\newcommand{\Out}{\mathrm{Out}}
\newcommand{\Inn}{\mathrm{Inn}}
\newcommand{\Aut}{\mathrm{Aut}}
\newcommand{\abs}[1]{\left\vert#1\right\vert}
\newcommand{\norm}[1]{\left\Vert#1\right\Vert}
\newcommand{\inv}{^{-1}}
\newcommand{\MM}{{\mathbb M}}
\newcommand{\M}{{\mathcal M}}
\newcommand{\R}{{\mathbb R}}
\newcommand{\sone}{{\mathbb S}^1}
\newcommand{\Z}{{\mathbb Z}}
\newcommand{\Q}{{\mathbb Q}}
\newcommand{\C}{\EuScript{C}}
\newcommand{\A}{{\mathcal A}}
\newcommand{\D}{\EuScript{D}}
\newcommand{\Csec}{\EuScript{S}}
\newcommand{\XC}{X^{\EuScript{C}}}
\newcommand{\BNS}{\Sigma}
\newcommand{\QBNS}{\widehat{\mathbb{Q}\BNS}}
\newcommand{\Hom}{\mathrm {Hom}}
\newcommand{\f}{f} 
\newcommand{\fee}{\varphi} 
\newcommand{\fib}{\eta}    
\newcommand{\rk}{\mathrm{rk}}
\newcommand{\flow}{\psi}   
\newcommand{\tflow}{\widetilde \psi} 
\newcommand{\tpsi}{\widetilde \psi} 
\newcommand{\tX}{\widetilde X} 
\newcommand{\tTheta}{\widetilde \Theta}
\newcommand{\hTheta}{\widehat \Theta}
\newcommand{\hGamma}{\widehat \Gamma}
\newcommand{\tGamma}{\widetilde \Gamma}
\newcommand{\tV}{\widetilde{\mathcal V}}
\newcommand{\tOmega}{\widetilde{\Omega}}
\newcommand{\poly}{\mathfrak m} 
\newcommand{\Rose}{\mathcal R}
\theoremstyle{plain}
\newtheorem{theorem}{Theorem}[section]
\newtheorem{lemma}[theorem]{Lemma}
\newtheorem{corollary}[theorem]{Corollary}
\newtheorem{proposition}[theorem]{Proposition}
\newtheorem{prop-defn}[theorem]{Proposition-Definition}
\newtheorem{claim}[theorem]{Claim}
\newtheorem{theor}{Theorem} 
\newtheorem*{theorem:Lipschitzflows}{Theorem \ref{T:lipschtiz flows}} 
\newtheorem*{theorem:conesections}{Theorem \ref{T:cone_of_sections}}
\newtheorem*{theorem:splittings}{Theorem \ref{T:splittings}}
\newtheorem*{theorem:determinant}{Theorem \ref{T:determinant formula}}
\newtheorem*{theorem:continuity-convexity}{Theorem \ref{T:continuity/convexity again}}
\newtheorem*{theorem:cones-are-equal}{Theorem \ref{T:cones_are_equal}}
\newtheorem*{theorem:cone-is-BNScpt}{Theorem \ref{T:BNS}}
\newtheorem*{theorem:transversefoliations}{Theorem \ref{T:transverse foliations}}
\theoremstyle{definition}
\newtheorem{defn}[theorem]{Definition}
\newtheorem{remark}[theorem]{Remark}
\newtheorem{question}[theorem]{Question}
\newtheorem*{remark*}{Remark}
\newtheorem*{remarks*}{Remarks}
\newtheorem{conv}[theorem]{Convention}
\newtheorem{example}[theorem]{Example}
\let\c@equation\c@theorem
\numberwithin{equation}{section}
\begin{document}

\title[McMullen polynomials and Lipschitz flows for free-by-cyclic groups]{McMullen polynomials and Lipschitz flows for\\free-by-cyclic groups}
\author{Spencer Dowdall, Ilya Kapovich, and Christopher J. Leininger}

\address{\tt  Department of Mathematics, Vanderbilt University,
1326 Stevenson Center, Nashville, TN 37240, U.S.A\\
  \newline \indent http://www.math.vanderbilt.edu/\~{}dowdalsd/ } \email{\tt spencer.dowdall@vanderbilt.edu}

\address{\tt  Department of Mathematics, University of Illinois at Urbana-Champaign,
  1409 West Green Street, Urbana, IL 61801
  \newline \indent http://www.math.uiuc.edu/\~{}kapovich/} \email{\tt kapovich@math.uiuc.edu}

\address{\tt  Department of Mathematics, University of Illinois at Urbana-Champaign,
  1409 West Green Street, Urbana, IL 61801
  \newline \indent http://www.math.uiuc.edu/\~{}clein/} \email{\tt clein@math.uiuc.edu}
  
\begin{abstract}
Consider a group $G$ and an epimorphism $u_0\colon G\to \Z$ inducing a splitting of $G$ as a semidirect product $\ker(u_0)\rtimes_\varphi \Z$ with $\ker(u_0)$ a finitely generated free group and $\varphi\in \Out(\ker(u_0))$ representable by an expanding irreducible train track map. Building on our earlier work \cite{DKL}, in which we realized $G$ as $\pi_1(X)$ for an Eilenberg-Maclane $2$--complex $X$ equipped with a semiflow $\flow$, and inspired by McMullen's Teichm\"uller polynomial for fibered hyperbolic $3$--manifolds, we construct a polynomial invariant $\poly \in \Z[H_1(G;\Z)/\text{torsion}]$ for $(X,\flow)$ and investigate its properties.

Specifically, $\poly$ determines a convex polyhedral cone $\C_X\subset H^1(G;\R)$, a convex, real-analytic function $\mathfrak H\colon \C_X\to \R$, and \emph{specializes} to give an integral Laurent polynomial $\poly_u(\zeta)$ for each integral $u\in \C_X$.
We show that $\C_X$ is equal to the ``cone of sections'' of $(X,\flow)$ (the convex hull of all cohomology classes dual to sections of of $\flow$), and that for each (compatible) cross section $\Theta_u\subset X$ with first return map $f_u\colon \Theta_u\to \Theta_u$, the specialization $\poly_u(\zeta)$ encodes the characteristic polynomial of the transition matrix of $f_u$.   More generally, for \emph{every} class $u\in \C_X$ there exists a geodesic metric $d_u$ and a codimension--$1$ foliation $\Omega_u$ of $X$ defined by a ``closed $1$--form'' representing $u$ transverse to $\flow$ so that after reparametrizing the flow $\flow^u_{s}$ maps leaves of $\Omega_u$ to leaves via a local $e^{s\mathfrak H(u)}$--homothety.

Among other things, we additionally prove that $\C_X$ is equal to (the cone over) the component of the BNS-invariant $\BNS(G)$ containing $u_0$ and, consequently, that each primitive integral $u\in \C_X$ induces a splitting of $G$ as an ascending HNN-extension $G = Q_u\ast_{\phi_u}$ with $Q_u$ a finite-rank free group and $\phi_u\colon Q_u\to Q_u$ injective. For any such splitting, we show that the stretch factor of $\phi_u$ is exactly given by $e^{\mathfrak H(u)}$.  
In particular, we see that $\C_X$ and $\mathfrak H$ depend only on the group $G$ and epimorphism $u_0$.
\end{abstract}  
  
\keywords{Free-by-cyclic groups, BNS invariant, train track maps, stretch factors}  

\thanks{The first author was partially supported by the NSF
  postdoctoral fellowship, NSF MSPRF no. 1204814. The second author
  was partially supported by the NSF grant DMS-0904200, DMS-1405146 and by
  the Simons Foundation Collaboration grant no. 279836. 
  The third author was partially supported by the NSF grant DMS-1207183 and acknowledges support from NSF grants DMS 1107452, 1107263, 1107367 (the ``GEAR Network'').
}

\subjclass[2010]{Primary 20F65, Secondary 57M, 37B, 37E}

\maketitle


\vspace{-5pt}
\tableofcontents

\section{Introduction}

Given an outer automorphism $\varphi$ of the finite-rank free group $F_N$, consider the free-by-cyclic group
\[ G = G_\fee = F_N \rtimes_\varphi \Z = \langle F_N, r \mid r^{-1} w r = \Phi(w), w \in F_N \rangle \]
(here $\Phi \in \Aut(F_N)$ is any representative of $\varphi$). The epimorphisms $G\to \Z$ are exactly the primitive integral points of the vector space $H^1(G;\R) = \Hom(G,\R)$, and every such $u\colon G\to \Z$ gives rise to a \emph{splitting}
\[ \xymatrix{ 1  \ar[r] &  \ker(u) \ar[r] & G \ar[r]^{u} & \Z \ar[r] & 1}\]
of $G$ and an associated \emph{monodromy} $\fee_u\in \Out(\ker(u))$ generating the outer action of $\Z$ on $\ker(u)$.  We are most interested in the case that $\ker(u)$ is finitely generated, or more generally, finitely generated over the monoid generated by $\fee_u$. In this case there is a canonically defined stretch factor $\Lambda(u)$ measuring the exponential growth rate of $\fee_u$ acting on $\ker(u)$; see \S\ref{sec:Setup} for the precise definitions of $\fee_u$ and $\Lambda(u)$.

We aim to illuminate the dynamical and topological properties of splittings of $G_\varphi$ in the case that $\varphi$ may be represented by an expanding irreducible train-track map $f$. For example, one broad goal is to understand how  $\fee_u$ and $\Lambda(u)$ vary with $u$. We began this undertaking in \cite{DKL} where, inspired by the work of Thurston and Fried on hyperbolic $3$--manifolds and their fibrations over the circle, we defined a $K(G,1)$ space called the {\em folded mapping torus} $X = X_f$ that comes equipped with a semi-flow $\flow$.
This determines an open convex cone $\A=\A_f \subset H^1(X;\R)= \Hom(G,\R)$ containing the natural projection $u_0\colon G\to G/F_N\cong \Z$ \emph{associated} with the given splitting $G = F_N\rtimes_\fee \Z$.  The cone $\A$ has the property that every primitive integral $u\in\A$ has $\ker(u)$ a finitely generated free group and monodromy $\fee_u$ which is again represented by an expanding irreducible train-track map $f_u$. Moreover, if the original automorphism $\fee \in \Out(F_N)$ is hyperbolic and fully irreducible, then so is $\fee_u$ for every primitive integral $u\in \A$.  We also proved in~\cite{DKL} that there exists a convex, continuous, homogeneous of degree $-1$ function $\mathfrak H\colon \A\to \R_+$ such that $\mathfrak H(u)$ is equal to the topological entropy of $f_u$ and also to $\log(\Lambda(u))$ for all primitive integral $u$. 
We refer the reader to \cite{DKL} for a detailed discussion of this material and other related results such as those in \cite{AR, BStrian,Gau1,Gau3,Gau4,Wang}.

Comparing our results on $G$ and $(X,\flow)$ from \cite{DKL} with those of a fibered hyperbolic $3$--manifold leads to several additional questions. The capstone of the $3$--manifold picture is McMullen's \emph{Teichm\"uller polynomial} \cite{Mc} which encapsulates nearly all the relevant information regarding fibrations of the manifold. In particular, it detects a canonically defined cone---which has topological, dynamical and algebraic significance---and gives specific information about every class in this cone. 
It is thus tantalizing to ask: Is there an analogous polynomial in the free-by-cyclic setting and, more importantly, what does it tell you about $G$ and $(X,\flow)$?
Any such analog would naturally determine a cone in $H^1(X;\R)$ and a convex, real-analytic function on that cone. What is the relationship of these to the cone $\A$ and function $\mathfrak H$ found in \cite{DKL} or to the canonically defined stretch function $\Lambda$? Is there a geometric, algebraic or dynamical characterization of the cone (e.g., in terms of foliations of $X$, Fried's cone of homology directions \cite{FriedS}, or the BNS-invariant)? What, if any, topological, geometric, or dynamical structure is there associated to the \emph{irrational} points of the cone?

The centerpiece of the present paper is the construction of exactly such a polynomial $\poly$, which we call the {\em McMullen polynomial}, together with a detailed analysis of $\poly$ that answers all of the above questions. Our multivariate, integral Laurent polynomial $\poly$ is constructed as an element of the integral group ring $\Z[H]$, where $H = H_1(G;\R)/\text{torsion}\cong \Z^b$, for $b = b_1(G)$.  As such, $\poly$ has a natural {\em specialization} $\poly_u$ for every $u \in H^1(X;\R)$; this is a single variable Laurent polynomial when $u$ is an integral class and a ``power sum'' in general. We now briefly summarize the the main results of the paper and explain the information packaged in $\poly$. Over the following several pages, we then provide detailed and expanded statements of our results together with discussion of related and other recent results in the literature and a more in depth comparison with the motivating $3$--manifold setting.

\begin{conv}\label{conv:main}
For the entirety of this paper (except in \S\ref{S:preliminaries}), $G=G_\fee$ will denote the free-by-cyclic group associated to an outer automorphism $\varphi\in \Out(F_N)$ represented by an expanding irreducible train track map $f \colon \Gamma \to \Gamma$ as above, and $X=X_f$ will denote the folded mapping torus built from $f$.
\end{conv}

\pagebreak

\noindent \underline{\underline{\bf Meta-Theorem}}

\smallskip

{\em 
\noindent {\bf \underline{I. Canonical cone}} There is an open, convex, rationally defined, polyhedral cone $\C_X \subset H^1(G;\R) = H^1(X;\R)$ containing $\A$, which can be characterized by any of the following:
\begin{enumerate}
\item[(1)] The dual cone of a distinguished vertex of the Newton polytope of $\poly$ (Theorem~\ref{T:cones_are_equal});
\item[(2)] The open convex hull of rays through integral $u \in H^1(X;\R)$ dual to sections of $\flow$ (Proposition~\ref{P:Csec convex});
\item[(3)] $\{ u \in H^1(X;\R) \mid$ $u$ is represented by a ``flow-regular closed $1$--form'' $\}$ (Definition~\ref{D:cone_sections});
\item[(4)] $\{u \in H^1(X;\R) \mid $ $u$ is positive on all closed orbits of $\flow$ $\}$ (Definition~\ref{D:Fried_cone});
\item[(5)] $\{u \in H^1(X;\R) \mid $ $u$ is positive on a specific finite set of closed orbits of $\psi$ $\}$ (Theorem~\ref{T:cone_of_sections});
\item[(6)] $\{ u \in H^1(X;\R) \mid $ $u$ is represented by a cellular $1$--cocycle which is positive on a specific set of $1$--cells of the ``circuitry cell structure'' of $X$ $\}$ (Proposition~\ref{P:homological_characterization});
\item[(7)] The cone on the component of the BNS--invariant containing $u_0$ (Theorem~\ref{T:BNS}).
\end{enumerate}}

\medskip

Item (1) shows that the cone $\C_X$ is determined by the McMullen polynomial $\poly$. Given $u \in \C_X$ primitive integral, the section $\Theta_u \subset X$ of $\flow$ dual to $u$ provided by (2) has first return map we denote $f_u \colon \Theta_u \to \Theta_u$.  By (7) we can realize $G$ (not necessarily uniquely) as an ascending HNN-extension $G = Q_u \ast_{\phi_u}$ dual to $u$ over a finitely generated free group $Q_u$.  For any class $u \in \C_X$ (not necessarily integral), the flow-regular closed $1$--form provided by (3) determines a ``foliation''  $\Omega_u$ of $X$.

\medskip

{\em 
\noindent {\bf \underline{II. Specialization: entropy and stretch factors}} There is a convex, real-analytic function $\mathfrak H \colon \C_X \to \R$ that is homogeneous of degree $-1$, extends the previously defined function on $\A$, and tends to $\infty$ at $\partial \C_X$ such that for every primitive integral $u\in \C_X$:
\begin{enumerate}
\item[(1)] $e^{\mathfrak H(u)}>1$ is equal to the largest root of $\poly_u(\zeta)$.
\item[(2)] $f_u \colon \Theta_u \to \Theta_u$ is an expanding irreducible train track map ``weakly representing'' $\phi_u$.
\item[(3)] The characteristic polynomial of the transition matrix for $f_u$ is $\pm \zeta^k \poly_u(\zeta)$, for some $k \in \Z$.
\item[(4)] The entropy of $f_u$ is given by $h(f_u) = \mathfrak H(u)$;
\item[(5)] The stretch factor of $\phi_u$ is $\Lambda(u) = \lambda(\phi_u) = e^{\mathfrak H(u)}$.
\end{enumerate}

\medskip

\noindent
{\bf \underline{III. Determinant formula}} For any primitive integral $u \in H^1(X;\R)$, the first return map $f_u \colon \Theta_u \to \Theta_u$ determines a matrix $A_u({\bf t}) = A_u(t_1,\ldots,t_{b-1})$ with entries in the ring of Laurent polynomials $\Z[t_1^{\pm 1},\ldots,t_{b-1}^{\pm 1}]$, for which $A_u(1,\ldots,1)$ is the transition matrix for $f_u$ and whose characteristic polynomial is $\poly$:
\[ \poly(x,{\bf t}) = \det(xI - A_u({\bf t}))\in\Z[x,{\bf t}] \cong \Z[H]. \]

\medskip

\noindent
{\bf \underline{IV. Foliations and Flows}} From the construction of $\poly$, for any class $u \in \C_X$ (not necessarily integral), there is a geodesic-metric $d_u$ on $X$ and a reparameterization $\flow^u$ of $\flow$ so that
\begin{enumerate}
\item[(1)] The flow lines $s \mapsto \flow_s^u(\xi)$ are $d_u$--geodesics for all $\xi \in X$;
\item[(2)] The ``leaves'' of $\Omega_u$ are (possibly infinite) graphs.
\item[(2)] $\flow^u$ maps leaves to leaves;
\item[(3)] The restriction of $\flow^u_s$ to any leaf is a local $e^{s {\mathfrak H(u)}}$--homothety with respect to the induced path metric.
\end{enumerate}
}

\medskip

\noindent
{\bf Some remarks:}
\begin{itemize}
\item We are indebted to Nathan Dunfield who suggested the scheme to prove Meta-Theorem I(7).
\item We consider left actions instead of right actions, and so the BNS--invariant discussed here agrees with $\Sigma^1(G;\Z)$ from \cite{BieriRenz}, but is $-\Sigma_{G'}(G)$ from \cite{BNS}; see Section 1.3 of \cite{BieriRenz}.
\item Despite the ostensible dependence on the dynamical system $(X,\flow)$, Meta-Theorem I--II shows that $\C_X$ and $\mathfrak H$ in fact depend only on the group $G$ and the initial cohomology class $u_0\colon G\to \Z$. 
\item We record that Theorem \ref{T:cone_of_sections}, Proposition~\ref{P:integral_classesin_Csec}, and Proposition \ref{P:homological_characterization} give the precise meaning to, and prove the equivalence of, (2)--(6) in Meta-Theorem I. Meanwhile Theorems~\ref{T:cones_are_equal} and \ref{T:BNS} prove the equivalence of these with (1) and (7). Meta-Theorem II follows from Theorems~\ref{T:splittings}, \ref{T:main_theorem}, and \ref{T:continuity/convexity again}. Meta-Theorem III is just Theorem~\ref{T:determinant formula}, and Meta-Theorem IV is an abbreviated version of Theorem~\ref{T:lipschtiz flows}.
\end{itemize}

\begin{remark}
Contemporaneously with the release of this paper, Algom-Kfir, Hironaka, and Rafi \cite{AHR} independently introduced a related polynomial $\Xi\in \Z[H]$ that provides information about $G$ and $X$. We stress that the results of the present paper (outlined above) have very little overlap with those of \cite{AHR}. Specifically, the polynomial $\Xi$ similarly determines a cone $\mathcal{T}\subset H^1(G;\R)$ and a convex, real-analytic function $L\colon \mathcal{T}\to \R$. Algom-Kfir, Hironaka, and Rafi prove that $\A$ (the cone defined in \cite{DKL}) is contained $\mathcal{T}$ and that $L(u) = \log(\Lambda(u))$ for every integral class $u\in \A$.  By real-analyticity, this implies that $L=\mathfrak H$ and that $\mathcal{T} = \C_X$.  Rather than study classes $u\in \mathcal{T}\setminus \A$ directly, the analysis in \cite{AHR} focuses on the construction of alternate ``$\A$--cones'' inside $\mathcal{T}$, and also produces an interesting interpretation of $\Xi$ in terms of a ``cycle polynomial" (which we learned from the authors of \cite{AHR}, and employ in the proof of Theorem~\ref{T:cones_are_equal} below). 
However, \cite{AHR} provides no geometric, algebraic, or dynamical interpretation of the significance of the cone $\mathcal{T}$ (c.f., Meta-Theorem I), nor for the value $L(u)$ when $u$ is not in an $\A$--cone (c.f. Meta-Theorem II); note that Example~\ref{Ex:the_cones} shows that $\C_X$ is not a union of $\A$--cones, in general. Furthermore, \cite{AHR} does not provide any discussion of, nor structure for, the irrational points of $\C_X$ (c.f. Meta-Theorem IV).

We additionally remark that in general the polynomial $\Xi$ from \cite{AHR} is not equal to $\poly$. In fact, $\Xi$ has the advantage of depending only on the pair $(G,u_0)$ and not on the folded mapping torus $(X,\flow)$.  The construction of \cite{AHR} furthermore ensures that $\Xi$ is a factor of $\poly$. However, $\Xi$ consequently cannot in general compute the characteristic polynomials for first return maps of cross sections to $\flow$ (c.f. Meta-Theorem II(3)). 
\end{remark}

We now turn to a more detailed discussion of the main results and ideas in this paper.\\

\subsection{Cross sections}
Some of the central objects of our current investigation are the \emph{cross sections} of $\flow$; these are embedded graphs in $X$ which are transverse to the flow and intersect every flowline of $\flow$; see \S\ref{S:cross_sections} for details. Every section $\Theta \subset X$ determines a dual cohomology class $u = [\Theta]$, and we construct an open convex cone $\Csec\subset H^1(X;\R)$ containing $\A$ with the property that an integral element $u \in H^1(X;\R)$ is dual to a section if and only if $u \in \Csec$; see Proposition~\ref{P:integral_classesin_Csec}. This \emph{cone of sections} $\Csec$ has a convenient definition in terms of closed $1$--forms (in the sense of \cite{FGS10}) satisfying an additional property we call {\em flow-regularity}; see \S\ref{S:closed 1-forms cohomology}.  
To aid in our analysis of $\Csec$, in \S\ref{S:cone_of_sections} we introduce another cone, the \emph{Fried cone} $\D\subset H^1(X;\R)$, consisting of those cohomology classes which are positive on all closed orbits of $\flow$; in fact, $\D$ is determined by finitely many closed orbits (Proposition~\ref{P:Fried finiteness}). The definition of $\Csec$ in terms of flow-regular closed $1$--forms easily implies that $\Csec\subseteq \D$. In Appendix~\ref{S:characterizing_sections} we give a combinatorial characterization of $\D$ (Proposition \ref{P:homological_characterization}) that is similar in spirit to the definition of $\A$ and which allows us to show the reverse containment $\D\subseteq \Csec$. Our first theorem summarizes these properties of $\Csec$ and $\D$:

\begin{theor}[Cone of sections]
\label{T:cone_of_sections}
There is an open convex cone $\Csec\subset H^1(X;\R)=H^1(G;\R)$ containing $\A$ (and thus containing $u_0$) such that a primitive integral class $u\in H^1(X_f,\R)$ is dual to a section of $\flow$ if and only if $u \in \Csec$. Moreover, $\Csec$ is equal to to the Fried cone $\D$, and there exist finitely many closed orbits $\mathcal{O}_1,\dotsc,\mathcal{O}_k$ of $\flow$ such that $u\in H^1(X;\R)$ lies in $\Csec$ if and only if $u(\mathcal{O}_i) > 0$ for each $1\leq i \leq k$.  In particular, $\Csec$ is an open, convex, polyhedral cone with finitely many rationally defined sides.
\end{theor}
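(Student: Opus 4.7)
The plan is to introduce the cone $\Csec$ as the set of cohomology classes representable by flow-regular closed $1$--forms (in the sense alluded to in Meta-Theorem I(3)), show it contains $\A$ as an open subcone, and then establish the chain of inclusions $\A \subseteq \Csec \subseteq \D \subseteq \Csec$, thereby identifying the cone of sections with the Fried cone. Once this identification is in place, the polyhedrality and rational definition follow essentially for free from the Fried-cone viewpoint.

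First I would verify that $\Csec$ is open and convex by the usual argument that convex combinations of flow-regular closed $1$--forms representing $u,v\in\Csec$ are again flow-regular, and that small perturbations of a flow-regular representative remain flow-regular; containment of $\A$ follows from the fact that the defining $1$--form for $u_0$ associated to $X=X_f$ (constructed in \cite{DKL}) is flow-regular, and similar representatives exist for all $u\in\A$. The duality between primitive integral classes of $\Csec$ and cross sections of $\flow$ is then exactly the content of Proposition~\ref{P:integral_classesin_Csec}: an integral flow-regular closed $1$--form may be integrated to give a map $X\to \sone$ whose generic fiber is a cross section dual to $u$, while conversely any section $\Theta\subset X$ provides a flow-regular closed $1$--form representing $[\Theta]$ via a local product structure near $\Theta$.

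Next I would establish $\Csec \subseteq \D$: any flow-regular closed $1$--form $\omega$ representing $u\in\Csec$ is, by definition, strictly positive along every nontrivial flow arc, and integrating around a closed orbit $\mathcal{O}$ of $\flow$ therefore yields $u(\mathcal{O}) = \int_{\mathcal{O}} \omega > 0$. The reverse inclusion $\D\subseteq \Csec$ is what I expect to be the main obstacle, and the outline in the introduction indicates this is handled by a combinatorial/homological characterization of $\D$ (Proposition~\ref{P:homological_characterization}) proved in Appendix~\ref{S:characterizing_sections}. My approach would be the following: given $u\in\D$, one first passes to a cellular cocycle representative using the ``circuitry cell structure'' on $X$; positivity of $u$ on every closed orbit, together with a finiteness argument over the (finitely many) circuits of the underlying train-track graph $\Gamma$, should force this cocycle to be expressible by weights that are positive on every $1$--cell carrying a flow direction. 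Smoothing this cocycle by the standard subdivision/averaging procedure yields a flow-regular closed $1$--form representing $u$, placing $u$ in $\Csec$.

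Finally, Proposition~\ref{P:Fried finiteness} states that $\D$ is cut out by finitely many closed orbits $\mathcal{O}_1,\dotsc,\mathcal{O}_k$, each of which gives a rational linear functional $u\mapsto u(\mathcal{O}_i)$ on $H^1(X;\R)$. Combined with the identification $\Csec = \D$ from the previous step, this immediately gives the required finite list of closed orbits such that $u\in\Csec$ iff $u(\mathcal{O}_i)>0$ for all $i$, and therefore exhibits $\Csec$ as an open, convex, polyhedral cone with finitely many rationally defined sides. The duality between integral points of $\Csec$ and sections (already established) completes the theorem.
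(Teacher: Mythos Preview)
Your proposal is correct and follows essentially the same route as the paper: define $\Csec$ via flow-regular closed $1$--forms, establish openness, convexity, and $\A\subseteq\Csec$ (Proposition~\ref{P:Csec convex}), identify integral points with sections (Proposition~\ref{P:integral_classesin_Csec}), prove $\Csec\subseteq\D$ by integrating over closed orbits (Proposition~\ref{P:Fried contains Csec}), and obtain polyhedrality from Proposition~\ref{P:Fried finiteness}. The one point where the paper's execution differs from your sketch is the reverse inclusion $\D\subseteq\Csec$: rather than producing a flow-regular $1$--form for \emph{every} $u\in\D$, the paper only constructs an explicit cross section dual to each \emph{primitive integral} $u\in\D$ (Proposition~\ref{P:dual_sections}, via the vertically positive cocycle of Proposition~\ref{P:homological_characterization} followed by a substantial refinement procedure---depth reduction, consistent signage, iterated standard subdivision---rather than a simple ``smoothing''), and then invokes convexity of $\Csec$ together with the fact that $\D$ is the convex hull of its rational points to conclude $\D\subseteq\Csec$.
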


\begin{remark}
We emphasize that the closed orbits $\mathcal{O}_i$ appearing in Theorem~\ref{T:cone_of_sections} are \emph{explicitly} defined in terms of the data used to construct $X$; see \S\ref{S:cone_of_sections}.
\end{remark}

This theorem mirrors Thurston's result in the $3$--manifold setting, where the cone of sections is the cone over a face of the (polyhedral) Thurston norm ball \cite{ThuN} and hence is defined by finitely many rational inequalities, and also Fried's characterization of this cone in terms of ``homology directions'' \cite{FriedS}.  In his unpublished thesis \cite{Wang}, Wang uses Fried's notion of homology directions to provide a similar characterization of the cone of sections for the mapping torus of a free group automorphism. Since there are semiflow equivariant maps between the mapping torus and $X$, Theorem~\ref{T:cone_of_sections} implies Wang's result in his setting.

For every primitive integral class $u\in \Csec$ we henceforth use $\Theta_u$ to denote a cross section dual to $u$; see Convention~\ref{conv:notation_convention}. For technical reasons we impose an additional assumption on the section $\Theta_u$ we call ``$\mathcal F$--compatibility'', but as we show in \S\ref{S:constructing_sections}, every primitive integral $u \in \Csec$ is dual to such a section. Every cross section $\Theta\subset X$ has an associated \emph{first return map} $\Theta\to \Theta$ induced by the semiflow, and we will use $f_u\colon\Theta_u\to\Theta_u$ to denote the first return map of $\Theta_u$.

\subsection{Splittings}
We will see that, in general, the sections $\Theta_u$ dual to primitive integral $u\in \Csec$ are not as nicely behaved as those dual to elements of $\A$. In particular, the inclusion  $\Theta_u\hookrightarrow X$ need not be $\pi_1$--injective and $f_u$ need not be a homotopy equivalence. Despite these shortcomings, our next theorem shows that for each section $\Theta_u$, the induced endomorphism $(f_u)_*$ of $\pi_1(\Theta_u)$ nevertheless provides a great deal of information about the corresponding splitting of $G$; namely it gives a realization of this splitting as an ascending HNN-extension over a finitely generated free group and provides a way to calculate $\Lambda(u)$.

For the statement, we need the following terminology regarding arbitrary endomorphisms $\phi$ of a finitely generated free group $F$:
As we explain in \S\ref{S:HNN-like}, each such $\phi$ naturally descends to an injective endomorphism $\bar\phi$ of the \emph{stable quotient} of $F$ by the \emph{stable kernel} consisting of all elements that become trivial under some power of $\phi$. 
Additionally, each endomorphism $\phi$ has as \emph{growth rate} or \emph{stretch factor} defined as
\begin{equation*}
\lambda(\phi) = \sup_{g \in F} \liminf_{n \to \infty} \sqrt[n]{\norm{\phi^n(g)}_A},
\end{equation*}
where $A$ is any free basis of $F$ and $\norm{\phi^n(g)}_A$ denotes the cyclically reduced word length of $\phi^n(g)$ with respect to $A$. One can show that this definition is independent of the the free basis $A$; see \S\ref{S:growth} for details. With this terminology, we prove that the first return maps $f_u\colon \Theta_u\to\Theta_u$ provide the following information about the splittings of $G$ determined by primitive integral classes $u\in \Csec$.

\begin{theor}[Splittings and ascending HNN-extensions]
\label{T:splittings}
Let $u\in \Csec$ be a primitive integral class with $\mathcal F$--compatible dual section $\Theta_u \subset X$ and first return map $f_u \colon \Theta_u \to\Theta_u$. Let $Q_u$ be the stable quotient of $(f_u)_*$ and let $\phi_u = \overline{(f_u)}_*$ be the induced endomorphism of $Q_u$. Then
\begin{enumerate}
\item $f_u$ is an expanding irreducible train track map.
\item $Q_u$ is a finitely generated free group and $\phi_u\colon Q_u\to Q_u$ is injective.
\item $G$ may be written as an HNN-extension
\[G \cong Q_u \ast_{\phi_u} = \langle Q_u, r \mid r\inv q r = \phi_u(q) \text{ for all }q\in Q_u\rangle\]
such that $u\colon G\to \Z$ is given by the assignment $r\mapsto 1$ and $Q_u\mapsto 0$.
\item If $J_u\leq \ker(u)$ denotes the image of $\pi_1(\Theta_u)$ induced by the inclusion $\Theta_u\hookrightarrow X$, then there is an isomorphism $Q_u\to J_u$ conjugating $\phi_u$ to $\Phi_u\vert_{J_u}$ for some automorphism $\Phi_u\in\Aut(\ker(u))$ representing the monodromy $\fee_u$.
\item The topological entropy of $f_u$ is equal to $\log(\lambda(\phi_u)) = \log(\lambda(\Phi_u\vert_{J_u}))$ and also to $\log(\Lambda(u))$.
\item $\ker(u)$ is finitely generated if and only if $\phi_u$ is an automorphism, in which case we have $\ker(u) \cong Q_u$ and $\fee_u = [\phi_u] \in \Out(Q_u)$.
\end{enumerate}
\end{theor}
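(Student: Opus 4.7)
The plan is to use the cross section $\Theta_u$ and the semiflow $\flow$ to explicitly identify the infinite cyclic cover of $X$ dual to $u$ as a mapping telescope of $f_u$, from which all claimed structure can be read off.

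I would first establish (1), transferring the train track property from $f\colon\Gamma\to\Gamma$ to $f_u$ via the semiflow. Since the folded mapping torus $X=X_f$ is constructed so that $\flow$ is locally homothetic along flowlines with expansion inherited from $f$, the first return map on each edge of $\Theta_u$ is a concatenation of segments of $\flow$ between successive transverse crossings of the edge structure of $X$. The $\mathcal F$--compatibility of $\Theta_u$ should ensure that the resulting trajectory traces an immersed legal edge-path, making $f_u$ a train track map; expansion of $\flow$ yields expansion of $f_u$, and irreducibility of $f_u$ follows from that of $f$ combined with the transitivity of $\flow$ between points of $\Theta_u$.

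The heart of the argument is (2)--(3). I would pass to the infinite cyclic cover $\tilde X\to X$ associated with $u$, in which $u$ lifts to a proper map $\tilde u\colon\tilde X\to\R$ that is strictly increasing along lifted flowlines (using $u\in\Csec=\D$, namely that $u$ is positive on all closed orbits). The section $\Theta_u$ lifts to a disjoint $\Z$--family $\{\Theta_n\}$ with $\tilde u(\Theta_n)=n$, and the first return of the lifted flow from $\Theta_n$ to $\Theta_{n+1}$ is a copy of $f_u$. Using $\flow$ to identify each slab $\tilde u^{-1}([n,n+1])$ with the mapping cylinder of $f_u$, one deduces that $\tilde X$ has the homotopy type of the bi-infinite mapping telescope of iterates of $f_u$. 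Taking fundamental groups then yields
\[ \ker(u) = \pi_1(\tilde X) = \varinjlim\bigl(\pi_1(\Theta_u)\xrightarrow{(f_u)_*}\pi_1(\Theta_u)\xrightarrow{(f_u)_*}\cdots\bigr). \]
Since the stable quotient $Q_u = \pi_1(\Theta_u)/\ker_\infty((f_u)_*)$ with its induced injective endomorphism $\phi_u$ forms a cofinal subsystem, this direct limit equals $\varinjlim(Q_u\xrightarrow{\phi_u}Q_u\xrightarrow{\phi_u}\cdots)$, realizing $\ker(u)$ as an ascending union of copies of $Q_u$. This is precisely the kernel of the canonical map $Q_u\ast_{\phi_u}\to\Z$, so $G\cong Q_u\ast_{\phi_u}$ with the stable letter $r$ satisfying $u(r)=1$. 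Finally $Q_u$ is finitely generated as a quotient of $\pi_1(\Theta_u)$, and is free because $\ker(u)$ is locally free (as an ascending union of free groups along injections in the limit), so its finitely generated subgroups are free.

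For (4), the inclusion $\Theta_u\hookrightarrow X$ lifts to $\Theta_0\hookrightarrow\tilde X$, and the induced map $\pi_1(\Theta_u)\to\pi_1(\tilde X)=\ker(u)$ has image precisely $Q_u$, identifying $J_u$ with $Q_u$; conjugation by $r$ in $G$ provides an automorphism $\Phi_u\in\Aut(\ker(u))$ representing $\fee_u$, and the HNN relation $r^{-1}qr=\phi_u(q)$ shows that $\Phi_u|_{J_u}$ corresponds to $\phi_u$ under $Q_u\cong J_u$. For (5), $h(f_u)=\log\lambda_{PF}(A_u)$ is the standard calculation for an expanding irreducible train track map, the train-track property ensures $\norm{\phi_u^n(g)}$ grows like $\lambda_{PF}^n$ for every nontrivial $g\in Q_u$ (giving $\lambda(\phi_u)=\lambda_{PF}(A_u)$), and since $\ker(u) = \bigcup_n \Phi_u^n(J_u)$ is monoidally generated over $\Phi_u$ by the finitely generated $J_u$, the stretch factor $\Lambda(u)$ on $\ker(u)$ agrees with that of $\Phi_u|_{J_u}$. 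For (6): $\ker(u) = \bigcup_n r^n J_u r^{-n}$ is finitely generated iff this ascending union stabilizes, iff $\phi_u$ is surjective and hence an automorphism; in that case $\ker(u)=J_u\cong Q_u$ and $\fee_u = [\phi_u]\in\Out(Q_u)$. The main technical obstacle I anticipate is rigorously carrying out the mapping-telescope identification of $\tilde X$: one must handle the semiflow near branch points (where backward determinism fails) and reconcile the $2$-cell structure of $X$ with the cellular edge-path structure of $\Theta_u$ under iteration of $f_u$.
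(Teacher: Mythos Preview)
Your overall architecture matches the paper's: both arguments rest on the identification $G \cong \pi_1(\Theta_u) \ast_{(f_u)_*}$ (your mapping-telescope description of the cyclic cover is the topological version of the paper's direct appeal to van~Kampen, stated as Lemma~\ref{L:vanKampen}), followed by passage to the stable quotient $Q_u$. The arguments for (1), (4), and (6) are likewise essentially the same.

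There is, however, a genuine gap in your proof of (2). You claim $Q_u$ is free because ``$\ker(u)$ is locally free (as an ascending union of free groups along injections in the limit).'' But the ascending union you have exhibited is of copies of $Q_u$ itself (via the injective $\phi_u$), so invoking local freeness of $\ker(u)$ to conclude $Q_u$ is free is circular. The maps $(f_u)_*$ in the original direct system are not injective, so that system does not exhibit $\ker(u)$ as a union of free groups along injections either. The paper avoids this with an elementary Hopficity argument (Propositions~\ref{P:Kapovich subgroup} and~\ref{P:injective_on_image}): the ranks $b_1\big((f_u)_*^i(\pi_1(\Theta_u))\big)$ form a nonincreasing sequence of nonnegative integers and so stabilize at some $i$; Hopficity of finitely generated free groups then forces $(f_u)_*$ to be injective on $J = (f_u)_*^i(\pi_1(\Theta_u))$, and the first isomorphism theorem gives $Q_u \cong J$. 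Since $J$ is a subgroup of the free group $\pi_1(\Theta_u)$, it is free. This is packaged in the paper as Corollary~\ref{C:HNN_and_stretch_for_endos}.

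A smaller point: in (5) you assert that the train-track property gives $\lambda(\phi_u) = \lambda_{PF}(A(f_u))$, but $f_u$ is a topological representative of $(f_u)_*$ on $\pi_1(\Theta_u)$, not directly of $\phi_u$ on $Q_u$. One needs the intermediate step $\lambda((f_u)_*) = \lambda(\phi_u)$, which the paper obtains from the identification $Q_u \cong J$ above together with the observation that stretch factors are unchanged under restriction to iterate images (Proposition~\ref{P:subgroup_stretch}).
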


\begin{remark}
Throughout this paper -- for example in Theorem~\ref{T:splittings}(1) -- we use ``train track map'' to mean a graph map that satisfies the usual dynamical properties of train track maps in $\Out(F_N)$ theory but which may not be a homotopy equivalence; see \S\ref{S:graphs_and_maps} for the precise definition.
\end{remark}

As noted above, the inclusion $\Theta_u \subset X$ is not necessarily $\pi_1$--injective and $f_u$ may fail to be a homotopy equivalence in two different ways. First, it can happen that  $(f_u)_* \colon \pi_1(\Theta_u) \to\pi_1(\Theta_u)$ may be injective but non-surjective, in which case $Q_u = \pi_1(\Theta_u)$, $\phi_u = (f_u)_*$ and $\ker(u)$ is not finitely generated even though $u\in\Csec$. Second, it can happen that $(f_u)_*$ is non-injective, in which case the Hopficity of the free group $\pi_1(\Theta_u)$ necessitates that $(f_u)_*$ non-surjective as well.  We will see (in Remark~\ref{R:S_has_both_kinds_of_kerels}) that both of these possibilities can in fact occur when $u \in \Csec\setminus\A$. In the second case it can moreover happen that the injective endomorphism $\phi_u \colon Q_u\to Q_u$ is surjective and consequently an automorphism of $Q_u\cong \ker(u)$. These kinds of phenomena are not present in the $3$--manifold setting; see \S\ref{S:contrasting_3-manifolds} for a more detailed discussion.

\subsection{The McMullen polynomial}
Having shown that a section $\Theta_u$ dual to $u\in \Csec$ leads to an algebraic description of the corresponding splitting of $G$, we now turn our attention to the dynamical properties of these splittings. In this regard, our main result is the introduction of a polynomial invariant $\poly$, termed  the \emph{McMullen polynomial}, that simultaneously encodes information about all of the first return maps $f_u$ and the stretch factors of their associated endomorphisms $(f_u)_*$. This result is analogous to McMullen's construction of the Teichm\"uller polynomial in the setting of fibered hyperbolic $3$--manifolds \cite{Mc}. Let
\[ H = H_1(G;\Z)/\text{torsion} \cong \Z^b,\]
where $b = b_1(G)$ is the first Betti number of $G$. The McMullen polynomial of $(X,\flow)$ is a certain element $\poly$ in the group ring $\Z[H]$, that is, it is a finite sum
\[\poly = \sum_{h\in H} a_h \, h \in \Z[H]\]
of elements of $H$ with integral coefficients. Note that if we expresses the elements $h\in H$ in terms of a multiplicative basis $t_1,\ldots,t_b$, then $\poly = \poly(t_1,\ldots,t_b)$ becomes an integral Laurent polynomial in $t_1,\ldots,t_b$.

For every integral class $u\in H^1(G;\Z)$, one may then form the \emph{specialization of $\poly$ at $u$}, which is the $1$--variable integral Laurent polynomial
\[\poly_u(\zeta) = \sum_{h\in H} a_h \zeta^{u(h)} \in \Z[\zeta^{\pm1}].\]
Our next theorem establishes an intimate relationship between these specializations $\poly_u$ and the first return maps $f_u$ of sections $\Theta_u$ dual to $u\in \Csec$.

\begin{theor}[The McMullen polynomial and its specializations]
\label{T:main_theorem}
There exists an element $\poly\in \Z[H]$ as above with the
following properties. Let $u\in \Csec\subset H^1(X;\R)$ be a primitive
integral class with dual compatible section $\Theta_u$, first return map $f_u$, and injective endomorphism $\phi_u = \overline{(f_u)}_*$ as in Theorem~\ref{T:splittings}. Then
\begin{enumerate}
\item The specialization $\poly_u(\zeta)\in \Z[\zeta^{\pm1}]$ is, up to multiplication by $\pm \zeta^k$ for some $k \in \Z$, equal to the characteristic polynomial of the transition
matrix of $f_u$.  
\item The largest positive root $\lambda_u$ of $\poly_u$ is equal to the stretch factor $\lambda(\phi_u)$ of $\phi_u$, to the spectral radius of the transition matrix of $f_u$, and to the stretch factor of $(f_u)_\ast$. In particular, $\lambda_u = \Lambda(u)$. Additionally, $\log(\lambda_u)$ is the topological entropy $f_u$.
\end{enumerate}
\end{theor}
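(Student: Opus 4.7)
The plan is to construct $\poly$ by lifting to the maximal free abelian cover and taking the characteristic polynomial of a lifted transition matrix. We let $\widehat{X}\to X$ be the regular cover corresponding to the kernel of the homomorphism $\pi_1(X)\to H$, on which the semiflow $\flow$ lifts to a semiflow $\widehat{\flow}$. The preimage of the $\mathcal{F}$--compatible section $\Theta_{u_0}$ is then an $H$--invariant subgraph whose edges span a free $\Z[H]$--module of rank $|E(\Theta_{u_0})|$. Tracking each edge of $\Theta_{u_0}$ under the first-return map of $\widehat{\flow}$ produces a $\Z[H]$--linear endomorphism whose matrix in a natural basis is the lifted transition matrix $A_{u_0}(\mathbf{t})$, and we will define $\poly := \det(xI-A_{u_0}(\mathbf{t}))\in\Z[H]$, where $x$ corresponds to a chosen generator of $H/H_0$ for $H_0 = \ker(u_0)\cap H$.

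For assertion (1), the case $u=u_0$ is immediate from the construction. The general case relies on Theorem~\ref{T:determinant formula}: the same element $\poly\in\Z[H]$ equals $\det(x_u I - A_u(\mathbf{t}_u))$ for every primitive integral $u\in\Csec$, in a basis of $H$ dual to $u$. Granting this invariance, specializing $x_u\mapsto\zeta$ and $t_{u,i}\mapsto 1$ produces $\poly_u(\zeta)=\det(\zeta I - M(f_u))$, the characteristic polynomial of the transition matrix $M(f_u)$ of $f_u$; the ambiguity by $\pm\zeta^k$ reflects our freedom to multiply $\poly$ by units of $\Z[H]$ and to choose the basis of $H$.

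Assertion (2) is then a chain of standard identifications. By Theorem~\ref{T:splittings}(1) the map $f_u$ is an expanding irreducible train-track map, so Perron--Frobenius gives a unique leading eigenvalue $\mu>1$ of $M(f_u)$ equal to its spectral radius; by (1), this $\mu$ is the largest positive root $\lambda_u$ of $\poly_u$. The standard subshift-of-finite-type theory attached to expanding train-track maps identifies $\log\mu$ with the topological entropy $h(f_u)$, and Theorem~\ref{T:splittings}(5) adds $h(f_u)=\log\lambda(\phi_u)=\log\Lambda(u)$; compatibility of the stretch factor with the stable quotient, treated in \S\ref{S:growth}, gives $\lambda((f_u)_\ast)=\lambda(\phi_u)$. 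Assembling these yields $\lambda_u=\mu=\lambda(\phi_u)=\lambda((f_u)_\ast)=\Lambda(u)=e^{h(f_u)}$, as required.

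The principal obstacle is the invariance statement of Theorem~\ref{T:determinant formula}: the assertion that a single element of $\Z[H]$ simultaneously plays the role of the characteristic polynomial of every lifted transition matrix $A_u(\mathbf{t})$, independently of the chosen section. This requires understanding how the lifted transition matrices transform under a change of section within $\Csec$, and is where the combinatorial structure of the folded mapping torus must be exploited carefully. Once that invariance is in hand, Theorem~\ref{T:main_theorem} reduces to the routine specialization in part (1) and the standard dynamical input recalled in part (2).
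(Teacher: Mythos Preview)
Your argument is correct and the deductive chain for part~(2) matches the paper's almost verbatim: both rely on Theorem~\ref{T:splittings}(1) to know $f_u$ is an expanding irreducible train-track map, then on Proposition~\ref{P:stretch} (your ``standard subshift-of-finite-type theory'') for $h(f_u)=\log\lambda(f_u)$, and on Theorem~\ref{T:splittings}(5) together with Corollary~\ref{C:HNN_and_stretch_for_endos} for the remaining equalities.

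The one substantive difference is in how $\poly$ is \emph{defined}. You set $\poly:=\det(xI-A_{u_0}(\mathbf{t}))$ for the distinguished section $\Theta_{u_0}$ and then invoke Theorem~\ref{T:determinant formula} to obtain the same expression for every other $u\in\Csec$. The paper instead defines $\poly$ intrinsically as the g.c.d.\ of the Fitting ideal of the module of transversals $T(\mathcal F)$, an object built from the foliation alone with no reference to any cross section; the determinant formula is then a \emph{consequence} of the isomorphism $T(\hTheta_u)\cong T(\mathcal F)$ (Proposition~\ref{P:moduleisomorphism}) combined with the explicit presentation of $T(\hTheta_u)$ (Proposition~\ref{P:presentationmatrix4graph}). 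Both routes ultimately rest on that module isomorphism, which is exactly the ``principal obstacle'' you identify. The advantage of the paper's packaging is that section-independence of $\poly$ is automatic from the definition, whereas in your setup it is the main thing to prove; the advantage of yours is concreteness---one never has to discuss Fitting ideals. Your specialization step for part~(1) is precisely Corollary~\ref{C:specialization}, proved the same way.
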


\begin{remark}
Using different methods, Algom-Kfir, Hironaka and Rafi \cite{AHR} have independently produced a polynomial and have obtained a similar result to Theorem~\ref{T:main_theorem}, but only for primitive integral elements $u \in \A$.  Their polynomial has an additional minimality property which guarantees that it is a factor of $\poly$ and that it depends only on the outer automorphism $f_\ast$ represented by $f\colon \Gamma\to\Gamma$. Because of this last property, their polynomial need not specialize to the characteristic polynomial of the train track map $f_u$ for primitive integral $u \in \A$ (even up to units; c.f.~Theorem~\ref{T:determinant-subdivision} and Example \ref{Ex:subdivision matters}).
\end{remark}

The McMullen polynomial $\poly$ in Theorem~\ref{T:main_theorem} is constructed in the following manner, which is in some sense ``dual'' to McMullen's construction of the Teichm\"uller polynomial \cite{Mc}.   Let $\tX$ denote the universal torsion-free abelian cover of $X$ (which has deck group $H$) and consider the foliation $\mathcal F$ of $\tX$ by (lifted) flowlines of $\flow$. In \S\ref{S:foliation} we then construct a \emph{module of transversals to $\mathcal F$}; this is a module over the group ring $\Z[H]$ and is denoted by $T(\mathcal F)$. A key fact regarding this module, which we prove in \S\ref{secc:module_isom}, is that $T(\mathcal F)$ is finitely presented as a $\Z[H]$--module. The McMullen polynomial of $(X,\flow)$ is then defined to be the g.c.d.~of the Fitting ideal of $T(\mathcal F)$; see \S\ref{S:McMullen_polynomial} for details.

This abstract definition of $\poly$ is, however, rather opaque and thus somewhat unsatisfying. In particular, it gives no indication as to why $\poly$ should enjoy the properties described in Theorem~\ref{T:main_theorem}. Ultimately, these properties follow from the fact that $T(\mathcal F)$ encodes a great deal of information about the semiflow $\flow$, but it is not readily apparent how this information is imparted to the fitting ideal and consequently to $\poly$.

To remedy this situation we give an alternate description of the McMullen polynomial, showing that $\poly$ is a very concrete and explicitly computable object. This alternate description is in terms of \emph{graph modules} for sections of $\flow$. More precisely, for each primitive integral class $u \in \Csec$ with dual compatible section $\Theta_u$, we consider the preimage $\hTheta_u \subset \tX$ in the universal torsion-free abelian cover and define a corresponding \emph{graph module} $T(\hTheta_u)$; this module is a certain quotient of the free module on the edges of $\hTheta_u$ that encodes information about the first return map of $\hTheta_u$ to itself. 

The module $T(\hTheta_u)$ may be explicitly described as follows: Choosing a component $\tTheta_u \subset \hTheta_u$, the stabilizer of $\tTheta_u$ in $H$ is a rank $b-1$ subgroup $H_u \subset H$, and $H$ splits (noncanonically) as $H \cong H_u \oplus \Z$. There is a canonical submodule of the free module of the edges of $\hTheta_u$ consisting of finite sums of edges of $\tTheta_u$, and if we choose a finite set $E$ of $H_u$--orbit representatives of the edges of $\tTheta_u$, then this submodule is naturally isomorphic to the {\em finitely generated} free $\Z[H_u]$--module $\Z[H_u]^E$. Choosing a multiplicative basis $t_1,\ldots,t_{b-1}$ for $H_u$, this latter module may be regarded as the finitely generated free module $\Z[t_1^{\pm1},\dotsb,t_{b-1}^{\pm1}]^E$ over the ring of integer Laurent polynomials in $b-1$ variables.  The first return map $f_u \colon \Theta_u \to \Theta_u$ then lifts to a map $\tTheta_u \to \tTheta_u$ which has a well-defined $\abs{E}\times\abs{E}$ ``transition matrix'' $A_u(t_1,\ldots,t_{b-1})$ with entries in $\Z[t_1^{\pm 1},\ldots,t_{b-1}^{\pm 1}]$; see \S\ref{S:transition_matrices}.

Choosing a multiplicative generator $x$ for the complement of $H_u < H$ produces an isomorphism between  $\Z[H]$ and the ring $\Z[t_1^{\pm 1},\ldots,t_{b-1}^{\pm 1},x^{\pm 1}]$ of integral Laurent polynomials.  With respect to this isomorphism, the McMullen polynomial $\poly$ is given by the following ``determinant formula'', which is analogous to McMullen's formula for the Teichm\"uller polynomial in \cite{Mc} and which implies Meta-Theorem III.

\begin{theor}[Determinant formula] \label{T:determinant formula}
For any primitive integral $u \in \Csec$ we have
\[ \poly(t_1,\ldots,t_{b-1},x) = \det(xI - A_u(t_1,\ldots,t_{b-1}))\]
up to units in $\Z[t_1^{\pm 1},\ldots,t_{b-1}^{\pm 1},x^{\pm 1}]$.
\end{theor}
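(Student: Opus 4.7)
The plan is to identify the McMullen polynomial with the characteristic polynomial of a matrix arising from a square presentation of the transversal module $T(\mathcal F)$. The starting point is the isomorphism (established via the module analysis in \S\ref{secc:module_isom}) between $T(\mathcal F)$ and the graph module $T(\hTheta_u)$ of the lifted section, where $\Theta_u$ is a compatible cross section dual to $u$ and $\hTheta_u\subset \tX$ is its preimage in the universal torsion-free abelian cover. Since $\poly$ is by definition a generator of the smallest principal ideal containing the Fitting ideal of $T(\mathcal F)$, I would reduce the statement to producing a square $\Z[H]$--presentation of $T(\hTheta_u)$ whose presentation matrix is $xI-A_u(\mathbf t)$.

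First I would fix a component $\tTheta_u$ of $\hTheta_u$, whose stabilizer is the rank $(b-1)$ subgroup $H_u\le H$ (this uses primitivity of $u$ so that $H/H_u\cong \Z$ is generated by the chosen class $x$). Choose a finite set $E$ of $H_u$--orbit representatives for the edges of $\tTheta_u$; then the free $\Z[H_u]$--module $\Z[H_u]^E\cong \Z[\mathbf t^{\pm1}]^E$ on these representatives is naturally identified with the subgroup of the edge module of $\hTheta_u$ consisting of finite sums of edges of $\tTheta_u$. Translates by powers of $x$ exhaust the components of $\hTheta_u$, so as a $\Z[H]$--module the edge module of $\hTheta_u$ is the induced module $\Z[H]\otimes_{\Z[H_u]}\Z[\mathbf t^{\pm1}]^E\cong \Z[H]^E$.

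Second I would translate the defining relations of $T(\hTheta_u)$ into the language of the transition matrix $A_u(\mathbf t)$. The relations come from the first-return map $f_u\colon \Theta_u\to\Theta_u$ lifted to a map $\tTheta_u\to \hTheta_u$: each edge $e\in E$ is identified, via the semiflow, with a sum of translates of edges in $E$ that records how the first return spreads $e$, weighted by the deck transformations encountered along the flowlines. After pushing one copy of the $u$--direction back into $\tTheta_u$, precisely the combinatorial data packaged in Convention~\ref{conv:notation_convention} and \S\ref{S:transition_matrices}, this becomes the relation $x\cdot e=\sum_{e'\in E} A_u(\mathbf t)_{e'e}\cdot e'$, i.e.\ the rows of $xI-A_u(\mathbf t)$ exactly describe the submodule of relations in the $\Z[H]^E$ above. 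The key verification is that these relations are complete, i.e.\ that the resulting quotient really is $T(\hTheta_u)$; this is essentially the content that $\hTheta_u$ is built from its component $\tTheta_u$ by $x$--translation and that the only identifications of transversals come from the first return map, which should be available from the construction of $T(\hTheta_u)$ in \S\ref{S:foliation}.

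Once the presentation $\Z[H]^E \xrightarrow{xI-A_u(\mathbf t)} \Z[H]^E \twoheadrightarrow T(\hTheta_u)\cong T(\mathcal F)$ is in hand, the conclusion is formal: the Fitting ideal of a module presented by a square matrix $M$ over a commutative ring is the principal ideal $(\det M)$, so the gcd of the Fitting ideal of $T(\mathcal F)$ is $\det(xI-A_u(\mathbf t))$ up to units in $\Z[H]=\Z[t_1^{\pm1},\dots,t_{b-1}^{\pm1},x^{\pm1}]$. The main obstacle is the second step, namely matching the abstract relations in $T(\hTheta_u)$ on the nose with $xI-A_u(\mathbf t)$; keeping track of how the $H$--action interacts with the lifts of the first return map, and verifying that no additional relations appear, is the technical heart of the argument. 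The rest is bookkeeping about changes of basis, which only affect the determinant by a unit and so are harmless for the statement up to units.
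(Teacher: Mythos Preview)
Your proposal is correct and follows precisely the paper's approach: combine the module isomorphism $T(\mathcal F)\cong T(\hTheta_u)$ from \S\ref{secc:module_isom} (Proposition~\ref{P:moduleisomorphism}) with the square presentation of $T(\hTheta_u)$ by $xI-A_u(\mathbf t)$ (Proposition~\ref{P:presentationmatrix4graph} and Corollary~\ref{C:graphidealgenerator}), and conclude that the gcd of the Fitting ideal is the determinant. One small remark: what you flag as the ``main obstacle''---that the relations $x\cdot e=\sum_{e'}A_u(\mathbf t)_{e'e}\cdot e'$ are complete---is in fact immediate, since $T(\hTheta_u)$ is \emph{defined} in \S\ref{S:graph module} (not \S\ref{S:foliation}) as $\Z[H]^E$ modulo exactly these relations, one for each $H$--orbit of edges; the hard part (which you correctly locate in \S\ref{secc:module_isom}) is the isomorphism with $T(\mathcal F)$.
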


While the compatibility condition imposed on the cross sections $\Theta_u$ can be computationally expensive, as it can lead to the addition of a large number of valence--$2$ vertices (see Definition~\ref{D:standard_graph}), it is essential for the above discussion because the module of transversals $T(\mathcal F)$, and consequently $\poly$, is sensitive to the choice of distinguished \emph{vertex leaves} of $\mathcal F$. These leaves are determined by the original graph structure on $\Gamma$, and subdividing $\Gamma$ can indeed affect the McMullen polynomial by introducing extra factors. To illuminate this dependence and increase computational flexibility, we derive a secondary determinant formula (Theorem~\ref{T:determinant-subdivision}) that explains exactly how the McMullen polynomial $\poly$ changes under the addition of new vertex leaves resulting from any subdivision of $\Gamma$ or of any cross section $\Theta_u$.

\subsection{The McMullen cone $\C_X$}
The McMullen polynomial $\poly\in \Z[H]$ naturally determines yet another convex cone $\C_X\subset H^1(X;\R)$, which we term the \emph{McMullen cone}. Specifically, $\C_X$ is the dual cone associated to a certain vertex of the ``Newton polytope'' of $\poly\in \Z[H]$; see Definition~\ref{D:McMcone}.
The next result states that $\C_X$ is in fact equal to the cone of sections $\Csec$.
The proof of this result appeals to Theorems \ref{T:cone_of_sections} and \ref{T:main_theorem} and a formula for $\poly$ as a ``cycle polynomial'' introduced by Algom-Kfir, Hironaka and Rafi \cite{AHR}.  The proof of this formula involves the use of directed graphs labeled by homology classes, which also appears in the work of Hadari \cite{HadariComm}, and is related to McMullen's recent work on the ``clique polynomial'' of a weighted directed graph \cite{McClique}.  Thus in addition to supplying dynamical information about all cross sections of $\flow$, our polynomial invariant also detects exactly which integral cohomology classes are dual to sections.

\begin{theor}[McMullen polynomial detects $\Csec$]
\label{T:cones_are_equal}
The McMullen cone $\C_X$ is equal to the cone of sections $\Csec$.
\end{theor}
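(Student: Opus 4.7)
By Theorem~\ref{T:cone_of_sections} we have $\Csec = \D$, so it suffices to prove the two inclusions $\Csec \subseteq \C_X$ and $\C_X \subseteq \D$. The plan is to use the determinant formula (Theorem~\ref{T:determinant formula}) for the former inclusion and a ``cycle polynomial'' expansion of $\poly$ in the spirit of Algom-Kfir, Hironaka, and Rafi \cite{AHR}, combined with the finite characterization of $\D$ from Theorem~\ref{T:cone_of_sections}, for the latter.

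For $\Csec \subseteq \C_X$, fix any primitive integral $u \in \Csec$ and choose a splitting $H \cong H_u \oplus \Z\langle x\rangle$ with $u(x)=1$ and $u|_{H_u} = 0$. Theorem~\ref{T:determinant formula} writes $\poly = \det(xI - A_u(t_1,\ldots,t_{b-1}))$ up to a unit in $\Z[H]$. The highest-$x$-degree monomial $x^{|E|}$ appears with coefficient $\pm 1$, while every other monomial is a product of some $x^k$ (with $k<|E|$) and an element of $\Z[H_u] = \ker(u|_H)$, and so has strictly smaller $u$-value. Hence $u$ lies in the open dual cone of the vertex of the Newton polytope of $\poly$ represented by $x^{|E|}$. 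As $u$ varies over primitive integral classes in the open connected cone $\Csec$, this vertex, chosen from the finite vertex set of the Newton polytope, is locally constant, hence constant on $\Csec$. Since $u_0 \in \A \subseteq \Csec$ and the distinguished vertex is by definition the one whose dual cone contains $u_0$, this constant vertex is the distinguished one; density of primitive integral classes together with openness of $\C_X$ then gives $\Csec \subseteq \C_X$.

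For $\C_X \subseteq \D$, expand $\det(xI - A_u)$ as a signed sum indexed by vertex-disjoint collections $L = \{C_1,\ldots,C_m\}$ of simple directed cycles in the transition graph of $f_u$, producing the cycle polynomial formula of \cite{AHR}. Each cycle $C_j$ corresponds to a primitive closed orbit $\gamma_j$ of $\flow$ with homology class $h_{\gamma_j}\in H$, and the monomial contributed by $L$ is supported at a position displaced from the distinguished vertex by $\prod_j h_{\gamma_j}^{\pm 1}$ (the distinguished vertex itself arising from the empty collection, up to a global shift by a unit of $\Z[H]$). Applying the dual cone condition defining $\C_X$ to singleton collections $L = \{C_j\}$ then forces $u(h_{\gamma_j})>0$ for every primitive closed orbit realized by a simple cycle in the transition graph of $f_u$. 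Since Theorem~\ref{T:cone_of_sections} provides a finite list $\mathcal O_1,\ldots,\mathcal O_k$ of closed orbits whose positivity cuts out $\D$, and each $\mathcal O_i$ passes through $\Theta_u$ and is thereby realized by such a cycle, the inequalities above include those defining $\D$, and we conclude $u \in \D$.

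The central technical obstacle is establishing the cycle polynomial formula rigorously and fixing the sign/shift convention so that the empty cycle collection corresponds (up to a unit of $\Z[H]$) to the distinguished vertex of the Newton polytope. Alongside this, one must verify that every orbit in the defining collection $\mathcal O_1,\ldots,\mathcal O_k$ of $\D$ is visible among the simple cycles of the transition graph of $f_u$; this should follow from the fact that every closed orbit of the semiflow traverses $\Theta_u$ and decomposes into primitive cycles, together with the explicit description of the defining orbits given in Theorem~\ref{T:cone_of_sections}.
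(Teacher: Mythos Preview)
Your overall strategy matches the paper's: use the determinant formula for $\Csec \subseteq \C_X$, and the cycle polynomial expansion of $\poly$ together with the characterization $\Csec = \D$ for the reverse inclusion. The first inclusion is fine and is essentially the argument of Lemma~\ref{L:contained}. However, the second inclusion has a genuine gap.

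The problem is the sentence ``Applying the dual cone condition defining $\C_X$ to singleton collections $L = \{C_j\}$ then forces $u(h_{\gamma_j})>0$.'' The dual cone condition constrains $u$ only at monomials that actually appear with nonzero coefficient in $\poly$ \emph{after cancellation}. In the cycle polynomial expansion, the singleton $\{C_j\}$ contributes $-p_{C_j}(\mathbf t)\,x^{m-|C_j|}$, but a disjoint multi-cycle $\{y_1,\dots,y_k\}$ with $k>1$, $\sum |y_i| = |C_j|$, and $\prod p_{y_i}(\mathbf t) = p_{C_j}(\mathbf t)$ contributes the same monomial with sign $(-1)^k$. If the net coefficient happens to be zero, the dual cone inequality for that monomial is vacuous and you learn nothing about $u(h_{\gamma_j})$. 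Your ``technical obstacles'' paragraph flags sign conventions and visibility of orbits but not this cancellation issue, and the argument as written does not go through without it.

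The paper handles this by arguing on the boundary rather than the interior: take $u \in \partial \Csec = \partial \D$ and choose a circuit $y$ of \emph{minimal length} among those with $u([\mathcal O_y]) = 0$. If the term $p_y(\mathbf t)\,x^{m-|y|}$ were cancelled by some multi-cycle $\{y_1,\dots,y_k\}$ with $k>1$, then $\sum_i u([\mathcal O_{y_i}]) = u([\mathcal O_y]) = 0$; since $u \in \partial \D$ gives $u([\mathcal O_{y_i}]) \ge 0$, each term vanishes, contradicting minimality of $|y|$. Thus the term survives, and the equality $\mathbf j'\cdot u = \mathbf j\cdot u$ it forces shows $u \notin \C_X$. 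The same minimality trick can be adapted to your direct approach (assume some $u([\mathcal O_y])\le 0$ and pick a shortest such $y$), but the non-cancellation step is essential and must be supplied.
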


Theorem~\ref{T:main_theorem} shows that for primitive integral points $u$ of $\Csec$, the canonically defined stretch factor $\Lambda(u)$ can be calculated, without any mention of splittings of $G$ or cross sections of $\flow$, in terms of the specializations $\poly_u$ of the McMullen polynomial. As such it is perhaps unsurprising that the algebraic object $\poly$ might impose strong regularity on the function $\Lambda$. Indeed, following McMullen \cite{Mc}, we use $\poly$ and properties of Perron-Frobenius matrices with entries in the ring of integer Laurent polynomials to prove that the assignment $u\mapsto\log(\lambda(\phi_u))=\log(\Lambda(u))$ extends to a real-analytic, convex and homogeneous function on the entire cone $\Csec$. 
Thus we obtain a new proof of \cite[Theorem D]{DKL} and also extend that result to $\Csec$.
Together with Theorem~\ref{T:cones_are_equal}, our argument additionally shows that this function blows up on the boundary of $\Csec$. 

\begin{theor}[Convexity of stretch factors]
\label{T:continuity/convexity again}
There exists a real-analytic, homogeneous of degree $-1$ function $\mathfrak H\colon \Csec \to \R$ such that:
\begin{enumerate}
\item $1/\mathfrak H$ is positive and concave, hence $\mathfrak H$ is convex.
\item For every primitive integral $u\in \Csec\subset H^1(X;\R)$ with dual compatible section $\Theta_u$, first return map $f_u$, and injective endomorphism $\phi_u = \overline{(f_u)}_*$ as in Theorem~\ref{T:splittings} we have 
\[\mathfrak H(u)=\log(\Lambda(u)) = \log(\lambda(\phi_u)) =\log(\lambda(f_u)) = h(f_u).\]
\item $\mathfrak H(u)$ tends to infinity as $u\to\partial \Csec$.
\end{enumerate}
\end{theor}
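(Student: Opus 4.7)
The plan is to follow McMullen's strategy from Section~5 of \cite{Mc}, extracting $\mathfrak H$ from the Perron-Frobenius spectral radius of the matrix $A(\mathbf{t}) = A_{u_0}(\mathbf{t})$ provided by the determinant formula (Theorem~\ref{T:determinant formula}). I would fix a basepoint $u_0 \in \A \subset \Csec$ primitive integral with compatible section $\Theta_{u_0}$, so that $\poly(\mathbf{t}, x) = \det(xI - A(\mathbf{t}))$ up to units. By Theorem~\ref{T:splittings}(1), $f_{u_0}$ is an expanding irreducible train track map, hence the entries of $A(\mathbf{t})$ are non-negative monomials in $H_{u_0}$ and $A(1,\ldots,1)$ is Perron-Frobenius. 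For each $u\in H^1(X;\R)$, form the positive real matrix $A^u(k) := A\bigl(k^{u(t_1)},\dots,k^{u(t_{b-1})}\bigr)$ and let $\rho^u(k)$ denote its Perron eigenvalue. I would define $\mathfrak H(u)$ implicitly as the unique real number satisfying
\[
\rho^u\!\bigl(e^{\mathfrak H(u)}\bigr) \;=\; e^{\mathfrak H(u)\cdot u(x)},
\]
which amounts to saying that $e^{\mathfrak H(u)}$ is the Perron root of the specialization of $\poly$ at $u$. The characterization of $\C_X = \Csec$ as the dual cone of a distinguished Newton-polytope vertex $v$ of $\poly$ (Meta-Theorem~I(1), Theorem~\ref{T:cones_are_equal}) is exactly what guarantees this implicit equation has a unique positive solution for every $u\in\Csec$. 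Standard Perron theory makes $\rho^u(k)$ real-analytic in $(u,k)$, and Kingman's theorem (compare McMullen's Proposition~5.3) makes $\tau\mapsto \log\rho^u(e^\tau)$ strictly convex; the implicit function theorem then yields the real-analyticity of $\mathfrak H$ on $\Csec$. Finally, the scaling identity $A^{cu}(k) = A^u(k^c)$, substituted into the defining equation, immediately gives $\mathfrak H(cu) = \mathfrak H(u)/c$, establishing the degree $-1$ homogeneity.

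For item~(2), when $u$ is primitive integral, Theorem~\ref{T:main_theorem}(2) identifies the largest positive root of $\poly_u(\zeta)$ with $\Lambda(u) = \lambda(\phi_u)$; by construction this root is precisely $e^{\mathfrak H(u)}$, and Theorem~\ref{T:splittings}(5) further identifies this value with $\lambda(f_u)$ and $h(f_u)$. For item~(1), the concavity of $1/\mathfrak H$ is extracted from the Kingman convexity of $\log\rho^u(e^\tau)$: differentiating the defining equation implicitly and translating through the homogeneity, strict convexity of $\log\rho^u$ along transverse directions becomes strict concavity of $1/\mathfrak H$, following the argument in McMullen's proof of \cite[Theorem~5.1]{Mc}. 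For item~(3), as $u\to\partial \Csec$ the vertex characterization forces $u(v)-u(v')\to 0$ for some second vertex $v'$ of the Newton polytope of $\poly$; writing $\poly_u$ as a sum dominated by the monomials corresponding to $v$ and $v'$, this competition pushes the Perron root to infinity and forces $\mathfrak H(u)\to\infty$.

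The principal technical obstacle I anticipate is making the boundary blow-up in~(3) quantitative: one must show that no family $u_n\to\partial\Csec$ can have $\rho^{u_n}(k_n)=e^{k_n\, u_n(x)}$ with $k_n$ remaining bounded, which requires carefully tracking how multiple Newton-polytope vertices become simultaneously competitive and controlling the Perron eigenvalue away from $\partial\Csec$. This step is where the sharper equality $\C_X=\Csec$ from Theorem~\ref{T:cones_are_equal} is indispensable, as mere inclusion would not suffice to rule out bounded limits at the boundary. A secondary subtlety is verifying that $\mathfrak H$ defined through $A_{u_0}$ is independent of the choice of basepoint $u_0$ and agrees on $\A$ with the function of \cite{DKL}; the defining equation depends only on $\poly$ via its Newton polytope, not on the particular matrix realization, and uniqueness of the Perron root makes both consistency statements immediate.
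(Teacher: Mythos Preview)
Your strategy is essentially the paper's: both define $\mathfrak H(u) = \log k(u)$ with $k(u)$ the largest positive root of the specialization $\poly_u$, invoke McMullen's Perron--Frobenius theory (packaged in the paper as Theorem~\ref{T:PF} and Proposition~\ref{P:PFconsequence}) for real-analyticity and for identifying the level set $\{\mathfrak H = 1\}$ with the graph $\mathfrak G$, cite Theorem~\ref{T:main_theorem} for item~(2), and defer to McMullen's convexity argument (\cite[Corollary~5.4]{Mc}) for item~(1). Homogeneity is handled identically.

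The one substantive divergence is item~(3). Your Newton-polytope heuristic---that as $u \to \partial\Csec$ two vertices compete and force the Perron root to infinity---is correct in spirit but, as you yourself flag, awkward to make quantitative when several terms of $\poly$ interact. The paper sidesteps this entirely with a short topological argument: $\mathfrak G$ is the graph of a continuous function defined on all of $\R^{b-1}$, hence a \emph{closed} subset of $H^1(X;\R)$, and Theorem~\ref{T:PF}(3) gives $\mathfrak G \subset \C_X$. If $u_n \to u \in \partial\Csec$ with $\mathfrak H(u_n)$ bounded, homogeneity lets one rescale so that $u_n \in \mathfrak G$; the (rescaled) limit then lies in $\mathfrak G \cap \partial\Csec$, which is empty since $\mathfrak G \subset \C_X = \Csec$ and $\Csec$ is open. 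This is exactly where Theorem~\ref{T:cones_are_equal} enters, as you anticipated, but no asymptotic analysis of the polynomial is needed.
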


\begin{remark}
In \cite{AHR} the authors have also constructed a function defined on a cone containing $\A$, and have proved a result similar to Theorem \ref{T:continuity/convexity again} (with (2) holding for $u \in \A$).   Real-analyticity of $\mathfrak H$ and of the analogous function from \cite{AHR}, combined with the fact that these functions necessarily agree on $\A \subset \C_X$, ensures that the function and cone constructed in \cite{AHR} agree with $\mathfrak H$ and $\C_X$, respectively.   McMullen has obtained similar results in a purely graph theoretic setting; see \cite[Theorem 5.2]{McClique}.  In \cite{Had2}, Hadari generalizes and further analyzes the kinds of polynomials produced here (and in \cite{AHR}, \cite{Mc}, and \cite{McClique}).
\end{remark}

\subsection{Foliations and Lipschitz flows}
Up to this point our focus has been on properties of sections, or equivalently on the integral points of $\Csec$.  However there is a rich geometric, topological, and dynamical structure associated to the non-integral points as well. To describe this structure, we recall that any element $u \in \Csec$ is represented by a closed $1$--form $\omega^u$ that is flow-regular.  Associated to $\omega^u$ is a ``foliation of $X$ transverse to $\flow$'' denoted by $\Omega_u$ with leaves that are (typically infinite) graphs.  Concretely, the $1$--form determines a (flow-regular, $u$--equivariant) function on the universal torsion-free abelian cover $\tX \to \R$, and each ``leaf'' of $\Omega_u$ is the image in $X$ of a fiber of this map; see \S\ref{S:closed 1-forms and foliations}.  The leaves $\Omega_{y,u}$ are thus in one-to-one correspondence with points $y \in \R/u(\pi_1(G))$.  As with sections, we are able to draw the strongest conclusions about the closed $1$--forms representing points in $\A$. 

\begin{theor}[$\pi_1$--injective foliations]
\label{T:transverse foliations}
Given $u \in \A$, there exists a flow-regular closed $1$--form
$\omega^u$ representing $u$ with associated foliation $\Omega_u$ of
$X$ having the following property.  There is a reparameterization of $\flow$, denoted $\flow^u$, so that for each $y \in \R$ the inclusion of the fiber $\Omega_{y,u} \to X$ is $\pi_1$--injective and induces an isomorphism $\pi_1(\Omega_{y,u}) \cong \ker(u)$.
Furthermore, for every $s \geq 0$ the restriction
\[ \flow^u_s \colon \Omega_{y,u} \to \Omega_{y+s,u} \]
of $\flow_s^u$ to any leaf $\Omega_{y,u}$ is a homotopy equivalence.
\end{theor}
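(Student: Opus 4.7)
The plan is to construct an explicit flow-regular closed $1$--form $\omega^u$ representing $u$, reparameterize $\flow$ using $\omega^u$, and then show that after lifting to the cover $X^u = \tX/\ker(u)$ the resulting function $\bar\omega^u\colon X^u\to\R$ realizes $X^u$ as a ``fibration up to homotopy'' over $\R$ with connected fibers. Concretely, flow-regularity of $\omega^u$ means it is strictly positive on the positive flow direction, so one may reparameterize $\flow$ by setting
\[
\flow^u_s(\xi) \;=\; \flow_\tau(\xi)\quad\text{where}\quad \int_0^\tau \omega^u(\dot\flow_r(\xi))\,dr \;=\; s.
\]
This $\flow^u$ is a continuous semiflow with $\omega^u(\dot\flow^u)\equiv 1$. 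Lifting $\omega^u$ to a $\pi_1(X)$--equivariant primitive $\tilde\omega^u\colon\tX\to\R$ and descending through $\ker(u)$ yields $\bar\omega^u\colon X^u\to\R$ whose level sets $\bar\Omega_{y,u}$ project homeomorphically down to the leaves $\Omega_{y,u}\subset X$; the lifted semiflow translates level sets, i.e.\ $\bar\omega^u(\flow^u_s(\cdot))=\bar\omega^u(\cdot)+s$.

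Granting that $\bar\omega^u\colon X^u\to\R$ is a homotopy fibration with connected fibers, the theorem follows quickly. Contractibility of $\R$ forces each inclusion $\bar\Omega_{y,u}\hookrightarrow X^u$ to be a homotopy equivalence, so $\pi_1(\Omega_{y,u})\cong\pi_1(\bar\Omega_{y,u})\cong\pi_1(X^u)=\ker(u)$, and the composition $\Omega_{y,u}\hookrightarrow X$ is $\pi_1$--injective with image $\ker(u)$. For the homotopy-equivalence assertion, note that $X^u$ is aspherical (since $\tX$ is contractible) and $(\flow^u_s)_\ast\colon\pi_1(X^u)\to\pi_1(X^u)$ equals the identity (it agrees with the identity at $s=0$ and is locally constant in $s$ via the usual basepoint-transport argument), so the self-map $\flow^u_s\colon X^u\to X^u$ is itself a homotopy equivalence. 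Applying two-out-of-three to the commuting square formed by the two leaf inclusions, the two instances of $\flow^u_s$, and the already-proven fact that the horizontal inclusions are homotopy equivalences then shows that $\flow^u_s\colon\Omega_{y,u}\to\Omega_{y+s,u}$ is a homotopy equivalence.

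The main obstacle is establishing the homotopy fibration property for \emph{arbitrary} (including irrational) $u\in\A$, since the leaves $\Omega_{y,u}$ are typically infinite graphs rather than compact cross sections. For primitive integral $u\in\A$ this is essentially immediate from \cite{DKL}: the dual cross section $\Theta_u$ is $\pi_1$--injective with first return map a homotopy equivalence, and the reparameterized flow retracts $X^u$ onto the preimage of $\Theta_u$. For general $u$ the approach is to choose the representative $\omega^u$ using the flow-regular $1$--form machinery developed in \S\ref{S:closed 1-forms and foliations} together with the original construction of $\A$ in \cite{DKL}, so that the level sets of $\tilde\omega^u$ are uniformly transverse to $\flow$ across the compact space $X$. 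Compactness then supplies local triviality for $\bar\omega^u$ via charts obtained by using the reparameterized flow to translate nearby level sets into one another, and flow-regularity allows these local trivializations to be patched into the required homotopy fibration structure.
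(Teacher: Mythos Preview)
Your outline correctly identifies the architecture of the argument---reparameterize by a good $1$--form, lift to a cover, and show the map to $\R$ behaves like a fibration---but the load-bearing step is not actually carried out. You concede that ``the main obstacle is establishing the homotopy fibration property'' and then dispatch it with a sentence about compactness and flow-translated charts. This is where the argument fails.

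The problem is that $\flow^u$ is only a \emph{semi}flow on a $2$--complex, not a flow on a manifold. You cannot flow backward to build trivializations, and even the forward maps $\flow^u_s\colon\Omega_{y,u}\to\Omega_{y+s,u}$ are not homeomorphisms: leaves fold onto one another along the skew $1$--skeleton. So ``translating nearby level sets by the flow'' does not give local trivializations in any sense strong enough to invoke a long exact sequence or to conclude that fiber inclusions are equivalences. Showing that these forward maps are \emph{homotopy equivalences} is precisely the content of the theorem, so your argument is circular at this point. Mere flow-regularity of $\omega^u$ is also not enough: the paper introduces a strictly stronger notion of \emph{nonsingular} closed $1$--form (Definition preceding Proposition~\ref{P:non-singular 1-forms exist}), which forces the fibers, near the $1$--skeleton, to coincide locally with slices $\flow_s(K)$ of the original fibration $\eta\colon X\to S^1$. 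This is exactly what makes the folding controllable.

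The paper's proof (Theorem~\ref{T:like kernels of 1-forms} feeding into the theorem in question) works in the universal torsion-free abelian cover $\tX$ and proceeds as follows: cover $X^{(1)}$ by finitely many flow-interiors of local models satisfying carefully arranged disjointness and size conditions; the nonsingularity hypothesis guarantees that on each local model the leaves of $\Omega_u$ agree with slices of the original fibration; this lets one decompose $\tflow^u_s\vert_{\tOmega_{y,u}}$ into pieces that are ``controlled rel boundary'' in the sense of \cite[Definition~5.3]{DKL} on each local model and homeomorphisms elsewhere; then \cite[Proposition~5.5]{DKL} yields that $\tflow^u_s$ is a homotopy equivalence between fibers for small $s$, hence for all $s$ by the semigroup property. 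Connectedness of fibers and the $\pi_1$--statements are then deduced, rather than assumed. Your sketch omits all of this local-model and controlled-homotopy machinery, which is not optional: it is what substitutes for the local triviality that a genuine fibration would provide.
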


This result should be compared to the situation for a fibered hyperbolic $3$--manifold $M$, where elements in the cone on a fibered face of the Thurston norm ball are represented by closed, nowhere vanishing $1$--forms.  The kernel of such a $1$--form is then tangent to a {\em taut foliation} of $M$, which has $\pi_1$--injective leaves by the Novikov-Rosenberg Theorem; see e.g.~\cite{Calegari07}.  Furthermore, appropriately reparameterizing the suspension flow on $M$ will map leaves to leaves by homeomorphisms.

Another application of the McMullen polynomial $\poly$ mirrors McMullen's construction of a ``Teichm\"uller flow'' for each cohomology class in the cone on a fibered face of the Thurston norm ball; see Theorems 1.1 and 9.1 of \cite{Mc}. In the setting of a fibered $3$--manifold $M$, there is a metric on $M$ so that the reparameterized flow mapping leaves to leaves is actually a Teichm\"uller mapping on each leaf.

In our setting, associated to any $u \in \Csec$, we construct a metric on $X$ so that the reparameterized flow, which maps leaves of $\Omega_u$ to leaves, has ``constant stretch factor''.  See \S\ref{S:closed 1-forms cohomology} and \S\ref{S:lipschitz flows} for precise definitions.

\begin{theor}[Lipschitz flows]  
\label{T:lipschtiz flows}
For every $u \in \Csec$, let $\mathfrak H(u)$ be as in Theorem \ref{T:continuity/convexity again}, $\omega^u$ a tame flow-regular closed $1$--form representing $u$, $\flow^u$ the associated reparameterization of $\flow$, and $\Omega_u$ the foliation defined by $\omega^u$.  Then there is a geodesic metric $d_u$ on $X$ such that:
\begin{enumerate}
\item The semiflow-lines $s \mapsto \psi_s^u(\xi)$ are local geodesics for all $\xi \in X$.
\item The metric $d_u$ induces a path metric on each (component of a) leaf $\Omega_{y,u}$ of the foliation $\Omega_u$ defined by $\omega^u$ making it into a (not necessarily finite) simplicial metric graph.
\item The restrictions of the reparameterized semiflow to any leaf
\[ \{\psi_s^u \colon \Omega_{y,u} \to \Omega_{y+s,u}\}_{s \geq 0}\]
are $\lambda^s$--homotheties on the interior of every edge, where $\lambda = e^{\mathfrak H(u)}$.
\item The restriction of $d_u$ to the interior of any $2$--cell of $X$ is locally isometric to a constant negative curvature Riemannian metric.
\end{enumerate}
\end{theor}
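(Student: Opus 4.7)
The plan is to build $d_u$ locally on each $2$--cell of $X$ as a negatively curved warped-product Riemannian metric and then verify these local metrics agree along identifications to yield a well-defined geodesic metric on $X$. I would begin with a primitive integral class $u \in \Csec$. By Theorems~\ref{T:splittings} and~\ref{T:main_theorem}, the first return map $f_u \colon \Theta_u \to \Theta_u$ of the dual compatible section is an expanding irreducible train track map whose transition matrix has Perron-Frobenius eigenvalue $\lambda = e^{\mathfrak H(u)}$. Choosing a positive left Perron-Frobenius eigenvector determines edge lengths on $\Theta_u$ for which $f_u$ is a local $\lambda$--homothety on the interior of every edge.

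Next I would reparameterize $\flow$ to obtain $\flow^u$ so that integrating $\omega^u$ along each orbit recovers the flow parameter, and then impose on every $2$--cell of $X$ the warped-product Riemannian metric
\[ ds^2 = dv^2 + \lambda^{2v}\,d\ell^2, \]
where $v$ is the flow coordinate (given by $\omega^u$) and $\ell$ is the horizontal arc-length coordinate inherited from the edge lengths on the section. This metric has constant Gaussian curvature $-\mathfrak H(u)^2$, giving~(4). By construction, semiflow-lines become unit-speed geodesics, yielding~(1); the restriction of $\flow_s^u$ to each leaf multiplies horizontal lengths by $\lambda^s$, yielding~(3); and each leaf $\Omega_{y,u}$ inherits a path metric as a (possibly infinite) union of horizontal arcs across the $2$--cells, giving~(2).

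The main technical obstacle will be verifying the consistency of these $2$--cell metrics along the identifications used to assemble $X$. Agreement in the flow direction is automatic from the choice of $\flow^u$, so the issue reduces to the horizontal direction: whenever a piece $E \subset \Theta_u$ flows for time $T$ and returns under $f_u$ to a piece $f_u(E) \subset \Theta_u$, the warped-product scaling forces $\ell(f_u(E)) = \lambda^{T}\ell(E)$. This is precisely the Perron-Frobenius eigenvalue equation for the transition matrix of $f_u$ weighted by return times, and it holds by the eigenvector choice. Here the $\mathcal F$--compatibility of $\Theta_u$ is essential to guarantee that the section meets the cell structure of $X$ cleanly enough for the warped-product charts to glue globally.

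For general (possibly irrational) $u \in \Csec$, I would carry out the same scheme directly from the tame flow-regular $1$--form $\omega^u$, replacing the integral-case eigenvector by a positive Perron-Frobenius-type eigenvector for a specialization of the matrix $A_u$ provided by the determinant formula (Theorem~\ref{T:determinant formula}); this eigenvector plays the role of a transverse horizontal length function on the (typically non-compact) leaves of $\Omega_u$. Alternatively, one can approximate $u$ by a sequence of primitive integral rays in $\Csec$, construct the metrics as in the integral case, and pass to a limit invoking the real-analyticity of $\mathfrak H$ from Theorem~\ref{T:continuity/convexity again} together with density of rational rays in the open cone $\Csec$. Either route yields properties~(1)--(4) for all $u \in \Csec$.
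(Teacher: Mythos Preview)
Your integral case is essentially the construction the paper records as Proposition~\ref{P:motivating metric}: take the canonical eigenmetric on $\Theta_u$ and warp by $\lambda^s$ along the reparameterized flowlines. The paper treats this as motivation; the substance of Theorem~\ref{T:lipschtiz flows} lies entirely in the irrational case, and there your proposal has a genuine gap.

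Your first route (``eigenvector for a specialization of the matrix $A_u$'') cannot work as written: for irrational $u$ there is no cross section, hence no matrix $A_u$ and nothing to specialize. What the paper actually does is fix the \emph{original} integral class $u_0$, take the Laurent-polynomial matrix $A_{u_0}({\bf t})$, and evaluate it at the real point ${\bf t}(u)\in\R_+^{b-1}$ determined by $u\in\mathfrak G$. The left Perron--Frobenius eigenvector of the resulting real matrix then defines a $\Z[H]$--module homomorphism $\mu\colon T(\mathcal F)\to\R_u$, positive on every transversal (Proposition~\ref{P:twistedMF1}). This \emph{twisted transverse measure} is the missing idea: the basic relations in $T(\mathcal F)$ are exactly the subdivision and flow compatibilities you identified as the obstacle, so packaging the horizontal length as a module homomorphism resolves consistency in one stroke and works uniformly for all $u$, integral or not. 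Your second route (approximate by integral rays and pass to a limit) is more seriously problematic: as $u$ varies the sections $\Theta_u$ change combinatorially, so there is no evident sense in which the metrics converge, and real-analyticity of $\mathfrak H$ alone says nothing about convergence of metrics on $X$.

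Even after one has $\mu$, there is nontrivial work you have not addressed. One must show the resulting pseudo-metric is an honest metric, which requires proving $\mu$ has no atoms (Lemma~\ref{L:no atoms}, using recurrence of flowlines), and one must show the local hyperbolic model charts are genuinely isometric embeddings near the $1$--skeleton, where several models overlap (Proposition~\ref{P:pseudo-metric is metric}, via $1$--Lipschitz retractions onto each chart). Neither is automatic from the warped-product formula.
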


The proof of this theorem resembles McMullen's construction of
Teichm\"uller flows in some ways.  In particular, the metric $d_u$ in
this theorem relies on the construction of a kind of ``twisted
transverse measure'' on the foliation $\mathcal F$ of $X$ associated
to the ray in $\Csec$ through $u$; see \S\ref{S:twisted transverse
  measures}.  These measures are similar in spirit to the ``affine
laminations'' of Oertel~\cite{Oertel} and the ``twisted measured
laminations'' of McMullen~\cite{Mc} in the context of hyperbolic
surfaces and $3$--manifolds.

\subsection{Relation to the BNS-invariant}
Recall that a rational point of $H^1(G;\R)$ projects into the BNS-invariant $\BNS(G)$ if and only if $\ker(u)$ is finitely generated as a $\langle \fee_u\rangle_+$--module \cite{BNS} (see the remarks following the Meta-Theorem). We note that the stretch function $\Lambda$ is naturally defined on the \emph{rational BNS-cone} $\QBNS(G)$ consisting of classes $u\in H^1(G;\R)$ with $[u]\in \BNS(G)$ and $u(G)$ discrete; see Definition~\ref{D:stretch_function}.
 Theorem~\ref{T:splittings} implies that the rational points of $\Csec$ project into $\BNS(G)$, and Theorem~\ref{T:DKL_BC} shows that the rational points of $\A$ in fact land in the \emph{symmetrized} BNS-invariant $\BNS_s(G)=\BNS(G)\cap -\BNS(G)$. Our above investigations into the foliations $\Omega_u$ of $X$ dual to non-rational points $u\in \Csec$ can be used to show that these inclusions hold for irrational points as well; namely $\Csec= \C_X$ projects into $\BNS(G)$ (Proposition~\ref{P:S is in BNS}) and $\A$ projects into $\BNS_s(G)$ (Corollary~\ref{C:A is in BNS_0}). Combining the properties of the stretch function $\Lambda$ provided by Theorem~\ref{T:continuity/convexity again} and Proposition~\ref{prop:stretch_is_locally_bounded}, we moreover find that $\C_X$ projects onto a full connected component of $\BNS(G)$:

\begin{theor}[McMullen polynomial detects a component of $\BNS(G)$]
\label{T:BNS}
The McMullen cone $\C_X$ projects onto a full component of the BNS-invariant $\BNS(G)$. That is, $\{[u] \mid u\in \C_X\}$ is a connected component of $\BNS(G)$.
\end{theor}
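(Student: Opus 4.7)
The plan is to show that $\pi(\C_X)$, the image of the cone $\C_X$ under the projection $\pi\colon H^1(G;\R)\setminus\{0\}\to S(G)$, is a nonempty, connected, open, and closed subset of $\BNS(G)$. Since the connected components of $\BNS(G)$ are precisely its nonempty, connected, open-and-closed subsets, this will prove the theorem.

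The inclusion $\pi(\C_X)\subseteq \BNS(G)$ is Proposition~\ref{P:S is in BNS} referenced just above the theorem statement: the rational case follows from the ascending HNN-extension description of Theorem~\ref{T:splittings}(3), which in particular realizes $\ker(u)$ as finitely generated over the monoid generated by $\phi_u$, and the irrational case is handled by the foliation $\Omega_u$ and reparameterized flow constructed in Theorem~\ref{T:lipschtiz flows}. Since $\C_X$ is an open convex cone (Theorems \ref{T:cone_of_sections} and \ref{T:cones_are_equal}), $\pi(\C_X)$ is open and connected in $S(G)$, and hence open in $\BNS(G)$.

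The heart of the argument is closedness of $\pi(\C_X)$ in $\BNS(G)$. Suppose $[u_n]\in \pi(\C_X)$ converges to some $[u^*]\in \BNS(G)$. Normalizing on an affine hyperplane transverse to the rays, we may assume the representatives $u_n\in \C_X$ converge to a nonzero class $u^*\in H^1(G;\R)$. Assume for contradiction that $u^*\notin \C_X$; then $u^*\in \partial \C_X$. By density of rational classes in $\C_X$ (which is polyhedral and rationally defined), we may further pass to rational approximations $v_n \in \C_X \cap H^1(G;\Q)$ tending to $u^*$. Theorem~\ref{T:continuity/convexity again}(3) then gives $\mathfrak H(v_n)\to \infty$, while Theorem~\ref{T:continuity/convexity again}(2) together with the $(-1)$--homogeneity of $\mathfrak H$ and $\log\Lambda$ gives $\mathfrak H(v_n)=\log\Lambda(v_n)$ for each $n$. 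Hence $\Lambda(v_n)\to \infty$. On the other hand, since $[u^*]\in \BNS(G)$ and $[v_n]\to [u^*]$, the local boundedness of the stretch function on the rational BNS-cone afforded by Proposition~\ref{prop:stretch_is_locally_bounded} forces $\Lambda(v_n)$ to remain bounded near $[u^*]$, a contradiction. Therefore $u^*\in \C_X$, establishing closedness.

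The main obstacle is this closedness step: it rests on a genuine interplay between two inputs of very different flavors, namely the analytic blow-up of $\mathfrak H$ along $\partial \C_X$ (a consequence of the polynomial-theoretic Theorem~\ref{T:continuity/convexity again}) and the structural local boundedness of $\Lambda$ on $\BNS(G)$ (Proposition~\ref{prop:stretch_is_locally_bounded}). The identification $\mathfrak H = \log\Lambda$ on rational classes supplied by Theorem~\ref{T:continuity/convexity again}(2) is the bridge that converts the analytic divergence into a divergence of stretch factors of splittings, which is what contradicts the BNS-side local bound. Once closedness is in hand, the theorem follows formally from the connectedness of the convex cone $\C_X$.
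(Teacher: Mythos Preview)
Your overall strategy matches the paper's: use Proposition~\ref{P:S is in BNS} for the inclusion $\C_X\subset\widehat{\BNS}(G)$, and derive a contradiction from $\mathfrak H\to\infty$ at $\partial\C_X$ (Theorem~\ref{T:continuity/convexity again}) versus local boundedness of $\Lambda$ (Proposition~\ref{prop:stretch_is_locally_bounded}). However, there is a genuine gap in your closedness step.

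Proposition~\ref{prop:stretch_is_locally_bounded} is stated only for basepoints $u\in\QBNS(G)$, i.e.\ for classes with $u(G)$ discrete. In your argument the limit point $u^*\in\partial\C_X$ need not be rational, so you cannot invoke the proposition ``near $[u^*]$'' as written: the proposition does not assert that $\Lambda$ is bounded on $\QBNS(G)$--neighborhoods of an irrational BNS point. Your rational approximants $v_n\in\C_X$ do lie in $\QBNS(G)$, but the local bound you need is a bound centered at $u^*$, and that is exactly what the proposition fails to supply when $u^*\notin\QBNS(G)$.

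The paper fixes this with one extra move that you should add: since $\BNS(G)$ is open in $S(G)$ and the faces of $\C_X$ are rationally defined (Theorem~\ref{T:cone_of_sections}), the open BNS--neighborhood of $[u^*]$ meets $\partial\C_X$ in a set containing a \emph{rational} point $u'$. Then $u'\in\QBNS(G)$, Proposition~\ref{prop:stretch_is_locally_bounded} applies at $u'$, and rational points of $\C_X$ approaching $u'$ give the desired contradiction with Theorem~\ref{T:continuity/convexity again}(3). With this adjustment your argument is complete and essentially identical to the paper's.
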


This shows that the component of $\BNS(G)$ containing $u_0$ is polyhedral and that $\Lambda$ extends to a real analytic function on the cone over that component. It is an interesting question whether \emph{every} component of $\BNS(G)$ has this structure:

\begin{question}
\label{Q:all_of_BNS}
Does every component of $\BNS(G)$ contain a class $u\colon G\to \Z$ so that $\ker(u)$ is finitely generated and $\fee_u$ can be represented by an expanding irreducible train track map?
\end{question}

If the answer to Question~\ref{Q:all_of_BNS} is ``yes,'' then the theory developed in this paper would imply that $\BNS(G)$ is a union of rationally defined polyhedra and that $\Lambda\colon\QBNS(G)\to \R_+$ admits a real-analytic, convex extension.

In a recent paper~\cite[Theorem 5.2]{CL} Cashen and Levitt computed $\BNS(G)$ for the case where $G$ is the mapping torus of a polynomially growing automorphism $\phi$ of $F_N$. They showed that in that situation $\BNS(G)$ is centrally symmetric and consists of the complement of finitely many rationally defined hyperplanes in $H^1(G,\mathbb R)$.  For a polynomially growing $\phi$ we have $\lambda(\phi)=1$, and such $\phi$ does not admit an expanding train track representative. Thus the results of \cite{CL} are disjoint from our results in the present paper.

\subsection{Contrasting the $3$--manifold setting}
\label{S:contrasting_3-manifolds}
Our results illustrate some qualitatively different behavior for free-by-cyclic groups as compared with similar results for $3$--manifold groups. For a fibered hyperbolic $3$--manifold $M$, there are several natural cones that one may consider in $H^1(M;\R)$, such as the cone determined by McMullen's Teichm\"uller polynomial, the analog of the cone of sections $\Csec$ (which can be viewed as the dual on Fried's \cite{FriedS} cone of ``homology directions''), and the components of both the BNS-invariant $\BNS(\pi_1(M))$ and its symmetrization $\BNS_s(\pi_1(M))$ containing $u_0$. These cones all turn out to be the same and are in fact equal to the cone on a ``fibered face'' of the Thurston norm ball \cite{ThuN,Mc,FriedS,BNS}.

Above we have seen that in the free-by-cyclic setting the positive cone $\A$ is contained in a component of $\BNS_s(G)$ and that the McMullen cone $\C_X$ is equal to a component of $\BNS(G)$. However our computations show that in general this component of $\BNS_s(G)$ can be strictly smaller than $\C_X = \Csec$. 
While it was already known that, unlike the $3$--manifold case,  the BNS-invariant of a free-by-cyclic group need not be symmetric (precisely because a free-by-cyclic group can also split as a strictly ascending HNN-extension of a finite rank free group, as an example in Brown's 1987 paper~\cite{Brown} illustrates), one still might have hoped that the geometric property of being dual to a section was sufficient to ensure containment in $\BNS_s(G)$. Evidently this is not the case.

In particular, in the ``running example'' group $G=G_\fee$ that we analyze in detail throughout this paper, the positive cone $\A\subseteq H^1(G,\R)$ is equal to a component of $\BNS_s(G)$ but is a \emph{proper} subcone of $\Csec = \C_X$. We moreover exhibit a specific primitive integral element $u_1\in \Csec \cap \partial \A$ such that $\ker(u_1)$ is not finitely generated but which does, in accordance with Theorem~\ref{T:splittings}, induce a splitting of $G$ as a strictly ascending HNN-extension over a finitely generated free group. Thus $u_1$ belongs to $\Csec = \C_X$ but not to $\BNS_s(G)$; see Examples~\ref{Ex:(-1,2) introduced}, \ref{Ex:(-1,2)_splitting}, \ref{Ex:the_cones}, and ~\ref{Ex:specializing} for the relevant computations regarding $u_1$. For the running example we also exhibit a primitive integral class $u_2\in \Csec \setminus \overline{\A}$ with dual section $\Theta_2$ such that the first return map $f_{2}\colon\Theta_2\to\Theta_2$ fails to be a homotopy equivalence but nevertheless the induced endomorphism descends to an automorphism of the finitely generated free group $\ker(u_2)$; see Examples~\ref{Ex:(-1,1) introduced} and \ref{Ex:(-1,1)_splitting} for the relevant computations. This shows that $\Csec$ can contain multiple distinct components of the symmetrized BNS-invariant $\BNS_s(G)$.

\begin{remark}
In Example \ref{Ex:(-1,2) introduced} and continued in Example
\ref{Ex:(-1,2)_splitting} and Remark \ref{Rm:infinitely generated
  kernels exist}, we find a section $\Theta_1 \subset X$ dual to a class
$u_1\in\Csec$ (mentioned above) with  $\ker(u_1)$ infinitely generated and which gives rise to a splitting of $G$ as a strictly ascending HNN-extension of a finite rank free group over an injective but non-surjective endomorphism. This example
illustrates that some of the results similar to Theorem~\ref{T:splittings}
that are claimed in Wang's thesis~\cite{Wang} are incorrect.  Specifically
\cite[Lemma 1.3]{Wang} produces, for any section of the semiflow on
the mapping torus, a kind of ``fibration'' of a related
complex. According to \cite[Lemma 4.1.3]{Wang}, this would imply $u_1$ has finitely
generated kernel (equal to the fundamental group of a fiber of the
associated ``fibration'') and thus that $u_1$ would define a splitting of
$G$ as a (f.g.~free)-by-cyclic group.  However, our computations show that this is not the case.
\end{remark}

\subsection{Acknowledgments:} We are grateful to Nathan Dunfield for many
useful discussions regarding  the Alexander norm and the BNS-invariant, and in particular for suggesting the approach to proving Theorem \ref{T:BNS}.
We would also like to thank Curtis McMullen for his help with clarifying some results from
the Perron-Frobenius theory related to his earlier work and to the proof of Theorem~\ref{T:continuity/convexity again}, Asaf Hadari for an interesting conversation regarding labeled graphs, and Robert Bieri for his reference to \cite{BieriRenz} clarifying the sign convention for the BNS--invariant.

\section{Background}
\label{S:preliminaries}
\subsection{Graphs and graph maps}
\label{S:graphs_and_maps}
We briefly review the relevant terminology regarding graph maps while referring the reader to \cite[\S2]{DKL} for a more detailed discussion of these definitions. A continuous map $f\colon \Gamma\to\Gamma$ of a finite graph $\Gamma$ is said to be a \emph{topological graph map} if it sends vertices to vertices and edges to edge paths. We always assume that our graphs do not have valence--$1$ vertices. If $\Gamma$ is equipped with either a metric structure or a (weaker) linear structure, then $f$ is furthermore said to be a \emph{(linear) graph map} if it enjoys a certain piecewise linearity on edges.  For simplicity, we also typically assume that $f$ is a {\em combinatorial graph map}, which means that $\Gamma$ is given a metric structure in which every edge has length one, and the restriction of $f$ to each edge $e$ is a local $d_e$--homothety, where $d_e$ is the number of edges in edge path $f(e)$.  The map is called \emph{expanding} if for every edge $e$ of $\Gamma$, the combinatorial lengths of the edge paths $f^n(e)$ tend to infinity as $n\to \infty$. 

Every graph map $f\colon \Gamma\to\Gamma$ has an associated \emph{transition matrix} $A(f)$, which is the $\abs{E\Gamma}\times\abs{E\Gamma}$ matrix whose $(e,e')$--entry records the number of times the edge path $f(e')$ crosses the edge $e$ (in either direction). We caution that, while this definition of $A(f)$ agrees with the definition given in \cite{BH92}, it gives the transpose of what was used to mean the transition matrix in \cite{DKL}. The graph map $f$ is then said to be \emph{irreducible} if its transition matrix $A(f)$ is irreducible. We additionally use $\lambda(f)$ to denote the spectral radius of $A(f)$.

A \emph{train track map} is a linear graph map $f\colon \Gamma\to\Gamma$ such that $f^k$ is locally injective at each valence--$2$ vertex and on each edge of $\Gamma$ for all $k\geq 1$. Note that this definition is slightly more general than the standard notion of a train track map in $\Out(F_N)$ theory (in particular, more general than what was used in \cite{DKL}) since here we don't require $f$ to be a homotopy equivalence. A train track map that is irreducible always has $\lambda(f)\geq 1$; furthermore this inequality is strict if and only if $f$ is expanding.

If $f\colon\Gamma\to\Gamma$ is an irreducible  train-track map with $\lambda(f)>1$, then by Corollary~A.7 of \cite{DKL} there exists a volume--1 metric structure $\mathcal L$ on $\Gamma$ with respect to which the map $f$ is a local $\lambda(f)$--homothety on every edge of $\Gamma$.  As in \cite{DKL}, we call  $\mathcal L$ the \emph{canonical metric structure} on $\Gamma$ and call the corresponding metric $d_\mathcal L$ on $\Gamma$ the \emph{canonical eigenmetric}.

\subsection{Markings and representatives}
\label{S:markings}
When $\Gamma$ is a finite connected graph and $f\colon\Gamma\to\Gamma$ is
topological graph map which is a homotopy equivalence, it induces an
automorphism $f_*$ of the free group $\pi_1(\Gamma)$ that is
well-defined up to conjugacy; accordingly we say that $f$ is a
\emph{topological representative} of the outer automorphism $[f_*]\in
\Out(\pi_1(\Gamma))$. Even when $f$ is not a homotopy equivalence, there is still a sense in
which $f$ represents an endomorphism of a free group, as we now explain.

A \emph{marking} on the free group $F_N$ consists of a finite connected graph $\Gamma$ without valence--1 vertices and an isomorphism $\alpha\colon F_N\to \pi_1(\Gamma)$. If $\mathcal L$ is a metric structure on $\Gamma$, then the pair $(\alpha,\mathcal L)$ is called a \emph{marked metric structure} on $F_N$. Lifting $\mathcal L$ to the universal cover $\widetilde \Gamma$ defines a metric $d_{\widetilde {\mathcal L}}$ on $\widetilde \Gamma$ such that $T=(\widetilde \Gamma, d_{\widetilde{\mathcal L}})$ is an $\R$--tree equipped with a free and discrete isometric action (via $\alpha$) of $F_N$ by covering transformations. For an element $g\in F_N$, let  $\norm{g}_T:=\inf_{x\in T} d_{\widetilde{\mathcal L}}(x,gx)$ denote the \emph{translation length} of $g$ on $T$. Note that $\norm{g}_T$ is equal to the $\mathcal L$--length of the unique immersed loop in $\Gamma$ which is freely homotopic to the loop $\alpha(g)\in \pi_1(\Gamma)$. If $A=\{a_1,\dots, a_N\}$ is a free basis of $F_N$, $\Gamma$ is a rose with $N$ petals corresponding to the elements of $A$, and $\mathcal L$ is the metric structure assigning every edge of $\Gamma$ length $1$, then $T$ is exactly the Cayley tree of $F_N$ corresponding to $A$. In this case for $g\in F_N$ we have $\norm{g}_T=\norm{g}_A$, where $\norm{g}_A$ is the cyclically reduced word length of $g$ with respect to $A$. Note that if $g,g'\in F_N$ are conjugate elements then $\norm{g}_T=\norm{g'}_T$.

A \emph{topological representative} of a free group endomorphism  $\phi\colon F_N\to F_N$ consists of a marking  $\alpha\colon F_N\to \pi_1(\Gamma,v)$ (where $v\in V\Gamma$) and a topological graph map $f\colon \Gamma\to\Gamma$ such that for some inner automorphism $\tau$ of $F_N$ we have $\tau\circ \phi= \alpha^{-1}\circ f_* \circ \alpha$. Here
$f_*\colon\pi_1(\Gamma,v)\to\pi_1(\Gamma,v)$ is the endomorphism of $\pi_1(\Gamma,v)$ defined as $f_*(\gamma)=\beta f(\gamma) \beta^{-1}$ for some fixed edge path $\beta$ in $\Gamma$ from $v$ to $f(v)$.  Thus the only difference with the standard definition of a topological representative is that here the graph map $f\colon \Gamma\to\Gamma$ need not a homotopy equivalence. Indeed, we note that a topological representative $f$ as above will be a homotopy equivalence if and only if $\phi$ is an actual automorphism of $F_N$. As usual, we often suppress the marking and, by abuse of notation, simply talk about $f\colon \Gamma\to\Gamma$ being a topological representative of $\phi$.  Most of the free group train track theory is developed for automorphisms of $F_N$, but topological and train track representatives of  endomorphisms of $F_N$  appear, for example, in \cite{AR,DV,Reynolds}.

\subsection{Growth}
\label{S:growth}
Let $\alpha\colon F_N\to \pi_1(\Gamma)$ be a marking on $F_N$, let $\mathcal L$ be a metric structure on $\Gamma$, and let $T=(\widetilde \Gamma, d_{\widetilde{\mathcal L}})$ be the corresponding $\R$--tree as above. For any endomorphism $\phi\colon F_N\to F_N$ and any element $g\in F_N$ put
\[
\lambda (\phi; g, T):=\liminf_{n\to\infty} \sqrt[n]{\norm{\phi^n(g)}_T}.
\]
It is not hard to check that the $\liminf$ in the above formula is in fact always a limit. Moreover, if $T'$ is an $\R$--tree corresponding to another marked metric structure on $F_N$, then the trees $T$ and $T'$ are $F_N$--equivariantly quasi-isometric. This fact  implies that $\lambda (\phi; g, T)=\lambda (\phi; g, T')$ for all $g\in F_N$. Thus we may unambiguously define $\lambda (\phi; g):=\lambda(\phi; g, T)$, where $T$ is the $\R$--tree corresponding to any marked metric structure on $F_N$. 

\begin{defn}[Growth of an endomorphism]\label{D:growth}
Let $\phi\colon F_N\to F_N$ be an arbitrary endomorphism. The \emph{growth rate} or \emph{stretch factor} of $\phi$ is defined to be 
\[
\lambda(\phi):=\sup_{g\in F_N} \lambda(\phi; g).
\]
We say that $\phi$ is \emph{exponentially growing} if $\lambda(\phi)>1$.
\end{defn}
It is not hard to check that if $\phi$ is not exponentially growing then $\lambda (\phi; g)=1$ or $\lambda(\phi; g)=0$ for every $g\in F_N$. The case $\lambda(\phi; g)=0$ is possible since $\phi$ need not be injective; indeed, $\lambda(\phi; g)=0$ if and only if $\phi^k(g)=1\in F_N$ for some $k\ge 1$. Furthermore, since the translation length of an element for an isometric group action is invariant under conjugation in the group, Definition~\ref{D:growth} implies that $\lambda(\phi) = \lambda(\tau\phi) = \lambda(\phi\tau)$ for any inner automorphism $\tau\in \Inn(F_N)$ of $F_N$.

Any irreducible train track representative of $\phi$ (assuming such a representative exists) can be used to compute the growth rate $\lambda(\phi)$:

\begin{proposition}\label{P:stretch}
Let $\phi\colon F_N\to F_N$ be an endomorphism and let $f\colon \Gamma\to\Gamma$ be an irreducible train track representative of $\phi$. Then $\lambda(\phi)=\lambda(f)$ and $\log(\lambda(f))=h(f)$, where $h(f)$ is the topological entropy of $f$.
\end{proposition}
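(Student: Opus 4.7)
The plan is to prove the two equalities separately, working throughout with respect to the canonical eigenmetric $\mathcal L$ on $\Gamma$ for which $f$ acts as a local $\lambda(f)$-homothety on every edge (Corollary A.7 of \cite{DKL}), and with $T = (\widetilde \Gamma, d_{\widetilde{\mathcal L}})$ the associated $\R$--tree. Since $\tau\circ\phi = \alpha^{-1}\circ f_\ast\circ\alpha$, an easy induction yields $(\tau\phi)^n(g) = w_n \phi^n(g) w_n^{-1}$ for some $w_n \in F_N$, and because translation length on $T$ is conjugation-invariant while $\norm{\alpha(h)}_{\widetilde{\mathcal L}} = \norm{h}_T$ for all $h \in F_N$, one obtains
\[
\norm{\phi^n(g)}_T \;=\; \norm{(\tau\phi)^n(g)}_T \;=\; \norm{f_\ast^n(\alpha(g))}_{\widetilde{\mathcal L}}.
\]
The right-hand side is precisely the $\mathcal L$-length of the immersed loop in $\Gamma$ freely homotopic to $f^n(\gamma_g)$, where $\gamma_g \subset \Gamma$ is the immersed loop representing $\alpha(g)$.

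The upper bound $\lambda(\phi) \le \lambda(f)$ then follows immediately: the immersed representative has length at most that of any loop in its homotopy class, and because $f^n$ scales $\mathcal L$-lengths by exactly $\lambda(f)^n$ on each edge, $\norm{\phi^n(g)}_T \le \lambda(f)^n$ times the $\mathcal L$-length of $\gamma_g$; taking $n$-th roots and $\liminf$ gives $\lambda(\phi;g) \le \lambda(f)$ for every $g\in F_N$. For the reverse inequality the plan is to exhibit a specific $g$ realizing equality by producing a \emph{legal} closed loop $\gamma$ in $\Gamma$, namely one every turn of which is mapped to a non-degenerate turn by every iterate of $f$. The train track hypothesis then guarantees that $f^n(\gamma)$ remains legal, hence immersed, for all $n \ge 1$, so the element $g$ with $\alpha(g) = [\gamma]$ satisfies $\norm{\phi^n(g)}_T = \lambda(f)^n$ times the $\mathcal L$-length of $\gamma$, which gives $\lambda(\phi;g) = \lambda(f)$. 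Existence of such a legal loop is extracted from irreducibility of $A(f)$: iterating $f$ on any edge produces arbitrarily long trajectories in the finite directed graph of legal transitions on $\Gamma$, and any sufficiently long trajectory must revisit a directed edge, yielding a legal cycle.

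Finally, the identity $\log(\lambda(f)) = h(f)$ is classical for combinatorial graph maps with irreducible transition matrix, as both sides compute the exponential growth rate of the number of distinct legal edge-paths of combinatorial length $n$, and this rate equals $\lambda(f)$ by Perron--Frobenius applied to $A(f)$; alternatively one may argue via the standard correspondence between $f$ and an irreducible subshift of finite type on the edge alphabet. The step most likely to require real care is the production of the legal loop in the lower bound argument, since the paper's definition of train track map is slightly weaker than the Bestvina--Handel notion and $f$ need not be a homotopy equivalence; nevertheless, irreducibility of $A(f)$ supplies enough recurrence in the directed graph of legal transitions to guarantee that a legal cycle exists even when $\phi$ fails to be injective.
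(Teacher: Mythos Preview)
Your approach is essentially the same as the paper's: both use the canonical eigenmetric to get the upper bound $\lambda(\phi)\le\lambda(f)$ from the tightening inequality, and both get the lower bound by producing a legal loop (the paper finds it by observing that $f^m(e)$ must contain a repeated oriented edge for some $m$, which is exactly your pigeonhole argument in the transition graph, rephrased).

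There is one small gap worth flagging. You invoke Corollary~A.7 of \cite{DKL} for the eigenmetric and then argue that iterating $f$ on an edge yields ``arbitrarily long trajectories''; both of these implicitly assume $\lambda(f)>1$. The paper handles this by treating the case $\lambda(f)=1$ separately at the outset: then $A(f)$ is a permutation matrix, $f$ is a simplicial automorphism, $\phi$ has finite order in $\Out(F_N)$, and one checks directly that $\lambda(\phi)=1$ and $h(f)=0$. Once that trivial case is disposed of, your argument and the paper's coincide. Your justification for $h(f)=\log\lambda(f)$ is also correct in spirit but informal; the paper simply cites Proposition~A.1 of \cite{DKL} for the expanding case rather than sketching the subshift correspondence.
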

\begin{proof}
If $\lambda(f)=1$, then $f$ is a simplicial automorphism of $\Gamma$ and hence $\phi$ is an automorphism of $F_N$ which has finite order in $\Out(F_N)$. Therefore $\lambda(\phi)=1$ and $\lambda(\phi)=\lambda(f)$, as required. Also, in this case it is easy to see that $h(f)=0$ and thus that $\log(\lambda(f))=h(f)$ holds as well.

Suppose now that $\lambda(f)>1$, so that $f$ is expanding. Proposition~A.1 of \cite{DKL} then implies the equality $\log(\lambda(f))=h(f)$. 
Let $\mathcal L$ be the canonical metric structure on $\Gamma$. Then
for every edge $e$ of $\Gamma$ we have $\mathcal L(f(e))=\lambda(f)
\mathcal L(e)$ and hence for every edge-path $\gamma$ in $\Gamma$ we
have 
\[
\mathcal L(f_\#(\gamma)) \le \mathcal L(f(\gamma))=\lambda(f)
\mathcal L(\gamma),\]
where $f_\#(\gamma)$ is the tightened form of $f(\gamma)$. This
implies that if $T$ is the $\R$--tree corresponding to $(\Gamma,\mathcal L)$, 
then for every $g\in F_N$ we have $\norm{\phi(g)}_T\le
\lambda(f)\norm{g}_T$. Therefore for $n\ge 1$ we have
$\norm{\phi^n(g)}_T\le \lambda(f)^n \norm{g}_T$ and hence
$\lambda(\phi)\le \lambda(f)$.

Since $f$ is an expanding graph map, for some edge $e$ there exists $m\ge 1$ such that $f^m(e)$ contains at least two occurrences of the same oriented edge. Therefore there exists a nontrivial immersed loop $\gamma$ in $\Gamma$ such that $\gamma$ is a subpath of $f^m(e)$. Since $f$ is a train track map, for every $n\geq 1$ the loop $f^n(\gamma)=(f^n)_\#(\gamma)$ is immersed in $\Gamma$ and therefore satisfies
\[
\mathcal L((f^n)_\#(\gamma))=\mathcal L
(f^n(\gamma))=\lambda(f)^n\mathcal L(\gamma).
\]
Taking $g_0\in F_N$ to be any element whose conjugacy class is
represented by $\gamma$, we see that $\norm{\phi^n(g_0)}_T= \lambda(f)^n \norm{g_0}_T$ for all $n\ge 1$. Hence $\lambda
(\phi; g_0, T)=\lambda(f)$. Together with $\lambda(\phi)\le
\lambda(f)$, this implies that $\lambda(\phi)=
\lambda(f)$.
\end{proof}

\begin{proposition}\label{P:subgroup_stretch}
Let $\phi\colon F_N\to F_N$ be an arbitrary free group endomorphism. Given any $i\geq 0$, set $J = \phi^i(F_N)$ and let $\xi=\phi\vert_J \colon J\to J$. Then $\lambda(\phi)=\lambda(\xi)$.
\end{proposition}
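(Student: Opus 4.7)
The plan is to establish both inequalities $\lambda(\xi)\leq \lambda(\phi)$ and $\lambda(\phi)\leq \lambda(\xi)$ by relating growth under $\phi$ acting on $F_N$ to growth under $\xi$ acting on $J$. First I would verify the setup: $J=\phi^i(F_N)$ is finitely generated (by the $\phi^i$--images of any free basis for $F_N$) and hence free by Nielsen--Schreier, and $\phi(J)=\phi^{i+1}(F_N)\subseteq \phi^i(F_N)=J$, so $\xi=\phi\vert_J$ is a well-defined endomorphism of $J$. The case $i=0$ is trivial and the degenerate case $J=\{1\}$ forces both growth rates to vanish, so one may assume $i\geq 1$ and $J$ nontrivial.

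For the inequality $\lambda(\xi)\leq \lambda(\phi)$, fix a marked metric structure on $F_N$ with $\R$--tree $T$. Since the inclusion of the finitely generated subgroup $J$ into $F_N$ is a quasi-isometric embedding (concretely realized via the Stallings folded core), for any marked metric structure on $J$ with tree $T'$ there exist constants $C_1,C_2>0$ with
\[
C_1\norm{j}_{T'}\leq \norm{j}_T\leq C_2\norm{j}_{T'}
\]
for all $j\in J$. Since $\xi^n(j)=\phi^n(j)$ for every $j\in J$, taking $n$--th roots and using $C_i^{1/n}\to 1$ gives $\lambda(\xi;j)=\lambda(\phi;j)$ for each $j\in J$. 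Taking the supremum over $j\in J\subseteq F_N$ then yields $\lambda(\xi)\leq \lambda(\phi)$.

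For the reverse inequality $\lambda(\phi)\leq \lambda(\xi)$, I would take an arbitrary $g\in F_N$ and set $h=\phi^i(g)\in J$. Then $\phi^n(h)=\phi^{n+i}(g)$, so
\[
\sqrt[n]{\norm{\phi^n(h)}_T} \;=\; \Bigl(\norm{\phi^{n+i}(g)}_T^{1/(n+i)}\Bigr)^{(n+i)/n}.
\]
Because $(n+i)/n\to 1$ and the sequence $\sqrt[n+i]{\norm{\phi^{n+i}(g)}_T}$ is simply a tail of $\sqrt[m]{\norm{\phi^m(g)}_T}$ and hence has the same limit inferior $\lambda(\phi;g)$, we conclude $\lambda(\phi;h)=\lambda(\phi;g)$. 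Combined with the previous paragraph,
\[
\lambda(\phi;g)\;=\;\lambda(\phi;h)\;=\;\lambda(\xi;h)\;\leq\;\lambda(\xi),
\]
and taking the supremum over $g\in F_N$ gives $\lambda(\phi)\leq \lambda(\xi)$.

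I expect the only real subtlety to be justifying cleanly that the inclusion $J\hookrightarrow F_N$ produces only bounded multiplicative distortion of translation lengths, but this is the standard fact that finitely generated subgroups of finitely generated free groups are quasi-isometrically embedded (alternatively one can argue directly that the minimal $J$--invariant subtree of $T$ is itself the tree of a marked metric structure on $J$, since its quotient by $J$ is the finite Stallings core, making the relevant translation lengths coincide on the nose). Once this is in hand, the remainder is a routine manipulation of the limit definition of $\lambda$, using that the index shift $n\mapsto n+i$ does not affect exponential growth rates.
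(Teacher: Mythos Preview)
Your proof is correct and follows essentially the same approach as the paper: both arguments rest on the quasi-isometric embedding of $J$ in $F_N$ to identify $\lambda(\xi;j)$ with $\lambda(\phi;j)$ for $j\in J$, together with the index-shift observation $\lambda(\phi;g)=\lambda(\phi;\phi^i(g))$. The paper states these two facts in one line each and combines them directly, whereas you organize the same content as two separate inequalities; the underlying mathematics is identical.
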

\begin{proof}
Pick a free basis $A$ of $F_N$ and let $T=T_A$ be the Cayley graph of $F_N$ with respect to $A$. Let $i\ge 0$ be arbitrary and let $J=\phi^i(F_N)$. Thus $J$ is a finitely generated free group of rank $\le N$. Choose a free basis $B$ for $J$ and let $T'$ be the Cayley graph of $J$ with respect to $B$. Let $\xi=\phi|_J\colon J\to J$. 
For any $g\in F_N$ the definitions imply that $\lambda(\phi; g, T)=\lambda(\phi; \phi^i(g), T)$.
The subgroup $J\le F_N$ is quasi-isometrically embedded in $F_N$. Hence there exists $C\ge 1$ such that
\[\frac{1}{C} \norm{w}_{T} \le \norm{w}_{T'}\le C \norm{w}_T\]
for all $w\in J$. Therefore for all $n\ge 1$ and $g\in F_N$ we have $\phi^{i+n}(g)=\xi^n(\phi^i(g))$ and
\[
\frac{1}{C} \norm{\phi^{n+m}(g)}_{T} \le \norm{\xi^n(\phi^m(g))}_{T'}\le C \norm{\phi^{n+m}(g)}_T.
\]
Hence 
\[
\lambda(\phi; g, T)=\lambda(\phi; \phi^m(g), T)=\lambda(\xi; \phi^m(g), T')
\]
The definition of the growth rate now implies that $\lambda(\phi)=\lambda(\xi)$. 
\end{proof}

\subsection{Endomorphisms and HNN-like presentations}
\label{S:HNN-like}
In this subsection we elaborate on the observation of Kapovich~\cite{K02}
about the algebraic structure of HNN-like presentations based on
arbitrary (and possibly non-injective) endomorphisms.

Let $\phi\colon G\to G$ be an arbitrary endomorphism of any group $G$. We then use the notation $G\ast_{\phi}$ to denote the group given by the ``HNN-like'' presentation
\begin{equation}\label{eqn:HNN-like}
G\ast_\phi := \langle G, r \mid r\inv g r = \phi(g),\text{ for all }
g\in G\rangle.
\end{equation}
The generator $r$ here is called the \emph{stable letter} of $G\ast_\phi$, and the group has a \emph{(natural) projection} $G\ast_\phi\to \Z$ defined by sending $r\mapsto 1$ and $G\mapsto 0$. 

Presentations as above with $\phi$ non-injective are difficult to work
with, since in that case $G$ does not embed into 
$G\ast_\phi$ and Britton's Lemma (the normal form theorem for HNN
extensions) does not hold.
However, as we will see below, presentation~\eqref{eqn:HNN-like}
does define a group which is a genuine HNN-extension along an injective endomorphism of a quotient
group of $G$.

To this end, define the \emph{stable kernel of $\phi\colon G\to G$} to be the normal subgroup
\[K_\phi := \bigcup_{i=1}^\infty \ker(\phi^i) \lhd G\]
obtained as the union of the increasing chain $\ker(\phi)\leq
\ker(\phi^2)\leq\dotsb$ of subgroups $\ker(\phi^i)\lhd G$.
We also denote $\bar{G}^\phi=G/K_\phi$ and call $\bar{G}^\phi$ the
\emph{stable quotient of $\phi$}.

\begin{proposition}
\label{P:stable_quotient_HNN}
Let $G$ be a group and $\phi \colon G\to G$ be an arbitrary endomorphism. Let $G\ast_\phi$ be as in presentation~\eqref{eqn:HNN-like} above and let $\bar G=\bar G^\phi$. Then the following hold:
\begin{enumerate}
\item $\phi\colon G\to G$ descends to an injective endomorphism $\bar\phi\colon \bar G\to \bar G$. 
\item The natural homomorphism $G\to G\ast_\phi$  has kernel equal
to $K_\phi$, so that the image of $G$ in $G\ast_\phi$ is canonically
identified with $\bar G$.
\item The quotient map $G\to \bar G$ (which we denote $g\mapsto \bar{g}$) induces an isomorphism
\[
G\ast_\phi \to \bar G\ast_{\bar \phi} 
= \langle \bar{G}, \bar{r} \mid \bar{r}\inv\bar{g} \bar{r} = \bar{\phi}(\bar g), \mbox{ for all }\bar g\in
\bar{G} \rangle,\]
whose composition with the projection $\bar{G}\ast_{\bar\phi}\to \Z$ yields the natural projection of $G\ast_\phi$. Thus $G\ast_\phi$ is canonically isomorphic to the (genuine) HNN-extension $\bar G\ast_{\bar \phi}$ over the injective endomorphism  $\bar\phi$.
\end{enumerate}
\end{proposition}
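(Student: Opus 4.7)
The proof naturally breaks into the three statements, and the main task is to leverage the fact that, although the presentation~\eqref{eqn:HNN-like} is not \emph{a priori} a genuine HNN-extension, we can quotient out by the stable kernel to land in the standard setting where Britton's Lemma applies.

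First I would establish part (1). Normality of $K_\phi$ is immediate since it is an ascending union of the normal subgroups $\ker(\phi^i)$. The $\phi$-invariance $\phi(K_\phi)\subseteq K_\phi$ follows from the observation that if $\phi^i(g)=1$ then $\phi^i(\phi(g))=\phi^{i+1}(g)=\phi(\phi^i(g))=1$, so $\phi(g)\in \ker(\phi^i)\subseteq K_\phi$. Hence $\phi$ descends to a well-defined endomorphism $\bar\phi\colon \bar G\to\bar G$. For injectivity, suppose $\bar\phi(\bar g)=\bar 1$; then $\phi(g)\in \ker(\phi^i)$ for some $i$, so $\phi^{i+1}(g)=1$, which gives $g\in \ker(\phi^{i+1})\subseteq K_\phi$ and thus $\bar g=\bar 1$.

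Next I would prove part (3), since it is the engine that drives part (2). Define a homomorphism $\Psi\colon G\ast_\phi \to \bar G\ast_{\bar\phi}$ by $g\mapsto \bar g$ and $r\mapsto \bar r$. Well-definedness is a direct check on the defining relations: $\Psi(r^{-1}gr)=\bar r^{-1}\bar g\bar r = \bar\phi(\bar g)=\Psi(\phi(g))$. In the opposite direction, before defining $\Phi\colon \bar G\ast_{\bar\phi}\to G\ast_\phi$ I first note that $K_\phi$ maps trivially under $G\to G\ast_\phi$: for any $g\in\ker(\phi^i)$, the relation applied $i$ times yields $r^{-i}gr^i=\phi^i(g)=1$, hence $g=1$ in $G\ast_\phi$. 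This allows us to define $\Phi$ by $\bar g\mapsto g$ and $\bar r\mapsto r$, and the relation $\bar r^{-1}\bar g\bar r=\bar\phi(\bar g)$ pulls back to $r^{-1}gr=\phi(g)$, which holds in $G\ast_\phi$. Since $\Psi$ and $\Phi$ are mutual inverses on generators, they are mutually inverse isomorphisms, and compatibility with the projections to $\Z$ is immediate.

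Part (2) now follows from part (3): because $\bar\phi$ is injective by part (1), the presentation $\bar G\ast_{\bar\phi}$ is a \emph{genuine} ascending HNN-extension, so Britton's Lemma (equivalently, the standard normal-form theorem for HNN-extensions) guarantees that $\bar G\hookrightarrow \bar G\ast_{\bar\phi}$ is injective. The composition
\[
G \;\longrightarrow\; G\ast_\phi \;\xrightarrow{\;\Psi\;}\; \bar G\ast_{\bar\phi}
\]
agrees with $g\mapsto \bar g$, so its kernel is exactly $K_\phi$. Since $\Psi$ is an isomorphism, the kernel of $G\to G\ast_\phi$ is also exactly $K_\phi$, and the image of $G$ in $G\ast_\phi$ is canonically identified with $\bar G$. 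The only subtle point in the whole argument is making sure the logical order is correct: one must verify injectivity of $\bar\phi$ and the trivialization of $K_\phi$ in $G\ast_\phi$ \emph{before} invoking Britton's Lemma, otherwise the application would be circular.
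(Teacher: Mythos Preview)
Your proof is correct and follows essentially the same approach as the paper's. The only cosmetic difference is that where the paper uses a chain of Tietze transformations to obtain the isomorphism $G\ast_\phi \cong \bar G\ast_{\bar\phi}$, you construct explicit mutually inverse homomorphisms $\Psi$ and $\Phi$; and you make explicit the appeal to Britton's Lemma for part~(2), which the paper leaves implicit in the phrase ``This implies parts (2) and (3).''
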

\begin{proof}
From the definition of $K_\phi$ we see that $g\in K_\phi$ if and only
if $\phi(g)\in K_\phi$. Therefore $\phi$ does indeed descend to an
endomorphism $\bar\phi\colon \bar G\to \bar G$, and moreover
$\bar\phi$ is injective. Thus (1) is established.

If $g\in K_\phi$ then $g\in\ker(\phi^n)$ for some $n\ge 1$. Then
$\phi^n(g)=1$ in $G$ and therefore in the group $G\ast_\phi$ we have
$g=r^{n}\phi^n(g)r^{-n}=1$. Therefore, by applying a Tietze transformation, we can rewrite
presentation~\eqref{eqn:HNN-like} of $G\ast_\phi$ as
\begin{align*}
G\ast_\phi &= \langle G, r \mid r\inv g r = \phi(g) \text{ for all }
g\in G\rangle\\
&= \langle G, r \mid r\inv g r = \phi(g) \text{ for all }
g\in G; \;\text{ and }  g=1  \text{ for all } g\in K_\phi\rangle\\
&= \langle G/K_\phi,\ \bar r \mid \bar{r}\inv \bar g \bar{r} = \bar\phi(\bar g) \text{ for all }
\bar g\in G/K_\phi\rangle
= \bar{G} \ast_{\bar\phi}
\end{align*}
This implies parts (2) and (3) of the proposition.
\end{proof}

The following proposition implies that in certain situations the
isomorphism $G\ast_\phi\cong \bar G\ast_{\bar \phi}$ from Proposition~\ref{P:stable_quotient_HNN} can 
actually be seen from {\em inside} $G$.

\begin{proposition}
\label{P:injective_on_image}
Let $\phi\colon G\to G$ be an arbitrary endomorphism, and let $\bar{G}$ and $\bar\phi$ be as in Proposition~\ref{P:stable_quotient_HNN}. Suppose that $i\ge 1$ is such that for $J = \phi^i(G)$ the endomorphism $\xi = \phi\vert_J\colon J\to J$ is injective.
Then $K_\phi=\ker(\phi^i)$ and there is a canonical isomorphism $\tau\colon \bar{G}\to J$ conjugating $\bar\phi$ to $\xi$.
\end{proposition}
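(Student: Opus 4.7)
The plan is to establish the two assertions in order, since the claimed isomorphism $\tau$ will be built from $\phi^i$ itself and only makes sense after we know that $\ker(\phi^i)$ already equals the stable kernel $K_\phi$.

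First I would prove $K_\phi = \ker(\phi^i)$. The inclusion $\ker(\phi^i) \subseteq K_\phi$ is immediate from the definition of $K_\phi$ as the union of the $\ker(\phi^n)$. For the reverse inclusion, suppose $g \in \ker(\phi^n)$ for some $n \geq 1$. If $n \leq i$, then $g \in \ker(\phi^i)$ automatically. If $n > i$, apply $\phi^i$ and consider $h = \phi^i(g) \in J$. Then $\xi^{n-i}(h) = \phi^n(g) = 1$, and since $\xi\colon J \to J$ is injective, so is every iterate $\xi^{n-i}$. Hence $h = 1$, i.e., $\phi^i(g) = 1$, so $g \in \ker(\phi^i)$. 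This gives the equality $K_\phi = \ker(\phi^i)$.

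Next, by definition $\phi^i \colon G \to G$ has image exactly $J$, and by the previous step its kernel is precisely $K_\phi$. Passing to the quotient $\bar G = G/K_\phi$ therefore produces a well-defined isomorphism
\[
\tau \colon \bar G \longrightarrow J, \qquad \tau(\bar g) = \phi^i(g).
\]
To see that $\tau$ conjugates $\bar\phi$ to $\xi$, it suffices to compute, for arbitrary $g \in G$, that
\[
\tau(\bar\phi(\bar g)) = \tau(\overline{\phi(g)}) = \phi^i(\phi(g)) = \phi^{i+1}(g) = \phi(\phi^i(g)) = \xi(\tau(\bar g)),
\]
so $\xi \circ \tau = \tau \circ \bar\phi$ as required. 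The canonicity is clear from the construction, since $\tau$ depends only on the data $\phi$ and $i$.

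The only (mild) obstacle is the inclusion $K_\phi \subseteq \ker(\phi^i)$; the rest is formal. This step is where the hypothesis that $\xi$ is injective is used in an essential way, since in general the ascending chain $\ker(\phi) \subseteq \ker(\phi^2) \subseteq \dotsb$ need not stabilize, and $K_\phi$ need not equal any single $\ker(\phi^n)$.
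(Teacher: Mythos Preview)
Your proof is correct and follows essentially the same approach as the paper: first show $K_\phi=\ker(\phi^i)$ by using injectivity of $\xi$ (and its iterates) on $J$, then apply the First Isomorphism Theorem to $\phi^i$ to produce $\tau$, and verify the conjugacy $\xi\circ\tau=\tau\circ\bar\phi$ by the same one-line computation. The only cosmetic difference is that the paper phrases the key inclusion contrapositively (if $\phi^i(g)\neq 1$ then $g\notin K_\phi$), whereas you argue directly by cases on $n$.
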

\begin{proof}
Since $\xi = \phi\vert_J$ is injective by assumption, it follows that $\phi^n\vert_J$ is injective for all $n\geq 0$. Therefore, if $g\in G$ is such that $\phi^i(g)\in J$ is nontrivial, then $\phi^{n+i}(g) = \phi^n(\phi^i(g))$ is also nontrivial for all $n\geq 0$. This implies that $\phi^i(k)=1\in J$ for all $k\in K_\phi$ and thus that $K_\phi\subset \ker(\phi^i)$. As we also have $\ker(\phi^i)\subset K_\phi$ by definition, it follows that $\ker(\phi^i) = K_\phi$.

Thus $\bar G=G/\ker(\phi^i)$ and so the First Isomorphism Theorem provides a canonical isomorphism $\tau\colon\bar G = G/K_\phi\to J$ given by $\tau(gK_\phi)=\phi^i(g)$ for all $g\in G$. Then for any $g\in G$ we have
\[ \xi(\tau(gK_\phi))=\xi(\phi^i(g))=\phi^{i+1}(g)=\tau(\phi(g)K_\phi)=\tau(\bar \phi (gK_\phi)).\]
Thus $\tau$ indeed conjugates $\bar\phi$ to
$\xi$, and the statement of the proposition follows.
\end{proof}

It was observed by Kapovich \cite{K02} that the assumption of Proposition~\ref{P:injective_on_image} is always satisfied when $G$ is a finite-rank free group $F_N$.

\begin{proposition}\label{P:Kapovich subgroup}
Let $\phi\colon F_N\to F_N$ be an arbitrary endomorphism of a finite-rank free group $F_N$. Then there exists $i\geq 0$ such that $b_1(\phi^i(F_N)) = b_1(\phi^{i+1}(F_N))$. Furthermore, if $J = \phi^i(F_N)$ for any such $i$, then $\xi=\phi|_J:J\to J$ is injective.
\end{proposition}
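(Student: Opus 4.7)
The plan is to prove the two assertions in sequence, first using a monotonicity argument on ranks to produce the desired $i$, and then using Hopficity of finitely generated free groups to upgrade rank-stability to injectivity.

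For the first assertion, I would examine the (not necessarily strictly) descending chain of finitely generated subgroups
\[ F_N \supseteq \phi(F_N) \supseteq \phi^2(F_N) \supseteq \cdots, \]
each of which is free by the Nielsen--Schreier theorem. The key observation is that $\phi^{i+1}(F_N) = \phi\bigl(\phi^i(F_N)\bigr)$ is a homomorphic image of $\phi^i(F_N)$ under the restriction $\phi|_{\phi^i(F_N)}$. Since any homomorphic image of a group generated by $n$ elements is generated by $n$ elements, and since the rank of a finitely generated free group equals the minimum size of a generating set, we obtain $b_1(\phi^{i+1}(F_N)) \leq b_1(\phi^i(F_N))$. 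Thus the sequence $\{b_1(\phi^i(F_N))\}_{i \geq 0}$ is a non-increasing sequence of non-negative integers, and so must eventually stabilize, yielding the existence of $i$ as claimed.

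For the second assertion, fix any such $i$ and set $J = \phi^i(F_N)$, $\xi = \phi|_J \colon J \to J$, and $r = b_1(J)$. By construction $J$ is a free group of rank $r$, and by the stabilization hypothesis $\xi(J) = \phi^{i+1}(F_N)$ is also a free group of rank $r$. In particular, viewed as a homomorphism onto its image, $\xi$ is a \emph{surjection} from a free group of rank $r$ onto a free group of rank $r$. By the Hopf property for finitely generated free groups, every such surjective endomorphism is an isomorphism; hence $\xi\colon J \to \xi(J)$ has trivial kernel, so $\xi \colon J \to J$ is injective.

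There is no significant obstacle here — the argument is short and structural. The only conceptual step worth highlighting is the recognition that mere equality of ranks $b_1(\phi^i(F_N)) = b_1(\phi^{i+1}(F_N))$ is already enough to force injectivity of $\xi$, via Hopficity of free groups applied to the surjection $J \twoheadrightarrow \xi(J)$; this is why one need not iterate further or impose stronger stabilization hypotheses.
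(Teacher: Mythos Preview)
Your proof is correct and follows essentially the same approach as the paper: both argue that the ranks $b_1(\phi^i(F_N))$ form a non-increasing sequence of non-negative integers (hence stabilize), and then invoke Hopficity of finitely generated free groups to conclude that the surjection $J\twoheadrightarrow \phi(J)$ between free groups of equal rank must be injective. The arguments are effectively identical.
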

\begin{proof}
For every $m\ge 1$ the group $\phi^m(F_N)$ is a subgroup of $F_N$ and it is also a homomorphic image of $\phi^{m-1}(F_N)$. Therefore each image $\phi^m(F_N)$ is a finitely generated free group and the integral sequence $\{b_1(\phi^m(F_N))\}$ is nonincreasing. Therefore there exists $i\geq 0$ such that $b_1(\phi^i(F_N))=b_1(\phi^{i+1}(F_N))$ as claimed.

For any such $i\geq 0$, the free groups $\phi^i(F_N)$ and $\phi^{i+1}(F_N)$ have the same rank and are thus isomorphic. Moreover the homomorphism $\phi$ maps $\phi^i(F_N)$ onto $\phi^{i+1}(F_N)$. Since finitely generated free groups are Hopfian, it follows that $\phi$ maps
$\phi^i(F_N)$ isomorphically onto $\phi^{i+1}(F_N)$ and thus the restriction of $\phi\vert_{\phi^i(F_N)}$ is injective.
\end{proof}

Combining the above with the results from \S\ref{S:growth}, we obtain the following corollary which, in the case that $G$ is free, gives two (essentially equivalent) useful ways of realizing the group $G\ast_\phi$ from (\ref{eqn:HNN-like}) as an ascending HNN-extension of finitely generated free group along an injective endomorphism.

\begin{corollary}
\label{C:HNN_and_stretch_for_endos}
Let $G=F_N$, let $\phi\colon G\to G$ be an arbitrary endomorphism and let $\bar{G} = \bar{G}^\phi$ and $\bar{\phi}$ be as in Proposition~\ref{P:stable_quotient_HNN}. Then $\bar{G}$ is a finite-rank free group and there exists $J = \phi^i(G)$ for some $i\ge 0$ such that $\xi= \phi\vert_J\colon J\to J$ is injective. Furthermore, the respective stretch factors satisfy $\lambda(\phi) = \lambda(\bar\phi) = \lambda(\xi)$, and there are canonical isomorphisms
\[G\ast_\phi \cong \bar{G}\ast_{\bar\phi} \cong J\ast_{\xi}\]
respecting the natural projections of these groups to $\Z$.
\end{corollary}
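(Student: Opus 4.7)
The plan is to assemble Corollary~\ref{C:HNN_and_stretch_for_endos} directly from the three preceding propositions (together with Proposition~\ref{P:subgroup_stretch}); essentially all the technical work has already been done and only a bookkeeping argument remains.

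First I would invoke Proposition~\ref{P:Kapovich subgroup} to produce an integer $i\geq 0$ such that $b_1(\phi^i(F_N))=b_1(\phi^{i+1}(F_N))$, and set $J=\phi^i(F_N)$; the same proposition guarantees that $\xi=\phi\vert_J\colon J\to J$ is injective, establishing the existence claim. As $J\leq F_N$ it is automatically a finite-rank free group. Next I would feed this $i$ into Proposition~\ref{P:injective_on_image}: it gives $K_\phi=\ker(\phi^i)$ and a canonical isomorphism $\tau\colon\bar{G}\to J$ conjugating $\bar\phi$ to $\xi$. In particular $\bar{G}\cong J$ is a finite-rank free group, which handles the first assertion.

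For the stretch factor equalities, Proposition~\ref{P:subgroup_stretch} (applied to the same $i$) gives $\lambda(\phi)=\lambda(\xi)$. Since $\tau$ conjugates $\bar\phi$ to $\xi$, a basic invariance of the growth rate under group isomorphism (immediate from the definition, since an isomorphism of free groups is an $F$-equivariant quasi-isometry of any pair of associated Cayley trees) yields $\lambda(\bar\phi)=\lambda(\xi)$. Chaining these gives $\lambda(\phi)=\lambda(\bar\phi)=\lambda(\xi)$.

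Finally, for the HNN-extension isomorphisms, Proposition~\ref{P:stable_quotient_HNN}(3) already supplies a canonical isomorphism $G\ast_\phi\xrightarrow{\cong}\bar{G}\ast_{\bar\phi}$ compatible with the projections to $\Z$. To obtain the second isomorphism, I would use $\tau\colon\bar{G}\to J$ from Proposition~\ref{P:injective_on_image}: since $\tau$ conjugates $\bar\phi$ to $\xi$, the universal property of the HNN-presentation gives a homomorphism $\bar{G}\ast_{\bar\phi}\to J\ast_{\xi}$ sending $\bar g\mapsto \tau(\bar g)$ and $\bar r\mapsto r_J$ (where $r_J$ is the stable letter of $J\ast_\xi$); its inverse is constructed in exactly the same way from $\tau^{-1}$. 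Both maps fix the stable letter and hence respect the natural projections onto $\Z$. Composing, the required chain $G\ast_\phi\cong \bar{G}\ast_{\bar\phi}\cong J\ast_\xi$ follows.

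There is no real obstacle here: the only step requiring any care is the observation that $\lambda$ is invariant under isomorphisms of finite-rank free groups (so that $\lambda(\bar\phi)=\lambda(\xi)$), which in any case is built into the setup of Definition~\ref{D:growth} via marked metric structures. Everything else is a formal application of Propositions~\ref{P:stable_quotient_HNN}, \ref{P:injective_on_image}, \ref{P:Kapovich subgroup}, and \ref{P:subgroup_stretch}.
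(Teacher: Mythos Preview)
Your proposal is correct and follows essentially the same approach as the paper's proof: invoke Proposition~\ref{P:Kapovich subgroup} to produce $J$, use Proposition~\ref{P:injective_on_image} for the isomorphism $\tau\colon\bar G\to J$ (yielding both that $\bar G$ is free of finite rank and the induced isomorphism $\bar G\ast_{\bar\phi}\cong J\ast_\xi$), apply Proposition~\ref{P:subgroup_stretch} for $\lambda(\phi)=\lambda(\xi)$, and cite Proposition~\ref{P:stable_quotient_HNN} for $G\ast_\phi\cong\bar G\ast_{\bar\phi}$. The paper's version is terser but substantively identical.
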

\begin{proof}
The existence of such an $i\ge 0$ follows from Proposition~\ref{P:Kapovich subgroup}. Proposition~\ref{P:injective_on_image} then provides an isomorphism $\tau\colon \bar{G}\to J$ showing that $\bar{G}$ is a finite-rank free group. Furthermore, since $\tau$ conjugates $\bar\phi$ to $\xi$ it also induces an isomorphism $\bar{G}\ast_{\bar\phi}\to J\ast_{\xi}$ and shows that $\lambda(\xi) = \lambda(\bar\phi)$. Lastly, the equality $\lambda(\phi) = \lambda(\xi)$ follows from Proposition~\ref{P:subgroup_stretch}, and the isomorphism $G\ast_{\phi}\cong \bar{G}\ast_{\bar\phi}$ from Proposition~\ref{P:stable_quotient_HNN}.
\end{proof}

\subsection{Invariance of HNN stretch factor}
\label{sec:stretch_invariance}
Consider a finitely presented group $D$ that splits as an HNN-extension
\[D = A\ast_\phi = \langle A,r \mid r\inv ar = \phi(a)\text{ for all }a\in A\rangle,\]
where $A$ is a finite rank free group and $\phi\colon A\to A$ is an injective endomorphism of $A$. The natural projection $u\colon D\to \Z$ defined by $r\mapsto 1$ and $A \mapsto 0$ is then said to be \emph{dual} to the splitting $D = A\ast_\phi$.

Note that if $\ker(u) = ncl_D(A)$ is not finitely generated, then there are infinitely many splittings of $D$ that are all dual to the same $u$. Indeed, in this case $\phi\colon A\to A$ is non-surjective and so we may choose a finitely generated subgroup $C\le A$ such that $\phi(A)\lneq C \lneq A$. Put $B=rCr^{-1}$, so that $C\lneq A \lneq B$ and $r^{-1}Br=C\le B$. Defining $\psi\colon B\to B$ by $\psi(b)=r^{-1}br$, it is not hard to see that the HNN-extension $B\ast_\psi= \langle B, r \mid r\inv b r = \psi(b),\text{ for all } b\in B\rangle$ gives another splitting $D=B\ast_\psi$ which is again dual to $u$. Notice that this construction can be used to produce splittings $D=B\ast_\phi$ dual to $u$ where the rank of $B$ is any integer greater than the rank of $A$. Other variations and iterations of this constructions are also possible. 

This shows that if a homomorphism $u\colon D\to \Z$ is dual to an ascending HNN-extension splitting $D=A\ast_\phi$ of $D$, then the injective endomorphism $\phi\colon A\to A$ defining the splitting is in no way canonical. Nevertheless, its stretch factor $\lambda(\phi)$ \emph{is} uniquely determined by the homomorphism $u$:

\begin{proposition}\label{P:unique_stretch_for_HNN}
Suppose that $u\in \Hom(D,\Z)$ is dual to two splittings $D = A\ast_\phi$ and $D = B\ast_\psi$, where $\phi\colon A\to A$ and $\psi\colon B\to B$ are injective endomorphisms of finite rank free groups $A$ and $B$. Then $\lambda(\phi) = \lambda(\psi)$.
\end{proposition}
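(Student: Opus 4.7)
My plan is to transport $\psi$ into the ambient free group $A$ and then exploit invariance of the stretch factor under composition with inner automorphisms (the remark after Definition~\ref{D:growth}) along with the quasi-isometric embedding property of finitely generated subgroups of free groups (the key ingredient in the proof of Proposition~\ref{P:subgroup_stretch}).

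The first step is to choose stable letters $r_A, r_B$ for the two splittings satisfying $u(r_A) = u(r_B) = 1$. The ascending hypothesis on $D = A \ast_\phi$ means that $r_A^{-1} A r_A = \phi(A) \le A$, and consequently $K := \ker(u)$ is the nested union
\[ K \;=\; \bigcup_{n \ge 0} r_A^n A r_A^{-n}. \]
Since $B \le K$ is finitely generated and the element $v := r_A^{-1} r_B$ also lies in $K$, I can fix $m_0$ and $a_v \in A$ with $v = r_A^{m_0} a_v r_A^{-m_0}$ and then choose $n_1 \ge m_0$ large enough that $\tilde B := r_A^{-n_1} B r_A^{n_1} \le A$ as well. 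A direct computation, using the factorization $r_B = r_A \cdot v = r_A^{m_0+1} a_v r_A^{-m_0}$, shows that the element $s := r_A^{-n_1} r_B r_A^{n_1}$ factors as $s = r_A \cdot w$, where $w := \phi^{n_1 - m_0}(a_v) \in A$.

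Conjugation by $r_A^{n_1}$ yields an isomorphism $c \colon B \to \tilde B$ which transports $\psi$ to an injective endomorphism $\tilde\psi \colon \tilde B \to \tilde B$ of stretch factor $\lambda(\tilde\psi) = \lambda(\psi)$. Substituting $s = r_A w$ into $\tilde\psi(\tilde b) = s^{-1} \tilde b s$ and using $r_A^{-1} \tilde b r_A = \phi(\tilde b)$ for $\tilde b \in \tilde B \le A$ gives
\[ \tilde\psi(\tilde b) \;=\; w^{-1}\, \phi(\tilde b)\, w \qquad \text{for all } \tilde b \in \tilde B. \]
Since $w \in A$, the right-hand side defines an injective endomorphism $\Phi \colon A \to A$ of the ambient free group $A$ which restricts to $\tilde\psi$ on the $\Phi$-invariant subgroup $\tilde B$. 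Because $\Phi$ is the composition of $\phi$ with the inner automorphism $a \mapsto w^{-1} a w$ of $A$, the remark after Definition~\ref{D:growth} gives $\lambda(\Phi) = \lambda(\phi)$.

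The final step compares intrinsic and ambient stretch factors on $\tilde B$. Since $A$ has finite rank, the finitely generated subgroup $\tilde B \le A$ is quasi-isometrically embedded. The same calculation used in the proof of Proposition~\ref{P:subgroup_stretch} then gives $\lambda(\Phi|_{\tilde B}; \tilde b) = \lambda(\Phi; \tilde b)$ for every $\tilde b \in \tilde B$, and taking suprema yields
\[ \lambda(\psi) \;=\; \lambda(\tilde\psi) \;=\; \lambda(\Phi|_{\tilde B}) \;\le\; \lambda(\Phi) \;=\; \lambda(\phi). \]
The reverse inequality $\lambda(\phi) \le \lambda(\psi)$ follows by the symmetric argument with the roles of $A$ and $B$ interchanged. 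The main obstacle is the simultaneous absorption of the subgroup $B$ and the ``twist element'' $v = r_A^{-1} r_B$ into a single conjugate $r_A^{n_1} A r_A^{-n_1}$; the ascending nature of both HNN splittings is precisely what makes this possible, since otherwise $K$ would fail to be the nested union displayed above and there would be no ambient free group in which to carry out the comparison.
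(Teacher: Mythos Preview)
Your proof is correct and follows essentially the same approach as the paper's: conjugate $B$ into $A$, realize the transported $\psi$ as the restriction of $\phi$ composed with an inner automorphism of $A$, and then use undistortedness of finitely generated subgroups of free groups to compare stretch factors. The only differences are organizational: the paper treats the case where $\phi$ is an automorphism separately and performs the normalization in two conjugation steps (first arranging $s = r a_0$ with $a_0 \in A$, then proving $s^m A s^{-m} = r^m A r^{-m}$ before conjugating $B$ into $A$ by a power of $s$), whereas you do the absorption in a single conjugation by $r_A^{n_1}$, which is slightly more direct.
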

\begin{proof}
By assumption we may write
\[\langle A, r \mid r\inv a r = \phi(a)\text{ for all }a\in A\rangle \; = \; D \;=\; \langle B, s \mid s\inv b s = \psi(b)\text{ for all }b\in B\rangle,\]
where $u(r)=u(s) = 1$ and $u(A) = u(B) = 0$. In particular, $\ker(u)=ncl_D(A)=ncl_D(B)$.

If $\phi$ is an automorphism of $A$, then $A = \ker(u) = B$ so that $\psi$ defines an automorphism of $A$ in the same outer automorphism class as $\phi$. In this case we obviously have $\lambda(\phi) = \lambda(\psi)$. Thus we may assume neither $\phi$ nor $\psi$ is surjective.

The fact that $u(s)=u(r)$ implies $s=ra'$ for some $a'\in \ker(u) = \cup_{i=0}^\infty r^i A r^{-i}$.  Thus there exists $k\ge 1$ such that $a_0 :=r^{-k}a' r^k\in A$.
Conjugating the HNN presentation $D=B\ast_\psi$ by the element $r^{-k}\in D$, which does not change the dual homomorphism $u\colon D\to \Z$ and preserves the stretch factor of the defining endomorphism, we may henceforth assume $s=ra_0$ for some  $a_0\in A$. Note that we then have $s^{-1} A s\le A$.

We claim that $s^m A s^{-m} = r^m A r^{-m}$ for all $m\geq 0$. This is obviously true for $m=0$, so by induction assume it holds for some $m\geq 0$. Then
\[ s(s^m A s^{-m})s^{-1} = (ra_0) (r^m A r^{-m}) (a_0\inv r\inv) = r^{m+1} A r^{-m-1},\]
where here we have used the fact that $a_0 (r^m A r^{-m})a_0\inv = r^mAr^{-m}$ since $a_0\in r^m A r^{-m}$. The claim follows.

We therefore have $\ker(u) = \cup_{i=0}^\infty s^i A s^{-i}$. Since $B\le \ker(u)$ is finitely generated and $s\inv A s \le A$, it follows that there exists $n_0\geq 0$ for which $s^{-n_0} B s^{n_0} \le A$. Thus, conjugating the splitting $D=B\ast_\psi$ by the element $s^{-n_0}\in D$, which does not change the stable letter $s$ of the presentation, we may assume that $B\le A$.

Consider the injective endomorphism $\phi'\colon A\to A$ defined by $\phi'(a) = s\inv a s\in A$. Notice that for any $a\in A$, the elements $\phi'(a) = s\inv a s$ and $\phi(a) = r\inv a r$ are conjugate in $A$ (since $s = ra_0$) and therefore have the same cyclically reduced word length with respect to any free basis of $A$. Thus $\norm{\phi'(a)}_A = \norm{\phi(a)}_A$ for all $a\in A$, and so we obviously have $\lambda(\phi) = \lambda(\phi')$. On the other hand, since $\psi$ is defined as $\psi(b) = s\inv b s$ and $B\le A$, we see that $\psi = \phi'\vert_{B}$. Therefore, letting $K$ denote the maximum value of $\norm{\cdot}_A$ over all elements in a  free basis of $B$, we find that
\[ \lambda(\psi) 
\; =\; \sup_{b\in B} \liminf_{n\to \infty} \sqrt[n]{\norm{\psi^n(b)}_B}
\; \leq\; \sup_{b\in B} \liminf_{n\to \infty} \sqrt[n]{K\norm{\psi^n(b)}_A}
\; \leq\; \sup_{a\in A} \liminf_{n\to \infty} \sqrt[n]{\norm{\phi'^n(a)}_A}
\; =\; \lambda(\phi').\]
Thus $\lambda(\psi)\leq \lambda(\phi')=\lambda(\phi)$. By symmetry we have $\lambda(\phi)\leq \lambda(\psi)$ as well, and so the proposition follows.
\end{proof}

We remark that, in view of Corollary~\ref{C:HNN_and_stretch_for_endos}, the conclusion of Proposition~\ref{P:unique_stretch_for_HNN} holds even if one omits the requirement that the endomorphisms $\phi$ and $\psi$ be injective.

\section{Setup}
\label{sec:Setup}

Let $\Gamma$ be a finite graph with no valence--$1$ vertices, and let $f\colon \Gamma\to\Gamma$ be an expanding irreducible train track map representing an outer automorphism $\fee \in \Out(F_N)$ of the rank--$N$ free group $F_N\cong\pi_1(\Gamma)$. Let $G = F_N\rtimes_\fee \Z$ be the free-by-cyclic group determined by the outer automorphism $\fee$. Explicitly, $G$ is defined up to isomorphism by choosing a representative $\Phi\in \Aut(F_N)$ of $\fee$ and setting
\begin{equation}
\label{eqn:semi-direct}
G = F_N\rtimes_\fee \Z :=  \langle w, r \mid r^{-1}wr = \Phi(w)\text{ for } w\in F_N\rangle.
\end{equation}
The first cohomology $H^1(G;\R)$ of $G$ is simply the set of homomorphisms $\Hom(G;\R)$, and an element $u\in H^1(G;\R)$ is said to be \emph{primitive integral} if $u(G) = \Z$. In this situation, the element $u$ determines a splitting (i.e., a split extension)
\begin{equation}
\label{eqn:splitting}
\xymatrix{ 1  \ar[r] &  \ker(u) \ar[r] & G \ar[r]^{u} & \Z \ar[r] & 1}
\end{equation}
of $G$  and a corresponding \emph{monodromy} $\fee_u\in \Out(\ker(u))$. Namely, if $t_u\in G$ is any element such that $u(t_u)=1$, then the conjugation $g\mapsto t_u^{-1} g t_u$ defines an automorphism of $\ker(u)\lhd G$ whose outer automorphism class is the monodromy $\fee_u$. It is easy to see that $\fee_u\in \Out(\ker(u))$ depends only on $u$ and not on the choice of $t_u$. The homomorphism associated to the original splitting $G = F_N\rtimes_\fee \Z$ (that is, the homomorphism to $\Z$ with kernel $F_N\lhd G$ and sending the stable letter $r$ to $1\in \Z$) will be denoted by $u_0$; its monodromy is the given outer automorphism $\fee$.

We are particularly interested in the case where $\ker(u)$ is finitely generated, which automatically implies that $\ker(u)$ is free \cite{GMSW} and thus that $\fee_u$ is a free group automorphism. However, even when $\ker(u)$ is not finitely generated, we will see that in many cases $\fee_u$ has a naturally associated injective endomorphism -- of a finitely generated free subgroup of $\ker(u)$ -- with respect to which the splitting \eqref{eqn:splitting} is realized as an ascending HNN-extension; see \S\ref{S:endomorphisms}.

\subsection{The folded mapping torus}
\label{S:folded_mapping_torus}

Given a train track map $f$ as above, in \S4 of \cite{DKL} we constructed a $K(G,1)$ polyhedral $2$--complex $X = X_f$, called the \emph{folded mapping torus of $f$}.  This $2$--complex comes equipped with a semiflow $\flow$ and a natural map $\fib\colon X\to \sone$ that is a local isometry on flowlines  and whose induced map on fundamental groups is the homomorphism $u_0 = \fib_*\colon G\to \Z$.
The $1$--skeleton of $X$ consists of \emph{vertical $1$--cells} which are arcs of flowlines, and \emph{skew $1$--cells} which are transverse to the flowlines. Each $2$--cell of $X$ is a trapezoid whose top and bottom edges consist of skew $1$--cells and whose sides consist of vertical $1$--cells (it may be that one side is degenerate).

As in \cite{DKL}, we will assume that $f$ is a combinatorial graph map (see \S\ref{S:graphs_and_maps}), though this is only done to simplify the discussion and the exposition. We briefly recall the construction of $X$: Let $Z_f = \Gamma\times[0,1]/(x,1)\sim(f(x),0)$ be the usual mapping torus of $f$. There is a natural suspension semiflow on $Z_f$ given by flowing in the $[0,1]$ direction. The folded mapping torus $X$ is constructed as an explicit flow-respecting quotient of $Z_f$. In particular, the original graph $\Gamma$ may be identified with the image of $\Gamma\times\{0\}\subset \Gamma\times[0,1]$ in $X$. In this way, $\Gamma$ is realized as a cross section of the flow on $X$, and the first return map of $\flow$ to this cross section is exactly $f$. Furthermore, the natural map $Z_f\to \sone$ descends to a map $\fib\colon X\to \sone$ whose induced map on fundamental group is just the homomorphism $u_0\colon G\to\Z$.

We will work in the universal torsion-free abelian cover $p \colon \tX \to X$, which is the cover corresponding to the subgroup
\[ \ker( G \to H_1(G;\Z)/\mathrm{torsion}) < G.\]
Denote the covering group of $\tX \to X$ by $H \cong H_1(G;\Z)/\mathrm{torsion} \cong \Z^b$, where $b = b_1(G) = \rk(H)$. The semiflow $\flow$ lifts to a semiflow $\tflow$ on $\tX$. The cell structure on $X$ determines one on $\tX$ so that the covering map is cellular, and we will use the same terminology (skew, vertical, etc) to describe objects in $\tX$ with the same meaning as in $X$.

\begin{example}[Running example]
\label{E:introduce_running}
We recall here the `running example' that was introduced in Example 2.2 of \cite{DKL} and developed throughout that paper. In this paper we will continue our analysis of this example as we use it to illustrate key ideas. Let $\Gamma$ denote the graph in Figure \ref{F:train_tracks}, which has four edges oriented as shown and labeled $\{a,b,c,d\} = E_+\Gamma$. We consider a graph-map $f\colon \Gamma\to\Gamma$ under which the edges of $\Gamma$ map to the combinatorial edge paths $f(a)=d$, $f(b)=a$, $f(c)=b^{-1}a$, and $f(d)=ba^{-1}db^{-1}ac$. It is straight froward to check that $f$ is a train track map.

\begin{figure} [htb]
\labellist
\small\hair 2pt
\pinlabel $a$ [b] at 37 58
\pinlabel $b$ [b] at 37 31
\pinlabel $d$ [b] at 37 5
\pinlabel $c$ [r] at 98 28
\pinlabel $\iota$ [b] at 118 52
\pinlabel $f'$ [b] at 118 8
\pinlabel $d$ [b] at 175 58
\pinlabel $a$ [b] at 175 31
\pinlabel $b$ [l] at 145 23
\pinlabel $a$ [bl] at 153 12
\pinlabel $d$ [b] at 167 6
\pinlabel $b$ [b] at 185 5.5
\pinlabel $a$ [br] at 199 12
\pinlabel $c$ [r] at 207 23
\pinlabel $b$ [b] at 226 43.5
\pinlabel $a$ [b] at 226 17
\pinlabel $\Gamma$ at 5 54
\pinlabel $\Delta$ at 212 54
\endlabellist
\begin{center}
\includegraphics[height=2.7cm]{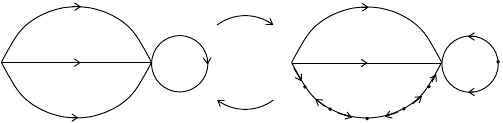} \caption{The example train track map. Left: Original graph $\Gamma$. Right: Subdivided graph $\Delta$ with labels. Our map $\f \colon \Gamma \to \Gamma$ is the composition of the ``identity'' $\iota\colon\Gamma\to\Delta$ with the map $f'\colon\Delta\to\Gamma$ that sends edges to edges preserving labels and orientations.}
\label{F:train_tracks}
\end{center}
\end{figure}

The graph map $f$ induces an automorphism $\fee = f_*$ of the free group $F_3 = F(\gamma_1,\gamma_2,\gamma_3)\cong \pi_1(\Gamma)$ generated by the loops $\gamma_1=b^{-1}a$, $\gamma_2=a^{-1}d$ and $\gamma_3=c$. Explicitly, the automorphism $\fee$ is given by $\fee(\gamma_1)=\gamma_2$, $\fee(\gamma_2)=\gamma_2^{-1}\gamma_1^{-1}\gamma_2\gamma_1\gamma_3$, and $\fee(\gamma_3)=\gamma_1$. Moreover, one may verify that $\fee$ is hyperbolic and fully irreducible.

Consider now the group extension $G = G_\fee$ defined by the presentation \eqref{eqn:semi-direct}. As this presentation contains relations $\gamma_2 = r\inv \gamma_1 r$ and $\gamma_3 = r \gamma_1 r\inv$, we see that $G$ is in fact generated by $\gamma_1$ and $r$. Writing $\gamma = \gamma_1$ and performing Tietze transformations to eliminate the generators $\gamma_2$ and $\gamma_3$, one may obtain the following two-generator one-relator presentation for $G$:
\begin{equation}
\label{eqn:example_presentation}
G=\langle \gamma, r \mid  \gamma^{-1} r \gamma^{-1} r  \gamma^{-1} r^{-1} \gamma r \gamma r \gamma r^{-3} = 1\rangle.
\end{equation}

\begin{center}
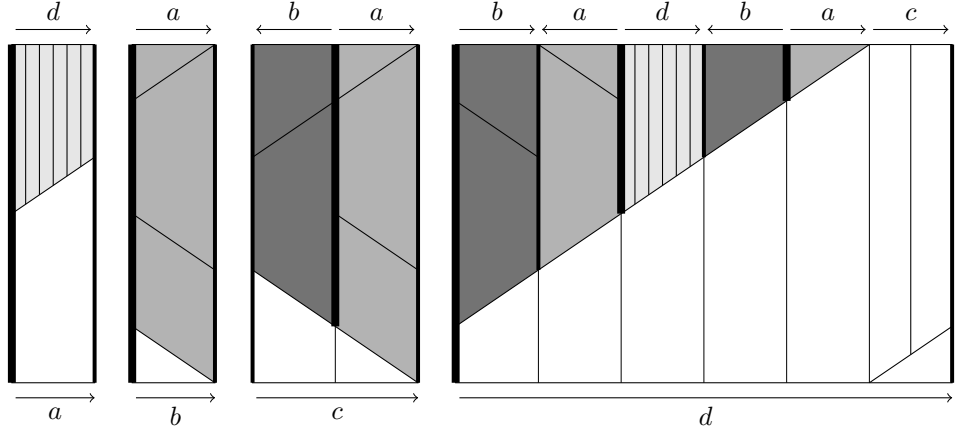
\begin{figure}[htb]
\begin{tikzpicture}
%

\pgfmathsetmacro\hstep{1.1}   
\pgfmathsetmacro\vstep{.75} 
\pgfmathsetmacro\hsp{.5}    
\pgfmathsetmacro\arsp{.05}  
\pgfmathsetmacro\varr{.2}    
\pgfmathsetmacro\subc{1.0/2.0} 
\pgfmathsetmacro\subd{1.0/6.0} 

\pgfmathsetmacro\vzero{0}                  
\pgfmathsetmacro\vone{\vzero   + 1*\vstep} 
\pgfmathsetmacro\vtwo{\vzero   + 2*\vstep} 
\pgfmathsetmacro\vthree{\vzero + 3*\vstep} 
\pgfmathsetmacro\vfour{\vzero  + 4*\vstep} 
\pgfmathsetmacro\vfive{\vzero  + 5*\vstep} 
\pgfmathsetmacro\vsix{\vzero   + 6*\vstep} 

\pgfmathsetmacro\azero{0}                  
\pgfmathsetmacro\aone{\azero + \hstep}     
\pgfmathsetmacro\bzero{\aone + \hsp}       
\pgfmathsetmacro\bone{\bzero + \hstep}     
\pgfmathsetmacro\czero{\bone + \hsp}       
\pgfmathsetmacro\cone{\czero + \hstep}     
\pgfmathsetmacro\ctwo{\czero + 2*\hstep}   
\pgfmathsetmacro\dzero{\ctwo  + \hsp}      
\pgfmathsetmacro\done{\dzero  + 1*\hstep}  
\pgfmathsetmacro\dtwo{\dzero  + 2*\hstep}  
\pgfmathsetmacro\dthree{\dzero+ 3*\hstep}  
\pgfmathsetmacro\dfour{\dzero + 4*\hstep}  
\pgfmathsetmacro\dfive{\dzero + 5*\hstep}  
\pgfmathsetmacro\dsix{\dzero  + 6*\hstep}  
 
\fill[black!30!white] 
(\bone,\vzero) -- (\bone,\vsix) -- (\bzero, \vsix) -- (\bzero,\vone)
(\ctwo,\vzero) -- (\ctwo,\vsix) -- (\cone, \vsix) -- (\cone,\vone)
(\done,\vtwo) -- (\done,\vsix) -- (\dtwo, \vsix) -- (\dtwo,\vthree)
(\dfour,\vfive) -- (\dfour,\vsix) -- (\dfive, \vsix);

\fill[black!55!white]
(\czero,\vtwo) -- (\czero,\vsix) -- (\cone,\vsix) -- (\cone,\vone)
(\done,\vtwo) -- (\done,\vsix) -- (\dzero,\vsix) -- (\dzero,\vone)
(\dthree,\vfour) -- (\dthree,\vsix) -- (\dfour,\vsix) -- (\dfour,\vfive);

\fill[black!10!white]
(\azero,\vthree) -- (\azero,\vsix) -- (\aone,\vsix) -- (\aone,\vfour)
(\dtwo,\vthree) -- (\dtwo,\vsix) -- (\dthree,\vsix) -- (\dthree,\vfour);

\draw (\azero,\vzero) rectangle (\aone,\vsix)
(\bzero,\vzero) rectangle (\bone,\vsix)
(\czero,\vzero) rectangle (\ctwo,\vsix)
(\dzero,\vzero) rectangle (\dsix,\vsix);

\draw [line width = 3]
(\azero,\vzero) -- (\azero,\vsix)
(\bzero,\vzero) -- (\bzero,\vsix)
(\dzero,\vzero) -- (\dzero,\vsix);
\draw [line width = 3] 
(\cone,\vone) -- (\cone,\vsix)
(\dtwo,\vthree) -- (\dtwo,\vsix)
(\dfour,\vfive) -- (\dfour,\vsix);
\draw [line width = 1.5]
(\aone,\vzero) -- (\aone,\vsix)
(\bone,\vzero) -- (\bone,\vsix)
(\czero,\vzero) -- (\czero,\vsix)
(\ctwo,\vzero) -- (\ctwo,\vsix)
(\dsix,\vzero) -- (\dsix,\vsix);
\draw [line width = 1.5]
(\done,\vtwo) -- (\done,\vsix)
(\dthree,\vfour) -- (\dthree,\vsix);
\draw [ultra thin] 
(\cone,\vzero) -- (\cone,\vone)
(\done,\vzero) -- (\done,\vtwo)
(\dtwo,\vzero) -- (\dtwo,\vthree)
(\dthree,\vzero) -- (\dthree,\vfour)
(\dfour,\vzero) -- (\dfour,\vfive)
(\dfive,\vzero) -- (\dfive,\vsix);

\draw 
(\azero+1*\subd*\hstep,\vthree+1*\subd*\vstep)--(\azero+1*\subd*\hstep,\vsix)
(\azero+2*\subd*\hstep,\vthree+2*\subd*\vstep)--(\azero+2*\subd*\hstep,\vsix)
(\azero+3*\subd*\hstep,\vthree+3*\subd*\vstep)--(\azero+3*\subd*\hstep,\vsix)
(\azero+4*\subd*\hstep,\vthree+4*\subd*\vstep)--(\azero+4*\subd*\hstep,\vsix)
(\azero+5*\subd*\hstep,\vthree+5*\subd*\vstep)--(\azero+5*\subd*\hstep,\vsix);

\draw 
(\dtwo+1*\subd*\hstep,\vthree+1*\subd*\vstep)--(\dtwo+1*\subd*\hstep,\vsix)
(\dtwo+2*\subd*\hstep,\vthree+2*\subd*\vstep)--(\dtwo+2*\subd*\hstep,\vsix)
(\dtwo+3*\subd*\hstep,\vthree+3*\subd*\vstep)--(\dtwo+3*\subd*\hstep,\vsix)
(\dtwo+4*\subd*\hstep,\vthree+4*\subd*\vstep)--(\dtwo+4*\subd*\hstep,\vsix)
(\dtwo+5*\subd*\hstep,\vthree+5*\subd*\vstep)--(\dtwo+5*\subd*\hstep,\vsix);
\draw
(\dfive+\subc*\hstep,\vzero+\subc*\vstep)--(\dfive+\subc*\hstep,\vsix);


\draw [->] (\azero+\arsp,\vzero -\varr)--node[below]{$a$}(\aone, \vzero -\varr); 
\draw [->] (\bzero+\arsp,\vzero -\varr)--node[below]{$b$}(\bone, \vzero -\varr); 
\draw [->] (\czero+\arsp,\vzero -\varr)--node[below]{$c$}(\ctwo, \vzero -\varr); 
\draw [->] (\dzero+\arsp,\vzero -\varr)--node[below]{$d$}(\dsix, \vzero -\varr); 
\draw [->] (\azero +\arsp,\vsix + \varr) -- node[above]{$d$} (\aone  -\arsp,\vsix + \varr);
\draw [->] (\bzero +\arsp,\vsix + \varr) -- node[above]{$a$} (\bone  -\arsp,\vsix + \varr);
\draw [<-] (\czero +\arsp,\vsix + \varr) -- node[above]{$b$} (\cone  -\arsp,\vsix + \varr);
\draw [->] (\cone  +\arsp,\vsix + \varr)  -- node[above]{$a$}(\ctwo  -\arsp,\vsix + \varr);
\draw [->] (\dzero +\arsp,\vsix + \varr) -- node[above]{$b$} (\done  -\arsp,\vsix + \varr);
\draw [<-] (\done  +\arsp,\vsix + \varr)  -- node[above]{$a$}(\dtwo  -\arsp,\vsix + \varr);
\draw [->] (\dtwo  +\arsp,\vsix + \varr)  -- node[above]{$d$}(\dthree-\arsp,\vsix + \varr);
\draw [<-] (\dthree+\arsp,\vsix + \varr) -- node[above]{$b$} (\dfour -\arsp,\vsix + \varr);
\draw [->] (\dfour +\arsp,\vsix + \varr) -- node[above]{$a$} (\dfive -\arsp,\vsix + \varr);
\draw [->] (\dfive +\arsp,\vsix + \varr) -- node[above]{$c$} (\dsix  -\arsp,\vsix + \varr);

\draw (\azero, \vthree) -- (\aone, \vfour); 
\draw (\bone, \vzero) -- (\bzero, \vone); 
\draw (\bone, \vtwo) -- (\bzero, \vthree); 
\draw (\bzero, \vfive) -- (\bone, \vsix); 
\draw (\ctwo, \vzero) -- (\czero, \vtwo); 
\draw (\ctwo, \vtwo) -- (\cone, \vthree); 
\draw (\czero, \vfour) -- (\ctwo, \vsix); 
\draw (\dzero,\vone) -- (\dfive,\vsix); 
\draw (\done,\vfour) -- (\dzero,\vfive); 
\draw (\dtwo,\vfive) -- (\done,\vsix); 

\draw (\dfive,\vzero)--(\dsix,\vone);
\end{tikzpicture}
\caption{The folded mapping torus $X_\f$, with its trapezoid cell structure, for the example $\f\colon \Gamma \to \Gamma$. The top is glued to the bottom as described by the labeling, and shaded cells with the same shading and shape are identified.}
\label{F:trapezoid_example}
\end{figure}
\end{center}

Using the specified train track representative $\f$ of $\fee$, we construct the corresponding folded mapping torus $X = X_\f$. This folded mapping torus $X$, equipped with its trapezoid cell structure, is illustrated in Figure \ref{F:trapezoid_example}. See the examples in \cite{DKL} for more details. The covering group $H \cong H_1(G;\Z)$ of the universal torsion-free abelian cover $\tX\to X$ is then just $\Z^2$. Moreover $H$ is freely generated by the image (under $G\to G^{ab}$) of the stable letter $r\in G$ and the image of $\gamma_1$ (which is also the image of $\gamma_2$ and of $\gamma_3$).
\end{example}

For the purposes of analyzing examples, we also note that the trapezoidal cell structure on $X$ is, in general, a subdivision of a cell structure with fewer cells. The $1$--cells of this {\em unsubdivided} cell structure are again either vertical or skew, and so may be oriented so that restriction of the orientation to a $1$--cell of the subdivision agrees with its original orientation. In particular, a positive (or nonnegative) $1$--cocycle for the canonical cell structure will give rise to one for the unsubdivided cell structure. We do not bother with a formal definition of this cell structure as it will only be used to simplify our discussion of examples, where it will be described.

\begin{example} \label{Ex:cell structure}
Let us employ the observations of the preceding paragraph to build a simplified cell structure on the folded mapping torus from Example \ref{E:introduce_running}.   This cell structure is shown in Figure~\ref{F:unsubdivided_torus} where we have removed the duplicate polygons to further simplify the picture.

\begin{figure}
\begin{center}
\includegraphics[width=10cm]{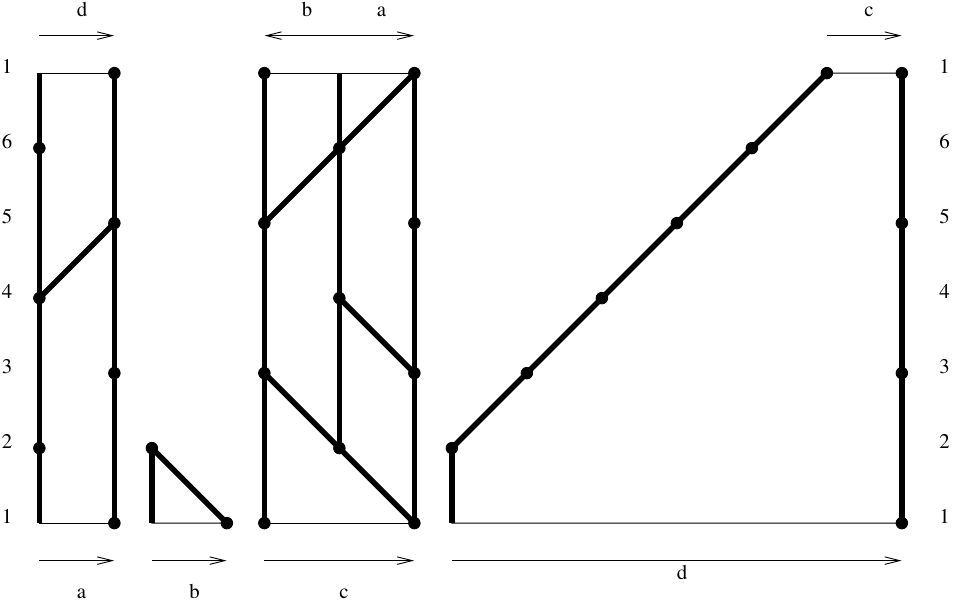} 
\label{F:01minimal}
\caption{A simpler picture of the folded mapping torus with the unsubdivided cell structure.  The six vertices $v_1,\ldots,v_6$ are those at the heights $1,\ldots,6$ as indicated.}
\label{F:unsubdivided_torus}
\end{center}
\end{figure}

There are just six vertices in the unsubdivided cells structure, and we label them $v_1,\ldots,v_6$ according to the heights as illustrated in Figure \ref{F:unsubdivided_torus}.  There are twelve $1$--cells, with at most one $1$--cell between every pair of vertices.  As such, we can label the oriented $1$--cells by their endpoints, and they are
\[ \left\{ \begin{array}{l} [v_1,v_3],[v_3,v_5],[v_5,v_1],[v_2,v_4],[v_4,v_6],[v_6,v_1],\\
 \quad [v_1,v_2],[v_2,v_3],[v_3,v_4],[v_4,v_5],[v_5,v_6],[v_6,v_1]  \end{array} \right\}. \]
We let $\{v_1^*,\ldots,v_6^*\}$ denote the dual basis of $0$--cochains.  Similarly, if $[v_i,v_j]$ is an oriented $1$--cell, we let $[v_i^*,v_j^*]$ denote the dual $1$--cochain, thus determining a basis of the $1$--cochains.

There are also six $2$--cells obtained by gluing the polygons in the figure. Orienting the $2$--cells, we can read off the vertices along the boundary.  Since there is at most one $1$--cell between any two vertices, this ordered list of vertices uniquely determines the loop in the $1$--skeleton which is the boundary of the $2$--cell. The boundaries of the $2$--cells are:
\[ \left\{ \begin{array}{l} (v_1,v_3,v_5,v_4,v_2,v_6),(v_1,v_2,v_6,v_5),(v_2,v_4,v_6,v_5,v_3),(v_1,v_3,v_4,v_2),\\
(v_3,v_5,v_1,v_6,v_4),(v_1,v_3,v_5,v_1,v_2,v_3,v_1,v_6,v_5,v_4,v_3,v_2,v_6,v_4,v_5) \end{array} \right\}. \]

\end{example}

\subsection{The BNS-invariant and the stretch function}
\label{sec:BNS_stretch}
Recall from \cite{BNS} that the \emph{BNS-invariant} of our finitely generated group $G$ is defined to be the subset $\BNS(G)$ of the sphere $S(G) = (H^1(G;\R)\setminus\{0\})/\R_+$ consisting of those directions $[u]$ for which $G'=[G,G]$ is finitely generated over a finitely generated submonoid of $\{g\in G : u(G)\geq 0\}$ (where here $G$ acts on $G'$ by conjugation---see the remarks following the Meta-Theorem). It follows from \cite[Proposition 4.3]{BNS} that for $u\in H^1(G;\R)$ primitive integral, the ray $[u]$ is in $\BNS(G)$ if an only if there exists $t\in G$ with $u(t)=1$ and a finitely generated subgroup $B\leq \ker(u)$ such that $t\inv Bt \leq B$ and $G = \langle B,t\rangle$. Defining $\phi\colon B\to B$ by $\phi(b) = t\inv b t$, we then see that $G$ splits as an ascending HNN-extension
\[G = B\ast_\phi = \langle B, t \mid t\inv b t = \phi(b)\text{ for all }b\in B\rangle\]
that is dual to $u$ (i.e., so that $u\colon G\to \Z$ is given by $B\mapsto 0$ and $t\mapsto 1$). Theorem 2.6 (with Remark 2.7) of \cite{GMSW} shows that in this case $B$ is necessarily a free group. Therefore $\phi$ is a free group endomorphism and we may consider its stretch factor $\lambda(\phi)$. By Proposition~\ref{P:unique_stretch_for_HNN}, $\lambda(\phi)$ is independent of the particular choices of $t\in G$ and $B\le \ker(u)$ and in fact only depends on $u$; thus we may unambiguously define $\Lambda(u) := \lambda(\phi)$.

\begin{defn}[Stretch function]
\label{D:stretch_function}
Let us define the \emph{rational BNS-cone of $G$} to be the set
\[\QBNS(G) = \{u\in H^1(G;\R) : [u]\in \BNS(G) \text{ and $u(G)$ is discrete}\}.\]
Given $u\in \QBNS(G)$, there is a unique $k>0$ so that $u'=ku$ is primitive integral, and we define $\Lambda(u):=\Lambda(u')^k$ (with $\Lambda(u')$ as defined above). This gives a well-defined function
\[\Lambda\colon \QBNS(G)\to \R_+\]
that we call the \emph{stretch function of $G$}. It is a canonical invariant of the free-by-cyclic group $G$ and satisfies the homogeneity property $\Lambda(ku) = \Lambda(u)^{1/k}$ for all $k>0$. 
\end{defn}

One of the main goals of this paper is to understand and explicitly compute $\Lambda$ on a large part of $\QBNS(G)$. Along the way, will need the following crucial feature of the stretch  function:

\begin{proposition}
\label{prop:stretch_is_locally_bounded}
The stretch function $\Lambda$ is locally bounded. That is, for all $u\in \QBNS(G)$, there exists a neighborhood $U\subset \QBNS(G)$ of $u$ and a finite number $R > 0$ so that $\Lambda(u')\leq R$ for all $u'\in U$.
\end{proposition}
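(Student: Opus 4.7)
The plan is to reduce local boundedness to an explicit analysis of ascending HNN-splittings near $u_0$. Write $u_0 = \alpha_0 u_0'$ with $u_0' \in H^1(G;\Z)$ primitive integral and $\alpha_0 > 0$. Since $[u_0'] = [u_0] \in \BNS(G)$, I would use \cite[Proposition~4.3]{BNS} to fix an ascending HNN-decomposition $G = B_0 \ast_{\phi_0}$ dual to $u_0'$, with $B_0 \leq \ker(u_0')$ finitely generated free (by \cite[Theorem~2.6]{GMSW}) with basis $A = (a_1,\ldots,a_n)$, stable letter $t_0$ satisfying $u_0'(t_0) = 1$, and $\phi_0(b) = t_0\inv b t_0$. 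Since $S = A \cup \{t_0\}$ is then a finite generating set for $G$, it suffices to bound $\Lambda$ on a neighborhood of the form
\[U_\delta = \{v \in H^1(G;\R) : |v(s) - u_0(s)| < \delta \text{ for all } s \in S\}\]
for $\delta > 0$ sufficiently small. Given $v \in U_\delta \cap \QBNS(G)$, I decompose $v = \beta v'$ with $v'$ primitive integral, so that $\Lambda(v) = \Lambda(v')^{1/\beta}$.

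The integrality of $v'(s) = v(s)/\beta \in \Z$, combined with $|v(s) - u_0(s)| < \delta$, tightly constrains the pair $(\beta, v')$. In the first subcase $\beta \geq \alpha_0/2$: taking $\delta < \alpha_0/4$ forces $v'|_S = u_0'|_S$ and hence $v' = u_0'$, giving the immediate uniform bound $\Lambda(v) = \Lambda(u_0')^{1/\beta} \leq \Lambda(u_0')^{2/\alpha_0}$.

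The main case is $\beta < \alpha_0/2$, in which $v'$ may be far from $u_0'$ in norm. Here the plan is to construct an explicit ascending HNN-splitting $G = B_{v'} \ast_{\phi_{v'}}$ dual to $v'$ by modifying the original $(B_0, t_0)$-splitting: replace each $a_i$ by $\tilde a_i = a_i t_0^{m_i}$ where $m_i \in \Z$ is chosen so that $v'(\tilde a_i) = 0$, choose a new stable letter $t_{v'}$ written as a word in $S$ with $v'(t_{v'}) = 1$, and let $B_{v'}$ be the finitely generated subgroup of $\ker(v')$ containing the $\tilde a_i$ together with the $\phi_0$-iterates needed so that $t_{v'}\inv B_{v'} t_{v'} \subseteq B_{v'}$. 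By Proposition~\ref{P:unique_stretch_for_HNN}, the value $\Lambda(v')$ is independent of the choices made, and can be estimated using the word lengths of the conjugates $t_{v'}\inv \tilde a_i t_{v'}$ in the basis of $B_{v'}$. I would aim for a bound of the form $\log\lambda(\phi_{v'}) \leq C\alpha_0/\|v'\|$ with $C$ depending only on $(\phi_0, A, \delta)$; then by the homogeneity relation $1/\beta \approx \|v'\|/\|u_0\|$, this gives the desired uniform bound $\log\Lambda(v) \leq C/\|u_0'\|$ on $U_\delta \cap \QBNS(G)$.

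The main obstacle is producing this modified construction cleanly enough to establish the quantitative bound above; I expect this to rely on the normal form for elements of the original HNN-extension $B_0 \ast_{\phi_0}$ provided by Britton's lemma (which applies because $\phi_0$ is injective) to control how $m_i$ and the word for $t_{v'}$ depend on the constrained parameters $(\beta, v')$. If this direct approach proves unwieldy, an alternative would be to quantify Brown's criterion for openness of $\BNS(G)$ on the Cayley $2$-complex and extract the required bound from the finite data arising from a finite presentation of $G$.
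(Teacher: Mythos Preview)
Your approach is purely algebraic, whereas the paper's proof (Appendix~\ref{S:bounding_stretch}) is geometric and dynamical. The paper builds the mapping torus $Y$ of an expanding irreducible graph map $f_1\colon\Rose\to\Rose$ representing $\phi_0$, equips it with the suspension semiflow, and for integral classes $[z_{\bf a}]$ in an open cone $\Upsilon_r\ni u_0'$ produces a cross section $\Theta_{\bf a}\subset Y$ with an expanding irreducible first return map $f_{\bf a}$. One then has the entropy bound $\log\Lambda([z_{\bf a}])\le h(f_{\bf a})$, and $[z_{\bf a}]\mapsto h(f_{\bf a})$ is shown to extend to a continuous, degree-$(-1)$ homogeneous function on all of $\Upsilon_r$ by passing to the natural extension of the semiflow and invoking the Variational Principle together with Abramov's Theorem. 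Local boundedness follows at once from continuity.

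Your Case~1 is fine, but Case~2 carries a genuine gap. The sentence ``let $B_{v'}$ be the finitely generated subgroup of $\ker(v')$ containing the $\tilde a_i$ together with the $\phi_0$-iterates needed so that $t_{v'}^{-1}B_{v'}t_{v'}\subseteq B_{v'}$'' is not a construction: nothing in your outline controls how many iterates are needed, whether the resulting subgroup has a free basis of bounded size, or how the word lengths of the images under $\phi_{v'}$ behave on that basis. The target inequality $\log\lambda(\phi_{v'})\le C\alpha_0/\lVert v'\rVert$ is exactly the degree-$(-1)$ scaling that would make $\log\Lambda$ locally bounded, so asserting it without a mechanism is essentially assuming the conclusion. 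Britton's Lemma gives you normal forms in $B_0\ast_{\phi_0}$, but it does not by itself deliver this quantitative decay: as $\lVert v'\rVert\to\infty$ the exponents $m_i$ and the word representing $t_{v'}$ grow, the number of $\phi_0$-iterates you must adjoin grows, and you would need to show that these effects balance \emph{exactly} against the $1/\lVert v'\rVert$ factor. You have not supplied that balance, and it is far from formal. This is precisely the difficulty the paper sidesteps by moving to the dynamical picture, where Abramov's formula $h(f_{\bf a})^{-1}=\inf_\mu \mu_{\bf a}(Y')/h_\mu(\Psi)$ encodes the linear dependence on the cohomology class automatically.
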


This result will be obtained by expanding on some ideas in \cite{DKL}. As these considerations are somewhat far afield of our current discussion, the proof of Proposition~\ref{prop:stretch_is_locally_bounded} is relegated to Appendix~\ref{S:bounding_stretch}.

\section{The module of transversals and the McMullen polynomial}
\label{S:foliation}

The flowlines of $\tflow$ intersected with each trapezoidal $2$--cell determine a $1$--dimensional foliation of $\tX$. The two skew $1$--cells of each trapezoid are transverse to the flowlines, and the vertical $1$--cells are arcs of flowlines. The arcs of flowlines in a $2$--cell will be called {\em plaque arcs}.  A maximal, path connected, countable union of plaque arcs will be called a {\em leaf}. We will refer to this decomposition of $\tX$ into leaves as a {\em foliation} of $\tX$ and denote it $\mathcal F$ (we also use $\mathcal F$ to denote the actual foliation of any $2$--cell). This foliation descends to a foliation on $X$ for which the leaves are the images of the leaves in $\tX$, and by an abuse of notation we also refer to this foliation as $\mathcal F$ whenever it is convenient to do so.

Recall that the union of the vertical $1$--cells is preserved by $\flow$. Moreover, since the vertices of $\Gamma\subset X$ all lie on vertical $1$--cells, and since the preimages of these vertices under all powers of $f$ form a dense subset of $\Gamma$, it follows that the set of points that eventually flow into vertical $1$--cells form a dense subset of $\tX$. This set is, by definition, a union of leaves, and we refer to these as the \emph{vertex leaves} of $\mathcal F$.

\begin{defn}[Transversal]
\label{D:the_transversals}
A {\em transversal} $\tau$ to $\mathcal F$ is an arc contained in a $2$--cell of $\tX$ which is transverse to the foliation $\mathcal F$ and has both endpoints contained in vertex leaves.  We do not view a single point as an arc.
\end{defn}

Let $\mathbb F(\mathcal F)$ denote the free $\Z$--module generated by all transversals $\tau$ to $\mathcal F$.  Following McMullen, we define the module of transversals to $\mathcal F$, denoted $T(\mathcal F)$, 
to be the largest quotient $\mathbb F(\mathcal F) \to T(\mathcal F)$ in which the images $[\tau]$ of transversals $\tau$ satisfy the following {\em basic relations}:
\begin{enumerate}
\item $[\tau] = [\tau_1] +[\tau_2]$, if $\tau = \tau_1 \cup \tau_2$ and $\tau_1 \cap \tau_2$ is a single point in a vertex leaf, 
\item $[\tau] =[\tau']$, if $\tau$ flows homeomorphically onto $\tau'$ in the sense that there exists a continuous, nonnegative real function $\nu\colon \tau \to [0,\infty)$ so that $w\mapsto \tpsi_{\nu(w)}(w)$ defines a homeomorphism from $\tau$ onto $\tau'$. 
\end{enumerate}
More precisely, we make the following definition:

\begin{defn}[Module of transversals]
\label{D:transversals module}
Let $R \le \mathbb F(\mathcal F)$ be the submodule generated by all elements $\tau - \tau_1 - \tau_2$ and $\tau - \tau'$, with $\tau, \tau', \tau_1, \tau_2$ as in the basic relations (1)--(2) above. The {\em module of $\mathcal F$} is defined to be the quotient $T(\mathcal F) =\mathbb F(\mathcal F)/R$.
\end{defn}

The covering group $H$ acts on the set of transversals by taking preimages:  given an element $h \in H$ and transversal $\tau$, we have $h \cdot \tau = h^{-1}(\tau)$.  This is naturally a right action, and it is sometimes convenient to write
\[  \tau \cdot h =  h \cdot \tau = h^{-1}(\tau).\]
Since $H$ is abelian, the distinction between left versus right is not important, but taking preimages (as opposed to images) will be important.
This makes both $\mathbb F(\mathcal F)$ and $T(\mathcal F)$ into $\Z[H]$--modules, where $\Z[H]$ is the integral group ring of $H$. 
We also note that the quotient $\mathbb F(\mathcal F) \to T(\mathcal F)$ is a $\Z[H]$--module homomorphism (not just a $\Z$--module homomorphism).

\subsection{The McMullen polynomial}
\label{S:McMullen_polynomial}

We now define a multivariable polynomial which is analogous to the Teichm\"uller polynomial defined by McMullen in the $3$--manifold setting \cite{Mc}. In later sections we will see that this polynomial invariant encodes much information about cross sections to $\flow$ and various splittings of $G$. The definition relies on the following proposition, which will follow from the results in \S\ref{secc:module_isom}. 

\begin{proposition} \label{P:fgmodule}
The $\Z[H]$--module $T(\mathcal F)$ is finitely presented.
\end{proposition}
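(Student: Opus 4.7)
The plan is to exhibit $T(\mathcal F)$ as isomorphic to a manifestly finitely presented $\Z[H]$--module, namely the \emph{graph module} $T(\hGamma)$ associated to the preimage $\hGamma \subset \tX$ of the cross-section $\Gamma \subset X$. I would define $T(\hGamma)$ as the free $\Z[H]$--module on the edges of $\hGamma$ modulo relations read off from the lift $\hat f$ of the first return map $f\colon \Gamma \to \Gamma$. Since $\Gamma$ has finitely many edges, $\hGamma$ has finitely many $H$--orbits of edges; and since the defining relations come one per $H$--orbit of edges, $T(\hGamma)$ is finitely presented by construction.

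I would then construct a $\Z[H]$--module map $\Phi\colon T(\hGamma)\to T(\mathcal F)$ sending each edge to its transversal class. Well-definedness amounts to the observation that each defining relation of $T(\hGamma)$ actually holds in $T(\mathcal F)$: for an edge $e$ of $\hGamma$, flowing $e$ forward one full mapping-torus cycle yields a homeomorphism onto the edge-path $\hat f(e)=e_{i_1}^{\epsilon_1}\cdots e_{i_k}^{\epsilon_k}$, whose breakpoints lie on vertex leaves of $\mathcal F$ and therefore subdivide $e$ (via Definition~\ref{D:transversals module}(1)) into sub-arcs that each flow homeomorphically onto one of the $e_{i_j}^{\epsilon_j}$ (via Definition~\ref{D:transversals module}(2)). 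This yields the identity
\[ [e] \;=\; \sum_{j=1}^{k}\epsilon_j\, h_j\cdot [e_{i_j}] \qquad\text{in }T(\mathcal F), \]
with $h_j\in H$ determined by the chosen lifts, and these are precisely the relations imposed in $T(\hGamma)$. Surjectivity of $\Phi$ follows from the fact that $\hGamma$ is a cross-section of $\tflow$: an arbitrary transversal $\tau$ can be flowed into $\hGamma$ using~(2), with vertex leaves encountered en route supplying subdivisions via~(1); the resulting edge-path in $\hGamma$ then decomposes further by~(1) into a finite $\Z[H]$--combination of edges of $\hGamma$.

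The main obstacle -- and the content of \S\ref{secc:module_isom} -- is injectivity of $\Phi$, i.e.\ showing that the first-return-map relations already exhaust every relation on edges of $\hGamma$ deducible from Definition~\ref{D:transversals module}(1)--(2). The strategy is a normal-form argument: any derivation of a relation in $T(\mathcal F)$ decomposes into elementary moves that either (i) subdivide a transversal at a vertex leaf, or (ii) flow a transversal across a finite portion of $\tX$. Move (i) is absorbed by applying relation~(1) inside $\hGamma$ itself, while move (ii) is absorbed by repeated application of the first-return-map relation for $\hat f$ (and for $\hat f$ run backwards in cases where the flow is toward $\hGamma$ from the positive side). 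The technical heart of the argument lies in bookkeeping the $H$--twists produced by covering transformations and in controlling transversals whose flow to $\hGamma$ requires arbitrarily many iterations of $\hat f$; the latter is where expansion of $f$ and finiteness of the edge set of $\Gamma$ combine to force termination.
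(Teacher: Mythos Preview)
Your approach is exactly the paper's: define the graph module $T(\hGamma)$ (the paper's $T(\hTheta_{u_0})$), observe it is finitely presented by construction (Proposition~\ref{P:presentationmatrix4graph}), and prove the natural map to $T(\mathcal F)$ is an isomorphism (Proposition~\ref{P:moduleisomorphism}). Well-definedness and surjectivity go just as you say.

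Your injectivity sketch, however, contains a few misconceptions worth correcting. First, transversals and edges are \emph{unoriented} in this setup, so there are no signs $\epsilon_j$ in the graph-module relations. Second, and more seriously, $\flow$ is only a \emph{semi}flow, so ``$\hat f$ run backwards'' is not available; and expansion of $f$ plays no role whatsoever in the argument. The paper's actual strategy is different from your ``absorb each elementary move'' picture. Given $w\in\Z[H]^E$ with $S(w)=\sum_\alpha r^\alpha$ a \emph{finite} sum of basic relations in $R$, one chooses a component $C$ of $\hGamma$ far enough \emph{forward} (a high power of $x$) that every transversal appearing in any $r^\alpha$ flows into $C$; this uses only finiteness of the sum, not expansion. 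One then pushes $w$ forward into $C$ via graph-module relations, obtaining $w+m$ with $m\in M$ supported entirely on edges of $C$. Next one canonically subdivides every transversal in every $r^\alpha$ according to its image edge-path in $C$: after this refinement the subdivision relations cancel identically, and each flow relation becomes a sum of flow relations between sub-transversals that each flow to a single edge of $C$. Since any transversal strictly below $C$ must have net coefficient zero in $S(w+m)$, and each ``flows-to-the-same-edge-of-$C$'' class contains at most one edge of $C$, one concludes $S(w+m)=0$ in $\mathbb F(\mathcal F)$, hence $w=-m\in M$.
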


Choose any finite presentation of $T(\mathcal F)$ as a $\Z[H]$--module, say with $m$ generators and $r$ relations. This gives an exact sequence
\[ \xymatrix{ \Z[H]^r \ar[r]^{D} &  \Z[H]^m \ar[r] &T(\mathcal F) \ar[r] & 0 } \]
where $D$ is an $m\times r$ matrix with entries in $\Z[H]$. Recall that the {\em fitting ideal of $T(\mathcal F)$} is the ideal $\mathcal I \leq \Z[H]$ generated by all $m \times m$ minors of $D$, and that this ideal is independent of the chosen finite presentation of $T(\mathcal F)$ \cite[Ch XIII, \S10]{Lang} \cite{Northcott}.

\begin{defn}[The McMullen polynomial]
Define $\poly\in \Z[H]$ to be the g.c.d.~of the fitting ideal $\mathcal I$ of $T(\mathcal F)$, which is well-defined up to multiplication by units in $\Z[H]$ (note that $\Z[H]$ is a unique factorization domain). Explicitly, if $p_1,\ldots,p_k$ denote the minors generating $\mathcal I$, then we have
\[ \poly =\mathrm{gcd}\{ p \in \mathcal I \} = \mathrm{gcd}\{ p_1,\ldots,p_k \}.\]
Viewing $\Z[H]$ as the ring of integral Laurent polynomials in $b$ variables, we can think of $\poly$ as a Laurent polynomial which we call the {\em McMullen polynomial} of $(X,\psi)$. Note that this definition depends only on $\tX$, $\mathcal F$,  and the action of $H$.
\end{defn}

In the process of proving Proposition \ref{P:fgmodule} we will see that $\poly$ enjoys many of the properties that McMullen's Teichm\"uller polynomial does for $3$--manifolds.

\section{Cross sections, closed $1$--forms, and cohomology}
\label{S:x-sections_1-forms_H^1}

We will see that the McMullen polynomial is intimately related to the cross sections of the semiflow $\flow$. In fact, these cross sections will play a crucial role in our analysis and understanding of $\poly$. 
To this end, we discuss here the definitions and general properties of cross sections and the duality between cross sections and cohomology.   Along the way we recall the notion of a closed $1$--form on a topological space, and describe a class of closed $1$--forms which interact well with $\flow$.  We then introduce the cone of sections $\Csec$ and give two descriptions of it---one in terms of cross sections and the other in terms of closed $1$--forms.  In Appendix~\ref{S:characterizing_sections} we will describe a procedure for explicitly building cross sections dual to certain cohomology classes.  This provides a characterization of the classes in $\Csec$ in combinatorial terms and results in a concrete description of $\Csec$ allowing us to prove Theorem \ref{T:cone_of_sections}.

\subsection{Cross sections and flow-regular maps}
\label{S:cross_sections}

Given an open subset $W \subset X$ (or $\tX$), we say that a continuous map $\eta' \colon W \to Y$ with $Y = \sone$ or $Y=\R$ is {\em flow-regular} if for any $\xi \in X$ the map $\{ s \in \R_{\geq 0} \mid \psi_s(\xi) \in W \}\to Y$ defined by $s \mapsto \eta'(\psi_s(\xi))$ is an orientation preserving local diffeomorphism. With this terminology, we say that a finite embedded graph $\Theta \subset X$ is {\em transverse to $\psi$} if there is a neighborhood $W$ of $\Theta$ and a flow-regular map $\eta' \colon W \to \sone$ so that $\Theta = (\eta')^{-1}(x_0)$ for some $x_0 \in \sone$.

\begin{defn}[Cross section]
A finite embedded graph $\Theta \subset X$ which is transverse to $\psi$ is called  a \emph{cross section} (or simply \emph{section}) of $\flow$
if every flowline intersects $\Theta$ infinitely often, that is, if $\{s\in \R_{\ge 0} \mid \flow_s(\xi)\in \Theta\}$ is unbounded for every $\xi\in X$. The cross section then has an associated \emph{first return map} $f_\Theta\colon\Theta\to\Theta$, which is the continuous map defined by sending $\xi\in \Theta$ to $\flow_{T(\xi)}(\xi)$, where $T(\xi)$ is the minimum of the set $\{s > 0 \mid \flow_s(\xi)\in \Theta\}$.
\end{defn}

If  $\Theta$ is a cross section, there exists a reparameterization 
$\flow^{\Theta}$ of the semiflow $\flow$ such that this first return map is exactly the restriction of the time--$1$ map $\flow^{\Theta}_1$ to $\Theta$. Explicitly, if $K$ is larger than $T(\xi)$ for all $\xi\in \Theta$, then we may reparameterize $\flow$ inside the flow-regular neighborhood $W$ to obtain a semiflow $\flow'$ for which the first return map of $\Theta$ to itself is exactly $\flow'_K$. The desired reparameterization is then given by $\flow_t^{\Theta} = \flow'_{t/K}$.

\begin{prop-defn}[Flow-regular maps representing cross sections] \label{PD:flow regular to sone}
For any cross section  $\Theta\subset X$  there exists a flow-regular map $\fib_\Theta\colon X\to \sone$ such that  $\Theta = \fib_\Theta\inv(0)$. (Thus the flow-regular map $W\to \sone$ witnessing the fact that $\Theta$ is transverse to $\flow$ may in fact be taken to have domain all of $X$). We say that  any such $\fib_\Theta$  \emph{represents}  $\Theta$.
\end{prop-defn}
\begin{proof}
Note that the function 
\[\tau(\xi) = \min\{s > 0 \mid \flow^{\Theta}_s(\xi) \in \Theta\}\qquad\text{for $\xi\in X$}\]
is bounded above. Indeed, $\tau \equiv 1$ on $\Theta$ by construction, and the fact that $\Theta$ is transverse to $\flow$ implies that $\tau$ is also bounded on an open neighborhood $U$ of $\Theta$. 
As $\tau$ is continuous and therefore bounded on the compact set $X\setminus U$, the global boundedness of $\tau$ follows. This implies that every biinfinite flowline (that is, a map $\gamma\colon \R\to X$ satisfying $\flow^{\Theta}_s(\gamma(t)) = \gamma(s+t)$) must intersect $\Theta$ infinitely often in the backwards direction. Since every point of $X$ lies on a biinfinite flowline, it follows that
\begin{equation}
\label{eqn:flow_union_of_sections}
X = \bigcup_{s\geq 0}\flow^{\Theta}_s(\Theta) = \bigcup_{0 \leq s \leq 1} \flow_s^{\Theta}(\Theta).
\end{equation}
In particular, we see that $\tau(\xi) \leq 1$ for all $\xi\in X$ and that $\tau(\xi) = 1$ if and only if $\xi\in \Theta$. The assignment $\xi\mapsto 1-\tau(\xi)$, which is continuous on $X\setminus\Theta$, thus descends to a continuous map $\fib_\Theta\colon X\to \R/\Z = \sone$ that satisfies $\fib_\Theta(\flow^{\Theta}_s(\xi)) = s + \fib_\Theta(\xi)$ for all $\xi\in X$ and $s\geq 0$. This map $\fib_\Theta$ is a local isometry on each $\flow^{\Theta}$--flowline, and we also have $\Theta = \fib_\Theta\inv(0)$ by construction.  Since $\flow^{\Theta}$ is just a reparameterization of $\flow$, it follows that $\fib_\Theta$  is also flow-regular with respect to the original flow $\flow$.
\end{proof}

\subsection{Cross sections and cohomology}
\label{sec:sections_and_cohomology}

Every cross section $\Theta$ of $\flow$ determines a homomorphism $[\Theta]\colon G\to \Z$ as follows: Let $\eta'\colon W\to \sone$ be a flow-regular map for which $\Theta = (\eta')^{-1}(x_0)$ for some $x_0\in \sone$. Taking a sufficiently small neighborhood $W'\subset W$ of $\Theta$, any closed loop $\gamma\colon \sone\to X$ may then be homotoped so that $t\mapsto \eta'(\gamma(t))$ is a local homeomorphism on $\gamma\inv(W')$. (In fact, one may perform the homotopy by applying $\flow$ judiciously inside $W$ to arrange, for example, that each component of $\gamma\cap W'$ is an arc of a flowline). The value of $[\Theta]$ on $\gamma$ is then equal to the number of components of $\gamma\inv(W')$ on which $\eta'\circ\gamma$ is orientation preserving, minus the number on which it is orientation reversing.   Alternatively, if $\fib_\Theta \colon X \to \sone$ represents $\Theta$ as in Proposition-Definition~\ref{PD:flow regular to sone}, then $[\Theta] = (\eta_\Theta)_*$.

\begin{defn}[Duality]
\label{D:dual_class}
The cross section $\Theta$ and corresponding integral cohomology class $[\Theta]\in H^1(X;\Z)$ are said to be \emph{dual} to each other.
\end{defn}

For any cell structure $Y$ on $X$, there is a natural way to represent the class $[\Theta]$ by a cellular $1$--cocycle $z\in Z^1(Y;\Z)$: 
Adjusting $\Theta$ by a homotopy if necessary, first choose a small neighborhood $W'\subset W$ of $\Theta$, as above, such that $W'$ is disjoint from the $0$--skeleton $Y^{(0)}$. For each $1$--cell $\sigma$ of $Y$, one may then find an arc $\gamma\colon [0,1]\to Y$ that is homotopic to $\sigma$ rel $\partial \sigma$ and for which the assignment $t\mapsto \eta'(\gamma(t))$ is a local homeomorphism on each component of $\gamma\inv(W')$. The value of the cocycle $z$ on $\sigma$ is then defined, as above, to be the number of components of $\gamma\inv(W')$ on which $\eta'\circ \gamma$ is orientation preserving minus the number on which it is orientation reversing. By definition, $z$ then agrees with $[\Theta]$ on any $1$--cycle representing a closed loop in $X$. Therefore we see that $z$ is in fact a cocycle (since the boundary $\partial T$ of any $2$--cell $T$ is nullhomotopic and thus satisfies $z(\partial T) = [\Theta](\partial T) = 0$) and that the cohomology class of $z$ is equal to  $[\Theta]$.

\begin{proposition}
\label{P:connected_iff_primitive} 
A cross section $\Theta$ of $\flow$ is connected if and only if its dual cohomology class $[\Theta]$ is primitive.
\end{proposition}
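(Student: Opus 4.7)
My plan is to prove the stronger statement that the number of connected components of $\Theta$ equals the divisibility $m$ of $[\Theta]$ in $H^1(X;\Z)$; the proposition then follows as the case $m=1$.  I establish the two inequalities separately.

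For ``divisibility $\le$ number of components'', write $\Theta=\Theta_1\sqcup\cdots\sqcup\Theta_n$, so the first-return map $f_\Theta$ induces a function $\sigma\colon\{1,\ldots,n\}\to\{1,\ldots,n\}$ with $f_\Theta(\Theta_i)\subseteq\Theta_{\sigma(i)}$. The map $\sigma$ is a permutation: for $y\in\Theta_j$, tracing a biinfinite flowline through $y$ backwards to its most recent $\Theta$-crossing identifies some $\Theta_i$ with $\sigma(i)=j$. Furthermore $\sigma$ is a single $n$--cycle: for each $\sigma$--cycle $C$, the set $X_C\subseteq X$ of points whose forward trajectory first meets $\Theta$ in a component belonging to $C$ is open (by transversality of $\Theta$ the ``first-hit component'' is locally constant, including near $\Theta$ itself), and the $X_C$ form a disjoint open cover of the connected space $X$. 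Consequently every flowline visits every $\Theta_i$ infinitely often, so each $\Theta_i$ is itself a cross section and has a well-defined class $[\Theta_i]\in H^1(X;\Z)$ with $[\Theta]=\sum_{i}[\Theta_i]$. A cobordism argument then shows all $[\Theta_i]$ coincide: the flow-homotopy $H(x,t)=\psi_{tT_i(x)}(x)$, where $T_i(x)$ is the return time of $x\in\Theta_i$ to $\Theta$, sweeps out a $2$--chain $C_i\subseteq X$ with $\partial C_i=\Theta_{\sigma(i)}-\Theta_i$; pairing with any loop $\gamma$ gives $[\Theta_i](\gamma)=[\Theta_{\sigma(i)}](\gamma)$. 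Since $\sigma$ has a single orbit, all $[\Theta_i]$ equal some $u\in H^1(X;\Z)$, and hence $[\Theta]=nu$, showing the divisibility is at least $n$.

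For the reverse inequality, suppose $[\Theta]=m\cdot u'$ with $u'\in H^1(X;\Z)$ primitive. Since $(\eta_\Theta)_\ast(\pi_1 X)=m\Z=(q_m)_\ast\pi_1(S^1)$, where $q_m\colon S^1\to S^1$ is the degree--$m$ self-cover, covering space theory provides a continuous lift $\mu\colon X\to S^1$ with $\eta_\Theta = q_m\circ \mu$, and $\mu$ inherits flow-regularity from $\eta_\Theta$ because $q_m$ is a local diffeomorphism.  Then
\[\Theta = \eta_\Theta^{-1}(0) = \bigsqcup_{k=0}^{m-1}\mu^{-1}(k/m),\]
and each $\mu^{-1}(k/m)$ is nonempty by surjectivity of $\mu$ (which follows from flow-regularity), so $\Theta$ has at least $m$ components.

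The main technical step will be the cobordism argument that $[\Theta_i]=[\Theta_{\sigma(i)}]$: in the polyhedral $2$--complex setting with a semiflow one must check carefully that the flow-cylinder $H(\Theta_i\times[0,1])$ defines a valid $2$--chain (or, equivalently, that the flow-regular maps $\eta_i,\eta_{\sigma(i)}\colon X\to S^1$ representing these cross sections are homotopic) despite possible complications near vertex leaves, where backward flow may branch.
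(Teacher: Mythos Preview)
Your proof is correct, and the argument for ``disconnected $\Rightarrow$ not primitive'' is essentially the same as the paper's: both show that the first-return map permutes the components cyclically and that all component classes $[\Theta_i]$ coincide. Your proof of surjectivity of $\sigma$ (tracing a biinfinite flowline backward) is slightly different from the paper's proof-by-contradiction, but both work. Your concern about the cobordism step is unfounded: the homotopy $H(x,t)=\psi_{tT_i(x)}(x)$ uses only \emph{forward} flow, and the return time $T_i$ is continuous on $\Theta_i$ by flow-regularity, so no backward-branching issues arise. The paper dispatches this step with the one-line observation that $\flow^\Theta_1(\Theta_i)=\Theta_{\sigma(i)}$, which is the same homotopy.

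For the converse direction the approaches genuinely diverge. You prove the stronger quantitative statement (number of components equals divisibility) via a covering-space lift of $\eta_\Theta$ through the degree-$m$ self-cover of $S^1$. The paper instead gives a direct, more elementary argument only for the case needed: if $\Theta$ is connected, concatenate the flowline from $\xi\in\Theta$ to $f_\Theta(\xi)$ with a path in $\Theta$ back to $\xi$; this loop $\gamma$ (after a small perturbation) meets $\Theta$ exactly once positively, so $[\Theta](\gamma)=1$ and $[\Theta]$ is primitive. Your approach yields a sharper conclusion at the cost of invoking covering-space machinery; the paper's approach is shorter and self-contained but proves only the bare equivalence.
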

\begin{proof}
Let $f_\Theta\colon\Theta\to\Theta$ denote the first return map. First suppose that $\Theta$ is a disjoint union of $k> 1$ connected components $\Theta_1,\dotsc,\Theta_k$. By continuity, $f_\Theta$ must map each $\Theta_i$ into some other connected component $\Theta_j$ and thus determines a self-map $\varsigma$ of the set $\{1,\dotsc,k\}$. 

We claim that $\varsigma$ must be surjective. To see this, suppose by contradiction that $1$ was not in the image of $\varsigma$. It then follows that every flowline $\{\flow_s(\xi) \mid s\geq 0, \xi\in X\}$ intersects the graph $\Theta_1$ at most once, for otherwise there would be some point of $\Theta_1$ that was mapped back into $\Theta_1$ by some iterate of $f_\Theta$. This fact shows that the subgraph $\Theta' = \Theta_2 \cup \dotsb \cup \Theta_k$, which is automatically transverse to $\flow$, is also a section of $\flow$. Equation \eqref{eqn:flow_union_of_sections} then shows that $X = \cup_{s\ge 0} \flow_s(\Theta')$;  in particular, there exist a point $\xi\in \Theta'$ and a time $s\ge 0$ for which $\flow_s(\xi)\in \Theta_1$. This contradicts the assumption that $1$ is not in the image of $\varsigma$.

Since $\varsigma$ is surjective it is also injective and therefore a permutation of the set $\{1,\dotsc,k\}$. In fact it must be a cyclic permutation: For each $i$, the set $X_i$ of points $\xi\in X$ that eventually flow into $\Theta_i$ is connected. If the action of $\varsigma$ partitioned $\{1,\dotsc,k\}$ into multiple orbits, then the corresponding sets $X_i$ would give a nontrivial decomposition of $X$ into disjoint closed subsets, contradicting the connectedness of $X$. 
It now follows that each component $\Theta_i$ of $\Theta$ is it itself a cross section and so determines a dual cohomology class $[\Theta_i]$. However, it is easy to see that these classes are all equal (for example, by using the fact that $\flow^{\Theta}_1(\Theta_i) = \Theta_{\varsigma(i)}$) and thus that their sum $[\Theta]_1+\dotsb + [\Theta_k]$, which necessarily equals $[\Theta]$, is not primitive.

Conversely, suppose that $\Theta$ is connected. Choose a point $\xi\in \Theta$ and consider the loop $\gamma\subset X$ obtained by concatenating the flowline from $\xi$ to $f_\Theta(\xi)$ with a path in $\Theta$ back to $\xi$. Taking a homotopic loop that is transverse to $\Theta$ (for example, $\flow_\epsilon(\gamma)$ for some small $\epsilon > 0$) we see that $\gamma$ intersects $\Theta$ once with positive orientation. Therefore $[\Theta](\gamma) = 1$ showing that $[\Theta]$ is primitive.
\end{proof}

\begin{example}[The cocycle $z_1$]
\label{Ex:(-1,2) introduced}
Let us construct a cross section and dual cohomology class for the running example. Using the unsubdivided cell structure introduced in Example~\ref{Ex:cell structure} and illustrated in Figure~\ref{F:01minimal}, consider the cocycle
\[ z_1 = [v_2^*,v_3^*] + [v_3^*,v_5^*] + 2[v_4^*,v_6^*] + [v_5^*,v_1^*] + [v_6^*,v_2^*].\]
Evaluating this expression on the boundary of each $2$--cell shows that it is indeed a $1$--cocycle,
and we may construct a cross section dual to the class $[z_1]$ as follows: Place $z_1(\sigma)$ vertices along each $1$--cell $\sigma$ of $X$, so in our case the cells $[v_2,v_3]$, $[v_3,v_5]$, $[v_5, v_1]$, and $[v_6,v_2]$ each get one vertex and $[v_4,v_6]$ gets two vertices. Since $z_1$ satisfies the cocycle condition, it is possible to ``connect the dots'' in each $2$--cell yielding a graph $\Theta_1$ intersecting $X^{(1)}$ at exactly these points. In this case it is moreover possible, as we have illustrated in Figure~\ref{F:(-1,2)introduced}, to construct $\Theta_1$ so that it is transverse to the flow. Every flowline of $\flow$ is seen to hit $\Theta_1$ infinitely often by inspection, so $\Theta_1$ is in fact a cross section. Applying the recipe following Definition~\ref{D:dual_class} to $\Theta_1$ yields exactly the cocycle $z_1$, so $\Theta_1$ is indeed dual to $[z_1]$ as desired.

\begin{figure}
\begin{center}
\includegraphics[width=13cm]{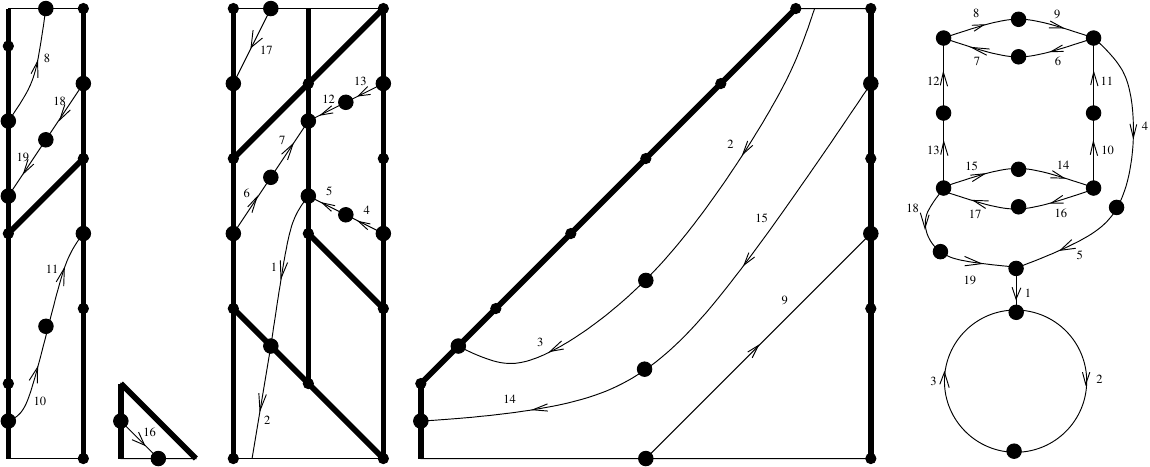}
\caption{A cross section $\Theta_1$ dual to the class $[z_1]$. The vertices of $X$ are small and the vertices of $\Theta_1$ are larger.}
\label{F:(-1,2)introduced}
\end{center}
\end{figure}

The abstract graph $\Theta_1$ is illustrated at the right of Figure~\ref{F:(-1,2)introduced}. We note that here we have used the the standard graph structure of Definition~\ref{D:standard_graph} to ensure that the first return map sends vertices to vertices. Following flowlines, we can then calculate the first return map $f_1 = f_{\Theta_1} \colon \Theta_1 \to \Theta_1$ as indicated in Figure \ref{F:(-1,2)continued1}. We thus find that the characteristic polynomial of the transition matrix $A(f_1)$ is
\[\zeta^{10}(\zeta^{9} - \zeta^{5} - \zeta^{4} - \zeta^{3} - \zeta^{2} - \zeta - 2).\]
One may also verify directly, or by using Peter Brinkmann's software package {\rm xtrain}\footnote{Available at http://gitorious.org/xtrain},  that $f_1$ is a train track map (c.f. Proposition~\ref{P:first_return_exp_irred_tt} below). 

By applying Stallings folds to the graph in the middle we produce the graph on the right. This folding operation $\kappa$ is a homotopy equivalence, and $f_1$ factors as the composition of $\kappa$ with the locally injective immersion from the graph on the right into $\Theta_1$. Since this immersion is locally injective but not injective, we see see that $(f_1)_*$ is an injective endomorphism that is {\em not} an automorphism.
\begin{figure} 
\labellist
\small\hair 2pt
\pinlabel $\kappa$ [b] at 310 185
\endlabellist
\begin{center}
\includegraphics[width=12cm]{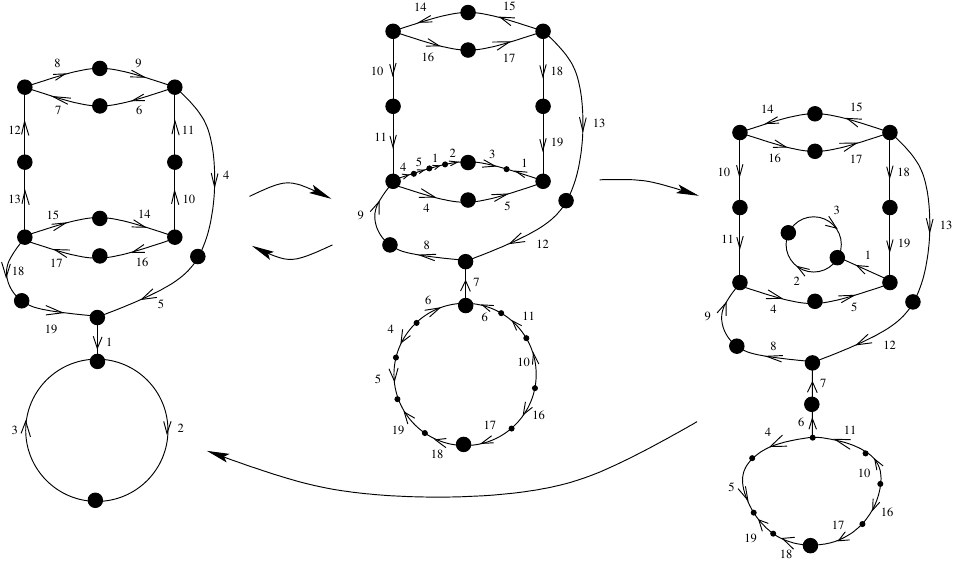}
\caption{The first return map map $f_1 \colon \Theta_1 \to \Theta_1$ for the section $\Theta_1$ dual to $[z_1]$ and the result of Stallings folds $\kappa$ on the right.}
\label{F:(-1,2)continued1}
\end{center}
\end{figure}
\end{example}

\begin{example}[The cocycle $z_2$]
\label{Ex:(-1,1) introduced}
Here we exhibit another cross section for the running example. Consider the graph $\Theta_2$ illustrated in Figure~\ref{F:(-1,1)introduced}. As in Example~\ref{Ex:(-1,2) introduced} above, one may verify by inspection that $\Theta_2$ is indeed a cross section and that its dual cohomology class $[\Theta_2]$ is represented by by the cocycle
\[z_2 = [v_1^*,v_2^*] + [v_2^*, v_3^*] - [v_4^*, v_5^*] - [v_5^*, v_6^*] + [v_1^*,v_3^*] + 2[v_6^*,v_2^*].\]
Notice that the cells $[v_4,v_5]$ and $[v_5,v_6]$ cross $\Theta_2$ ``against the flow'', that is, in the opposite direction as the flowlines do. The abstract graph $\Theta_2$ along with its first return map $f_2$ are illustrated in Figure~\ref{F:(-1,1)first_return}. Here we have again used the standard graph structure from Definition~\ref{D:standard_graph} to ensure that $f_2$ sends vertices to vertices. Applying Stallings folds to the graph on the right produces a graph with rank $4$, showing that $(f_2)_*$ is neither injective nor surjective. We have also verified, using  {\rm xtrain}, that $f_2$ is an expanding irreducible train track map and calculated that the characteristic polynomial of $A(f_2)$ is $\zeta^{13}(\zeta^{6} - 3\zeta^{3} - 3\zeta - 1)$.
\begin{figure}
\begin{center}
\includegraphics[width=11cm]{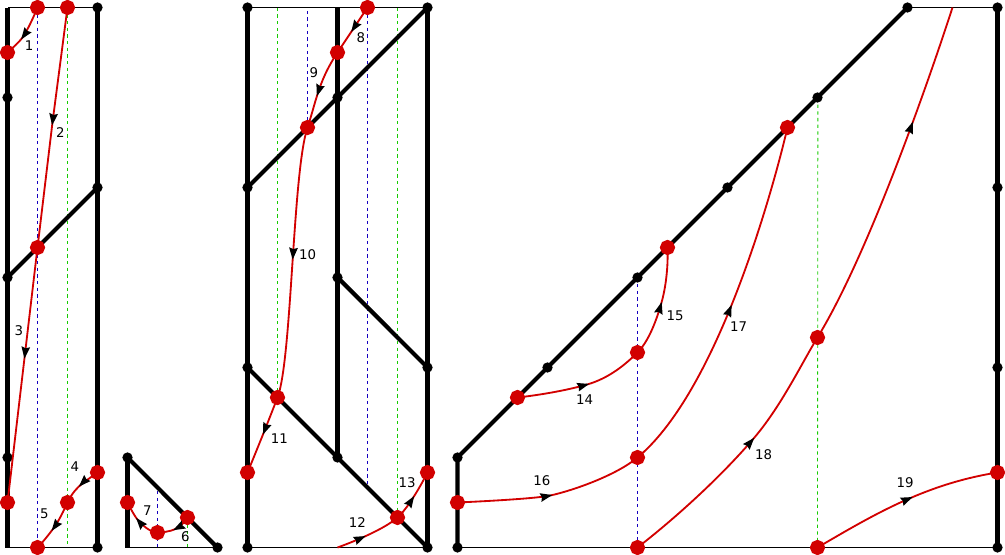}
\caption{A cross section $\Theta_2$ dual to the class $[z_2]$. Again the vertices of $X$ are small and the vertices of $\Theta_2$ are larger.}
\label{F:(-1,1)introduced}
\end{center}
\end{figure}
\begin{figure}
\begin{center}
\includegraphics[width=11cm]{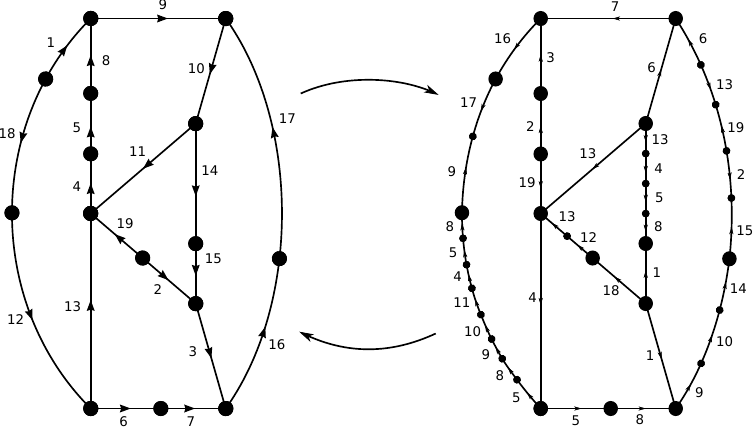}
\caption{The first return map $f_2\colon \Theta_2\to\Theta_2$.}
\label{F:(-1,1)first_return}
\end{center}
\end{figure}
\end{example}

\subsection{Closed $1$--forms and cohomology}
\label{S:closed 1-forms cohomology}

Cross sections provide a useful tool for analyzing certain cohomology classes, however working with cross sections confines us to studying only the integral classes. To consider more general cohomology classes we  must recall some additional machinery borrowed from differential geometry.

Following \cite{FGS10}, a {\em closed $1$--form} $\omega$ on a topological space $Y$ is a collection of functions
\[ \omega = \{\omega_U \colon U \to \R\}_{U \in \mathcal J}\]
defined on some open cover $\mathcal J$ of $Y$ with the compatibility property that if $U \cap V \neq \emptyset$ then there exist a locally constant function $\Delta_{U,V}$ so that on $U \cap V$, $\omega_V(\xi) = \omega_U(\xi) + \Delta_{U,V}(\xi)$.    We assume that the collection of functions and open sets are maximal in the sense that any other function on an open set $W \to \R$ that also satisfies this compatibility property is already in $\omega$.  Any collection of functions satisfying the compatibility condition is contained in a maximal collection which is unique up to addition of locally constant functions, so we typically ignore this technicality when no confusion arises.

Given a continuous path $\delta \colon [a,b] \to Y$, take a partition $a = a_0 < a_1 <\dotsb < a_k = b$ such that for all $j = 1,\ldots,k$, $\delta([a_{j-1},a_j]) \subset U_j$ for some $U_j \in \mathcal J$.  Then we can integrate $\omega$ over $\delta$:
\[ \int_\delta \omega = \sum_{j=1}^k \omega_{U_j}(a_j) - \omega_{U_j}(a_{j-1}).\]
This integral is independent of the covering used and does not change if we homotope $\delta$ relative to its endpoints.  In particular, the integral over a null-homologous loop is zero, and in this way a closed $1$--form defines a $1$--dimensional cohomology class.  If $\omega$ represents the cohomology class  $u$, then we will sometimes write $\omega = \omega^u$ or $u = [\omega]$.  Another consequence of these observations is that if $U$ is a simply connected open subset of $Y$ then by the maximality condition on the functions defining $\omega$, $U$ is a domain for a function $\omega_U \in \omega$. 

Closed $1$--forms can be added and multiplied by scalars (by applying these operations to the defining functions on simply connected domains, for example), and hence the closed $1$--forms on $Y$ form a real vector space.  If $Y$ is a CW-complex then the map from closed $1$--forms to $H^1(Y;\R)$ is an epimorphism.

Given a continuous function $\beta \colon Y' \to Y$ between topological spaces and a closed $1$--form $\omega$ on $Y$, we can pull $\omega$ back by $\beta$ to a closed $1$--form $\beta^*(\omega)$.  This is the collection of functions obtained from the compositions
\[ \omega_U \circ \beta|_{\beta^{-1}(U)} \colon \beta^{-1}(U) \to \R \]
and extended to a maximal collection.  

For the universal torsion-free abelian cover $p \colon \tX \to X$ and any closed $1$--form $\omega^u$ on $X$ representing $u \in H^1(X;\R)$ the pull-back $\widetilde \omega^u = p^*(\omega^u)$ contains a globally defined function $\widetilde \eta_u \colon \tX \to \R \in \widetilde \omega^u$.  Composing with any covering transformation $h \colon \tX \to \tX$, $h \in H$, gives rise to another such function $\widetilde \eta_u \circ h$, and these differ by translation by $u(h)$
\[ \widetilde \eta_u \circ h (\xi) = \widetilde \eta_u(\xi) + u(h).\]
That $\widetilde \eta_u$ and $\widetilde \eta_u \circ h$ differ by a translation follows from the definition of a closed $1$--form.  To see that the translation is by $u(h)$, take a loop $\gamma \colon [0,1] \to X$ in $G = \pi_1(X)$ representing the homology class $h$.  The covering transformation $h$ sends the initial point $\xi_0 = \tilde \gamma(0)$ of a lift of $\gamma$ to $\tX$ to the terminal point $\tilde \gamma(1) = h(\xi_0)$ and so
\[ \widetilde \eta_u \circ h(\xi_0) - \widetilde \eta_u (\xi_0) = \widetilde \eta_u (\tilde \gamma(1)) - \widetilde \eta_u(\tilde \gamma(0)) = \int_{\tilde \gamma} \tilde \omega^u = \int_\gamma \omega^u = u(\gamma) = u(h).\]
Conversely, given a function $\widetilde \eta_u \colon \tX \to \R$ equivariant with respect to the action of $H$ on $\R$ by translation determined by the homomorphism $u \colon H \to \R$, it is easy to see that this function belongs to the pull-back $p^*(\omega^u)$ for a unique closed $1$--form $\omega^u$ on $X$ representing $u$.

\subsection{Flow-regular closed $1$--forms and sections}
\label{S:flow-regular closed 1-forms and sections}

We will be interested in closed $1$--forms that interact with $\flow$ in some meaningful way.  To that end, we make the following

\begin{defn}[Flow-regular closed $1$--form]
Say that a closed $1$--form $\omega$ is {\em flow-regular} provided all the functions $\omega_U \in \omega$ are flow-regular.  If $\gamma_\xi(s) = \flow_s(\xi)$ is a flowline, then the {\em $\omega$--height} of the arc $\gamma_\xi([0,s_0])$ of the flowline is defined to be
\[ \int_{\gamma_\xi([0,s_0])} \omega. \]
We say that a flow-regular closed $1$--form $\omega$ is {\em tame}, if the restriction to every $2$--cell is smooth and if the restriction to each skew $1$--cell has finitely many critical points.
\end{defn}
Given a flow-regular closed $1$--form $\omega$, it is possible to construct another one which is tame and which represents the same cohomology class.  The point is to first adjust $\omega$ along the skew $1$--cells so that it is smooth with finitely many critical points, but without changing the integral of $\omega$ over the $1$--cell.  Then we further adjust the $1$--form over every $2$--cell.  When convenient we will pass from an arbitrary flow-regular closed $1$--form to a tame one.

For every section $\Theta$, the integral class $[\Theta]$ may be represented by a closed flow-regular $1$--form $\omega^{[\Theta]}$.  
Indeed,  consider a lift of the flow-regular map $\fib_{\Theta} \colon X \to \sone$  to a map
\[ \widetilde \fib_{\Theta} \colon \tX \to \R.\]
Then for every $h \in H$ we have $\widetilde \fib_{\Theta} \circ h = \widetilde \fib_{\Theta} + [\Theta](h)$, and hence $\widetilde \fib_{\Theta}$ determines a closed $1$--form $\omega^{[\Theta]}$ on $X$.  Since $\fib_{\Theta}$ is flow regular, so is $\widetilde \fib_{\Theta}$ and hence also $\omega^{[\Theta]}$.

Conversely, a flow-regular closed $1$--form $\omega^{[\Theta]}$ representing a integral class determines a flow-regular map $\fib_{\Theta} \colon X \to \sone$ by integrating along paths from a fixed basepoint, and any fiber of this map is a section.

Given this correspondence between sections and integral classes represented by closed flow-regular $1$--forms, the following definition is quite natural.
\begin{defn}[Cone of sections]
\label{D:cone_sections}
Let $\Csec\subset H^1(X;\R)$ denote the set of all cohomology classes represented by flow-regular closed $1$--forms.  We refer to $\Csec$ as the \emph{cone of sections.}
\end{defn}

The discussion above proves the following.
\begin{proposition}
\label{P:integral_classesin_Csec}
The integral classes in $\Csec$ are precisely the set of classes in $H^1(X;\R)$ that are dual to sections of $\flow$. \qed
\end{proposition}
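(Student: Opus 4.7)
The proposition asserts two containments, and I would argue each separately. The easier direction—that the dual class of any section $\Theta$ lies in $\Csec$—is essentially established in the discussion preceding Definition~\ref{D:cone_sections}. Starting from $\Theta$, I would invoke Proposition-Definition~\ref{PD:flow regular to sone} to obtain a flow-regular map $\fib_\Theta\colon X \to \sone$ with $\Theta = \fib_\Theta^{-1}(0)$ and lift it to $\widetilde{\fib}_\Theta\colon \tX \to \R$. Since every covering transformation $h \in H$ satisfies $\widetilde{\fib}_\Theta\circ h = \widetilde{\fib}_\Theta + [\Theta](h)$, this lifted function belongs to the pullback of a unique closed $1$--form $\omega^{[\Theta]}$ on $X$ representing $[\Theta]$, and flow-regularity of $\widetilde{\fib}_\Theta$ transfers directly to $\omega^{[\Theta]}$. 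Hence $[\Theta] \in \Csec$.

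For the converse, let $u \in \Csec \cap H^1(X;\Z)$. I would first replace any representing flow-regular closed $1$--form by a tame one $\omega^u$ using the remark just after the definition of tameness. Integrating on the universal torsion-free abelian cover produces $\widetilde{\eta}_u\colon \tX \to \R$, continuous, flow-regular, and equivariant: $\widetilde{\eta}_u \circ h = \widetilde{\eta}_u + u(h)$. Integrality of $u$ implies $u(H) = k\Z$ for some $k \geq 1$ (the case $u=0$ being vacuous), so $\widetilde{\eta}_u$ descends to $\eta_u\colon X \to \R/k\Z \cong \sone$. Choosing $x_0 \in \sone$ to be a regular value that avoids the finite set $\eta_u(X^{(0)})$ and the finitely many critical values created on skew $1$--cells by tameness, I would set $\Theta = \eta_u^{-1}(x_0)$. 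Tameness ensures $\Theta$ meets each $2$--cell in finitely many smooth arcs and each skew $1$--cell in finitely many points; since $\eta_u$ is strictly monotonic along flowlines, $\Theta$ meets every vertical $1$--cell in only finitely many isolated points. Assembling these pieces gives $\Theta$ the structure of a finite embedded graph, and flow-regularity of $\eta_u$ makes $\Theta$ transverse to $\flow$.

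The main obstacle is proving every flowline meets $\Theta$ infinitely often, which I would establish by a compactness argument. Fix $\epsilon > 0$ and define $F_\epsilon\colon X \to \R$ by $F_\epsilon(\xi) = \widetilde{\eta}_u(\tflow_\epsilon(\tilde{\xi})) - \widetilde{\eta}_u(\tilde{\xi})$, where $\tilde{\xi}$ is any lift of $\xi$; this is independent of the lift because both terms shift by the same $u(h)$ under the $H$--action (using that $\tflow$ commutes with deck transformations). Continuity of $\tflow$ and $\widetilde{\eta}_u$ makes $F_\epsilon$ continuous on $X$, and flow-regularity of $\omega^u$ forces $s \mapsto \widetilde{\eta}_u(\tflow_s(\tilde{\xi}))$ to be strictly increasing, so $F_\epsilon > 0$ pointwise. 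Compactness of $X$ yields a uniform lower bound $F_\epsilon \geq m_\epsilon > 0$, whence iteration gives $\widetilde{\eta}_u(\tflow_{n\epsilon}(\tilde{\xi})) - \widetilde{\eta}_u(\tilde{\xi}) \geq nm_\epsilon \to \infty$. Thus $\eta_u \circ \flow_s(\xi)$ wraps $\sone$ infinitely many times as $s \to \infty$, forcing infinitely many intersections with $\Theta$. Hence $\Theta$ is a cross section, and the construction of $\eta_u$ from $\omega^u$ shows its dual cohomology class is $u$, completing the proof.
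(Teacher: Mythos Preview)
Your approach follows the same lines as the paper's, which simply asserts that the discussion preceding the definition of $\Csec$ proves the proposition. You have filled in considerably more detail than the paper offers---in particular, the paper does not spell out the compactness argument showing that every flowline meets the fiber infinitely often, nor does it invoke tameness to ensure the fiber is a finite graph. Your arguments for those points are correct.

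There is, however, a small slip in the converse direction. You descend $\widetilde{\eta}_u$ to a map $\eta_u\colon X \to \R/k\Z$ using the fact that $u(H) = k\Z$. A single fiber $\Theta = \eta_u^{-1}(x_0)$ of this map is then dual not to $u$ but to $u/k$: for a loop $\gamma$, the signed intersection number with $\Theta$ equals the winding number of $\eta_u\circ\gamma$ around $\R/k\Z$, which is $u(\gamma)/k$. So your final sentence, claiming the dual class of $\Theta$ is $u$, is incorrect when $u$ is not primitive. The fix is immediate: since $u(H) \subseteq \Z$, you may descend $\widetilde{\eta}_u$ all the way to $\R/\Z$ (this is what the paper does when it speaks of ``integrating along paths from a fixed basepoint''), and then a fiber of this map---which, when $k>1$, is the union of $k$ parallel copies of your $\Theta$---is a (possibly disconnected) section dual to $u$.
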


\subsection{Convexity and the $\A$ cone.}

In \cite{DKL} we introduced the nonempty, open, convex cone $\A\subset H^1(X;\R)$ containing the epimorphism $u_0$ associated to the original splitting $G = F_N\rtimes_\varphi \Z$ and consisting of cohomology classes that may be represented by cellular $1$--cocycles that are positive on all $1$--cells of $X$. We then proved that every primitive integral class $u\in \A$ \emph{is} dual to a cross section of $\flow$ with some nice additional properties. More precisely, Theorem B of \cite{DKL} gives:

\begin{theorem} \label{T:DKL_BC}
Given a positive cellular $1$--cocycle $z \in Z^1(X;\R)$ representing a primitive integral element $u \in \A \subset H^1(X;\R) = H^1(G;\R)$, there exists a flow-regular map
$\fib_u \colon X \to \sone$ with $(\fib_u)_* = u \colon G \to \pi_1(\sone) = \Z$ and a fiber $\Theta_u = \fib_u^{-1}(y_0)$ for some $y_0 \in \sone$ so that
\begin{enumerate}
\item $\Theta_u$ is a finite, connected topological graph such that the
  inclusion $\Theta_u\subset X$ is $\pi_1$--injective and such that
  $\pi_1(\Theta_u)=\ker(u)\le \pi_1(X)=G$, and
\item $\Theta_u$ is a section of $\flow$ that is dual to $u$.
\end{enumerate}
\end{theorem}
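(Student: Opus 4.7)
The plan is to build the flow-regular map $\fib_u$ directly from the positive cellular cocycle $z$ using the trapezoidal structure on $X$. First I would use $z$ to produce a flow-regular closed $1$-form $\omega^u$ representing $u$. On each vertical $1$-cell $\sigma$, which is a subarc of a flowline, set $\omega^u$ to integrate to $z(\sigma)$ with strictly positive derivative along the flow direction; on each skew $1$-cell, set $\omega^u$ to have total integral $z(\sigma)$; then extend $\omega^u$ to each trapezoid $T$ so that along every flow-arc running from the bottom skew edge of $T$ to the top skew edge the $\omega^u$-height is affine. The cocycle condition $z(\partial T) = 0$ forces compatibility of these extensions across the boundary of $T$, and positivity of $z$ on vertical $1$-cells makes the extension strictly increasing along flowlines, hence flow-regular.

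Next, integrating $\omega^u$ along paths from a basepoint produces an $H$-equivariant function $\tilde{\fib}_u \colon \tX \to \R$ which, since $u$ is primitive integral, descends to a flow-regular map $\fib_u \colon X \to \R/\Z = \sone$ with $(\fib_u)_\ast = u$. Choose a regular value $y_0 \in \sone$ avoiding the $\fib_u$-image of the finitely many vertices of $X$, and set $\Theta_u = \fib_u^{-1}(y_0)$. Each trapezoid meets $\Theta_u$ in a finite disjoint union of arcs transverse to $\flow$, so $\Theta_u$ is a finite embedded graph transverse to $\flow$, and positivity of $\omega^u$ along flowlines ensures every flowline meets $\Theta_u$ infinitely often. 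Thus $\Theta_u$ is a section of $\flow$ dual to $u$, establishing (2).

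For (1), connectedness of $\Theta_u$ is immediate from the primitivity of $u$ together with Proposition \ref{P:connected_iff_primitive}. The $\pi_1$-assertions then follow once one upgrades $\fib_u \colon X \to \sone$ from a continuous surjection to a locally trivial fiber bundle with fiber $\Theta_u$. The key point is that a suitable reparameterization $\flow^{\Theta_u}$ of the semiflow realizes its first return map as the time-one map $\flow_1^{\Theta_u}\vert_{\Theta_u}$, and the product $\Theta_u \times [0,1] \to X$, $(\xi,s) \mapsto \flow_s^{\Theta_u}(\xi)$, descends to a homeomorphism identifying $X$ with the mapping torus of the first return map $f_u \colon \Theta_u \to \Theta_u$, covered by $\fib_u$. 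The long exact sequence of this fibration yields
\[1 \longrightarrow \pi_1(\Theta_u) \longrightarrow \pi_1(X) \xrightarrow{(\fib_u)_\ast} \pi_1(\sone) \longrightarrow 1,\]
and since $(\fib_u)_\ast = u$ by construction, we conclude that the inclusion $\Theta_u \hookrightarrow X$ is $\pi_1$-injective with $\pi_1(\Theta_u) = \ker(u)$.

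The main obstacle I expect is verifying that $\fib_u$ really is a locally trivial fiber bundle over $\sone$ and not merely a submersion with graph fibers. In particular, one must handle the degenerate trapezoids in $X$ (where one vertical side collapses to a point) and verify that the reparameterization $\flow^{\Theta_u}$ glues coherently across the $2$-cells so that the first-return map $f_u$ is well-defined and continuous on all of $\Theta_u$. Controlling this gluing near the non-manifold points of the cell structure is where most of the technical work resides; once that is in place the remainder of the argument is essentially bookkeeping.
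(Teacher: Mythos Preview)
This theorem is not proved in the present paper; it is quoted as Theorem~B of the authors' earlier work~\cite{DKL}. Your proposal for part~(2) is broadly in the right spirit and roughly parallels the construction in~\cite{DKL}: one builds $\fib_u$ cell-by-cell from the positive cocycle, checks flow-regularity, and takes a generic fiber.

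The argument for part~(1), however, contains a genuine gap. You assert that $\fib_u$ is a locally trivial fiber bundle by identifying $X$ with the mapping torus of the first return map $f_u$ via $(\xi,s)\mapsto\flow^{\Theta_u}_s(\xi)$. This map is \emph{not} a homeomorphism: $\flow$ is only a semiflow, and $f_u$ is typically not injective (it folds edges together, since the folded mapping torus $X$ is itself a nontrivial quotient of the ordinary mapping torus $Z_f$). Consequently different fibers $\fib_u^{-1}(y)$ need not be homeomorphic, $\fib_u$ is not a fiber bundle, and no long exact sequence of a fibration is available. What \emph{is} true---and what~\cite{DKL} actually proves---is that the reparameterized semiflow maps each fiber to each later fiber by a \emph{homotopy equivalence}. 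This is established not by a bundle argument but by covering a neighborhood of the $1$--skeleton with finitely many ``local models'' (see Definition~\ref{D:local model} and the discussion in \S\ref{S:foliations in A}) inside which the flow is controlled in the sense of~\cite[Definition~5.3]{DKL}, and then invoking~\cite[Proposition~5.5]{DKL} to conclude that the composite map on fibers is a homotopy equivalence. The $\pi_1$--injectivity and the identification $\pi_1(\Theta_u)=\ker(u)$ then follow from this homotopy-equivalence-between-fibers statement rather than from any fibration structure. Your ``main obstacle'' paragraph correctly senses that something delicate happens near the non-manifold points, but the resolution is not to salvage local triviality---that fails---but to work with controlled homotopy equivalences instead.
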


From this theorem and basic properties of flow-regular closed $1$--forms we deduce the following.

\begin{proposition} \label{P:Csec convex}
The cone of sections $\Csec$ is an open convex cone in $H^1(X;\R)$ containing $\A$.  Moreover, $\Csec$ is the convex hull of the rays through integral classes dual to sections.
\end{proposition}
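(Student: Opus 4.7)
The strategy is to verify, in order, each claimed property of $\Csec$ by working directly with flow-regular closed $1$--form representatives of cohomology classes.

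The cone and convexity properties should follow from the definitions. If $\omega^u$ is a flow-regular representative of $u$ and $\lambda > 0$, then $\lambda \omega^u$ represents $\lambda u$ and retains flow-regularity, since positive rescaling preserves the positivity of the derivative of each defining function along flowlines. Likewise, for $u_1, u_2 \in \Csec$ with flow-regular representatives $\omega^{u_1}, \omega^{u_2}$ and $t \in [0,1]$, the closed $1$--form $t\omega^{u_1} + (1-t)\omega^{u_2}$ represents $tu_1 + (1-t)u_2$ and has strictly positive derivative along flowlines, hence is flow-regular.

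For openness, I would pass to tame flow-regular representatives and exploit compactness of $X$. Given $u \in \Csec$ with tame representative $\omega^u$, the flowline-derivative of $\omega^u$ admits a uniform positive lower bound $c > 0$. Fixing a basis $v_1,\dotsc,v_b$ of $H^1(X;\R)$ with tame (not necessarily flow-regular) closed $1$--form representatives $\omega_1,\dotsc,\omega_b$ whose flowline-derivatives are bounded in absolute value by some $M$, any class $v = \sum c_i v_i$ with $M \sum |c_i| < c$ yields a flow-regular representative $\omega^u + \sum c_i \omega_i$ of $u + v$. I expect this to be the main technical obstacle, as one must carefully handle the behavior of the various forms at the interfaces between $2$--cells, skew $1$--cells, and vertical $1$--cells, where tameness and flow-regularity interact nontrivially; compactness of $X$ reduces matters to a finite local check.

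Finally, $\A \subseteq \Csec$ follows from Theorem~\ref{T:DKL_BC}: the flow-regular map $\fib_u \colon X \to \sone$ provided therein for each primitive integral $u \in \A$ lifts to a flow-regular function $\widetilde{\fib}_u\colon \tX \to \R$, which by \S\ref{S:flow-regular closed 1-forms and sections} defines a flow-regular closed $1$--form on $X$ representing $u$. Since $\A$ is an open convex cone, every $u \in \A$ is a positive convex combination of primitive integral classes in $\A$, and so $u \in \Csec$ by convexity. The convex-hull characterization then follows formally from openness together with Proposition~\ref{P:integral_classesin_Csec}: one inclusion is immediate from convexity and the cone property, while for the reverse, openness yields a small open simplicial neighborhood of any $u \in \Csec$ with rational vertices contained in $\Csec$, each of which is a positive scalar multiple of a primitive integral class dual to a section, expressing $u$ as a positive combination of rays through integral classes dual to sections.
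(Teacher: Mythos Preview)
Your proposal is correct and follows essentially the same approach as the paper: convexity from positive combinations of flow-regular forms, openness via a lower bound on the flow-derivative and a perturbation by a basis of forms with bounded flow-derivative, $\A\subseteq\Csec$ from Theorem~\ref{T:DKL_BC} combined with convexity, and the convex-hull statement from openness together with Proposition~\ref{P:integral_classesin_Csec}.

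The one noteworthy difference is in the openness argument. You choose an arbitrary basis with ``tame (not necessarily flow-regular)'' representatives and then flag the potential difficulty of controlling their flow-derivatives across the interfaces of cells. The paper sidesteps this worry by reordering the argument: it first proves $\A\subseteq\Csec$, and then, since $\A$ is an \emph{open} cone already known to lie in $\Csec$, it chooses the basis $\omega_1,\dots,\omega_b$ to represent classes in $\A$. These representatives are then automatically tame and \emph{flow-regular}, so $D_\flow\omega_j$ is continuous and positive on $X$, and compactness gives the required uniform bound without any delicate local analysis. This is a small but clean trick: by establishing $\A\subseteq\Csec$ first, the paper obtains a supply of well-behaved basis forms for free, making the openness step entirely routine. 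Your version works too, but requires you to argue that an arbitrary (suitably smooth) closed $1$--form has globally bounded flow-derivative, which, while true, is precisely the technical point you anticipated having to check.
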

\begin{proof}
If $\omega_1$ and $\omega_2$ are closed flow-regular $1$--forms, then so is $t_1\omega_1 + t_2\omega_2$ for any $t_1,t_2 \in \R_+$, thus $\Csec$ is a convex cone.
According to Theorem \ref{T:DKL_BC}, every primitive integral element of $\A$ is dual to a section, and hence the rays through integral classes of $\A$ are contained in $\Csec$.  Since $\A$ is an open convex cone, these rays are dense and in fact $\A$ is the convex hull of this set of rays.  By convexity of $\Csec$, it follows that $\A \subset \Csec$.  

We now prove that $\Csec$ is open.  Given $u \in \Csec$, choose a tame, flow-regular closed $1$--form $\omega$ representing $u$.  The derivative of $\omega$ along flowlines is well-defined (independent of the choice of function in $\omega$ since any two differ by a constant) and we denote it $D_\flow\omega$.  Since $\omega$ is smooth on each $2$--cell, $D_\flow\omega$ is a continuous function on $X$.    By compactness, $D_\flow\omega$ is bounded above and below by positive constants, and we let $\epsilon > 0$ be a lower bound.

Next, since $\A$ is open, we can choose tame, flow-regular closed $1$--forms $\omega_1,\ldots,\omega_b$ representing elements of $\A$ which form a basis for $H^1(X;\R)$.  Let $K > 0$ be such that $D_\flow \omega_j \leq K$ for all $j = 1,\ldots,b$.  Now the set
\[ U = \left\{ \left. \left[\omega + \sum_{j=1}^b \epsilon_j \omega_j \right] \right| |\epsilon_j| < \epsilon/(bK) \mbox{ for all } j = 1,\ldots, b \right\} \]
is an open neighborhood of $[\omega]$ in $H^1(X;\R)$.  Furthermore, $D_\flow(\omega + \sum \epsilon_j \omega_j)$ with $|\epsilon_j| < \epsilon/(bK)$ is bounded below by $\epsilon - \sum |\epsilon_j K |  > \epsilon - bK\epsilon/(bK) = 0$, and hence this closed $1$--form is flow-regular.  It follows that $U$ is contained in $\Csec$ and hence $\Csec$ is open.  The last statement of the proposition is now immediate.
\end{proof}

As we will see in Example \ref{Ex:the_cones}, $\A$ can be a proper subcone of $\Csec$.

\section{A homological characterization of $\Csec$}
\label{S:cone_of_sections}

In this section we introduce a new cone $\D\subset H^1(X;\R)$ and investigate its properties. Studying $\D$ ultimately leads to a homological characterization of $\Csec$ and the proof of Theorem~\ref{T:cone_of_sections}.
Let $\frak O(\flow)$ denote the set of closed orbits $\mathcal O$ of $\flow$. Each $\mathcal O \in \frak O(\flow)$ may be thought of an integral homology class on $X$; for emphasis, we will sometimes denote this class by $[\mathcal O] \in H_1(X;\Z)$. Since this set of homology classes mimics Fried's cone of homology directions \cite{FriedS}, we call its dual the \emph{Fried cone}:
\begin{defn}
\label{D:Fried_cone}
The \emph{Fried cone} of $(X,\flow)$ is defined to be the set
\[\D = \{u\in H^1(X;\R) \mid u(\mathcal O) > 0 \mbox{ for all }\mathcal O\in\mathfrak{O}(\flow)\} \subset H^1(X;\R).\]
\end{defn}

To further analyze $\D$, we now describe a way to organize the set $\mathfrak O(\flow)$ that reveals some additional structure.
Recall that $f\colon \Gamma\to \Gamma$ is an expanding irreducible train track map and that the $(e,e')$--entry of its transition matrix $A = A(f)$ records the number of times that the edge path $f(e')$ crosses the edge $e$ in either direction. The matrix $A(f)$ has an associated transition graph $\mathcal{G}(f)$ whose vertex set is $E\Gamma$ and which has $A(f)_{(e,e')}$ directed edges from $e'$ to $e$ for each pair $(e,e')\in E\Gamma^2 = (V\mathcal{G}(f))^2$. 

Consider a combinatorial edge path $t_1\dotsb t_k$ in $\mathcal{G}(f)$ consisting of directed edges $t_i$ from $e_{i-1}$ to $e_i$. Such an edge path is said to be a \emph{circuit} in $\mathcal{G}(f)$ if $e_k = e_0$ and all of the vertices $e_0,\dotsc,e_{k-1}$ are distinct. Notice that in this case the edges $t_1,\dotsc,t_k$ are also distinct. Let $\mathcal{Y}$ denote the set of circuits in $\mathcal{G}(f)$. Circuits that differ by a cyclic permutation of its edges are not considered distinct, thus $t_1\dotsb t_k$ and $t_2\dotsb t_k t_1$ define the same element of $\mathcal{Y}$. Since $\mathcal{G}(f)$ is a finite graph, the set $\mathcal{Y}$ of circuits is finite.

Suppose that $y = t_1\dotsb t_k\in \mathcal{Y}$ is a circuit in $\mathcal{G}(f)$ with $t_i$ directed from $e_{i-1}$ to $e_i$. Then by definition of $\mathcal{G}(f)$, the directed edge $t_i$ corresponds to a particular occurrence of $e_{i}$ in the combinatorial edge path $f(e_{i-1})\subset \Gamma$. Similarly, the entire circuit determines a particular occurrence of $e_k = e_0$ in the edge path $f^k(e_0)\subset \Gamma$. By the linearity of $f$, there is then a subinterval $\alpha\subset e_0$ such that the restriction $(f^k)\vert_\alpha$ maps $\alpha$ homeomorphically and affinely onto this particular occurrence of $e_0$ in the edge path $f^k(e_0)$. The map $f^k\vert_\alpha \colon \alpha\to e_0$ then necessarily has a unique fixed point $p\in \alpha$, and we let
\[\mathcal{O}_y = \{\flow_s(p)\mid s\geq 0\} = \{\flow_s(p) \mid 0 \leq s \leq k\}\]
denote the closed orbit of $\flow$ through $p\in \Gamma\subset X$. Notice that this orbit is independent of the cyclic ordering on the directed edges $t_1,\dots,t_k$ of the circuit $y$. For each $y\in \mathcal{Y}$ we thus have a well-defined closed orbit $\mathcal{O}_y\subset X$.

More generally, each closed combinatorial edge path $w=t_1 \dotsb t_k$ in $\mathcal G(f)$ (not necessarily embedded) determines a closed orbit $\mathcal O_w$ in the same way. Conversely, any closed orbit of $\flow$ crosses a sequence $e_1,\dots,e_k$ of edges of $\Gamma\subset X$ and so determines a closed combinatorial edge path in $\mathcal G(f)$.  Thus we have a  surjective function from the set of closed edge paths in $\mathcal G(f)$ (up to cyclic permutation) to the set of closed orbits $\mathfrak O(\flow)$; this function is a bijection off the preimage of closed orbits through vertices of $\Gamma$.

\begin{lemma} \label{L:additive cone map}
Given any closed edge path $w$ in $\mathcal G(f)$, we may write $w$ as a union of circuits $w = y_1 \cup \cdots \cup y_n$ with $y_1,\ldots,y_n \in \mathcal Y$ and correspondingly write $[\mathcal O_w]$ as the sum
\[ [\mathcal O_w] = [\mathcal O_{y_1}] + \cdots + [\mathcal O_{y_n}] \in H_1(X;\Z).\]
\end{lemma}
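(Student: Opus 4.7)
The plan is to separate the combinatorial decomposition of $w$ from the homological identity. First, I will decompose $w$ as a union of circuits by induction on the length $k$ of $w$: if the visited vertices $e_0, e_1, \ldots, e_{k-1}$ are all distinct then $w$ is itself a circuit; otherwise, choose $i < j$ minimizing $j-i$ subject to $e_i = e_j$, so that $y := t_{i+1} \cdots t_j$ is a circuit by minimality of $j-i$, and apply induction to the shorter closed path $w' := t_1 \cdots t_i t_{j+1} \cdots t_k$. This yields a decomposition $w = y_1 \cup \cdots \cup y_n$ into circuits coming from pairwise disjoint subpaths of $w$, so that the multiset of directed edges of $w$ is the disjoint union of the multisets of directed edges of the $y_j$.

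Next, to set up the homological identity, I will fix a basepoint $q_e$ in the interior of each edge $e$ of $\Gamma$ and, for every directed edge $t$ of $\mathcal G(f)$ from $e'$ to $e$, choose a reference path $s_t$ in $X$ from $q_{e'}$ to $q_e$ consisting of a flow arc $\beta_t$ realizing $t$ (from some $p_t \in e'$ to some $q_t$ in the occurrence of $e$ in $f(e')$ specified by $t$), prepended by an arc of $e'$ from $q_{e'}$ to $p_t$ and appended by an arc of $e$ from $q_t$ to $q_e$. For any closed edge path $w = t_1 \cdots t_k$ in $\mathcal G(f)$, the concatenation $L_w := s_{t_1} s_{t_2} \cdots s_{t_k}$ is then a well-defined loop in $X$ since consecutive endpoints agree. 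Crucially, as a singular $1$-chain, $L_w$ equals $\sum_i s_{t_i}$, and therefore depends only on the multiset of directed edges appearing in $w$.

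The core geometric step is to verify that $[L_w] = [\mathcal O_w]$ in $H_1(X;\Z)$. For this, I will use that each directed edge $t$ of $\mathcal G(f)$ corresponds to a \emph{flow strip} in $X$: a topological disk built from adjacent trapezoidal $2$-cells whose boundary consists of two flow arcs together with two sub-arcs lying in $\Gamma$. Consequently, any two flow arcs realizing the same directed edge $t$ are homologous in $X$ modulo a $1$-chain supported in the edges of $\Gamma$. Applying this to each segment $\alpha_i$ of $\mathcal O_w = \alpha_1 \cdots \alpha_k$ (comparing it to $\beta_{t_i}$) and summing, the $\Gamma$-supported corrections telescope at each $q_{e_i}$ with the prepended and appended arcs built into the $s_{t_i}$, yielding $[\mathcal O_w] = [L_w]$. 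The identical argument applied to each circuit $y_j$ gives $[\mathcal O_{y_j}] = [L_{y_j}]$.

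The lemma now follows by combining the pieces: since the multisets of directed edges partition according to $w = y_1 \cup \cdots \cup y_n$, we obtain the identity $L_w = L_{y_1} + \cdots + L_{y_n}$ as $1$-cycles in $C_1(X)$, and hence in $H_1(X;\Z)$. Substituting $[L_w] = [\mathcal O_w]$ and $[L_{y_j}] = [\mathcal O_{y_j}]$ yields the claim. I expect the main obstacle to be making the flow-strip argument rigorous: one must track the correction chains in $\Gamma$ and confirm the asserted telescoping, which amounts to a careful chase through the trapezoidal cell structure of $X$ from \S\ref{S:folded_mapping_torus} and may be streamlined by first unsubdividing the cells as in Example~\ref{Ex:cell structure}.
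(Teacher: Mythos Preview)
Your proof is correct and shares the same two-part structure (combinatorial decomposition, then homological identity) as the paper, with the same underlying geometric principle: the obstruction to the desired equality is a $1$--chain supported in individual edges of $\Gamma$, hence null-homologous. The organizational difference is that the paper argues recursively, peeling off one circuit $y_1$ at a time and showing $[\mathcal O_w] = [\mathcal O_{y_1}] + [\mathcal O_{w_2}]$ directly---observing that $\mathcal O_w$ is homotopic to a curve built from $\mathcal O_{y_1}$, $\mathcal O_{w_2}$, and some subarcs of the single edge $e$ of $\Gamma$ where the factoring occurs, so the difference is represented by a cycle in $e$. You instead introduce the auxiliary normalized loops $L_w$ built from fixed reference paths $s_t$, which makes the additivity $L_w = \sum_j L_{y_j}$ hold on the nose at the chain level and pushes all the geometry into the single identity $[L_w] = [\mathcal O_w]$ established via flow strips. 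Your route is more systematic and cleanly separates the combinatorics from the geometry; the paper's is more economical here, since working one circuit at a time keeps all the error terms confined to a single edge and avoids both the flow-strip machinery and the telescoping check you correctly flag as the delicate step.
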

Note that since $[w] = [y_1] + \cdots + [y_n] \in H_1(\mathcal G(f);\Z)$, it follows that the function from closed edge paths in $\mathcal G(f)$ to $\mathfrak O(\flow)$ descends to an additive map on the level of homology.
\begin{proof}
If $w = t_1 \dotsb t_k$ is already a circuit, then there is nothing to do.  Otherwise, we decompose $w$ as a union of circuits in the following recursive way.  Let $w_1 = w$. Since $w_1$ is closed, there must exist a subpath $t_i\dotsb t_j$, for some  $1 \leq i\leq j \leq k$, that is a circuit. In this case, we may `factor out' the subpath $y_1 = t_i\dotsb t_j\in \mathcal{Y}$ to obtain a shorter closed path $w_2 = t_1\dotsb t_{i-1} t_{j+1}\dotsb t_k$ in $\mathcal{G}(f)$. Since $w_2$ is again closed, we may factor out another circuit $y_2\in \mathcal{Y}$ giving yet a shorter closed path $w_3$. This process continues until we reach a closed path $w_n$ that is itself a circuit $y_n=w_n\in \mathcal{Y}$.

On the level of homology, we may perform an analogous factoring procedure to the orbit $\mathcal O_w \subset X$.  Specifically, we note that the closed orbits $\mathcal O_w$ and $\mathcal O_{y_1}$ start on some edge $e$ of $\Gamma$, cross the same set of edges (and at the same occurrences inside iterates of $f$ on $e$) before returning back to $e$ when $\mathcal O_{y_1}$ closes up.  At this time $\mathcal O_w$ switches and starts crossing the same edges as $\mathcal O_{w_2}$, finally returning to $e$ as both $\mathcal O_w$ and $\mathcal O_{w_1}$ close up.  Thus, the closed orbit $\mathcal O_w$ is homotopic to a closed curve constructed out of the closed orbits $\mathcal O_{y_1}$ and $\mathcal O_{w_2}$ together with a collection of subarcs of the edge $e$.  In particular, $[\mathcal O_w] - [\mathcal O_{y_1}] - [\mathcal O_{w_2}] \in H_1(X;\Z)$ is represented by a cocycle contained in $e$ which one may check  is null homologous, and hence $[\mathcal O_w] - [\mathcal O_{y_1}] - [\mathcal O_{w_2}] = 0 \in H_1(X;\Z)$.   Repeating this procedure for $[\mathcal O_{w_2}]$ and continuing recursively, we obtain the desired formula
\[ [\mathcal O_w] = [\mathcal O_{y_1}] + \cdots + [\mathcal O_{y_n}].\qedhere \]
\end{proof}

As a consequence, we see that $\D$ can be defined by requiring positivity on only finitely many orbits (compare \cite[Theorem I]{FriedS} and \cite[Theorem 1.6]{Wang}).

\begin{proposition} \label{P:Fried finiteness}
The Fried cone $\D$ is given by
\[ \D = \{ u \in H^1(X;\R) \mid u(\mathcal O_y) > 0 \mbox{ for all } y \in \mathcal Y \} .\]
In particular, $\D$ is an open, convex, polyhedral cone with finitely many rationally defined sides.
\end{proposition}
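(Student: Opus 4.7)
My plan is to prove the two containments of the claimed equality separately and then deduce the structural statement. The containment
\[ \D \subseteq \{ u \in H^1(X;\R) \mid u(\mathcal O_y) > 0 \text{ for all } y \in \mathcal Y\} \]
is immediate from Definition~\ref{D:Fried_cone}, since $\{\mathcal O_y\}_{y \in \mathcal Y}$ is a subset of $\frak O(\flow)$.

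For the reverse containment, I would argue as follows. Suppose $u \in H^1(X;\R)$ satisfies $u(\mathcal O_y) > 0$ for every $y \in \mathcal Y$, and let $\mathcal O \in \frak O(\flow)$ be an arbitrary closed orbit of $\flow$. The discussion preceding the proposition shows that the set of closed combinatorial edge paths in the transition graph $\mathcal G(f)$ surjects onto $\frak O(\flow)$, so we may fix a closed edge path $w$ in $\mathcal G(f)$ with $\mathcal O = \mathcal O_w$. Applying Lemma~\ref{L:additive cone map} to $w$ produces a decomposition into circuits $y_1,\ldots,y_n \in \mathcal Y$ with
\[ [\mathcal O] = [\mathcal O_w] = [\mathcal O_{y_1}] + \cdots + [\mathcal O_{y_n}] \in H_1(X;\Z). \]
Evaluating $u$ gives $u(\mathcal O) = \sum_{i=1}^n u(\mathcal O_{y_i}) > 0$, since each summand is positive and $n \geq 1$ (as $w$ is a nontrivial closed edge path). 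Hence $u \in \D$, establishing the reverse containment.

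Finally, the structural statement will follow from what we have just proved. Since the transition graph $\mathcal G(f)$ is a finite directed graph, the set $\mathcal Y$ of its circuits is finite. Each homology class $[\mathcal O_y] \in H_1(X;\Z)$ is integral, so the linear functional $u \mapsto u([\mathcal O_y])$ is defined over $\Q$ on $H^1(X;\R)$. Thus $\D$ has been exhibited as the intersection of finitely many open rational half-spaces, which is precisely an open, convex, polyhedral cone with finitely many rationally defined sides. I do not anticipate any serious obstacle here, as the essential combinatorial work has already been carried out in Lemma~\ref{L:additive cone map}; the present proposition is a direct consequence.
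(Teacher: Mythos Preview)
Your proof is correct and follows essentially the same approach as the paper: both directions of the containment are handled identically, with the reverse containment reduced to Lemma~\ref{L:additive cone map}, and the structural claim deduced from finiteness of $\mathcal Y$ and integrality of the $[\mathcal O_y]$.
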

\begin{proof}
Since $\{\mathcal O_y\}_{y\in \mathcal Y} \subset \mathfrak O(\flow)$, the set in question clearly contains $\D$. For the other containment, we suppose $u(\mathcal O_u) > 0$ for all $y\in \mathcal Y$ and prove $u\in \D$. Every closed orbit $\mathcal O\in \mathfrak O(\flow)$ has the form $\mathcal O_w$ for some closed path $w$ in $\mathcal G(f)$. By Lemma~\ref{L:additive cone map} we may then write $[\mathcal O_w] = [\mathcal O_{y_1}] + \cdots + [\mathcal O_{y_n}]$, from which we obtain
\[u(\mathcal O_w) = u(\mathcal{O}_{y_1}) + \dotsb + u(\mathcal{O}_{y_n}) > 0.\]
This proves the first claim of the proposition.  The second claim is an immediate consequence of the first.
\end{proof}

Integrating a closed, flow-regular $1$--form over a closed orbit yields a positive number, and hence we have

\begin{proposition} \label{P:Fried contains Csec}
The Fried cone contains the cone of sections: $\Csec \subseteq \D$.
\end{proposition}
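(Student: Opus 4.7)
The plan is to unwind the definitions and observe that flow-regularity of a closed $1$--form forces it to be strictly ``positive in the flow direction'', so its integral over any closed orbit (oriented by the flow) is automatically positive.

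More precisely, let $u \in \Csec$ and pick a flow-regular closed $1$--form $\omega = \{\omega_U\}_{U\in\mathcal{J}}$ representing $u$; by the discussion after the definition of tameness I may further assume $\omega$ is tame, though this is not strictly needed. Choose any closed orbit $\mathcal O \in \mathfrak O(\flow)$ of period $T > 0$, and parametrize it by $\gamma\colon [0,T] \to X$, $\gamma(s) = \flow_s(\xi)$, which closes up since $\flow_T(\xi) = \xi$. This is the orientation on $\mathcal O$ as a homology class, so $u(\mathcal O) = \int_\gamma \omega$.

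Now pick a partition $0 = s_0 < s_1 < \cdots < s_k = T$ and sets $U_i \in \mathcal J$ with $\gamma([s_{i-1}, s_i]) \subset U_i$, as in the definition of integration of closed $1$--forms. Then
\[
u(\mathcal O) \;=\; \int_\gamma \omega \;=\; \sum_{i=1}^k \bigl(\omega_{U_i}(\gamma(s_i)) - \omega_{U_i}(\gamma(s_{i-1}))\bigr).
\]
By flow-regularity of each $\omega_{U_i}$, the single-variable map $s\mapsto \omega_{U_i}(\flow_s(\xi))$ is an orientation-preserving local diffeomorphism on its domain, hence \emph{strictly increasing} on the interval $[s_{i-1}, s_i]$. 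Therefore each summand is strictly positive, and so $u(\mathcal O) > 0$, proving $u \in \D$.

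The argument is essentially a one-line observation; the only mild subtlety is pinning down that the flow-orientation of $\mathcal O$ matches the orientation in which flow-regular means ``orientation-preserving'', but this is immediate from the definitions given in \S\ref{S:cross_sections} and \S\ref{S:flow-regular closed 1-forms and sections}. There is no real obstacle; the containment is built into the definition of $\Csec$, which is precisely why the cone of sections is being introduced as a refinement of $\D$.
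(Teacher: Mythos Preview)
Your proof is correct and follows essentially the same approach as the paper: represent $u$ by a flow-regular closed $1$--form and observe that integrating it over any closed orbit yields a positive number by flow-regularity. The paper's version is simply the one-line summary of your more explicit unpacking of the integral via a partition.
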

\begin{proof}
Given any class $u \in \Csec$, we can represent it by a closed flow-regular $1$--form $\omega = \omega^u$.  For any closed curve $\gamma$ we then have $u(\gamma) = \int_\gamma \omega^u$.  In particular, for each closed orbit $\mathcal O\in \mathfrak O(\flow)$, this gives
\[ u(\mathcal O) = \int_{\mathcal O} \omega^u > 0 \]
by flow-regularity.  Therefore $u \in \D$.
\end{proof}

We will see that the containment in this proposition is actually an equality. Proving this fact requires that we construct a section dual to any integral class $u \in \D$. 
This is a fairly technical construction which we carry out in Appendix~\ref{S:characterizing_sections}; see Proposition~\ref{P:dual_sections}. With this result in hand, we may now prove Theorem~\ref{T:cone_of_sections} from the introduction.

\begin{theorem:conesections}[Cone of sections]
There is an open convex cone $\Csec\subset H^1(X;\R)=H^1(G;\R)$ containing $\A$ (and thus containing $u_0$) such that a primitive integral class $u\in H^1(X_f,\R)$ is dual to a section of $\flow$ if and only if $u \in \Csec$. Moreover, $\Csec$ is equal to to the Fried cone $\D$, and there exist finitely many closed orbits $\mathcal{O}_1,\dotsc,\mathcal{O}_k$ of $\flow$ such that $u\in H^1(X;\R)$ lies in $\Csec$ if and only if $u(\mathcal{O}_i) > 0$ for each $1\leq i \leq k$.  In particular, $\Csec$ is an open, convex, polyhedral cone with finitely many rationally defined sides.
\end{theorem:conesections}
\begin{proof}
The first assertion follows from Propositions~\ref{P:integral_classesin_Csec} and \ref{P:Csec convex}. According to Proposition~\ref{P:Fried contains Csec}, we have $\Csec \subseteq \D$. On the other hand, Proposition~\ref{P:dual_sections} shows that all rational points of $\D$ are contained in $\Csec$. Since $\D$ is the convex hull of its rational points, we also have $\D \subseteq \Csec$, and thus $\Csec = \D$ as claimed. Taking the finite set of closed orbits $\{\mathcal O_1,\dotsc,\mathcal O_k\}$ to be $\{\mathcal O_y\}_{y \in \mathcal Y}$, the remaining claims then follow from Proposition~\ref{P:Fried finiteness}.
\end{proof}

\section{The algebra and dynamics of cross sections to $\flow$}
\label{S:geometry_of_sections} 

In this section we study the first return maps to cross sections of $\flow$. While technical considerations require that we restrict to certain nice cross sections, this is not a serious restriction as every integral class in $\Csec$ is seen to be dual to such a section. In \S\ref{S:endomorphisms} we go on to describe how each of these first return maps, which need not be homotopy equivalences, gives a realization of $G$ as an HNN-extension. This discussion culminates in the proof of Theorem~\ref{T:splittings}.

\subsection{First return maps}
\label{sec:first_return_maps}

Topologically, a cross section is merely a subset $\Theta = (\fib')\inv(x_0)$, for some flow-regular map $\fib'\colon X\to \sone$ and $x_0\in \sone$, which abstractly has the structure of an embedded topological graph. To aid in our analysis of the first return map $f_\Theta$, here we describe a topological graph structure on $\Theta$ that is tailored to the complex $X$ and flow $\flow$. We then show that $\Theta$ moreover supports a linear graph structure with respect to which $f_\Theta$ is an expanding irreducible train track map.

Suppose that $\eta'\colon X\to \sone$ is a flow-regular map and that $\Theta = (\eta')\inv(x_0)$ is a cross section of $\flow$ for some $x_0\in \sone$. It follows that for each $x\in \sone$ the subset $\Theta_x := (\eta')\inv(x)$ is again a cross section and that all of these cross sections $\{\Theta_x\}_{x\in \sone}$ are homotopic to each other. 

\begin{defn}[Compatible cross section]
\label{D:compatible_cross_section}
A cross section $\Theta\subset X$ is said to be \emph{compatible with the foliation $\mathcal F$} (or more succinctly \emph{$\mathcal F$--compatible}) if $\Theta\cap X^{(1)}$ consists of a finite set of points that all lie on vertex leaves of $\mathcal F$.
\end{defn}

\begin{remark}
Since the union of vertex leaves of $\mathcal F$ is dense in $X$, it is always possible to adjust any given cross section by a homotopy so that it is $\mathcal F$--compatible. Nevertheless, when we construct cross sections in \S\ref{S:constructing_sections} below, we will explicitly arrange for the constructed cross sections to be $\mathcal F$--compatible.
\end{remark}

Every $\mathcal F$--compatible cross section may be equipped with a convenient topological graph structure:

\begin{defn}[Standard graph structure]
\label{D:standard_graph}
Let $\Theta$ be an $\mathcal F$--compatible cross section. The \emph{standard (topological) graph structure} on $\Theta$ is then defined as follows. Firstly, $\Theta$ naturally inherits a topological graph structure in which the (finite) vertex set is $\Theta\cap X^{(1)}$ and every edge is an embedded arc in the interior of a $2$--cell of $X$. We subdivide this initial graph structure by declaring the vertex set of $\Theta$ to be
\[V\Theta = \left(\bigcup_{s\geq 0} \flow_s(\Theta\cap X^{(1)})\right) \cap \Theta.\]
Since $\mathcal F$--compatibility ensures that each point $\xi\in \Theta\cap X^{(1)}$ lies on a vertex leaf and thus that the $\flow$--orbit of $\xi$ eventually becomes periodic, we see that this is indeed a finite subdivision making $\Theta$ into a finite topological graph.
\end{defn}

\begin{remark}
\label{R:edges_are_transversals}
We emphasize that when an $\mathcal F$--compatible cross section $\Theta$ is equipped with its standard graph structure, then each edge of $\Theta$ is an embedded arc contained in a $2$--cell of $X$ with both endpoints lying in vertex leaves of $\mathcal F$. In particular each edge of $\Theta$ is a transversal to $\mathcal F$; this fact will play an important role in \S\ref{secc:module_isom} below.
\end{remark}

With these definitions, the arguments from \cite[\S7]{DKL} establishing \cite[Theorem C]{DKL} essentially go through verbatim to prove that the first return map to an $\mathcal F$--compatible cross section is an expanding irreducible train track map. Rather than repeat the entire proof, here we briefly go through the argument while recalling the relevant results from \cite{DKL}.

\begin{lemma}
\label{L:first_return_top_graph_map}
Let $\Theta$ be an $\mathcal F$--compatible cross section equipped with its standard graph structure and first return map $f_\Theta\colon\Theta\to\Theta$. Then for all $n\ge 1$ the map $f_\Theta^n$ is locally injective on the interior of each edge of $\Theta$. Moreover, $f_\Theta(V\Theta)\subset V\Theta$, and $f_\Theta$ is a regular topological graph map.
\end{lemma}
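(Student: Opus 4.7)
The plan is to tackle the three conclusions in sequence, following the approach from \cite[\S7]{DKL}.

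The invariance $f_\Theta(V\Theta)\subset V\Theta$ is immediate from unravelling definitions: if $v\in V\Theta$ then $v=\flow_s(\xi)$ for some $\xi\in \Theta\cap X^{(1)}$ and $s\geq 0$, whence $f_\Theta(v)=\flow_{s+T(v)}(\xi)$ lies in $\flow_{\geq 0}(\Theta\cap X^{(1)})\cap\Theta=V\Theta$. In particular, the vertex set is forward-invariant and $f_\Theta$ sends vertices to vertices.

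For the local injectivity of $f_\Theta$ on the interior of each edge, the crucial dynamical input is that the semiflow $\flow$ on $X$ is locally injective on $X\setminus X^{(0)}$: two distinct flowlines can only merge by passing through a common vertex of $X$. If $f_\Theta$ failed to be locally injective at an interior edge point $p$, then continuity of the return time $T$ would produce a sequence $p_n\to p$ with $p_n\ne p$ and $f_\Theta(p_n)=f_\Theta(p)$, whose flowlines must therefore merge with that of $p$ at some vertex $v=\flow_{s_0}(p)\in X^{(0)}$ with $0<s_0<T(p)$. I would then exploit the folded mapping torus structure of $X$---in particular, the fact that the forward orbit of a vertex travels along vertical $1$--cells before re-entering the interior of $2$--cells---to trace flowlines backward from $v$ and deduce that $p$ itself must lie on the forward $\flow$--orbit of some point of $\Theta\cap X^{(1)}$, contradicting $p\notin V\Theta$. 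Once local injectivity for $f_\Theta$ on edge interiors is in hand, the same property for each iterate $f_\Theta^n$ follows by an induction using only the elementary fact that compositions of locally injective continuous maps between graphs remain locally injective (applied on the open set away from the finitely many preimages of vertices under the relevant iterates).

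With these two facts established, verifying that $f_\Theta$ is a regular topological graph map is then formal: forward invariance of $V\Theta$ gives vertex-to-vertex behavior, and local injectivity together with continuity forces the image of each open edge to be an immersed path in $\Theta$ whose changes of edge occur only at points of $V\Theta$, i.e.\ an edge path in the standard graph structure.

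The main obstacle I expect is the step in the second paragraph asserting that an interior vertex crossing $v=\flow_{s_0}(p)\in X^{(0)}$ forces $p\in V\Theta$. This requires a careful analysis of how the vertex leaves of $\mathcal F$ meet $\Theta$ and relies essentially on the $\mathcal F$--compatibility hypothesis; carrying it out amounts to transporting the trapezoid-by-trapezoid arguments of \cite[\S7]{DKL} from the setting of sections arising from positive cocycles to the more general $\mathcal F$--compatible sections considered here.
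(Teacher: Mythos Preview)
Your induction step is the weak point. The claim that ``compositions of locally injective continuous maps between graphs remain locally injective'' is false at points that map to vertices: if $p$ lies in an edge interior but $f_\Theta(p)\in V\Theta$, then local injectivity of $f_\Theta$ at the vertex $f_\Theta(p)$ is not available, and the composition $f_\Theta^2$ can genuinely fail to be locally injective at $p$. Your parenthetical ``away from the finitely many preimages of vertices'' concedes exactly this, but the lemma demands local injectivity on the \emph{entire} edge interior, including those preimage points. This is precisely why the train-track condition is imposed on all iterates rather than deduced from the first; your proposed induction cannot close this gap without an additional argument showing that edge-interior points never map to $V\Theta$ under any iterate---and nothing in your sketch establishes that.

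The paper avoids this difficulty entirely by a different mechanism. The key structural fact (which you do not use) is that in the standard graph structure each edge of $\Theta$ is an arc in the \emph{interior of a single $2$--cell} of $X$; this is the content of Remark~\ref{R:edges_are_transversals}. One then invokes \cite[Lemma~7.2]{DKL}: for any arc $\alpha\colon(-\delta,\delta)\to\sigma$ in a $2$--cell interior and transverse to $\flow$, the full two-parameter map $(t,s)\mapsto\flow_s(\alpha(t))$ is locally injective on $(-\delta,\delta)\times\R_{\ge 0}$. Since $f_\Theta^n|_\alpha$ is the composition of the embedding $t\mapsto(t,S_n(t))$ (with $S_n$ the $n$th return time) with this locally injective map, local injectivity of \emph{every} iterate on edge interiors follows at once---no induction, no analysis of what happens at vertices. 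With this in hand, the proof of \cite[Lemma~7.3]{DKL} carries over verbatim to give the regular-graph-map conclusion.

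A secondary issue: your backward-tracing argument for $f_\Theta$ itself conflates directions. Knowing that the forward flowline from $p$ passes through some $v\in X^{(0)}$ says nothing about whether $p$ lies in the \emph{forward} orbit of a point of $\Theta\cap X^{(1)}$, which is what $p\in V\Theta$ requires. Even granting your merging-at-vertices heuristic, the contradiction you seek does not follow.
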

\begin{proof}
The fact that $f_\Theta(V\Theta)\subset V\Theta$ is ensured by the definition of the standard graph structure (Definition~\ref{D:standard_graph}). Lemma 7.2 of \cite{DKL} shows that for every arc $\alpha\colon(-\delta,\delta)\to \sigma$ in the interior of a $2$--cell $\sigma$ of $X$ and transverse to the flow $\flow$, the assignment $(t,s)\mapsto \flow_s(\alpha(t))$ defines a locally injective map $(-\delta,\delta)\times \R_{\ge 0} \to X$. Since each edge of $\Theta$ is by construction an embedded arc contained in the interior of $2$--cell and is transverse to $\flow$, the proof of \cite[Lemma 7.3]{DKL} goes through verbatim to establish the remaining claims of the lemma.
\end{proof}

\begin{lemma}
\label{L:first_return_linear}
Let $\Theta$ be an $\mathcal F$--compatible cross section equipped with its standard graph structure. Then there exists a linear structure on $\Theta$ with respect to which $f_\Theta\colon \Theta\to \Theta$ is a linear graph map with irreducible transition matrix $A(f_\Theta)$ and spectral radius $\lambda(f_\Theta) > 1$.
\end{lemma}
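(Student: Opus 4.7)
The plan is to transport the canonical eigenmetric $\mathcal L$ from $\Gamma$ to $\Theta$ using the semiflow, paralleling the argument for \cite[Theorem C]{DKL} which treats the special case of sections dual to positive cocycles. First I would exploit the fact that $\Gamma$ itself is a cross section of $\flow$ with first return map equal to the original train track map $f$. Since every forward flowline from $\Theta$ eventually re-enters $\Gamma$ (by compactness of $X$ and the fact that $\Gamma$ is a cross section), the rule $\xi \mapsto \flow_{T(\xi)}(\xi)$ with $T(\xi) = \min\{s\ge 0 : \flow_s(\xi) \in \Gamma\}$ defines a map $\rho \colon \Theta \to \Gamma$. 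Applying the transversality argument of \cite[Lemma 7.2]{DKL} (the same tool used in the proof of Lemma~\ref{L:first_return_top_graph_map}), $\rho$ is locally injective on the interior of each edge of $\Theta$.

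Next I would \emph{define} the linear structure on $\Theta$ by pulling back the canonical eigenmetric $\mathcal L$ under $\rho$, subdividing if necessary so that $\rho^{-1}(V\Gamma)$ is contained in the vertex set (the standard graph structure of Definition~\ref{D:standard_graph} is already engineered to absorb such subdivisions). Because $f$ is a local $\lambda(f)$--homothety on $\Gamma$ in the eigenmetric, on each edge of $\Theta$ we will have $\rho \circ f_\Theta = f^{n} \circ \rho$ for a positive integer $n$ that is locally constant (and hence constant on each edge after subdivision), which is the precise amount by which flowing from $\Theta$ back to $\Theta$ passes through $\Gamma$. Composing a local $\lambda(f)^{n}$--homothety with the locally affine map $\rho$ forces the restriction of $f_\Theta$ to each edge to be a local homothety as well, so $f_\Theta$ is a linear graph map with respect to this structure.

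For irreducibility of $A(f_\Theta)$, I would argue as follows: given edges $e,e'$ of $\Theta$, their $\rho$--images lie in edges $\rho(e),\rho(e')$ of $\Gamma$. Since $f$ is irreducible, there is some $k\ge 1$ for which the edge path $f^{k}(\rho(e))$ crosses $\rho(e')$. Pulling this crossing back to $\Theta$ through $\rho$ (using local injectivity on interiors of edges, and the relation $\rho\circ f_\Theta^{m} = f^{kn} \circ \rho$ on each edge for an appropriate $m$) shows that some power of $f_\Theta$ sends $e$ across $e'$, i.e.\ that $A(f_\Theta)$ is irreducible. Finally, since $f$ is expanding we have $\lambda(f) > 1$; combined with the relation $\rho \circ f_\Theta^m = f^{kn}\circ \rho$ and the fact that $\rho$ is locally a bijection on edges, this forces the combinatorial lengths of $f_\Theta^{n}(e)$ to grow without bound for each edge $e$ of $\Theta$, giving $\lambda(f_\Theta) > 1$.

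The main obstacle will be the compatibility between the two sources of vertices on $\Theta$: the intrinsic vertices $\Theta \cap X^{(1)}$ together with their forward flow-orbit truncations (from Definition~\ref{D:standard_graph}) on the one hand, and the pre-images under $\rho$ of vertices of $\Gamma$ on the other. Both are finite sets because $\mathcal F$--compatibility ensures that $\Theta\cap X^{(1)}$ sits on vertex leaves and hence eventually flows into $V\Gamma$, but checking that after a common subdivision the resulting topological graph map $f_\Theta$ sends vertices to vertices, acts as a local homothety on every edge, and that the integer $n$ controlling the equation $\rho\circ f_\Theta = f^{n}\circ \rho$ is genuinely edge-wise constant, is the delicate bookkeeping step that will occupy most of the actual proof.
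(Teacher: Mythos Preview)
Your approach differs substantially from the paper's. The paper does not build the linear structure by hand: it shows (via the proof of \cite[Lemma~7.4]{DKL}) that $f_\Theta$ is \emph{expanding on all scales} and then invokes \cite[Theorem~A.3]{DKL} as a black box, which outputs the linear structure, irreducibility, and $\lambda(f_\Theta)>1$ all at once. Your pullback-of-the-eigenmetric idea is more explicit and can in principle be made to work, but as written the irreducibility step contains a genuine gap.

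You argue that $f^k(\rho(e))$ crosses $\rho(e')$ in $\Gamma$ and then propose to ``pull this crossing back to $\Theta$ through $\rho$'' via the relation $\rho\circ f_\Theta^m = f^{N}\circ\rho$. This does not work: $\rho$ is many-to-one (several disjoint arcs of $\Theta$ can flow forward onto the same arc of $\Gamma$), so knowing $\rho(f_\Theta^m(e)) \supseteq \rho(e')$ does not force $f_\Theta^m(e)$ to meet $e'$ itself---it might meet a different edge of $\Theta$ with the same $\rho$-image. The relation $\rho\circ f_\Theta^m = f^N\circ\rho$ only controls the $\Gamma$-shadow of $f_\Theta^m(e)$, not its location inside $\Theta$. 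The repair is to argue in the forward direction rather than pull back: given an interior point $\eta\in e'$, write $\eta = \flow_s(\gamma)$ for some $\gamma\in\Gamma$ and $s\ge 0$ (using $X = \bigcup_{s\ge 0}\flow_s(\Gamma)$), use expansion of $f$ to find $\xi\in e$ with $f^k(\rho(\xi)) = \gamma$ for large $k$, and observe that then $\flow_{T(\xi)+k+s}(\xi) = \eta\in\Theta$, so $\eta = f_\Theta^m(\xi)$ for some $m$. But this corrected argument is exactly what establishes that $f_\Theta$ is expanding on all scales---the property the paper feeds into \cite[Theorem~A.3]{DKL}---so once the gap is filled your route and the paper's essentially converge.
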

\begin{proof}
By assumption, the original map $f\colon \Gamma\to\Gamma$ defining the folded mapping torus $X$ is an expanding irreducible train track map. The proof of \cite[Lemma 7.4]{DKL} thus shows that the topological graph map $f_\Theta\colon \Theta\to\Theta$ is ``expanding on all scales'' (see Appendix A.2 of \cite{DKL}). It then follows from \cite[Theorem A.3]{DKL} that there exists a linear structure $\Lambda$ on $\Theta$ with respect to which $f_\Theta$ is a linear graph map, the transition matrix $A(f_\Theta)$ is irreducible, and the spectral radius $\lambda(f_\Theta)$ is larger than $1$. 
\end{proof}

\begin{proposition}
\label{P:first_return_exp_irred_tt}
Let $\Theta$ be an $\mathcal F$--compatible cross section equipped with the linear graph structure provided by Lemma~\ref{L:first_return_linear}. Then the first return map  $f_\Theta\colon \Theta\to \Theta$ is an expanding irreducible train track map.
\end{proposition}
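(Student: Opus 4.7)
The plan is to verify the two remaining conditions needed to upgrade the conclusion of Lemma~\ref{L:first_return_linear} to a full train track statement. Equipping $\Theta$ with the linear structure from Lemma~\ref{L:first_return_linear} and combining with Lemma~\ref{L:first_return_top_graph_map}, we already know that $f_\Theta$ is a linear graph map, that its transition matrix $A(f_\Theta)$ is irreducible with spectral radius $\lambda(f_\Theta)>1$, and that $f_\Theta^k$ is locally injective on the interior of each edge for every $k\geq 1$. Thus, under the definition of train track map given in \S\ref{S:graphs_and_maps}, it suffices to establish: (i) $f_\Theta$ is expanding; and (ii) $f_\Theta^k$ is locally injective at each valence--$2$ vertex of $\Theta$ for every $k\geq 1$.

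For (i), I would invoke Perron--Frobenius: since $A=A(f_\Theta)$ is a non-negative integer matrix which is irreducible with $\lambda(A)>1$, all entries of $A^n$ grow without bound as $n\to\infty$, so that for any edge $e\in E\Theta$ the combinatorial length
\[
\abs{f_\Theta^n(e)} \;=\; \sum_{e'\in E\Theta}(A^n)_{e',e}
\]
tends to $\infty$. This is exactly the expanding condition.

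For (ii), the plan is to adapt the argument of \cite[\S7]{DKL} used to establish their Theorem~C, which only genuinely relied on the $\mathcal F$--compatibility of the section together with the transverse flow behavior recorded in \cite[Lemma~7.2]{DKL}. At a valence--$2$ vertex $v\in V\Theta$, the two incident edges $e_1,e_2$ determine germs emanating on the two sides of $\Theta$ as measured by the flow direction through $v$; by Definition~\ref{D:standard_graph}, $v$ lies on the forward $\flow$--orbit of some point of $\Theta\cap X^{(1)}$, so the iterates $f_\Theta^k(v)$ are themselves obtained by flowing forward. Applying \cite[Lemma~7.2]{DKL} in a flow-regular neighborhood of the flow arc from $v$ to $f_\Theta^k(v)$, the two sides of $\Theta$ at $v$ are carried injectively to the two sides of $\Theta$ at $f_\Theta^k(v)$. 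Consequently the germs of $f_\Theta^k(e_1)$ and $f_\Theta^k(e_2)$ at $f_\Theta^k(v)$ emanate on opposite sides of $\Theta$ and are therefore distinct, giving the required local injectivity of $f_\Theta^k$ at $v$.

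The main obstacle is (ii): one must argue that the notion of ``two sides of $\Theta$ at $v$'' is globally well-defined via the flow-regular map $\fib_\Theta$ representing $\Theta$ (Proposition-Definition~\ref{PD:flow regular to sone}), and that this side-labeling is genuinely preserved under first return even when $f_\Theta^k(v)$ happens to be a higher-valence branch point of $\Theta$. Because the arguments in \cite[\S7]{DKL} establishing the train track property never appealed to $\pi_1$--injectivity or to membership of $u$ in the positive cone $\A$ (those stronger hypotheses were only used for conclusions such as $f_u$ being a homotopy equivalence), the translation to the present $\mathcal F$--compatible setting should be routine once the side-labeling is set up using $\fib_\Theta$.
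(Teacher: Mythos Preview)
Your treatment of (i) is fine; the paper does not argue expanding separately but relies on the equivalence noted in \S\ref{S:graphs_and_maps} (an irreducible train track map has $\lambda(f)>1$ if and only if it is expanding), which amounts to the same Perron--Frobenius reasoning.

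For (ii), however, your route diverges from the paper's and carries a real gap. The paper does not run a geometric ``two sides'' argument at valence--$2$ vertices. Instead it invokes \cite[Lemma~2.12]{DKL}, a purely graph--theoretic statement whose proof goes through verbatim once one knows that $f_\Theta$ is \emph{surjective}. In \cite{DKL} surjectivity came from $f$ being a homotopy equivalence; here the paper observes that surjectivity follows immediately from the irreducibility of $A(f_\Theta)$ already supplied by Lemma~\ref{L:first_return_linear}. That single substitution is the whole argument.

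Your sketch, by contrast, tries to propagate a left/right labeling along the flow arc from $v$ to $f_\Theta^k(v)$ using \cite[Lemma~7.2]{DKL}. But that lemma concerns an arc in the interior of a \emph{single} $2$--cell, whereas the relevant flow arc typically crosses the $1$--skeleton many times, passing through trapezoids of both signs $\zeta(T)=\pm 1$; the ``side'' labeling is not obviously coherent across such transitions, nor is it clear how to interpret it when $v$ itself lies on $X^{(1)}$. You also assert that the \cite{DKL} arguments for the train track property never used the homotopy--equivalence hypothesis, but in fact the valence--$2$ step (Lemma~2.12 there) did use it, precisely to get surjectivity---which is exactly the point the present paper isolates and replaces. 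So the missing idea in your proposal is simply: derive surjectivity of $f_\Theta$ from irreducibility of $A(f_\Theta)$, then quote \cite[Lemma~2.12]{DKL}.
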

\begin{proof}
By Lemmas~\ref{L:first_return_top_graph_map}--\ref{L:first_return_linear} we know that $f_\Theta$ is a regular linear graph map for which $A(f_\Theta)$ is irreducible, $\lambda(f_\Theta) >1$, and for which each iterate $f_\Theta^n$ is locally injective on each edge of $\Theta$. To prove that all iterates of $f_\Theta$ are locally injective near the degree-$2$ vertices of $\Theta$, we can follow the proof of \cite[Lemma 2.12]{DKL} verbatim with one difference: the assumption that the map was a homotopy equivalence was only used to guarantee that it was surjective.  Here surjectivity of $f_\Theta$ is immediate from the fact that $A(f_\Theta)$ is irreducible. Therefore the linear graph map $f_\Theta\colon\Theta\to\Theta$ satisfies all the conditions of being an expanding irreducible train track map, and so the claim holds.
\end{proof}

\begin{conv}
\label{conv:notation_convention}
Henceforth, for any primitive integral element $u\in \Csec$, we let $\fib_u\colon X\to \sone$ denote any flow-regular map whose induced map on fundamental group is $(\fib_u)_* = u\colon G\to \Z$ and whose fiber $\Theta_u\subset X$ is an $\mathcal F$--compatible cross section. Note that such a map $\fib_u$ always exists by Proposition~\ref{P:dual_sections}. We equip $\Theta_u$ with the standard graph structure; the first return map $f_u=f_{\Theta_u}\colon \Theta_u\to\Theta_u$ is then an expanding irreducible train track map by Proposition~\ref{P:first_return_exp_irred_tt}. Lastly, we use $\flow^u$ to denote the the reparameterized semiflow for which the restriction of the time--$1$ map $\flow^u_1$ to $\Theta_u$ is exactly $f_u$.
\end{conv}

\subsection{Sections and Endomorphisms}
\label{S:endomorphisms}

Here we explain how every section $\Theta$ of $\flow$ gives rise to
splitting of $G$ as a (possibly ascending) HNN-extension 
\begin{equation} 
\label{E:HNN from section} 
G = Q_{[\Theta]} \ast_{\phi_{[\Theta]}} 
\end{equation}
of a finitely generated free group $Q_{[\Theta]}$ along an injective endomorphism $\phi_{[\Theta]}$ with respect to which $[\Theta]\colon G\to \Z$ is exactly the natural projection $Q_{[\Theta]}\ast_{\phi_{[\Theta]}}\to \Z$ sending the stable letter to $1$ and $Q_{[\Theta]}$ to $0$. This general situation should be compared with the special case $[\Theta] \in \A$, where Theorem \ref{T:DKL_BC} shows that we can find $\Theta$ so that in the splitting (\ref{E:HNN from section}) we can take $Q_{[\Theta]} = \pi_1(\Theta)$ and let $\phi_{[\Theta]} = (f_{\Theta})_*$ be the {\em isomorphism} induced by the first return map. 

Let $u\in \Csec$ be any primitive integral class with dual cross section $\Theta_u$ as in Convention~\ref{conv:notation_convention}. Choosing a basepoint $v\in \Theta_u$ and a path $\beta\subset \Theta_u$ from $f_u(v)$ to $v$, we then let $(f_u)_*\colon \pi_1(\Theta_u,v)\to\pi_1(\Theta_u,v)$ be the endomorphism defined by $(f_u)_*(\gamma) = \beta f(\gamma)\beta\inv$. 
If we let $\tau_u\in G =\pi_1(X,v)$ be the loop obtained by concatenating the flowline from $v$ to $f_u(v)$ with $\beta$, it then follows that $u(\tau_u) = 1$ and therefore that the conjugation $g\mapsto \tau_u\inv g \tau_u$ defines an automorphism $\Phi_u\in \Aut(\ker(u))$ representing the monodromy $\fee_u$. Furthermore, the splitting (\ref{eqn:splitting}) gives a realization of $G$ as the HNN-extension $G = \ker(u)\ast_{\Phi_u}$ for which the natural projection is $u\colon G\to \Z$

The inclusion $\iota\colon \Theta_u\hookrightarrow X$ now induces a homomorphism
\[\iota_*\colon \pi_1(\Theta_u,v)\to\ker(u)\leq \pi_1(X,v) = G\]
that semiconjugates $(f_u)_*$ to $\Phi_u$.
Despite the fact that $\Theta_u$ need not $\pi_1$--inject into $X$ and that $(f_u)_*$ need not be an automorphism, van Kampen's Theorem still applies to yield the following:

\begin{lemma}\label{L:vanKampen}
Let $\Theta_u$ be a section of $\flow$. Then, with the above notation, the homomorphism $\iota_*\colon \pi_1(\Theta_u,v)\to \ker(u)$ induces an isomorphism
\begin{equation*} 
\langle \pi_1(\Theta_u,v), r \mid r^{-1} \gamma r = (f_u)_*(\gamma) \, \mbox{ for all } \gamma \in \pi_1(\Theta_u,v) \rangle \to \ker(u)\ast_{\Phi_u} =G
\end{equation*}
with respect to which $u\colon G\to \Z$ is given by sending $\pi_1(\Theta_u,v)$ to $0$ and $r$ to $1$.  
\end{lemma}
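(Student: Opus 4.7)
The plan is to apply van Kampen's theorem to $X$ using the cross section $\Theta_u$ and the flow, mirroring the standard fundamental-group computation for a mapping torus. Using the reparameterized semiflow $\flow^u$ (whose time-$1$ map restricted to $\Theta_u$ is $f_u$), the assignment $(\xi, s) \mapsto \flow^u_s(\xi)$ defines a continuous surjection $\Theta_u \times [0,1] \to X$ that respects the identification $(\xi, 1) \sim (f_u(\xi), 0)$, and so descends to a continuous map $\Psi \colon M(f_u) \to X$ from the topological mapping torus of $f_u$. Although $\Psi$ need not be a homeomorphism when $f_u$ is non-injective (the semiflow can merge flowlines), the cross-section property together with the fact that the first return map of $\Theta_u$ under $\flow^u$ is exactly $f_u$ forces $\Psi$ to be a homotopy equivalence: both spaces carry the same data of a cross section with specified first return map, and this data determines the homotopy type.

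Having identified $\pi_1(X) \cong \pi_1(M(f_u))$ via $\Psi_*$, I would next run the standard van Kampen computation on $M(f_u)$. Cover $M(f_u)$ by an open tubular neighborhood $U$ of the image of $\Theta_u \times \{1/2\}$ (deformation retracting onto $\Theta_u$) and an open tubular neighborhood $V$ of the glued base fiber $\Theta_u \times \{0\} = \Theta_u \times \{1\}$ (deformation retracting onto the mapping cylinder of $f_u$, and hence onto the target copy of $\Theta_u$, crucially even when $f_u$ is not a homotopy equivalence). The intersection $U \cap V$ has two components, each homotopy equivalent to $\Theta_u$; both inclusions into $U$ induce the identity on $\pi_1(\Theta_u, v)$, while the inclusions into $V$ induce the identity on one component and $(f_u)_*$ on the other, reflecting the mapping-torus gluing. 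Van Kampen then yields the HNN-like presentation $G = \pi_1(X,v) \cong \langle \pi_1(\Theta_u, v), r \mid r^{-1} \gamma r = (f_u)_*(\gamma) \text{ for all } \gamma \in \pi_1(\Theta_u, v) \rangle$, where the stable letter $r$ is represented by a loop traversing $M(f_u)$ once in the flow direction, corresponding under $\Psi$ to the loop $\tau_u \in G$.

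Finally, I would verify that this isomorphism matches the standard presentation $G = \ker(u) \ast_{\Phi_u}$. The discussion preceding the lemma already supplies the key compatibilities: $\iota_*$ takes values in $\ker(u)$ (since $\Theta_u$ is a level set of $\fib_u$), conjugation by $\tau_u$ realizes $\Phi_u \in \Aut(\ker(u))$ (by definition of $\Phi_u$), and $\Phi_u \circ \iota_* = \iota_* \circ (f_u)_*$; combined with $u(\tau_u) = 1$ and $u \circ \iota_* = 0$, these force the van Kampen isomorphism to coincide with the natural isomorphism to $\ker(u) \ast_{\Phi_u}$ obtained by sending $\pi_1(\Theta_u, v)$ via $\iota_*$ and $r$ to the stable letter represented by $\tau_u$, with $u$ corresponding to the natural projection. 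The principal obstacle will be rigorously justifying that $\Psi \colon M(f_u) \to X$ is a homotopy equivalence when $f_u$ is non-injective: the semiflow can merge flowlines in $X$, creating identifications not present in $M(f_u)$, so one must argue that these additional identifications are $\pi_1$-trivial—plausibly by localizing the mergings to vertex leaves of $\mathcal F$ (which by $\mathcal F$-compatibility meet $\Theta_u$ only at its vertices) and showing the semiflow trajectories involved in each merging are contractible.
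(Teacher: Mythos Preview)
The paper provides no proof at all: the sentence ``van Kampen's Theorem still applies to yield the following'' immediately precedes the lemma, and the text then moves directly to Theorem~\ref{T:splittings}. So you are supplying an argument the authors leave entirely implicit. Your second and third paragraphs---the standard van Kampen computation on $M(f_u)$ yielding the HNN-like presentation, and the verification that $\iota_*$ together with $r\mapsto\tau_u$ matches the presentation $\ker(u)\ast_{\Phi_u}$---are correct and are presumably what the authors have in mind.

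The obstacle you flag is genuine, but your proposed resolution is misdirected. Flowlines in the folded mapping torus $X$ merge along the entire fold locus, namely the skew $1$--cells along which the quotient $Z_f\to X$ identifies pairs of trapezoids---not merely at vertex leaves of $\mathcal F$. The $\mathcal F$--compatibility hypothesis only controls where $\Theta_u$ meets $X^{(1)}$; it says nothing about where flowlines merge between successive returns to $\Theta_u$, so ``localizing the mergings to vertex leaves'' cannot account for all the identifications $\Psi$ makes. What does work is to observe that each such merging is a local fold (two arcs identified along a common subarc as they cross a skew $1$--cell), which is the same mechanism making the original quotient $Z_f\to X$ a homotopy equivalence; the map $\Psi$ factors through a sequence of such elementary collapses and is therefore a homotopy equivalence. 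Alternatively, one can bypass $M(f_u)$ and run van Kampen directly on $X$ with $U$ a neighborhood of $\Theta_u$ and $V=X\setminus\Theta_u$: the forward semiflow deformation retracts $\fib_u^{-1}([\epsilon,1])$ onto $\Theta_u=\fib_u^{-1}(\{1\})$, and the two components of $U\cap V$ then include via the identity and via $f_u$ just as in your mapping-torus argument---though one again needs the fold observation to see that the tubular neighborhood $U$ itself retracts onto $\Theta_u$.
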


We can now prove Theorem~\ref{T:splittings}. For the statement, recall that the stable kernel $K_\phi$ of an arbitrary endomorphism $\phi\colon W\to W$ was defined in \S\ref{S:HNN-like} and that $\phi$ naturally descends to a homomorphism $\bar{\phi}$ of the corresponding stable quotient $W/K_\phi$.

\begin{theorem:splittings}[Splittings and ascending HNN-extensions]
Let $u\in \Csec$ be a primitive integral class with $\mathcal F$--compatible dual section $\Theta_u \subset X$ and first return map $f_u \colon \Theta_u \to\Theta_u$. Let $Q_u$ be the stable quotient of $(f_u)_*$ and let $\phi_u = \overline{(f_u)}_*$ be the induced endomorphism of $Q_u$. Then
\begin{enumerate}
\item $f_u$ is an expanding irreducible train track map.
\item $Q_u$ is a finitely generated free group and $\phi_u\colon Q_u\to Q_u$ is injective.
\item $G$ may be written as an HNN-extension
\[G \cong Q_u \ast_{\phi_u} = \langle Q_u, r \mid r\inv q r = \phi_u(q) \text{ for all }q\in Q_u\rangle\]
such that $u\colon G\to \Z$ is given by the assignment $r\mapsto 1$ and $Q_u\mapsto 0$.
\item If $J_u\leq \ker(u)$ denotes the image of $\pi_1(\Theta_u)$ induced by the inclusion $\Theta_u\hookrightarrow X$, then there is an isomorphism $Q_u\to J_u$ conjugating $\phi_u$ to $\Phi_u\vert_{J_u}$ for some automorphism $\Phi_u\in\Aut(\ker(u))$ representing the monodromy $\fee_u$.
\item The topological entropy of $f_u$ is equal to $\log(\lambda(\phi_u)) = \log(\lambda(\Phi_u\vert_{J_u}))$ and also to $\log(\Lambda(u))$.
\item $\ker(u)$ is finitely generated if and only if $\phi_u$ is an automorphism, in which case we have $\ker(u) \cong Q_u$ and that $\fee_u = [\phi_u] \in \Out(Q_u)$.
\end{enumerate}
\end{theorem:splittings}
\begin{proof}
We use the notation introduced before Lemma~\ref{L:vanKampen} and, in particular, implicitly use $v\in \Theta_u$ as a basepoint and take $\Phi_u\in\Aut(\ker(u))$ to be conjugation by $\tau_u\in \pi_1(X) =G$. Then $\Phi_u$ represents the monodromy $\fee_u\in \Out(\ker(u))$. Item (1) was established by Proposition~\ref{P:first_return_exp_irred_tt}, and by Proposition~\ref{P:stretch} this further implies that the topological entropy of $f_u$ is equal to $\log(\lambda((f_u)_*))$. Corollary~\ref{C:HNN_and_stretch_for_endos} then implies item (2) and additionally proves $\lambda(\phi_u) = \lambda((f_u)_*)$, thus establishing the first part of (5). The remaining assertions $\lambda(\phi_u) = \lambda(\Phi_u\vert_{J_u})$ and $\Lambda(u)=\lambda(\phi_u)$ of (5) will follow from items (4) and (3), respectively.

Since $\iota_*\colon\pi_1(\Theta_u)\to \ker(u)$ semiconjugates $(f_u)_*$ to $\Phi_u$, which is injective, it follows that the stable kernel $K_{(f_u)_*}$ is contained in $\ker(\iota_*)$. Therefore $\iota_*$ factors through the stable quotient $Q_u$ thus yielding a pair of homomorphisms
\[\pi_1(\Theta_u)\to Q_u\to \ker(u).\]
Since these homomorphisms respectively semiconjugate $(f_u)_*$ to $\phi_u = \overline{(f_u)}_*$ and $\phi_u$ to $\Phi_u$, they induce canonical homomorphisms
\[\pi_1(\Theta_u)\ast_{(f_u)_*} \to Q_u\ast_{\phi_u} \to \ker(u)\ast_{\Phi_u} = G\]
that respect the natural projections of these groups to $\Z$. The above composition is an isomorphism by Lemma~\ref{L:vanKampen}, and thus each map is itself an isomorphism. This proves (3). Since $Q_u$ embeds into $Q_u\ast_{\phi_u}$, this furthermore implies that $Q_u\to \ker(u)$ is injective. Therefore $Q_u\to \ker(u)$ is an isomorphism onto its image which, by definition, is equal to $J_u$. This isomorphism establishes (4).

It remains to prove (6). The fact that $u\colon G\to \Z$ is the natural projection $Q_u \ast_{\phi_u}\to \Z$ defined by $r\mapsto 1$ and $Q_u\mapsto 0$ implies that $\ker(u)$ is equal to the normal closure $ncl(Q_u)$ of $Q_u$ in $G$.  The HNN-extension presentation $G=Q_u \ast_{\phi_u}$ further implies that $ncl(Q_u)=\cup_{n=0}^\infty r^n Q_u r^{-n}$. Thus if $\phi_u$ is an automorphism, then $\ker(u)$ is equal to $Q_u$ and is thus finitely generated by (2). On the other hand, if $\phi_u$ is not an automorphism, then $\phi_u\colon Q_u\to Q_u$ is necessarily non-surjective (by (2)) and thus $ncl(Q_u)=\cup_{n=0}^\infty r_u^n Q_u r_u^{-n}$ is the union of an infinite strictly increasing chain of subgroups of $\ker(u)$. This shows that $\ker(u)$ is not finitely generated and completes the proof of (6).
\end{proof}

\begin{example}[The ascending HNN-extension for {$u_1 = [z_1]$}]
\label{Ex:(-1,2)_splitting}
Let us calculate the splitting described by Theorem~\ref{T:splittings}(2) for the homomorphism $u_1 = [z_1]$, where
\[ z_1 = [v_2^*,v_3^*] + [v_3^*,v_5^*] + 2[v_4^*,v_6^*] + [v_5^*,v_1^*] + [v_6^*,v_2^*]\]
is the cocycle introduced in Example~\ref{Ex:(-1,2) introduced}. Recall from equation (\ref{eqn:example_presentation}) that in our running example $G$ has the two-generator one-relator presentation
\begin{equation}
\label{eqn:re-present}
 G=\langle \gamma, r \mid  \gamma^{-1} r \gamma^{-1} r  \gamma^{-1} r^{-1} \gamma r \gamma r \gamma r^{-3} = 1\rangle.
\end{equation}
Thus we may apply Brown's method~\cite{Brown} to compute the kernel $\ker(u_1)$ and corresponding monodromy $\fee_1 = \fee_{u_1}$. Working in the unsubdivided cell structure on $X$, the loops $\gamma,r\subset X$ generating $G$ may be realized as $1$--chains
\[\gamma = \Big[ [v_1,v_2] + [v_2,v_4] + [v_4,v_5] - [v_3,v_5] - [v_1,v_3]\Big]\,\,\text{and}\,\, r = \Big[ [v_1,v_3] + [v_3,v_5] + [v_5,v_1]\Big].\]
Pairing these with $z_1$ we find that $u_1(\gamma) = -1$ and $u_1(r) = 2$. Setting $\beta=r\gamma^2$ and $\tau=\gamma^{-1}$, we therefore have $u_1(\tau)=1$ and $u_1(\beta)=0$. Conjugating $k \mapsto \tau\inv k\tau$ by $\tau$ thus yields an automorphism $\Phi_1\in \Aut(\ker(u_1))$ representing the monodromy $\fee_1$. Noting that $\gamma = \tau\inv$ and $r = \beta \tau^2$, we may rewrite the defining relation of (\ref{eqn:re-present}) in terms of the generators $\beta$ and $\tau$ to obtain the following presentation for $G$:
\begin{equation*}
\label{eqn:z_1-presentation1}
G=\langle \beta,\tau \mid 
\tau \beta \tau^{3} \beta \tau \beta^{-1}\tau^{-1}\beta \tau\beta \tau^{-1} \beta^{-1} \tau^{-2} \beta^{-1}\tau^{-2}\beta^{-1}
\rangle.
\end{equation*}
We now represent $G$ as an HNN-extension of another one-relator group, following the method introduced by McCool and Schupp~\cite{McSch}. For each $i\in \Z$ denote $\beta_i=\tau^{-i} \beta\tau^{i}$.  Then the defining relation above may be rewritten as
\[
\beta_{-1} \beta_{-4} \beta_{-5}^{-1} \beta_{-4}\beta_{-5} \beta_{-4}^{-1} \beta_{-2}^{-1}\beta_0^{-1} =1. 
\]
Hence we can rewrite the above presentation of $G$ as
\begin{equation}
\label{eqn:z_1-presentation2}
G = \left\langle \beta_{-5}, \dots, \beta_0,\tau \left| \begin{array}{l} \beta_{-1} \beta_{-4} \beta_{-5}^{-1} \beta_{-4}\beta_{-5} \beta_{-4}^{-1} \beta_{-2}^{-1}\beta_0^{-1}=1,\\
\quad  \tau^{-1}\beta_i\tau=\beta_{i+1}\text{ for } i=-5,\dots, -1\end{array}\right.\right\rangle.
\end{equation}

Consider now the one-relator group
\[
B=\langle  \beta_{-5},\dotsc, \beta_0 \mid  \beta_{-1} \beta_{-4} \beta_{-5}^{-1} \beta_{-4}\beta_{-5} \beta_{-4}^{-1} \beta_{-2}^{-1}\beta_0^{-1} \rangle.
\]
By Magnus' Freiheitssatz theorem for the one-relator group $B$, the
subgroups $L=\langle \beta_{-5},\dotsc,\beta_{-1}\rangle \le B$ and  $J=\langle\beta_{-4},\dotsc,\beta_0 \rangle \le B$ are free groups of rank $5$ and the indicated generating sets of $L$ and $J$ are their free bases. Hence the map $\phi_1\colon L\to J$, sending $\beta_i\mapsto \beta_{i+1}$ for $i=-5,\dots, -1$ is an isomorphism. Moreover, since the defining relation of $B$ implies that $\beta_0=\beta_{-1} \beta_{-4} \beta_{-5}^{-1} \beta_{-4}\beta_{-5} \beta_{-4}^{-1} \beta_{-2}^{-1}$, we see that the generator $\beta_0$ may be eliminated from the presentation of $B$ and thus that $L = \langle \beta_{-5},\dotsc,\beta_{-1}\rangle$ is in fact equal to $B$. Therefore $\phi_1\colon L\to J$ is actually an injective endomorphism $\phi_1\colon B\to B$ of the rank--$5$ free group $B$. This shows that the presentation~(\ref{eqn:z_1-presentation2}) gives a splitting of $G$ as an HNN-extension
\[G = B\ast_{\phi_1} = \langle B, \tau \mid \tau\inv b \tau = \phi_1(b)\text{ for }b\in B \rangle.\]

Note that with respect to the free basis $B = \langle \beta_{-5},\dotsc,\beta_{-1}\rangle$, the injective endomorphism $\phi_1$ is given by $\phi_1(\beta_{-1})=\beta_{-1} \beta_{-4} \beta_{-5}^{-1}\beta_{-4}\beta_{-5} \beta_{-4}^{-1} \beta_{-2}^{-1} $ and $\phi_1(\beta_i)=\beta_{i+1}$ for $i=-5,\dotsc, -2$. Using the Stallings folding method \cite{KM02,St83} we find that $\phi_1$ is not surjective: The fact that $\beta_{-5}$ occurs in $\phi_1(\beta_{-1})$ twice leads to the conclusion that the rose with petals labelled by the words $\phi_1(\beta_{-5}),\dotsc,\phi_1(\beta_{-1})$ \emph{does not} fold onto the rose with petals labelled $\beta_{-5},\dotsc,\beta_{-1}$. Therefore (\ref{eqn:z_1-presentation2}) is a strictly ascending HNN-extension presentation of $G$ with base $B=F(\beta_{-5},\dotsc,\beta_{-1})$ and stable letter $\tau$ along the injective but non-surjective endomorphism $\phi_1$ of $B$.

It follows that $\ker(u_1)$ is not finitely generated because it equal to the infinite union $ncl_G(B) = \cup_{i=0}^\infty \tau^i B \tau^{-i}$ of a strictly increasing chain of subgroups.
We also note that the free group endomorphism $\phi_1$ is just the restriction $\Phi_1\vert_{B}$ of $\Phi_1$ to $B\leq G$. Using the software package {\rm xtrain}, one may verify that $\phi_1$ does admit an expanding irreducible train track representative and that the spectral radius of the transition matrix for this train-track representative is $\approx 1.35827$. By Proposition~\ref{P:stretch} we may thus conclude that $\phi_1$ has a stretch factor of $\lambda(\phi_1) \approx 1.35827$.
\end{example}

\begin{remark} \label{Rm:infinitely generated kernels exist}
In light of the proof of Theorem~\ref{T:splittings}, one may also see that $\ker(u_1)$ is infinitely generated by noting, as we have done in Example~\ref{Ex:(-1,2) introduced}, that the first return map $f_1$ induces an injective but not surjective endomorphism $(f_1)_*$ of $\pi_1(\Theta_1)$: The fact that $(f_1)_*$ is injective implies that its stable kernel is trivial and that $\Theta_1$ $\pi_1$--injects into $G$. Thus the HNN-extension provided by Theorem~\ref{T:splittings}(2) is simply
\[G\cong \pi_1(\Theta_1)\ast_{(f_1)_*}.\]
Therefore, as $(f_1)_*$ is not surjective, Theorem~\ref{T:splittings}(6) implies that $\ker(u_1)$ is not finitely generated.
\end{remark}

\begin{example}[The splitting for {$u_2 = [z_2]$}]
\label{Ex:(-1,1)_splitting}
Let us also calculate the splitting of $G$ for the homomorphism $u_2 = [z_2]$ determined by the cocycle
\[z_2 = [v_1^*,v_2^*] + [v_2^*, v_3^*] - [v_4^*, v_5^*] - [v_5^*, v_6^*] + [v_1^*,v_3^*] + 2[v_6^*,v_2^*]\]
introduced in Example~\ref{Ex:(-1,1) introduced}. Again, by realizing the generators $\gamma,r\in G$ as $1$--cycles and pairing with $z_2$, we find that $u_2(\gamma) = -1$ and $u_2(r) = 1$. Setting $\beta = r\gamma$ and $\tau = \gamma\inv$, we may rewrite the presentation (\ref{eqn:re-present}) for $G$ in terms of the generators $\beta$ and $\tau$ as we did in (\ref{eqn:z_1-presentation2}) above to find that
\begin{equation}
\label{eqn:z_2-presentation2}
G = \left\langle \beta_{-4}, \dots, \beta_0,\tau \left| \begin{array}{l} \beta_{-1} \beta_{-3} \beta_{-4}^{-1} \beta_{-3}^2\beta_{-2}^{-1} \beta_{-1}^{-1} \beta_0^{-1}=1,\\
\quad  \tau^{-1}\beta_i\tau=\beta_{i+1}\text{ for } i=-4,\dots, -1\end{array}\right.\right\rangle,
\end{equation}
where we again denote $\beta_i = \tau^{-i}\beta \tau^i$ for $i\in \Z$. Setting
\[B' = \langle\beta_{-4},\dotsc,\beta_{0} \mid \beta_{-1} \beta_{-3} \beta_{-4}^{-1} \beta_{-3}^2\beta_{-2}^{-1} \beta_{-1}^{-1} \beta_0^{-1} \rangle,\]
Magnus' Freiheitssatz now tells us that the subgroups $L' = \langle\beta_{-4},\dotsc,\beta_{-1}\rangle\leq B'$ and $J' = \langle\beta_{-3}\dotsc,\beta_0\rangle \leq B'$ are freely generated by the indicated generating sets and that the assignment $\beta_i \mapsto \beta_{i+1}$ for $i=-4,\dotsc,-1$ gives an isomorphism $\phi_2\colon L'\to J'$. However, as the generator $\beta_0$ of $B'$ is evidently superfluous, we in fact have $L' = B'$. Thus $\phi_2$ is actually an endomorphism $\phi_2\colon B'\to B'$, and with respect to the free basis $B' =\langle \beta_{-4},\dotsc,\beta_{-1}\rangle$ it takes the form $\phi_2(\beta_{-1}) = \beta_{-1} \beta_{-3} \beta_{-4}^{-1} \beta_{-3}^2\beta_{-2}^{-1} \beta_{-1}^{-1}$ and $\phi_2(\beta_i) = \beta_{i+1}$ for $i=-4,\dotsc,-2$. Here the Stallings folding method shows that $\phi_2$ is both injective and surjective,  and therefore (\ref{eqn:z_1-presentation2}) presents $G$ as an HNN-extension $G\cong B'\ast_{\phi_2}$ over the free base group $B'$ along the automorphism $\phi_2$. It now follows that $\ker(u_2) = B'$ is finitely generated and in fact has rank $4$. Theorem~\ref{T:splittings} then implies that $\phi_2$ represents the monodromy $\varphi_2 = \varphi_{u_2}\in \Out(\ker(u_2))$, and the program xtrain calculates that the stretch factor here is $\lambda(\varphi_2) = \lambda(\phi_2)\approx 1.632992$.
\end{example}

\section{Cross sections and homology}
\label{S:sections_and_homology}

In this section we return our attention to the abelian group $H = H_1(G;\Z)/\text{torsion}$ and describe 
bases for $H$ and associated coordinates systems on $H^1(G;\R) = H^1(X;\R)$ that are
tailored to each integral class $u\in\Csec$.
Viewing a given primitive integral class $u\in \Csec$ as a surjective homomorphism $G \to \Z$, we let $\tX_u \to X$ denote the infinite cyclic cover corresponding to the kernel of $u$.  The inclusion $\Theta_u \to X$ lifts to an inclusion $\Theta_u \to \tX_u$ since $\pi_1(\Theta_u) < \ker(u)$.  Because $H$ is the maximal, abelian, torsion-free quotient of $G$, the homomorphism $u$ factors as a composition $G \to H \to \Z$, and we let $H_u < H$ denote the kernel of this homomorphism $H \to \Z$.

Observe that $H_u$ is the image in $H$ of $\pi_1(\tX_u) = \ker(u) < G$ under the homomorphism $G \to H$.  In fact, we claim that $H_u$ is the image of $\pi_1(\Theta_u)$ (if $u \in \A$, then $\pi_1(\Theta_u) = \pi_1(\tX_u)$ and this is obvious).  Before we prove the claim, we note that $\pi_1(\Theta_u)$ is only well-defined up to conjugacy in general, but any two conjugates have the same image in $H$.  Now, given an arbitrary element $[\alpha]$ of $H_u$, let $\alpha$ be a loop in $\tX_u$ sent to $[\alpha]$.  Lifting the semiflow $\psi$ to $\tX_u$ we flow $\alpha$ into some lift of $\Theta_u$ in $\tX_u$, producing a loop $\alpha'$ in some lift of $\Theta_u$, freely homotopic to $\alpha$.  Pushing down to $X$, we find $[\alpha] = [\alpha'] \in H_u$, and hence $[\alpha]$ is in the image of $\pi_1(\Theta_u)$, as required.

Write $p \colon \tX \to X$, let $\hTheta_u = p^{-1}(\Theta_u) \subset \tX$, and let $\tTheta_u$ denote some component.  Since $\tX \to \tX_u$ is the cover corresponding to the kernel of $\pi_1(\tX_u) \to H_u$, and $\pi_1(\Theta_u)$ maps onto $H_u$, $H_u$ is precisely the stabilizer in $H$ of $\tTheta_u$.  Furthermore, $H/H_u$ is the covering group of $\tX_u \to X$, and so $H/H_u \cong \Z$.  It follows that there is a (noncanonical) splitting $H = H_u \oplus \Z$.  Since $H$ has rank $b$, $H_u$ has rank $b-1$.

A useful alternative way to describe this splitting is the following.
Consider the subgroup of cohomology fixed by $f_u$, denoted
\[ H^1(\Theta_u;\Z)^{f_u} \subset H^1(\Theta_u;\Z).\]
Given a homotopy class $\alpha$ of loops in $\Theta_u$, evaluating cohomology classes on $\alpha$ defines an element of $\Hom(H^1(\Theta_u;\Z)^{f_u},\Z)$.  This is a quotient of $H_1(\Theta_u;\Z)$, and in fact it is the maximal quotient on which $f_u$ acts trivially.  On the other hand, the inclusion $\Theta_u \subset X$ induces a map $H_1(\Theta_u;\Z) \to H$, and it is straightforward to check that this map factors as a composition $H_1(\Theta_u;\Z) \to \Hom(H^1(\Theta_u;\Z)^{f_u},\Z) \to H$.  Moreover, this second map is actually an injection, so since $H_1(\Theta_u;\Z)$ is the abelianization of $\pi_1(\Theta_u)$, from the discussion above, we have a natural isomorphism
\[ H_u  = \Hom(H^1(\Theta_u;\Z)^{f_u},\Z).\]

We choose a basis $s_1,\ldots,s_{b-1}$ for $H_u$ and $w \in H$ a generator for the complement.  Here the group structure is additive, but it will also be convenient to have a multiplicative basis.   Specifically, we let $t_1,\ldots,t_{b-1},x \in H_1(X;\R_+)$, be given by $t_i =e^{s_i}$ and $x = e^w$.  Formally, this just means that $t_i$ and $x$ are the images of $s_i$ and $w$, respectively, under the isomorphism $H_1(X;\R) \to H_1(X;\R_+)$ determined by the isomorphism of coefficient groups $\exp \colon \R \to \R_+$.  When convenient we write ${\bf s} = (s_1,\ldots,s_{b-1})$ and ${\bf t} = e^{\bf s} = (t_1,\ldots,t_{b-1})$.
In this way, the integral group ring of $H$ is naturally identified with the ring of Laurent polynomials
\[ \Z[H] \cong \Z[t_1^{\pm 1},\ldots,t_{b-1}^{\pm 1},x^{\pm 1}] = \Z[{\bf t}^{\pm 1},x^{\pm}].\]
It is also convenient to think of $t_1,\ldots,t_{b-1},x$ as generators for the covering group $H$ of $\tX \to X$, and we will do so.  Finally, we assume that the generator $x$ is chosen so that it translates $\tX$ {\em positively} with respect to $\tpsi$.

It will be convenient to have a concise means for referring to all of these choices, so we make the following
\begin{defn} \label{D:adapted to z}
Given a primitive integral class $u \in \Csec$ with dual cross section $\Theta_u$, we say that the splitting $H =  H_u \oplus \Z$, and associated bases ${\bf s},w$ and ${\bf t} = e^{\bf s},x = e^w$ described above are {\em adapted to $u$}.
\end{defn}

\begin{conv} \label{Conv:homology coordinates}
The pairing between homology and cohomology allows us to view the homology classes $s_1,\ldots,s_{b-1},w,t_1,\ldots,t_{b-1},x$ as real-valued functions on $H^1(X;\R)$.  In fact, $({\bf s},w) \colon H^1(X;\R) \to \R^b$ are linear coordinate functions, and in these coordinates
\[ u = ({\bf s}(u),w(u)) = ({\bf 0},1).\]
Furthermore, the formal expressions ${\bf t} = e^{\bf s}$ and $x = e^w$ can be interpreted as a functional relation
\[ ({\bf t},x) = (e^{\bf s},e^w) \colon H^1(X;\R) \to \R_+^b.\]
\end{conv}

The bases ${\bf t}$ and ${\bf t},x$ for $H_u$ and $H$, respectively, determine isomorphisms of integral group rings with rings of integral Laurent polynomials:
\[ \Z[H_u] \cong \Z[{\bf t}^{\pm 1}] = \Z[t_1^{\pm 1},\ldots,t_{b-1}^{\pm 1}]\]
and
\[ \Z[H] \cong \Z[{\bf t}^{\pm 1},x^{\pm 1}] = \Z[t_1^{\pm 1},\ldots,t_{b-1}^{\pm 1},x^{\pm 1}].\]
Whenever we have chosen bases adapted to $u$, we will freely use these identifications.

\begin{example}[The cones $\A\subset\Csec$]
\label{Ex:the_cones}
Let us return to the running example introduced in Example~\ref{E:introduce_running}. If we take $u_0$ to be the homomorphism associated to the original splitting $G = F_N\rtimes_\fee \Z$, then $\Theta_{u_0}$ is the original graph $\Gamma$ shown in Figure~\ref{F:train_tracks}.  Let $H_0 = H_{u_0}$ as above giving rise to the splitting $H \cong H_0 \oplus \Z$.  Here $H_0 \le H$ is the cyclic group $\langle t\rangle$ generated by the image of either $\gamma_1$, $\gamma_2$, or $\gamma_3\in G$ under $G\to H = G^{ab}$, and its complement $\Z = \langle x \rangle$ is generated by the image of the stable letter $r\in G$.  

Let $s,w$ be the corresponding additive elements with $e^s = t$ and $e^w= x$, which we represent by $1$--cycles
\[ s = \Big[ [v_1,v_3] - [v_2,v_3] - [v_1,v_2] \Big]\quad\mbox{and}\quad w = \Big[ [v_1,v_3] + [v_3,v_5] + [v_5,v_1] \Big] .\]
Using $(s,w)$ as coordinates on $H^1(X;\R)$ we can express the classes $u_0, u_1, u_2$ as
\[ u_0 = (0,1), \quad u_1 = (-1,2), \quad u_2 = (-1,1).\]
All of these classes are dual to cross sections (see Examples~\ref{Ex:(-1,2) introduced}--\ref{Ex:(-1,1) introduced}) and are therefore contained in $\Csec$. By construction we also know that $\A$ contains $u_0$. On the other hand, since $\ker(u_1)$ is infinitely generated (see Example~\ref{Ex:(-1,2)_splitting}) we necessarily have $u_1\notin \A$. This gives us some information about $\A$ and $\Csec$, and we may in fact calculate these cones explicitly as follows.

First recall that every class in $\A$ may be represented by a positive cocycle on the unsubdivided cell structure. We note also that the coboundary operator $\delta \colon C^1(X;\R) \to C^2(X;\R)$ is linear, and hence that finding $1$--cocycles that represent points on the boundary of $\A$ amounts to finding $c \in C^1(X;\R)$ that maximizes/minimizes the linear function $c \mapsto c([v_1,v_3] - [v_2,v_3] - [v_1,v_2])$, subject to the system of linear equations and inequalities
\[  \left\{ \begin{array}{l}
\delta(c)  =  0,\\ 
c([v_1,v_3] + [v_3,v_5] + [v_5,v_1]) = 1,\\
c([v_i,v_j]) \geq 0, \mbox{ for all } i,j \mbox{ with } [v_i,v_j] \mbox{ a $1$--cell.}\end{array} \right.
\]
Here the first equation corresponds to the requirement that $c$ be a cocycle. The third inequality is the requirement that the cocycle is nonnegative. The second condition is a normalization that the cocycle evaluates to $1$ on $w$.  The function we are trying to maximize/minimize is the $s$--coordinate, and so if $\bar \A$ contained the ray through $(1,0)$ or $(-1,0)$, then there would be no maximum or minimum, respectively, subject to the constraints. This is a linear programming problem that can easily be solved in Mathematica or by hand. Doing so, we find that the the minimum and maximum values of the $s$--coordinate subject to these constraints are $-\nicefrac{1}{2}$ and $\nicefrac{1}{2}$, respectively. The $\A$ cone is therefore as illustrated in Figure~\ref{F:Acone-Scone}, and we note that $u_1$ in fact lies on its boundary $\partial \A$.

We now calculate $\Csec$ using the homological characterization given by Theorem~\ref{T:cone_of_sections}. Following the explicit construction of \S\ref{S:cone_of_sections}, we find that in the case of our running example the finitely many closed orbits $\mathcal{O}_i$ appearing in the statement of Theorem~\ref{T:cone_of_sections} consist of $7$ closed loops which are freely homotopic in $X$ to the following $7$ elements of $\pi_1(X) = \langle\gamma_1,\gamma_2,\gamma_3,r\rangle$:
\[\left\{\begin{array}{l}
r \gamma_3\inv\gamma_1\inv, 
\quad r^2\gamma_3\inv\gamma_1\inv\gamma_2\inv,
\quad r^2\gamma_3\inv,
\quad r^3,\vspace{4pt}\\
\quad r^2\gamma_3\inv r^2,
\quad r^2\gamma_3\inv\gamma_1\inv\gamma_2\inv r,
\quad r^2\gamma_3\inv\gamma_1\inv r\end{array}\right\}.\]
In terms of the additive basis $\{s,w\}$, the images of these elements in $H$ are
\[ (-2,1),\; (-3, 2),\; (-1,2),\; (0,3),\; (-1,4),\; (-3,3),\; (-2,3).\]
The cohomology classes $u$ which are positive on all of these vectors are exactly those whose $(s,w)$--coordinates satisfy both $w(u) > 0$ and $w(u) > 2s(u)$; such classes comprise the cone of sections $\Csec$ and are illustrated in Figure~\ref{F:Acone-Scone}.

\begin{figure}
\begin{center}
\begin{tikzpicture}[scale = 1.5]
\fill[black!30!white]
(0,0) -- (1.5,3) -- (-1.5,3) -- (0,0);
\fill[black!12!white]
(0,0) -- (-1.5,3) -- (-3.354,3) -- (-3.354,0) -- (0,0);

\draw[thin, ->] (-2,0) -- (2.2,0) node[above] {$s^*$};
\draw[thin, ->] (0,-.1) -- (0,3.2) node[right] {$w^*$} ;
\foreach \i in {-3,...,2}
{ \draw[thin] (\i,-.1) -- (\i,.1);}
\foreach \i in {0,...,3}
{ \draw[thin] (-.1,\i) -- (.1,\i);}
\draw[thick, dashed] (0,0) -- (1.5,3);
\draw[thick, dashed] (0,0) -- (-1.5,3);
\draw[very thick] (0,0) -- (1.5,3);
\draw[very thick] (0,0) -- (-3.354,0);

\fill (-1,2) circle (.06cm) node [left] {$u_1$};
\fill (-1,1) circle (.06cm) node [left] {$u_2$};
\fill (0,1) circle (.06cm) node [above right] {$[z_0]$};
\node (A)  at (.8,2.5) {$\A$};
\node (S) at (-2,2.5) {$\Csec$};

\end{tikzpicture}
\caption{The cones $\A\subset \Csec \subset H^1(X;\R)$ for our running example.}
\label{F:Acone-Scone}
\end{center}
\end{figure}
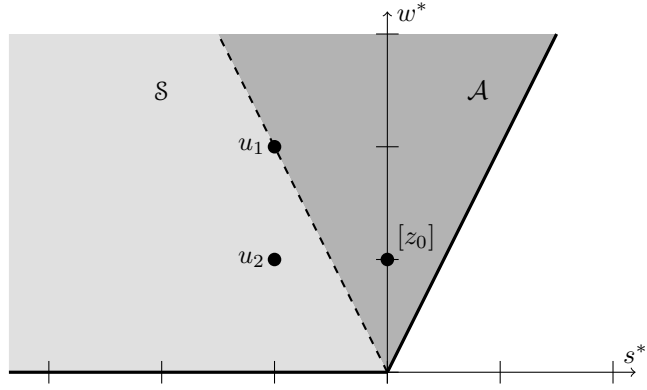
\end{example}

\begin{remark}
\label{R:S_has_both_kinds_of_kerels}
When $u\in \A$, Theorem~\ref{T:DKL_BC} ensures that $\ker(u)$ is finitely generated. However, Theorem~\ref{T:splittings} makes no claims about the finite or non-finite generation of $\ker(u)$ when $u\in \Csec\setminus\A$.
Indeed, since $\ker(u_1)$ is infinitely generated and $\ker(u_2)$ is finitely generated (see Examples~\ref{Ex:(-1,2)_splitting} and \ref{Ex:(-1,1)_splitting}), the above Example~\ref{Ex:the_cones} illustrates that both outcomes are possible for classes in $\Csec\setminus\A$.
\end{remark}

\section{Graph modules} \label{S:graph module}

Here we describe another, more combinatorial module that will connect the McMullen polynomial to the geometry of splittings of $G$.  Let $\tpsi^u$ be the lift of the reparameterized flow $\flow^u=\flow^{\Theta_u}$ so that $\tpsi_1^u$ maps $\tTheta_u$ to $x( \tTheta_u)$ (recall that $x \in H$ is a generator for the complement of $H_u$, which translates positively with respect to $\tpsi^u$). Thus $x^{-1} \circ \tpsi_1^u$ restricts to a graph map on $\tTheta_u$; in fact, $x^{-1} \circ \tpsi_1^u$ is just a lift of the first return map $f_u \colon \Theta_u \to \Theta_u$ of the flow $\psi^u$ to $\Theta_u\subset X$.

The covering group $H$ acts on (unoriented) edges of $\hTheta_u$, as with transversals, by taking preimages, so that $h \in H$ acts on an edge  $e$ by
\[ h \cdot e = e \cdot h = h^{-1}(e). \]
We can organize the edges of $\hTheta_u$ nicely as follows.  Let $\bar E = \{\bar \sigma_1,\ldots,\bar \sigma_m\}$ denote the set of (unoriented) edges of $\Theta_u$. For each $\bar \sigma_j$ we choose an edge $\sigma_j \in p^{-1}(\bar \sigma_j)$ contained in $\tTheta_u$, and we denote this set by $E = \{ \sigma_1,\ldots,\sigma_m\}$.
Because the action of $H$ is free, we see that the action on every orbit is also free. Thus the free $\Z$--module on the set of edges of $\hTheta_u$, which is naturally a $\Z[H]$--module, can be identified with the module
\[ \Z[H]^E = \left\{ \sum_{j=1}^m \sum_{h \in H} a_{j,h} h \cdot \sigma_j \left| \begin{array}{l} a_{j,h} \in \Z \mbox{ with } a_{j,h} = 0\\ \quad \mbox{ for all but finitely many } h \end{array} \right. \right\}. \]
Furthermore, since the elements of $E$ were chosen to lie in $\tTheta_u$, we see that the set of edges in $\tTheta_u$ is naturally identified with the $H_u$--orbit of $E$, and hence the free $\Z$--module on the edges of $\tTheta_u$ is precisely $\Z[H_u]^E$.

We now define the module of $\hTheta_u$, denoted $T(\hTheta_u)$, to be the largest quotient $\Z[H]^E \to T(\hTheta_u)$ in which the image $[e]$ of any edges $e$ satisfies the relations
\[ [e] = [e_1] + \ldots + [e_k], \]
where the edge path $\tpsi^u_1(e) = e_1\cdots e_k$ is the image of $e$ under $\tpsi^u_1$.  We refer to these above relations as the {\em basic relations} of $T(\hTheta_u)$. More precisely:

\begin{defn}[Module of $\hTheta_u$]
\label{D:graph module}
Let $M \le \Z[H]^E$ denote the submodule generated by all elements of the form $e - e_1 - \cdots - e_k$, with $e,e_1,\ldots,e_k$ as above. Note that the set of such elements is preserved by $H$. The \emph{module of $\hTheta_u$} is defined to be the quotient $\Z[H]$--module $T(\hTheta_u) = \Z[H]^E/M$.
\end{defn}

\subsection{Connection to transition matrices}
\label{S:transition_matrices}

Here we develop an alternate description of the module $T(\hTheta_u)$ closely related to the transition matrix of $\f_u$, ultimately culminating in a finite presentation.  Let ${\bf t} = t_1,\ldots,t_{b-1},x$ be a multiplicative basis for $H$ adapted to $u$ as in Definition \ref{D:adapted to z}.  For each edge $e \in \tTheta_u$, translating the edge path $\tpsi_1^u(e) = e_1\dotsb e_k$ by $x^{-1}$ gives a new edge path $x^{-1} \circ \tpsi_1^u(e)  = e_1' \cdots e_k' = x^{-1}(e_1 \cdots e_k)$, which now lies in $\tTheta_u$. Since $x^{-1}\circ\tpsi_1^u(e) = \tpsi_1^u(x^{-1}(e)) = \tpsi_1^u(x\cdot e)$, the corresponding basic relation becomes
\[ x \cdot [e] = [e_1'] + \ldots + [e_k'].\]
Now, for $e = \sigma_j \in E$, if we rewrite the sum on the right as a $\Z[H_u]$--linear combination of the edges in $E$ this relation becomes
\[ x \cdot [\sigma_j] = \sum_{i=1}^s A_{ij} \cdot [\sigma_i].\]
This gives a matrix $A = (A_{ij})$ with entries in $\Z[H_u]$; to clarify the dependence on $u$, we may sometimes write this matrix as $A_u = (A_{u,ij})$. Given the isomorphism with the ring of integral Laurent polynomials $\Z[H_u] = \Z[{\bf t}^{\pm 1}]$, $A_u$ becomes a matrix $A_u({\bf t})$ with coefficients in $\Z[{\bf t}^{\pm 1}]$.

Observe that $x^{-1} \circ \tpsi_1^u$ is a lift of the first return map $f_u \colon \Theta_u \to \Theta_u$, and so the following is not so surprising.

\begin{proposition} \label{P:evaluatetransition}
For every primitive integral $u \in \Csec$, the integral matrix $A_u({\bf 1})$, obtained by evaluating $A_u({\bf t})$ at ${\bf t} = {\bf 1} = (1,\dotsc,1)$, is exactly the transition matrix of the train track map $\f_u$.
\end{proposition}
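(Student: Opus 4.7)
The plan is to simply unpack the definition of the matrix $A_u({\bf t})$ and compare it term by term with the transition matrix of $f_u$ after setting ${\bf t} = {\bf 1}$. Recall that $E = \{\sigma_1,\dots,\sigma_m\}$ was chosen to consist of one lift in $\tTheta_u$ of each edge of $\Theta_u$, and that the edges of $\tTheta_u$ are precisely the free $H_u$--orbit of $E$. Hence for any edge $e$ of $\tTheta_u$ there exist unique indices $i(e)\in\{1,\dots,m\}$ and $h(e)\in H_u$ with $e = h(e)\cdot \sigma_{i(e)}$.

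First I would fix $\sigma_j \in E$ and write the edge path $x^{-1}\circ \tpsi_1^u(\sigma_j) = e_1'\cdots e_k'$ in $\tTheta_u$. By the preceding observation each $e_\ell'$ can be written uniquely as $e_\ell' = h(e_\ell')\cdot \sigma_{i(e_\ell')}$, and the defining basic relation of $T(\hTheta_u)$ becomes
\[
x\cdot[\sigma_j] \;=\; \sum_{\ell=1}^k [e_\ell'] \;=\; \sum_{i=1}^{m}\Bigl(\sum_{\ell\,:\,i(e_\ell')=i} h(e_\ell')\Bigr)\,[\sigma_i].
\]
Comparing with $x\cdot[\sigma_j] = \sum_i A_{u,ij}\cdot[\sigma_i]$, I read off $A_{u,ij}({\bf t}) = \sum_{\ell\,:\,i(e_\ell')=i} h(e_\ell') \in \Z[H_u]$. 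Applying the augmentation ${\bf t}\mapsto {\bf 1}$, which sends every $h(e_\ell')$ to $1$, gives $A_{u,ij}({\bf 1})$ equal to the number of indices $\ell$ such that $e_\ell'$ lies in the $H_u$--orbit of $\sigma_i$, equivalently, the number of edges in the path $x^{-1}\circ\tpsi_1^u(\sigma_j)$ that project under $p$ to $\bar\sigma_i$.

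Second I would identify this count with the $(\bar\sigma_i,\bar\sigma_j)$--entry of the transition matrix of $f_u$. Since $x^{-1}\circ\tpsi_1^u$ is by construction a lift of the first return map $f_u\colon \Theta_u\to\Theta_u$, projecting the edge path $e_1'\cdots e_k'$ by $p$ yields precisely the combinatorial edge path $f_u(\bar\sigma_j)$ in $\Theta_u$. Therefore the number of $\ell$ with $p(e_\ell') = \bar\sigma_i$ is exactly the number of times the edge path $f_u(\bar\sigma_j)$ crosses the (unoriented) edge $\bar\sigma_i$, which by the definition recalled in \S\ref{S:graphs_and_maps} is the $(\bar\sigma_i,\bar\sigma_j)$--entry of the transition matrix $A(f_u)$.

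The argument is essentially a direct bookkeeping exercise, so I do not expect any genuine obstacle. The one point that warrants care is that the edges $\sigma_i$ are unoriented, so the decomposition $\tpsi_1^u(e) = e_1\cdots e_k$ should be interpreted as a concatenation of unoriented edges; this matches the convention in the paper's definition of the transition matrix, which counts occurrences in $f(e')$ in \emph{either direction}, and it is compatible with the fact that the basic relations of $T(\hTheta_u)$ were written using unoriented edges. Once this is noted, steps one and two combine to give $A_u({\bf 1}) = A(f_u)$, completing the proof.
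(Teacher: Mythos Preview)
Your proof is correct and follows essentially the same approach as the paper's own proof: both unpack the definition of $A_{u,ij}({\bf t})$ as a sum of elements of $H_u$ indexed by occurrences of edges in the $H_u$--orbit of $\sigma_i$ along the path $x^{-1}\circ\tpsi_1^u(\sigma_j)$, apply the augmentation ${\bf t}\mapsto{\bf 1}$ to count these occurrences, and then identify this count with the $(i,j)$--entry of $A(f_u)$ by projecting the path to $f_u(\bar\sigma_j)$. Your version is slightly more explicit about the unique decomposition $e_\ell' = h(e_\ell')\cdot\sigma_{i(e_\ell')}$ and about the orientation convention, but these are presentational rather than substantive differences.
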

\begin{proof}
The $(i,j)$--entry of $A_u$ is a $\Z$--linear combination of edges in the $H_u$--orbit of $\sigma_i$. Moreover, for each edge $e\in H_u\cdot \sigma_i$, the coefficient of $e$ in $A_{u,ij}$ is the number of times the path $x^{-1}\circ \tpsi^u_1(\sigma_j)$ crosses $e$. As these edges are exactly those which project to the edge $\bar \sigma_i$ in $\Theta_u$, the sum of these coefficients (namely $A_{z,ij}({\bf 1})$) is just number of times that the projected path $p(x^{-1}\circ\tpsi_1^u(\sigma_j))\subset\Theta_u$ crosses $\bar\sigma_i$. But this projected path is exactly $\f_u(\bar \sigma_j)$, and so the number of times it crosses $\bar \sigma_i$ is given by the $(i,j)$--entry of the transition matrix for $\f_u$.
\end{proof}

Furthermore, as the submodule $M\le \Z[H]^E$ of relations defining $T(\hTheta_u)$ is generated, as a $\Z[H]$--module, by the elements
\[ x \cdot \sigma_j - \sum_{i=1}^m A_{ij}({\bf t}) \cdot \sigma_i\]
for $j=1,\dotsc,m$, we have the following useful presentation for $T(\hTheta_u)$.
\begin{proposition} \label{P:presentationmatrix4graph}
The module $T(\hTheta_u)$ is finitely presented as a $\Z[H]$--module by
\[ \xymatrix{ \Z[H]^E \ar[r]^{D_u} & \Z[H]^E \ar[r] & T(\hTheta_u) \ar[r] & 0}, \]
where $D_u$ is the {\bf square} $m \times m$ matrix $xI - A_u({\bf t})$ with entries in $\Z[H]$.
\end{proposition}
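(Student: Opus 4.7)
The plan is to exploit the freeness of the $H$-action on the edges of $\hTheta_u$ together with the fact that the lifted flow $\tpsi^u$ commutes with deck transformations to reduce the infinitely many defining relations of $T(\hTheta_u)$ to the $m$ relations coming from the distinguished set $E$, which will then be seen to be precisely the columns of $D_u$.

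First I would make explicit that the free $\Z$-module on the set of all edges of $\hTheta_u$ is naturally identified as a $\Z[H]$-module with $\Z[H]^E$: because the $H$-action on edges is free and $E$ is a set of $H$-orbit representatives, every edge $e$ has a unique expression $e = h\cdot \sigma_j$. Under this identification the defining submodule $M \le \Z[H]^E$ is generated as an abelian group by all basic relations $e - e_1 - \cdots - e_k$ with $\tpsi_1^u(e) = e_1\cdots e_k$.

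The key reduction is to show that $M$ is already generated as a $\Z[H]$-module by the $m$ basic relations corresponding to $e = \sigma_j \in E$. For this I would use that $\tpsi^u$ is a lift of $\psi^u$ from $X$, so it commutes with the covering action of $H$: for every $h\in H$,
\[
\tpsi_1^u(h\cdot \sigma_j) \;=\; h\cdot \tpsi_1^u(\sigma_j).
\]
Consequently the basic relation associated to the edge $h\cdot \sigma_j$ is exactly $h$ times the basic relation associated to $\sigma_j$. Since every edge is of the form $h\cdot \sigma_j$, this proves that the $m$ relations coming from $E$ generate $M$ as a $\Z[H]$-module.

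Finally I would identify these $m$ generators with the columns of $xI - A_u({\bf t})$. By the construction of $A_u({\bf t})$ given just before the proposition, translating $\tpsi_1^u(\sigma_j)$ by $x^{-1}$ brings it into $\tTheta_u$ and expresses it as $\sum_i A_{ij}({\bf t})\cdot \sigma_i$ in $\Z[H_u]^E$; equivalently $\tpsi_1^u(\sigma_j) = x^{-1}\cdot\sum_i A_{ij}({\bf t})\cdot \sigma_i$ in $\Z[H]^E$. The basic relation for $\sigma_j$ is therefore $\sigma_j - x^{-1}\cdot \sum_i A_{ij}\sigma_i$, and multiplying through by the unit $x\in \Z[H]$ produces the $j$-th column $x\cdot\sigma_j - \sum_i A_{ij}({\bf t})\cdot \sigma_i$ of $xI - A_u({\bf t})$. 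This yields the asserted exact sequence and shows $T(\hTheta_u)$ is finitely presented. The only real obstacle is getting the action and orientation conventions straight — in particular, remembering that $h$ acts by $h^{-1}$ on points and that $\tpsi_1^u$ sends $\tTheta_u$ to $x(\tTheta_u) = x^{-1}\cdot \tTheta_u$ — but once these conventions are pinned down the argument is purely formal.
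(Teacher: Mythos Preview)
Your proposal is correct and follows essentially the same approach as the paper. The paper treats this proposition as an immediate consequence of the preceding discussion: it notes in Definition~\ref{D:graph module} that the set of basic relations is $H$--invariant (equivalently, your equivariance $\tpsi_1^u(h\cdot\sigma_j)=h\cdot\tpsi_1^u(\sigma_j)$), and just before the proposition rewrites the relation for $\sigma_j$ as $x\cdot[\sigma_j]=\sum_i A_{ij}({\bf t})\cdot[\sigma_i]$, observing that these $m$ elements generate $M$ over $\Z[H]$---exactly the argument you spell out in more detail.
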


We thus have the following immediate corollary:
\begin{corollary} \label{C:graphidealgenerator}
The g.c.d.~of the fitting ideal of $T(\hTheta_u)$ is given by
\[ \det(xI - A_u({\bf t}))\in \Z[H].\]
\end{corollary}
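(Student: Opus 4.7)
The plan is to observe that this corollary is essentially a direct consequence of Proposition~\ref{P:presentationmatrix4graph} together with the definition of the fitting ideal. The crucial point is that the presentation of $T(\hTheta_u)$ given in Proposition~\ref{P:presentationmatrix4graph} uses the same number $m = \abs{E}$ of generators as relations, so that the presentation matrix $D_u = xI - A_u({\bf t})$ is \emph{square}. Since the fitting ideal $\mathcal{I}(T(\hTheta_u)) \leq \Z[H]$ is by definition generated by the $m \times m$ minors of the $m \times m$ presentation matrix, and a square matrix has exactly one top-sized minor, we conclude that $\mathcal{I}(T(\hTheta_u))$ is the \emph{principal} ideal generated by $\det(D_u) = \det(xI - A_u({\bf t}))$.

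Once we have identified $\mathcal{I}(T(\hTheta_u))$ as this principal ideal, the g.c.d.~of its generators is just $\det(xI - A_u({\bf t}))$ itself, which gives the desired conclusion. (As elsewhere in the paper, this is well-defined only up to units in the UFD $\Z[H]$, but this is all that is claimed.) There is essentially no obstacle: the work has all been done in setting up Proposition~\ref{P:presentationmatrix4graph}, and once one knows that a square presentation matrix exists, the statement about the fitting ideal is immediate from its definition (see e.g.~\cite[Ch.~XIII, \S10]{Lang} or \cite{Northcott}). The only subtlety worth noting is invariance of the fitting ideal under choice of presentation, but this invariance is exactly the standard fact already invoked in the definition of the McMullen polynomial $\poly$ in \S\ref{S:McMullen_polynomial}, so no new argument is required here.
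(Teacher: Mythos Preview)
Your proposal is correct and matches the paper's reasoning exactly: the paper states this as an immediate corollary of Proposition~\ref{P:presentationmatrix4graph} with no written proof, and your argument (square presentation matrix implies the fitting ideal is principal, generated by the determinant) is precisely why it is immediate.
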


\begin{example} \label{Ex:calculating poly} 
Here we return again to our running example and calculate $A_u$ and $\det(xI-A_u)$ in this case. We take $u = u_0$ so that $\Theta_u$ is the original fiber $\Gamma$; the infinite cyclic cover $\tTheta_u\to \Gamma$ is then as shown in Figure~\ref{F:cyclic cover}. Here the generator $t$ of $H_u$ is pictured as translating to the right. The edges $\bar E$ of $\Gamma$ are labeled as in Figure~\ref{F:train_tracks}, and we have used the same labels for the representative edges $E$ of $\tTheta_u$; the rest of the edges of $\tTheta_u$ are obtained (and labeled) as translates of these representative edges under $H_u$. 

\begin{figure} [htb]
\begin{center}
\includegraphics[height=4.5cm]{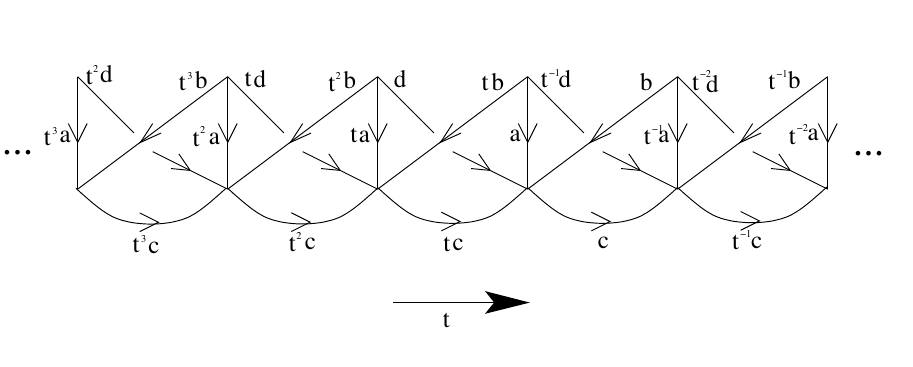} 
\caption{The infinite cyclic cover $\tTheta_u$ of $\Theta_u$ for our running example $f\colon\Gamma\to\Gamma$ when $u=u_0$ is the homomorphism associated to the original splitting $G = F_3\rtimes_\fee \Z$.}
\label{F:cyclic cover}
\end{center}
\end{figure}

The module $T(\hTheta_u)$ is then generated by the edges $\{\sigma_1,\dotsc,\sigma_4\} = \{a,b,c,d\}$. For each $\sigma_j\in \{a,b,c,d\}$, we calculate $x^{-1}\circ \tpsi_1(\sigma_j)$ by lifting the edge path $f(\bar \sigma_j)\subset \Gamma$ to $\tTheta_u$ and expressing it in the form $\sum_{i=1}^4 A_{ij}(t) \cdot [\sigma_j]$ with $A_{ij}(t) \in \Z[t^{\pm}]$. This gives the matrix:

\[A(t) = \begin{pmatrix}
0 & 1 & t^{-1} & t + t^3\\
0 & 0 & 1 & t^2 + t^3\\
0 & 0 & 0 & t\\
1 & 0 & 0 & t^2
\end{pmatrix}\]
Notice that $A(1)$ is the transpose of the transition matrix for $f$ found in \cite[Example 2.9]{DKL}. Calculating $\det(xI - A(t))$, we see that the fitting ideal of $T(\hTheta_u)$ is generated by the polynomial
\[ \poly(t,x) = x^4 -t^2x^3 - t^3 x^2 -t x^2 -t^3 x - t^2x - x -t\in \Z[H].\]
\end{example}

\section{The isomorphism of modules}
\label{secc:module_isom}

In this section we prove that there is a $\Z[H]$--module isomorphism between $T(\hTheta_u)$ and $T(\mathcal F)$ for every primitive integral class $u \in \Csec$.   We first note that by construction of $\Theta_u$, each edge of $\hTheta_u$ is in fact a transversal to $\mathcal F$; see Remark \ref{R:edges_are_transversals} and Convention~\ref{conv:notation_convention}. This gives a natural (injective) $\Z[H]$--module map $S \colon \Z[H]^E \to \mathbb F(\mathcal F)$. We also recall from Definitions~\ref{D:transversals module} and \ref{D:graph module} that the $\Z[H]$--modules $T(\hTheta_u)$ and $T(\mathcal F)$ are the quotients of $\Z[H]^E$ and $\mathbb F(\mathcal F)$ by submodules $M$ and $R$, respectively.  We can now state the main result of this section.

\begin{proposition} \label{P:moduleisomorphism}
The $\Z[H]$--module map $S \colon \Z[H]^E \to \mathbb F(\mathcal F)$ descends to a $\Z[H]$--module isomorphism $\bar{S}\colon T(\hTheta_u) \to T(\mathcal F)$. 
\end{proposition}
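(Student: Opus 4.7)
My plan is to prove the proposition by first verifying that $\bar S$ is well-defined, and then constructing an explicit two-sided inverse.

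\textbf{Well-definedness of $\bar S$.} The submodule $M \subseteq \Z[H]^E$ is generated as a $\Z[H]$-module by elements of the form $e - e_1 - \cdots - e_k$, one for each $e \in E$ with $\tpsi_1^u(e) = e_1 \cdots e_k$. Since $\tpsi^u$ is a reparameterization of $\tpsi$, the map $\xi \mapsto \tpsi_1^u(\xi) = \tpsi_{\nu(\xi)}(\xi)$ is a flow homeomorphism from $e$ onto $\tpsi_1^u(e)$, and so basic relation~(2) of $T(\mathcal F)$ gives $[e] = [\tpsi_1^u(e)]$. The decomposition points between consecutive $e_j$ are vertices of $\hTheta_u$, which lie on vertex leaves of $\mathcal F$ by $\mathcal F$-compatibility (Definition~\ref{D:compatible_cross_section}), so iterated use of basic relation~(1) yields $[\tpsi_1^u(e)] = [e_1] + \cdots + [e_k]$. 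Hence $S(M) \subseteq R$ and $\bar S$ descends.

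\textbf{Construction of the inverse.} I would build a $\Z[H]$-linear map $\bar T \colon T(\mathcal F) \to T(\hTheta_u)$ by first defining $\widetilde T\colon \mathbb F(\mathcal F)\to T(\hTheta_u)$ on transversals and then showing it descends. Given a transversal $\tau$, use the fact that $\hTheta_u$ is a cross section for $\tpsi^u$: the first-hit function $t_\tau(\xi) := \min\{s \geq 0 \mid \tpsi^u_s(\xi) \in \hTheta_u\}$ is bounded and piecewise smooth, with only finitely many discontinuities, occurring precisely at points of $\tau$ whose forward orbit meets a vertex of $\hTheta_u$ (so in particular lying on vertex leaves of $\mathcal F$). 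Cut $\tau$ at these points into sub-arcs $\tau_1, \dots, \tau_n$; each $\tau_i$ flows via $\xi\mapsto\tpsi^u_{t_\tau(\xi)}(\xi)$ homeomorphically onto a sub-arc $J_i$ of a single edge of $\hTheta_u$, with endpoints on vertex leaves. To interpret each $[J_i]$ as an element of $T(\hTheta_u)$, iterate $\tpsi^u$ further: for $N$ sufficiently large, $\tpsi^u_N(J_i)$ is a concatenation of full edges $e_1^{(i)}, \dots, e_{\ell_i}^{(i)}$ of $\hTheta_u$ (possible because the endpoints of $J_i$ eventually flow to vertices of $\hTheta_u$), and set $[J_i] := x^{-N}\sum_j [e_j^{(i)}]$ in $T(\hTheta_u)$. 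Define $\widetilde T([\tau]) := \sum_i [J_i]$ and extend $\Z[H]$-linearly.

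\textbf{Verifications and main obstacle.} The heart of the argument, and the main obstacle I expect to encounter, is showing that $\widetilde T$ is well-defined on $\mathbb F(\mathcal F)$ and descends to a $\Z[H]$-linear map $\bar T$ on $T(\mathcal F)$. Independence of the iteration depth $N$ and of the chosen cut-point partition follows from the defining relation $x\cdot[\sigma_j] = \sum_i A_{u,ij}({\bf t})\cdot[\sigma_i]$ of $T(\hTheta_u)$ (Proposition~\ref{P:presentationmatrix4graph}), which exactly encodes one step of the flow-and-decompose procedure: increasing $N$ by one or refining the partition at an additional vertex-leaf point simply applies this relation internally. Vanishing on the submodule $R$ then reduces to checking the two basic relations: relation~(1) is respected since cutting $\tau$ at a vertex-leaf point is already built into the construction, and relation~(2) is respected because any flow-homeomorphism $\tau \to \tau'$ merely reparameterizes $t_\tau$, producing identical sub-arcs $J_i$ in $\hTheta_u$. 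Once $\bar T$ is established, the equalities $\bar T \circ \bar S = \mathrm{id}$ and $\bar S \circ \bar T = \mathrm{id}$ are immediate: on a generator $[\sigma_j] \in T(\hTheta_u)$ no cutting or iteration is required, and on a transversal class in $T(\mathcal F)$ the decomposition defining $\bar T$ is precisely reversed by the well-definedness computation used for $\bar S$.
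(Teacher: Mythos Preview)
Your approach is correct in spirit and genuinely different from the paper's, but there is one concrete error you should fix: the factor $x^{-N}$ in the definition $[J_i] := x^{-N}\sum_j [e_j^{(i)}]$ must be removed. The basic relation of $T(\hTheta_u)$ reads $[e] = [e_1] + \cdots + [e_k]$ whenever $\tpsi_1^u(e) = e_1\cdots e_k$; in particular, flowing forward by one unit does \emph{not} introduce a factor of $x$. With your $x^{-N}$, the value at depth $N+1$ differs from the value at depth $N$ by a factor of $x^{-1}$, so the definition is not independent of $N$. The correct formula is simply $[J_i] := \sum_j [e_j^{(i)}]$, and this \emph{is} independent of $N$ exactly because of the basic relation. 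With that correction, all of your verifications go through: independence of the partition and vanishing on $R$ follow as you describe, $\Z[H]$--linearity is automatic from equivariance of $\tpsi^u$, and the two composite identities are immediate.

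By contrast, the paper does not construct an inverse. It proves surjectivity of $\bar S$ by exactly the flow-forward idea you use (any transversal flows onto an edge path in $\hTheta_u$), and then proves injectivity directly: given $w \in \Z[H]^E$ with $S(w) \in R$, it writes $S(w)$ as a finite sum of basic relations in $\mathbb F(\mathcal F)$, chooses a component $C$ of $\hTheta_u$ above all transversals involved, pushes $w$ up into $C$ by adding an element of $M$, and then carries out a careful cancellation argument (subdividing every transversal by its canonical decomposition coming from $C$) to conclude $S(w+m) = 0$ in $\mathbb F(\mathcal F)$. Your inverse construction packages the same ingredients --- flowing forward to $\hTheta_u$, the fact that vertex-leaf points eventually hit vertices, and the basic relation encoding one step of the flow --- into a cleaner statement, at the cost of having to check well-definedness of $\bar T$ on all of $\mathbb F(\mathcal F)$. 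The paper's direct injectivity argument avoids that check but is correspondingly more delicate.
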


\begin{remark}
We emphasize that this isomorphism holds for \emph{every} primitive integral class $u\in \Csec$. In particular, the graph modules $T(\hTheta_u)$ are all isomorphic.
\end{remark}

\begin{proof}
First note that since $H$ preserves the set of basic relations generating $M$, we see that elements of $M$ are precisely $\Z$--linear combinations of elements of the form $(e - e_1 - \dotsb - e_k)$, where $e$ is an edge of $\hTheta_u$ and $\tpsi_1^u(e)$ is the edge path $e_1\dotsc e_k$ in $\hTheta_u$. Similarly, the elements of $R$ are exactly the $\Z$--linear combinations of elements of the form $(\tau - \tau_1 - \dotsb - \tau_k)$ or $(\tau - \tau')$, where $\tau,\tau',\tau_1,\dotsc,\tau_k$ are transversals to $\mathcal F$ as in the basic relations defining $R$ (more precisely, $\tau - \tau_1 - \cdots - \tau_k$ is obtained from the first relation by repeated subdivision).

To see that $S$ descends to the quotient, we need only check that $S(M)$ is contained in $R$. Consider a basic relation $m = e - e_1 - \dotsb - e_k$ in $M$ for edges $e,e_1,\ldots,e_k$ in $\hTheta_u$. By definition, the semiflow $\tpsi_1^u$ then maps $e$ to the edge path $e_1\dotsc e_k$. Let $\tau = S(e)$ and $\tau_i = S(e_i)$. Then $\tau$ may be subdivided as the concatenation $\tau'_1\dotsb\tau'_k$, where each $\tau'_i$ flows homeomorphically onto $\tau_i$ via $\tflow_1^u$. Thus we have
\[S(m) = \tau - \tau_1 - \dotsb - \tau_k = (\tau - \tau'_1 -\dotsb - \tau'_k) + \sum_{i=1}^k(\tau'_i - \tau_i)\in R.\]

To see that $\bar{S}$ is surjective, let $\tau$ be any transversal, and let $[\tau]$ denote its equivalence class in $T(\mathcal F)$. The transversal $\tau$ may be flowed forward locally homeomorphically onto an edge path $e_1\dotsb e_k$ in $\hTheta_u$ (necessarily this edge path lies in one component of $\hTheta_u$). This identification of $\tau$ with $e_1\dotsb e_k$ gives a subdivision of $\tau$ into transversals $\tau_1$, \dots, $\tau_k$, where each $\tau_i$ flows forward homeomorphically onto the edge $e_i$. Therefore
\[[\tau] = [\tau_1] + \dotsb + [\tau_k] = [e_1] + \dotsb + [e_k] = \bar{S}([e_1] + \dotsb [e_k]).\]

\begin{remark}
\label{R:vertex_leaves}
Note that this step uses the fact that transversals have endpoints on vertex leaves of $\mathcal F$.
\end{remark}

The only remaining difficulty, then, is to show that $\bar{S}$ is injective. Suppose that $w \in \Z[H]^E$ is such that $S(w) \in R$. It suffices to show that $w\in M$. By assumption, we may write $S(w)$ as a finite sum of basic relations in $R$:
\[S(w) = \sum_{\alpha\in A} r^\alpha\]
where $A$ is a finite index set and each $r^\alpha$ is either a basic subdivision relation $r^\alpha = \pm(\tau^\alpha - \tau^\alpha_1 - \dotsb - \tau^\alpha_{k_\alpha})$, or a basic flow relation $r^\alpha = \pm(\tau^\alpha -\tau'^\alpha)$ in which $\tau^\alpha$ flows forward homeomorphically onto $\tau'^\alpha$.

Since the sum $\sum_{\alpha} r^\alpha$ comprises only finitely many transversals, we may find some connected component $C$ of $\hTheta_u$ (the image of $\tTheta_u$ by a sufficiently high power of $x \in H$) so that each of these transversals flows forward into $C$. 
After translating by an even higher power of $x$, we may furthermore assume that $C$ is disjoint from each of these transversals (so that $C$ is \emph{strictly} higher than the transversals in question) and that the endpoints of these transversals all flow into vertices of $C$. 

Now, using the relations in $M$ we can push $w$ up into the the graph $C$. More precisely, the assumption on $S(w)$ implies that each edge $e$ occurring with nonzero coefficient in $w$ lives in a component of $\hTheta_u$ that is \emph{below} $C$. Adding a relation in $M$, we may effectively replace $e$ with the sum $e_1 + \dots + e_k$, where $\tpsi_1^u(e)$ is the edge path $e_1\dots e_k$. Doing this repeatedly for each edge in $w$ (and then again until all edges have been pushed up into $C$), we obtain an element $m\in M$ (a sum of basic relations) so that the element $w + m\in \Z[H]^E$ is a $\Z$--linear combination of only edges contained in $C$. More precisely, every edge $e\in \hTheta_u$ appearing with a nonzero coefficient in $w+m$ lives in the component $C$.

We claim that $w+m = 0 \in \Z[H]^E$ (so that $w \in M$ as desired). We have explicitly constructed $m$ as a sum of particular basic relations that only involve edges in the level $C$ or below.
Therefore, the above explicit proof that $S(M)\subset R$ shows us how to express $S(m)$ as a finite sum $S(m) = \sum_{\alpha\in B} r^\alpha$  of basic relations $r^\alpha\in R$ that only involve transversals that are either edges of $C$ or lie strictly below $C$.  We may now write
\[S(w+m) = \sum_{\alpha \in A \sqcup B} r^\alpha,\]
where this is an equality in $\mathbb F(\mathcal F)$. In particular, for every transversal $\tau$ to $\mathcal F$ that lies strictly below $C$, we know that the net coefficient of $\tau$ in the the right hand side must be zero.

Let $\tau$ be any transversal occurring in any of the basic relations $r^\alpha$, $\alpha \in A\sqcup B$. By construction, $\tau$ flows homeomorphically onto some edge path $e^\tau_1\dotsb e^\tau_{k_\tau}$ in $C$. (It may be, if $\tau$ is in one of the new relations coming from $m$, that $\tau$ is already a single edge in $C$, but every other transversal occurring in a $r^\alpha$ lies strictly below $C$.) Flowing the vertices of this edge path backwards onto $\tau$ we obtain a canonical subdivision $\rho^\tau_1\dotsb\rho^\tau_{k_\tau}$ of the transversal $\tau$. Now, for each occurrence $\pm\tau$ of $\tau$ in a basic relation $r^\alpha$, $\alpha \in 
A\sqcup B$, let us add the basic relation $\delta_{\pm\tau} = \pm(\rho^\tau_1 + \dotsb \rho^\tau_{k_\tau} - \tau)\in R$. Notice that the sum of all added relations is $0$ (in $\mathbb F(\mathcal F)$) since each $\tau$ either lies strictly below $C$ and thus occurs with net coefficient $0$ in $S(w+m)$, or is contained in $C$ in which case $\delta_\tau = \rho^\tau_1 - \tau= 0$ by construction (i.e., such a transversal does not get subdivided). Letting $\Omega_\alpha$ denote the set of (signed) transversals $\tau$ occurring in the relation $r^\alpha$ and grouping the new added relations $\delta_{\tau}$ along with the old, we may now write
\[S(w+m) = \sum_{\alpha\in A\sqcup B}\left(r^\alpha + \sum_{\tau \in \Omega_\alpha} \delta_\tau\right).\]

Let's regroup terms and see what we have. First consider a basic subdivision relation $r^\alpha = \tau^\alpha - \tau^\alpha_1- \dotsb - \tau^\alpha_{k_\alpha}$. Let us denote the canonical subdivision of $\tau^\alpha_i$ as $\beta_{i,1}\dotsb\beta_{i,j_i}$. Then since $\tau^\alpha_1\dotsb \tau^\alpha_{k_\alpha}$ was already a subdivision of $\tau^\alpha$, we see that the canonical subdivision of $\tau^\alpha$ must be
\[\tau^\alpha = (\beta_{1,1}\dotsb \beta_{1,j_1})(\beta_{2,1}\dotsb\beta_{2,j_2})\dotsb(\beta_{k_\alpha,1}\dotsb\beta_{k_\alpha,j_{k_\alpha}})\]
Therefore we have complete cancellation in the term $({r^\alpha + \sum_{\Omega_\alpha}\delta_\tau})$. That is, for each such subdivision relation $r^\alpha$ we have 
\begin{align*}
\left(r^\alpha + \sum_{\tau\in \Omega_\alpha}\delta_\tau\right)
&= \tau^\alpha - \tau^\alpha_1 - \dotsb \tau^\alpha_{k_\alpha} 
- \sum_{i = 1}^{k_\alpha}(\beta_{i,1} + \dotsb + \beta_{i,j_i} - \tau^\alpha_i)\\
&\phantom{=}\quad + (\beta_{1,1} + \dotsb + \beta_{1,j_1}+\dotsb + \beta_{\alpha_k,1} + \dotsb \beta_{\alpha_k,j_{\alpha_k}} - \tau^\alpha)\\
&= 0.
\end{align*}

Now consider a flow relation $r^\alpha = \pm(\tau - \tau')$ in which $\tau$ flows forward homeomorphically onto $\tau'$. In this case $\tau$ and $\tau'$ both flow homeomorphically onto the same edge path in $C$, and so in their canonical subdivisions $\beta_1\dotsb \beta_{k_\tau}$ and $\beta'_1\dotsb \beta'_{k_{\tau'}}$ we must have that $k_\tau = k_{\tau'}$ and that $\beta_i$ flows homeomorphically onto $\beta'_i$ for each $i$ (after possibly reindexing).
Therefore, the term $({r^\alpha + \sum_{\Omega_\alpha} \delta_\tau})$ may be reorganized
\begin{align*}
\left(r^\alpha + \sum_{\tau\in \Omega_\alpha} \delta_\tau\right)
=& \pm(\tau - \tau') \pm(\beta_1 + \dotsb + \beta_{k_\tau} - \tau) \mp (\beta'_1 + \dotsb \beta'_{k_\tau} - \tau')\\
=& \pm\left( (\beta_1 - \beta'_1) + \dotsb + (\beta_{k_\tau} - \beta'_{k_\tau})\right)
\end{align*}
into a sum of basic flow relations. Furthermore, each transversal appearing in this sum flows homeomorphically onto a single edge of $C$.

The above two paragraphs show that, in $\mathbb F(\mathcal F)$, we may write $S(w+m)$ as a finite sum $\sum_{i = 1}^N r_i$ of basic flow relations $r_i = \tau_i - \tau'_i$ between two transversals that both flow homeomorphically onto a single edge of $C$. Let $\Omega$ denote the set of all transversals appearing in the sum $\sum_i r_i$, and let us partition $\Omega$ into equivalence classes in which two transversals are equivalent if they both flow homeomorphically onto the same edge of $C$. Notice that, since the two transversals appearing in a relation $r_i = \tau_i - \tau'_i$ are in the same equivalence class, the sum of the coefficients of all transversals in a given equivalence class must be zero. On the the other hand, for each transversal $\tau\in \Omega$ that is not an edge of $C$, the the net coefficient of $\tau$ in the sum $\sum_i r_i$ must also be zero (since $\sum_i r_i = S(w+m)$ in $\mathbb F(\mathcal F)$, and the only edges of $\hTheta_u$ with nonzero coefficient in $w+m$ lie in $C$). As each equivalence class contains at most one transversal $\tau\in \Omega$ which is an edge of $C$, these two facts imply that the net coefficient of such a $\tau$ (i.e., of a transversal which is an edge of $C$) must also be zero. Therefore, for each $\tau\in \Omega$, the net coefficient of $\tau$ in the sum $\sum_i r_i$ is zero. This proves that $S(w+m) = \sum_i r_i = 0$ in $\mathbb F(\mathcal F)$.  On the other hand, $S$ is injective and hence $w + m = 0$ in $\Z[H]^E$ as required.
\end{proof}

\section{Specialization, characteristic polynomials, and stretch factors}
\label{S:specialization}

Propositions~\ref{P:moduleisomorphism} and \ref{P:presentationmatrix4graph} together prove  Proposition \ref{P:fgmodule} and thus verify that the McMullen polynomial $\poly$ is well-defined. Furthermore, Corollary \ref{C:graphidealgenerator} now provides an explicit way to calculate $\poly$:  Given a primitive integral class $u \in \Csec$, let ${\bf t},x$ be a basis for $H = H_u \oplus \Z$ adapted to $u$ as in Definition \ref{D:adapted to z}. Viewing $\poly$ as an integral Laurent polynomial $\poly({\bf t},x) \in \Z[{\bf t}^{\pm},x^{\pm 1}]$, we can now prove the ``determinant formula'' from the introduction.

\begin{theorem:determinant}[Determinant formula]
For any primitive integral $u \in \Csec$ we have
\[ \poly({\bf t},x) = \det(xI - A_u({\bf t}))\]
up to units in $\Z[{\bf t}^{\pm 1},x^{\pm 1}]$.
\end{theorem:determinant}
\begin{proof}  Since the g.c.d.~of the fitting ideal of a module depends only on the isomorphism type of the module up to units, the claim follows from Corollary~\ref{C:graphidealgenerator} and Proposition~\ref{P:moduleisomorphism}.
\end{proof}

The determinant formula also tells us about characteristic polynomials of transition matrices for the expanding irreducible train track maps $f_u \colon \Theta_u\to\Theta_u$. The relationship can be stated in terms of the specializations of $\poly$. To describe these, first note that any cohomology class $u\in H^1(G;\R)$ (not necessarily integral) determines a homomorphism $u\colon H\to \R$. Writing the McMullen polynomial as a finite sum
\[\poly = \sum_{h\in H} a_h h \in \Z[H],\]
the \emph{specialization of $\poly$ at $u$} is then the single variable power sum given by
\[ \poly_{u}(\zeta) = \sum_{h\in H} a_h \zeta^{u(h)}.\]
The exponents here are real numbers, but in the case that $u$ is integral, this is in fact an integral Laurent polynomial in $\zeta$.

\begin{corollary}[Specialization]
\label{C:specialization}
For any primitive integral $u \in \Csec$, the specialization $\poly_u(\zeta)$ is equal (up to a factor $\pm \zeta^j$ for some $j \in\Z$) to the characteristic polynomial of the transition matrix for the train track map $f_u \colon \Theta_u \to \Theta_u$.
\end{corollary}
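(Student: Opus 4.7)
The proof is essentially a direct combination of Theorem~\ref{T:determinant formula} with Proposition~\ref{P:evaluatetransition}, once we unwind the definition of specialization in adapted coordinates. I will proceed as follows.

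First, I would fix a multiplicative basis $t_1, \ldots, t_{b-1}, x$ for $H = H_u \oplus \Z$ adapted to $u$, in the sense of Definition~\ref{D:adapted to z}. The point of this choice is that $u(t_i) = 0$ for every $i$ (since $t_i \in H_u = \ker(u\colon H \to \Z)$) while $u(x) = 1$. Consequently, for any Laurent monomial $t_1^{a_1} \cdots t_{b-1}^{a_{b-1}} x^{k}$ appearing in $\poly$, the image under $u$ of the corresponding element of $H$ is simply $k$, so the specialization at $u$ of this monomial is $\zeta^{k}$. In other words, under the identification $\Z[H] \cong \Z[\mathbf{t}^{\pm 1}, x^{\pm 1}]$, the specialization $\poly_u(\zeta)$ is obtained from $\poly(\mathbf{t}, x)$ by the substitution $(\mathbf{t}, x) \mapsto (\mathbf{1}, \zeta)$, i.e.\ $\poly_u(\zeta) = \poly(\mathbf{1}, \zeta)$.

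Next, I would invoke Theorem~\ref{T:determinant formula}, which gives
\[ \poly(\mathbf{t}, x) \;=\; U(\mathbf{t}, x) \cdot \det\bigl(xI - A_u(\mathbf{t})\bigr) \]
for some unit $U \in \Z[\mathbf{t}^{\pm 1}, x^{\pm 1}]^{\times}$. Since this ring is a Laurent polynomial ring over $\Z$, its units are precisely the elements of the form $\pm\, t_1^{k_1} \cdots t_{b-1}^{k_{b-1}} x^{j}$ with $k_i, j \in \Z$. Evaluating such a unit at $(\mathbf{t}, x) = (\mathbf{1}, \zeta)$ therefore produces an element of the form $\pm \zeta^j$.

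Finally, I would apply Proposition~\ref{P:evaluatetransition}, which identifies the integer matrix $A_u(\mathbf{1})$ with the transition matrix of the train track map $f_u$. Combining this with the previous step yields
\[ \poly_u(\zeta) \;=\; \poly(\mathbf{1}, \zeta) \;=\; \pm \zeta^{j} \cdot \det\bigl(\zeta I - A_u(\mathbf{1})\bigr) \;=\; \pm \zeta^{j} \cdot \chi_{A(f_u)}(\zeta), \]
which is the stated conclusion. There is no real obstacle here: the work has already been done in proving Theorem~\ref{T:determinant formula} and Proposition~\ref{P:evaluatetransition}. The only point that deserves care is verifying that the specialization map really does send units to $\pm \zeta^j$ (as opposed to introducing spurious Laurent tails in $\zeta$), but this is immediate from the explicit description of the units in a Laurent polynomial ring over $\Z$ combined with the fact that $u$ takes integer values on $H$.
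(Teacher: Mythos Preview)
Your proof is correct and follows essentially the same approach as the paper: fix an adapted basis so that specialization at $u$ becomes the substitution $(\mathbf{t},x)\mapsto(\mathbf{1},\zeta)$, apply Theorem~\ref{T:determinant formula}, and then invoke Proposition~\ref{P:evaluatetransition}. You are slightly more explicit than the paper in tracking the unit from Theorem~\ref{T:determinant formula} and checking that it specializes to $\pm\zeta^j$, but the underlying argument is identical.
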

\begin{proof}
We first recall  that the kernel of the homomorphism $u \colon H\to \Z$ is exactly $H_u< H$. By definition of the basis $t_1,\dots,t_{b-1},x$ of $H = H_u\oplus \Z$, we consequently see that $u(t_i) = 0$ for each $i = 1,\ldots,b-1$ and that $u(x) = 1$. Thus by Theorem~\ref{T:determinant formula} we have
\[ \poly_u(\zeta) = \poly(\zeta^0,\ldots,\zeta^0,\zeta^1) = \det(\zeta^1I - A_u(\zeta^0,\ldots,\zeta^0)) = \det(\zeta I - A_u(1,\ldots,1)).\]
But $A_u(1,\ldots,1)$ is the transition matrix for $f_u$ by Proposition \ref{P:evaluatetransition}.
\end{proof}

We may now prove Theorem~\ref{T:main_theorem} from the Introduction:
\begin{proof}[Proof of Theorem~\ref{T:main_theorem}]
The existence of $\poly$ follows from Proposition~\ref{P:fgmodule}, and Corollary~\ref{C:specialization} implies that the specialization $\poly_u(\zeta)$ at a primitive integral class $u\in \Csec$ is, up to a factor, equal to the characteristic polynomial of $f_u$. Thus the largest root of $\poly_u(\zeta)$ is the spectral radius $\lambda(f_u)$ of the transition matrix of $f_u$. Since $f_u$ is an irreducible train track map, Proposition~\ref{P:stretch} gives $h(f_u) = \log(\lambda(f_u))$ so that the equalities $\lambda(f_u) = \lambda((f_u)_\ast) = \lambda(\phi_u)= \Lambda(u)$ follow from Theorem~\ref{T:splittings}(5).
\end{proof}

\begin{example}[Specializing at $u_1$ and $u_2$]\label{Ex:specializing}
We now use Theorem~\ref{T:main_theorem} to calculate the stretch factors associated to the classes $u_1$ and $u_2$. Using the splitting $H = \Z[s]\oplus\Z[w]$ of $H$ adapted to the class $u = u_0$, the McMullen polynomial is then given, as calculated in Example~\ref{Ex:calculating poly}, by
\[ \poly(t,x) = x^4 -t^2x^3 - t^3 x^2 -t x^2 -t^3 x - t^2x - x -t\in \Z[H].\]
Since $(s(u_1),w(u_1)) = (-1,2)$, the specialization of $\poly$ at $u_1$ becomes
\begin{align*}
\poly_{u_1}(\zeta) 
&=\zeta^8 -\zeta^{-2}\zeta^6-\zeta^{-3}\zeta^4-\zeta^{-1}\zeta^4-\zeta^{-3}\zeta^2-\zeta^{-2}\zeta^2-\zeta^2-\zeta^{-1}\\
&=\zeta^{-1}(\zeta^9-\zeta^5-\zeta^4-\zeta^3-\zeta^2-\zeta-2).
\end{align*}
Up to a power of $\zeta$, this is exactly the characteristic polynomial of $A(f_1)$ for the first return map $f_1$ calculated in Example~\ref{Ex:(-1,2) introduced}. Furthermore,  the largest positive root of $\poly_{u_1}$ is $\lambda_{u_1}\approx 1.35827$, which we note agrees with the stretch factor of the injective endomorphism $\phi_1$ computed in Example~\ref{Ex:(-1,2)_splitting}.

On the other hand, as $(s(u_2),w(u_2)) = (-1,1)$, the specialization of $\poly$ at $u_1$ is
\begin{align*}
\poly_{u_2}(\zeta) 
&= \zeta^4 - \zeta^{-2}\zeta^3 - \zeta^{-3}\zeta^2 - \zeta^{-1}\zeta^2 - \zeta^{-3}\zeta - \zeta^{-2}\zeta - \zeta - \zeta^{-1}\\
&= \zeta^{-2}(\zeta^6 - 3\zeta^3 - 3\zeta - 1),
\end{align*} 
which, again up to a power of $\zeta$, is exactly the characteristic polynomial of $A(f_2)$ for the first return map $f_2$ found in Example~\ref{Ex:(-1,1) introduced}. We also find that the largest positive root of $\poly_{u_2}$ is $\lambda_{u_2} \approx 1.63299$, in agreement with the stretch factor of the monodromy $\fee_2 = \fee_{u_2}$ calculated in Example~\ref{Ex:(-1,1)_splitting}. Thus the data provided by $\poly$ for the classes $u_1$ and $u_2$ agrees with the conclusion of Corollary~\ref{C:specialization}.
\end{example}

\subsection{Subdivision}
\label{S:subdivision and poly calculation}

Recall from \S\ref{S:foliation} that the foliation $\mathcal F$ of $X$ has distinguished vertex leaves. These play an important role in the definition of the module $T(\mathcal F)$ (see Definitions~\ref{D:the_transversals}--\ref{D:transversals module}), and this is reflected in the McMullen polynomial $\poly$. 
 
For example, suppose that $\Theta\subset X$ is a cross section equipped with \emph{any} topological graph structure for which $f_\Theta$ sends vertices to vertices, and let $T(\hTheta)$ be the corresponding graph module as constructed in \S\ref{S:graph module}. If all vertices of $\Theta$ lie on vertex leaves of $\mathcal F$, then the proof of Proposition~\ref{P:moduleisomorphism} still goes through verbatim and gives an isomorphism $T(\hTheta)\to T(\mathcal F)$. Thus any finite presentation of $T(\hTheta)$ may be used to calculate the McMullen polynomial $\poly$.

However, if $V\Theta$ contains points that do not lie on vertex leaves, then Proposition~\ref{P:moduleisomorphism} fails (see Remarks~\ref{R:edges_are_transversals} and \ref{R:vertex_leaves}) and so the fitting ideal of $T(\hTheta)$ is conceivably different from that of $T(\mathcal F)$. Nevertheless, if we \emph{declare} the leaves of $\mathcal F$ through $V\Theta$ to be vertex leaves---effectively creating a new foliation $\mathcal F'$ with a new module $T(\mathcal F')$ and corresponding McMullen polynomial $\poly'$---then Proposition~\ref{P:moduleisomorphism} again gives an isomorphism $T(\hTheta')\to T(\mathcal F')$ that may be used to calculate $\poly'$. 

Theorem~\ref{T:determinant-subdivision} below explains exactly how the McMullen polynomial changes in this situation. To set up notation, let $\Theta_u$ be an $\mathcal F$--compatible cross section dual to $u\in \Csec$ and equipped with the standard graph structure, and let $\Theta'_u$ be the graph structure obtained by subdividing $\Theta_u$ along a finite set $\mathcal V\subset \Theta_u$ that is preserved by $f_u$. Notice that any such set $\mathcal V$ is necessarily disjoint from the vertex leaves of $\mathcal F$.  This is because any point contained in a vertex leaf eventually flows into a vertical $1$--cell of $X$ and thus onto a vertex of $\Theta_u$. Let $\mathcal F'$ be the foliation obtained by declaring the leaves through $\mathcal V$ to be vertex leaves, and let $\poly'$ be the corresponding McMullen polynomial (i.e., the g.c.d. of the fitting ideal of $T(\mathcal F')$).

Let $\{\mathbf{t},x\}$ be a basis of $H$ adapted to $u$. If $\tV \subset \tTheta_u$ denotes the preimage of $\mathcal V$, then the group $H_u\leq H$ acts freely on $\tV$. Thus if $\mathcal V_0 = \{v_1,\dotsc,v_k\}\subset \tV$ is any set of orbit representatives, then the free $\Z$--module on $\tV$ is isomorphic to the finitely generated free $\Z[H_u]$--module on $\mathcal V_0$, namely $\Z[H_u]^{\mathcal V_0}\cong \Z[H_u]^k$. Choosing a lift $\widetilde{f}_u \colon \tTheta'_u \to \tTheta'_u$ of $f_u$ then induces a $\Z[H_u]$--linear map $\Z[H_u]^{\mathcal V_0} \to \Z[H_0]^{\mathcal V_0}$ which we may represent by a matrix $B_u(\mathbf t)$ with entries in $\Z[H_u]$ as follows: Choose an orientation on each edge of $\Theta_u$ and lift to an orientation on the edges of $\tTheta_u$. For each vertex $v_j\in \mathcal V_0$, we have $\widetilde{f}_u(v_j) = h \cdot v_i$ for a unique $v_i\in \mathcal V_0$ and $h\in H_u$. Accordingly we set $B_{u,ij} = \pm h$, where the sign $\pm$ records whether or not $\widetilde{f}_u$ preserves the orientation at $v_j$. All other entries in the $j$--column of $B_u$ are set to $0$. We may now state the relationship between $\poly$, $\poly'$ and $B_u(\mathbf{t})$.

\begin{theorem}\label{T:determinant-subdivision}
Suppose $\Theta'_u$ is obtained by subdividing $\Theta_u$ along the finite $f_u$--invariant set $\mathcal V$ as described above. Let $\mathcal F'$ denote the foliation in which the leaves through $\mathcal V$ are declared to be vertex leaves, and let $\poly,\poly' \in \Z[H]$ be the McMullen polynomials corresponding to the modules $T(\mathcal F)$ and $T(\mathcal F')$. Then with $B_u({\bf t})$ as above, up to units in $\Z[H]$ we have
\[ \poly'(\mathbf{t},x) = \poly(\mathbf{t},x) \cdot \det(xI-B_u({\bf t})).\]
\end{theorem}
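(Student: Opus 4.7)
The plan is to present $T(\hTheta'_u)$ by a square matrix in block upper-triangular form whose diagonal blocks are $xI - A_u(\mathbf{t})$ and $xI - B_u(\mathbf{t})$; Theorem~\ref{T:determinant formula} together with the standard fact that the $\gcd$ of the fitting ideal of a module with a square presentation matrix is (up to units) the determinant of that matrix will then give the result. The first observation is that the proof of Proposition~\ref{P:moduleisomorphism} extends verbatim to the pair $(\Theta'_u, \mathcal F')$ because every vertex of $\Theta'_u$ lies on a vertex leaf of $\mathcal F'$: the original vertices of $\Theta_u$ lie on old vertex leaves, while the subdivision points in $\mathcal V$ lie on the new vertex leaves by definition of $\mathcal F'$. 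Hence $T(\hTheta'_u) \cong T(\mathcal F')$ as $\Z[H]$--modules, and so $\poly'$ is (up to units) the $\gcd$ of the fitting ideal of $T(\hTheta'_u)$.

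Next I would set up a better basis and a better set of relations for $T(\hTheta'_u)$. Choose $H_u$--orbit representatives $\{\sigma_{j,l} : j = 1,\ldots,m,\ l = 1,\ldots,n_j\}$ for the subedges of $\tTheta'_u$, where each edge representative $\sigma_j$ of $\tTheta_u$ decomposes as $\sigma_j = \sigma_{j,1}\cdots\sigma_{j,n_j}$. Replace the standard generators $\{\sigma_{j,l}\}$ of $\Z[H]^{E'}$ by the new basis $\{\tilde\sigma_j\} \cup \{\tau_{j,l} : l \geq 2\}$, where $\tilde\sigma_j := \sigma_{j,1} + \cdots + \sigma_{j,n_j}$ (the whole edge $\sigma_j$) and $\tau_{j,l} := \sigma_{j,1} + \cdots + \sigma_{j,l-1}$ (the initial segment of $\sigma_j$ terminating at the subdivision point $v_{j,l}$). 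Correspondingly, replace the defining relations $R_{j,l}$ from Proposition~\ref{P:presentationmatrix4graph} by $R^{\tilde\sigma}_j := \sum_l R_{j,l} = x\tilde\sigma_j - [\tilde f_u(\sigma_j)]$ and $R^\tau_{j,l} := \sum_{l' < l} R_{j,l'} = x\tau_{j,l} - [\tilde f_u(\tau_{j,l})]$ for $l \geq 2$. Both changes (of basis and of relations) are unimodular, so the resulting presentation still computes $T(\hTheta'_u)$.

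The key step is then to analyze these new relations geometrically. Since $\tilde f_u(\sigma_j)$ starts and ends at original vertices of $\tTheta_u$, it is a concatenation of full edges of $\tTheta_u$; each such full edge $h\cdot\sigma_i$ contributes all its subedges, which sum to $h\cdot\tilde\sigma_i$. Thus $R^{\tilde\sigma}_j$ involves only $\tilde\sigma$--terms in the new basis, producing exactly the upper-left block $xI - A_u(\mathbf{t})$ and the zero block in the lower-left. In contrast, $\tilde f_u(\tau_{j,l})$ travels from an original vertex to $\tilde f_u(v_{j,l}) = h\cdot v_{i,m}$ and so consists of some full edges followed by a terminal ``partial edge'' traversing part of a full edge $h\cdot\sigma_i$ up to $h\cdot v_{i,m}$. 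Depending on whether $\tilde f_u$ preserves or reverses orientation at $v_{j,l}$, this terminal partial edge traverses either the subedges $h\sigma_{i,1},\ldots,h\sigma_{i,m-1}$ (OP case, summing to $h\tau_{i,m}$) or $h\sigma_{i,m},\ldots,h\sigma_{i,n_i}$ (OR case, summing to $h\tilde\sigma_i - h\tau_{i,m}$). Combined with the full-edge contributions, this gives $R^\tau_{j,l} = x\tau_{j,l} - G_{j,l}(\mathbf{t}) - \epsilon h\tau_{i,m}$, where $G_{j,l}(\mathbf{t})$ is a $\Z[H_u]$--linear combination of $\tilde\sigma_k$'s and $\epsilon = +1$ (OP) or $-1$ (OR). Under the bijection $\tau_{j,l} \leftrightarrow v_{j,l}$, this matches the entry $B_{u,(i,m),(j,l)} = \epsilon h$ from the definition of $B_u$, so the lower-right block of the presentation matrix is exactly $xI - B_u(\mathbf{t})$.

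The presentation matrix is therefore a square block upper-triangular matrix $\left(\begin{smallmatrix} xI - A_u & * \\ 0 & xI - B_u \end{smallmatrix}\right)$, whose determinant is $\det(xI - A_u(\mathbf{t})) \cdot \det(xI - B_u(\mathbf{t})) = \poly(\mathbf{t},x) \cdot \det(xI - B_u(\mathbf{t}))$ by Theorem~\ref{T:determinant formula}. Taking $\gcd$'s of fitting ideals then yields $\poly'(\mathbf{t},x) = \poly(\mathbf{t},x) \cdot \det(xI - B_u(\mathbf{t}))$ up to units in $\Z[H]$, as required. The main obstacle I expect is the geometric bookkeeping in the analysis of the terminal partial edge of $\tilde f_u(\tau_{j,l})$: carefully tracking the direction of approach at $h\cdot v_{i,m}$ and verifying that the resulting sign $\epsilon$ matches the orientation convention used to define $B_u$, as well as confirming that the element $h \in H_u$ obtained from the partial edge is precisely the one prescribed by $\tilde f_u(v_{j,l}) = h\cdot v_{i,m}$.
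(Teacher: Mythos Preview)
Your proposal is correct and follows essentially the same approach as the paper's proof. Both arguments choose the full edges $\sigma_j$ of $\tTheta_u$ together with the initial subarcs terminating at the subdivision points as a generating set for $T(\hTheta'_u)$ (the paper calls these $\alpha_j^-$, you call them $\tau_{j,l}$), observe that the relations coming from full edges involve only full edges while the relations coming from initial subarcs have a ``full-edge part'' plus a single $\tau$--term governed by $B_u(\mathbf t)$, and conclude via the resulting block upper-triangular presentation matrix. Your explicit framing of this as a unimodular change of basis from the subedge generators, and your more detailed orientation-preserving/orientation-reversing case analysis for the terminal partial edge, are correct elaborations of steps the paper leaves implicit.
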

\begin{proof}
By Proposition~\ref{P:moduleisomorphism} we have $T(\mathcal F') \cong T(\hTheta'_u)$, thus the McMullen polynomial $\poly'$ may be calculated using any finite presentation of $T(\hTheta'_u)$.
By choosing this presentation carefully, the above formula will become obvious.

As in \S\ref{S:graph module}, let $\bar{E} = \{\bar\sigma_1,\dotsc,\bar\sigma_m\}$ be the edges of $\Theta_u$, and let $E = \{\sigma_1,\dotsc,\sigma_m\}$ denote chosen lifts in $\tTheta_u$. These choices determine the transition matrix $A_u(\mathbf t)$ as in \S\ref{S:transition_matrices}. Recall also that, as described above, $\mathcal V_0$ is any set of orbit representatives for the $H_u$ action on the preimage $\tV\subset\tTheta'_u$ of $\mathcal V$, and that $B_u(\mathbf{t})$ describes the $\Z[H_u]$--linear action of the lift $\widetilde{f}_u\colon \tTheta_u\to\tTheta_u$ on $\tV$.

We now choose convenient generators for $T(\hTheta'_u)$. As described above, fix an orientation on each edge of $\Theta_u$ and lift to an orientation on the edges of $\tTheta_u$. Each vertex $v_j\in \mathcal V_0$ is contained in a unique edge $\sigma$ of $\tTheta_u$, and we let $\alpha^-_j$ denote the initial subarc of $\sigma$ that terminates at $v_j$. We similarly let $\alpha_j^+$ denote the terminal subarc. Each of the arcs $\sigma_i,\alpha_j^\pm\subset\tTheta_u$ are comprised of edges of $\hTheta'_u$ and as such may be thought of as elements of the free $\Z$--module on the edges of $\hTheta'_u$. Moreover, since we can add and subtract $H$--translates of these to obtain representatives of every $H$--orbit, the $m+k$ elements $\{\sigma_1,\dotsc\sigma_m,\alpha_1^-,\dotsc,\alpha_k^-\}$ freely generate this module.  There is a bijection between this set and $E \cup \mathcal V_0$ which we use as a notational convenience.

By Definition~\ref{D:graph module}, we now see that $T(\hTheta'_u)$ is the quotient of $\Z[H]^{E\cup \mathcal V_0}$ by a certain submodule $M$ of relations. Namely each arc $\beta\in E\cup \mathcal V_0$ must satisfy the relation
\[[\beta] = \left[\tflow_1^u(\beta)\right],\]
where here $\beta$ and $\tflow_1^u(\beta)$ are shorthand for the sum of edges of $\hTheta'_u$ comprising those edge paths. Let us express these relations in terms of our generating set. For $\sigma_j\in E$ the corresponding relator may, as in \S\ref{S:transition_matrices}, be rewritten as $(x\cdot \sigma_j - \sum_i A_{ij}(\mathbf t)\cdot \sigma_i)$; notice that this does not involve any of the generators $\alpha_i^-$.

By definition of $B_u$, for each $v_j\in \mathcal V_0$ we have $\widetilde{f}_u(v_j) = \sum_{i} B_{u,ij}\cdot v_i = h\cdot v$ for some $h\in H_u$ and $v\in \mathcal V_0$. Notice that $\tflow_1^u(x\cdot \alpha_j^-)$ is an edge path in $\tTheta'_u$ that starts at a vertex of $\tTheta_u$, crosses several complete edges of $\tTheta_u$, and then terminates at $\widetilde{f}_u(v_j) = h \cdot v$. Therefore in $\Z[H]^{E\cup \mathcal V_0}$ this edge path may be expressed as
\begin{align*}
\tflow_1^u(x\cdot \alpha_j^-) &= h_1\cdot\sigma_1 + \dotsb + h_m\cdot\sigma_m + \sum_{i}B_{u,ij}(\mathbf t)\cdot\alpha_i^-,
\end{align*}
for some coefficients $h_i\in H$ (here we have used the fact that $\alpha_i^-+\alpha_i^+$ is a complete edge of $\hTheta_u$). Since the above $m+k$ relators (one for each element of $E\cup \mathcal V_0$)  generate the submodule $M$ of all relations, it follows that the $\Z[H]$--module $T(\hTheta'_u)$ is presented by a block matrix of the form
\[\begin{pmatrix} xI - A_u(\mathbf t) & \ast \\ 0 & xI - B_u(\mathbf t) \end{pmatrix}.\]
Since $\poly(\mathbf t,x) = \det(xI-A_u(\mathbf t))$ by Theorem~\ref{T:determinant formula}, the result follows.
\end{proof}

\begin{corollary}
If $\poly$ and $\poly'$ are McMullen polynomials as in Theorem~\ref{T:determinant-subdivision}, then for any integral class $u\in \Csec$ the specialization $\poly_u(\zeta)$ is a factor of $\poly'_u(\zeta)$.
\end{corollary}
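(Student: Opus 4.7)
The plan is to deduce the corollary as an essentially immediate consequence of Theorem~\ref{T:determinant-subdivision}, using the fundamental observation that specialization at an integral class is a ring homomorphism. To avoid conflict with the $u$ appearing as the distinguished class in Theorem~\ref{T:determinant-subdivision} (where it determines the adapted basis and the matrix $B_u(\mathbf t)$), let me denote the class in the statement of the corollary by $u_\ast \in \Csec \cap H^1(G;\Z)$.

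First I would note that for any integral class $u_\ast$, the assignment $h \mapsto \zeta^{u_\ast(h)}$ extends $\Z$--linearly to a ring homomorphism
\[ \mathrm{sp}_{u_\ast} \colon \Z[H] \longrightarrow \Z[\zeta^{\pm 1}], \]
and by construction $\mathrm{sp}_{u_\ast}(\poly) = \poly_{u_\ast}$ and $\mathrm{sp}_{u_\ast}(\poly') = \poly'_{u_\ast}$. The group ring $\Z[H]$ has units exactly $\{\pm h : h \in H\}$, and each such unit is sent by $\mathrm{sp}_{u_\ast}$ to $\pm \zeta^{u_\ast(h)}$, which is a unit in $\Z[\zeta^{\pm 1}]$. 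Thus $\mathrm{sp}_{u_\ast}$ carries the ambiguity ``up to units in $\Z[H]$'' to ``up to units in $\Z[\zeta^{\pm 1}]$''.

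Then I would apply $\mathrm{sp}_{u_\ast}$ to both sides of the identity from Theorem~\ref{T:determinant-subdivision}, which may be written as
\[ \poly'(\mathbf t, x) = \epsilon \cdot \poly(\mathbf t, x) \cdot \det(xI - B_u(\mathbf t)) \]
for some unit $\epsilon \in \Z[H]^{\times}$. Multiplicativity of $\mathrm{sp}_{u_\ast}$ gives
\[ \poly'_{u_\ast}(\zeta) = \mathrm{sp}_{u_\ast}(\epsilon) \cdot \poly_{u_\ast}(\zeta) \cdot \mathrm{sp}_{u_\ast}\!\bigl(\det(xI - B_u(\mathbf t))\bigr), \]
with $\mathrm{sp}_{u_\ast}(\epsilon)$ a unit in $\Z[\zeta^{\pm 1}]$. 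Since $\Z[\zeta^{\pm 1}]$ is a unique factorization domain in which divisibility is well-defined up to units, this exhibits $\poly_{u_\ast}(\zeta)$ as a factor of $\poly'_{u_\ast}(\zeta)$, as desired.

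There is really no substantive obstacle: the argument amounts to checking that the specialization map is a unit-preserving ring homomorphism, which is immediate from its definition on the basis $H$ of $\Z[H]$. The only point requiring a moment of attention is notational — the basis $\{\mathbf t, x\}$ and the factorization in Theorem~\ref{T:determinant-subdivision} are tied to the class $u$ used to build the cross section, but $\poly$ and $\poly'$ are intrinsic elements of $\Z[H]$ (coming from the modules $T(\mathcal F)$ and $T(\mathcal F')$), and specialization at any integral $u_\ast \in H^1(G;\Z)$ is independent of this basis.
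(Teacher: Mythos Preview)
Your proof is correct and follows essentially the same approach as the paper's: both observe that specialization at an integral class is a ring homomorphism $\Z[H]\to\Z[\zeta^{\pm1}]$ and apply it to the factorization from Theorem~\ref{T:determinant-subdivision}. Your version is in fact slightly more careful---you explicitly track the unit ambiguity and note that the basis $\{\mathbf t,x\}$ attached to the cross section is irrelevant to the intrinsic elements $\poly,\poly'\in\Z[H]$---but these refinements are not needed for the argument to go through.
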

\begin{proof}
For any  $u\in H^1(G,\Z)$, the assignment
\[\sum_{h\in H} c_h h  \mapsto \sum_{h\in H} c_h \zeta^{u(h)}\]
defines a ring homomorphism $\Z[H]\to \Z[\zeta^{\pm1}]$. By definition of specialization, this homomorphism sends $\poly$ to $\poly_u$ and $\poly'$ to $\poly'_u$.  Therefore, since Theorem~\ref{T:determinant-subdivision} implies that $\poly'$ is a product of $\poly$ with another element of $\Z[H]$, it follows that $\poly'_u$ is the product of $\poly_u$ and another element of $\Z[\zeta^{\pm1}]$.
\end{proof}

\begin{example} \label{Ex:subdivision matters}
There is a fixed point in the $d$--edge of our running example, and we can subdivide $\Gamma$ at this point to produce $\Gamma'$.  Looking at Figure \ref{F:cyclic cover} if we choose our representative point in the edge labeled $d$, then the image is contained in the edge labeled $t^2d$.    Thus, $B(t) = t^2$, and we see that
\[ \poly'(t,x) = \poly(t,x)(x-t^2).\]
\end{example}

\section{Real-analyticity, convexity, and divergence}
\label{S:convexity}

Theorem~D of \cite{DKL} showed that the logarithm of the stretch factor of $\fee_u$, for primitive integral $u\in \A$, extends to a continuous, convex, and homogeneous of degree $-1$ function $\mathfrak H \colon \A\to \R$. This theorem was an analogue of Fried's result in the setting of fibered hyperbolic $3$--manifolds \cite{FriedD}. Following McMullen's more recent approach to this $3$--manifold result \cite{Mc}, we now use the McMullen polynomial $\poly$ to give an alternate proof of \cite[Theorem D]{DKL} and to moreover extend this result to the entire cone of sections $\Csec$.

This discussion involves yet another cone $\C_X\subset H^1(X;\R)$, termed the \emph{McMullen cone}, that is naturally determined by the McMullen polynomial. We use the properties of $\poly$ to show that the above function $\mathfrak H\colon A\to \R$ naturally extends, with the same properties, to the entire cone $\C_X$ and that this extension diverges at $\partial \C_X$. On the other hand, the relationship between $\poly$ and the primitive classes $u\in \Csec$ (as illuminated by Theorem~\ref{T:main_theorem}) shows that this extension $\mathfrak H\colon \C_X\to \R$ is finite on $\Csec$ and therefore that $\Csec \subseteq \C_X$. Finally, we prove that $\mathfrak H$ in fact diverges as we approach the boundary of $\Csec$ by establishing the equality $\Csec = \C_X$, which is Theorem \ref{T:cones_are_equal}. This proof combines the characterization of $\Csec$ provided by Theorem~\ref{T:cone_of_sections} with another formula for $\poly$ that is due to \cite{AHR}.

\subsection{McMullen's Perron-Frobenius theory}

Given a primitive integral element $u \in \Csec$, let $H = H_u \oplus \Z$ and ${\bf s},{\bf t} = e^{\bf s},w$, and $x=e^w$ be adapted to $u$ as in Definition \ref{D:adapted to z}. We then express elements $u \in H^1(X;\R)$ in terms of their $({\bf s},w)$--coordinates following Convention \ref{Conv:homology coordinates}:
\[ u = ({\bf s}(u),w(u))= ({\bf s},w).\]

Given a nonzero polynomial $\mathfrak p({\bf t},x) \in \Z[{\bf t}^{\pm 1},x^{\pm 1}]$, we express this polynomial as
\[ \mathfrak p({\bf t},x) = \sum_{{\bf j} \in J} a_{\bf j} ({\bf t}x)^{\bf j}\]
where ${\bf j} = (j_1,\ldots,j_b)$ runs over some finite index set $J$, $({\bf t}x)^{\bf j} = t_1^{j_1}\cdots t_{b-1}^{j_{b-1}} x^{j_b}$, and $a_{\bf j} \neq 0$ for all ${\bf j} \in J$.  Then given ${\bf j} \in J$, the {\em dual cone of $a_{\bf j} ({\bf t}x)^{\bf j}$ for $\mathfrak p$} is defined by
\[ \C({\mathfrak p},a_{\bf j} ({\bf t}x)^{\bf j}) = \{ ({\bf s},w) \in \R^b \mid {\bf j} \cdot ({\bf s},w) > {\bf j}' \cdot ({\bf s},w) \mbox{ for all } {\bf j}' \in J \setminus \{ {\bf j} \} \} \]
Here, ``$\cdot$'' denotes the standard dot product on $\R^b$.  The cone $\C({\mathfrak p},a_{\bf j} ({\bf t}x)^{\bf j})$ is usually referred to as the dual cone of the Newton polytope of ${\mathfrak p}$ associated to $a_{\bf j} ({\bf t}x)^{\bf j}$.  The dual cones for terms $a_{\bf j} ({\bf t}x)^{\bf j}$ and $a_{\bf j'}({\bf t}x)^{\bf j'}$ of the same polynomial $\mathfrak p$ are either disjoint or equal.

As $({\bf s},w)$ are coordinates on $H^1(X;\R)$, we can view $\C({\mathfrak p},a_{\bf j} ({\bf t}x)^{\bf j}) \subset H^1(X;\R)$.  This cone depends only on $\mathfrak p \in \Z[H]$ and $({\bf t}x)^{\bf j} \in H$.  Furthermore, if $\mathfrak q = \pm ({\bf t}x)^{\bf i} \mathfrak p$, then
\[ \C({\mathfrak q},\pm a_{\bf j} ({\bf t}x)^{\bf j + i}) = \C({\mathfrak p},a_{\bf j} ({\bf t}x)^{\bf j}) \subset H^1(X;\R).\]

A polynomial in $\Z[{\bf t}^{\pm 1}]$ is {\em nonnegative} if all coefficients are nonnegative, and {\em positive} if it is nonnegative and nonzero.
Following \cite{Mc}, a matrix $P({\bf t})$ with entries in $\Z[{\bf t}^{\pm 1}]$ is said to be {\em Perron-Frobenius} if each entry is nonnegative, for every $i,j$ there exists $k > 0$ so that $(P({\bf t})^k)_{ij}$ is positive, and $P({\bf 1})$, the matrix obtained by evaluating ${\bf t}$ at $(1,\ldots,1)$, is not a permutation matrix. In \cite[Appendix A.1]{Mc}, McMullen proves the following theorem. 

\begin{remark}
This is a slightly weaker definition of Perron-Frobenius matrix than that given in \cite{Mc}, but the proof in that paper goes through in this setting as well with only minor modifications.
\end{remark}

\begin{theorem}[McMullen \cite{Mc}]
\label{T:PF}
Let $E({\bf t})$ be the leading eigenvalue of a $k \times k$ Perron-Frobenius matrix $P({\bf t})$ with entries in $\Z[{\bf t}^{\pm 1}]$.  Then:
\begin{enumerate}
\item The function $\xi({\bf s}) = \log(E(e^{\bf s}))$ is a  convex function of ${\bf s} \in \R^{b-1}$
\item The graph $\mathfrak G = \{ ({\bf s},w) \mid w = \xi({\bf s}) \}$ meets each ray from the origin in $\R^{b-1} \times \R$ at most once.
\item The rays passing through $\mathfrak G$ coincide with the dual cone $\C({\mathfrak p},x^k)  \subset \R^{b-1} \times \R$ for $\mathfrak p = \det(xI-P({\bf t})) \in \Z[{\bf t}^{\pm 1},x^{\pm 1}]$.
\end{enumerate}
\end{theorem}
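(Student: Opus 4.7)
The plan is to follow McMullen's proof in Appendix~A.1 of \cite{Mc}, indicating the modifications needed to accommodate our slightly weaker Perron-Frobenius definition (which permits a different power $k$ for each entry $(i,j)$, i.e.\ irreducibility rather than primitivity, and requires only that $P({\bf 1})$ not be a permutation matrix). The first step is to specialize at ${\bf t} = e^{\bf s}\in \R_+^{b-1}$: each entry of $P({\bf t})$ is a Laurent polynomial with nonnegative integer coefficients, so $P(e^{\bf s})$ has nonnegative real entries, and the hypothesis that some power $(P({\bf t})^k)_{ij}$ is nonzero for each $(i,j)$ means that $P(e^{\bf s})$ is irreducible. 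Classical Perron-Frobenius theory then supplies a simple real leading eigenvalue $E(e^{\bf s})>0$ with a strictly positive right-eigenvector. The assumption that $P({\bf 1})$ is not a permutation matrix rules out the degenerate case in which $E({\bf 1})$ fails to strictly dominate the other eigenvalues in modulus (this is where the weaker hypothesis requires care), thereby guaranteeing that $E$ is a simple, strictly dominant eigenvalue of $P(e^{\bf s})$. Real-analyticity of $E(e^{\bf s})$, and hence of $\xi({\bf s})=\log E(e^{\bf s})$, then follows from the analytic implicit function theorem applied to $\mathfrak p(e^{\bf s},x)=0$ at the simple root $x=E(e^{\bf s})$.

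For part~(1), each entry of $P(e^{\bf s})$ has the form $\sum_\alpha c_\alpha e^{{\bf j}_\alpha\cdot{\bf s}}$ with $c_\alpha\ge 0$, and such a sum of exponentials is log-convex in ${\bf s}$. A result of Kingman (used by McMullen) then yields that the Perron-Frobenius eigenvalue of a nonnegative irreducible matrix whose entries are log-convex functions is itself log-convex, so $\xi({\bf s})=\log E(e^{\bf s})$ is convex as claimed. One can either apply Kingman's theorem as a black box, or give a direct proof via the spectral radius formula $E=\lim_n\|P^n\|^{1/n}$, using that sums, products, and pointwise limits of log-convex functions are log-convex.

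For parts~(2) and~(3), the key is to analyze the Newton polytope of $\mathfrak p({\bf t},x)=\det(xI-P({\bf t}))$. Writing $\mathfrak p=\sum_{\bf j} a_{\bf j}({\bf t}x)^{\bf j}$ with the distinguished leading term $x^k$, the dual cone $\C(\mathfrak p,x^k)$ consists of those $({\bf s},w)$ at which the linear functional ${\bf j}\cdot({\bf s},w)$ is uniquely maximized by ${\bf j}=(0,\ldots,0,k)$. I would first prove the containment of $\mathfrak G$ in $\C(\mathfrak p,x^k)$: if $({\bf s}_0,w_0)\in\mathfrak G$, then $e^{w_0}=E(e^{{\bf s}_0})$ strictly dominates all other eigenvalues of $P(e^{{\bf s}_0})$, and since the $x^j$-coefficient of $\mathfrak p$ is (up to sign) the $(k-j)$-th elementary symmetric function of the eigenvalues, one checks that the contribution of the monomial $e^{kw_0}$ to $\mathfrak p(e^{{\bf s}_0},e^{w_0})$ strictly exceeds the combined contribution of every other monomial—placing $({\bf s}_0,w_0)$ strictly inside $\C(\mathfrak p,x^k)$. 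Conversely, given a ray $\{t({\bf s}_0,w_0)\}$ in $\C(\mathfrak p,x^k)$, the function $t\mapsto \mathfrak p(e^{t{\bf s}_0},e^{tw_0})$ is dominated by its leading exponential $e^{ktw_0}$ for large $t$ (hence positive) and by the subleading exponentials for small $t$ (hence negative, after accounting for signs), so by the intermediate value theorem it vanishes for some $t>0$. Uniqueness of this intersection—giving part~(2)—then follows from the convexity established in part~(1) together with the strict domination of $e^{kw}$ inside $\C(\mathfrak p,x^k)$: along any ray in the dual cone, the derivative of the implicitly defined root is strictly positive, so the ray is transverse to $\mathfrak G$ and cannot cross it twice.

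The main obstacle is the strict-domination step used in parts~(2) and~(3): verifying that the monomial $e^{kw}$ really does dominate the others at points of $\mathfrak G$, with a quantitative slack sufficient to deduce uniqueness of the intersection. This is where our weaker Perron-Frobenius hypothesis creates work, since it does not immediately supply a uniform power of $P({\bf t})$ with strictly positive entries; instead one must combine irreducibility of $P(e^{\bf s})$, the non-permutation hypothesis on $P({\bf 1})$, and simplicity of the eigenvalue $E(e^{\bf s})$ to rule out eigenvalues of the same modulus as $E$. With that in hand, the rest of McMullen's argument transplants to our setting essentially verbatim.
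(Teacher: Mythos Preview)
The paper itself does not prove this theorem: it cites McMullen's Appendix~A.1 and remarks just before the statement that his argument ``goes through in this setting as well with only minor modifications.'' Your plan to follow McMullen is therefore exactly the paper's approach, and your treatment of part~(1) via Kingman's log-convexity result is correct.

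However, your identification of what the weakened hypothesis buys is wrong, and the error propagates into your sketch of parts~(2) and~(3). You assert that the assumption that $P({\bf 1})$ is not a permutation matrix ``rules out the degenerate case in which $E({\bf 1})$ fails to strictly dominate the other eigenvalues in modulus,'' thereby ``guaranteeing that $E$ is a simple, strictly dominant eigenvalue.'' It does no such thing: the matrix $\left(\begin{smallmatrix} 0 & 2 \\ 1 & 0 \end{smallmatrix}\right)$ is irreducible with nonnegative integer entries, is not a permutation matrix, yet has eigenvalues $\pm\sqrt{2}$ of equal modulus. Irreducibility does not prevent imprimitivity. What the non-permutation hypothesis actually delivers, for a nonnegative \emph{integer} irreducible matrix, is $E({\bf 1})>1$, so that $\xi({\bf 0})>0$ and the graph $\mathfrak G$ sits strictly above the origin. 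Simplicity of $E(e^{\bf s})$ as a root of $\mathfrak p(e^{\bf s},x)$---which is all the implicit function theorem needs for real-analyticity---follows from irreducibility alone and requires no spectral gap.

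Consequently your argument for $\mathfrak G\subset\C(\mathfrak p,x^k)$ does not stand. Beyond resting on a false premise, the claim that ``the contribution of the monomial $e^{kw_0}$ strictly exceeds the combined contribution of every other monomial'' cannot hold at a point where $\mathfrak p(e^{{\bf s}_0},e^{w_0})=0$. The dual-cone condition is about \emph{exponents}, not values: one must show $E^{k-j}>e^{{\bf j}'\cdot{\bf s}_0}$ for each non-leading exponent $({\bf j}',j)$ in the support of $\mathfrak p$. Each such term arises from a disjoint union of cycles in the coefficient graph, and the Collatz--Wielandt eigenvector bound gives $E^{|y|}\ge\prod_{e\in y}P_e(e^{{\bf s}_0})\ge$ (any single monomial product) for every cycle $y$. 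Strictness comes not from a spectral gap but from the extra edge or coefficient $\ge 2$ that the non-permutation hypothesis guarantees somewhere in the strongly connected graph; this is the ``minor modification'' the paper alludes to. Your intermediate-value argument for the converse direction similarly needs repair, since the sign of $\mathfrak p({\bf 1},1)=\det(I-P({\bf 1}))$ is not obviously controlled without strict dominance; the correct route uses convexity of $\xi$ together with $\xi({\bf 0})>0$ and the asymptotic slope of $\xi$ along rays, which is governed by the Newton polytope of the entries of $P$.
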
 

Note that the terms of $\det(xI-P({\bf t}))$ other than $x^k$ have a smaller exponent on $x$, and hence $({\bf 0},1) \in \R^{b-1} \times \R$ is contained in $\C({\mathfrak p},x^k)$.

\begin{remark}
In fact the statement in \cite{Mc} is sharper, but the above theorem will suffice for our purposes.
\end{remark}

\begin{lemma}
\label{L:perron-frobenius}
For any primitive integral class  $u\in \Csec$, the transition matrix
\[ A_u({\bf t}) \in M_{m \times m}\left( \Z[{\bf t^{\pm 1}}] \right)\]
is a Perron-Frobenius matrix with entries in $ \Z[{\bf t^{\pm 1}}] = \Z[H_u]$.
\end{lemma}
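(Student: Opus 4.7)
The plan is to verify the three defining properties of a Perron-Frobenius matrix for $A_u(\mathbf t)$ directly from how this matrix was constructed in \S\ref{S:transition_matrices} together with the fact that $f_u$ is an expanding irreducible train track map.

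First I would observe that the entries of $A_u(\mathbf t)$ have nonnegative integer coefficients. Recall from \S\ref{S:transition_matrices} that $A_{u,ij}(\mathbf t)$ arose by writing the edge-path $x^{-1}\circ\widetilde\psi_1^u(\sigma_j)$ in $\widetilde\Theta_u$ as a $\Z[H_u]$-linear combination of the representative edges $\sigma_1,\dots,\sigma_m$. Since this is literally a count of how many times the (unoriented) edge-path traverses each $H_u$-translate $h\cdot\sigma_i$, the coefficient of each $h\in H_u$ in $A_{u,ij}(\mathbf t)$ is a nonnegative integer. In particular every power $A_u(\mathbf t)^k$ also has entries with nonnegative integer coefficients.

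Next I would verify that $A_u(\mathbf 1)$ is not a permutation matrix. By Proposition~\ref{P:evaluatetransition}, $A_u(\mathbf 1)$ is the transition matrix of the first-return map $f_u\colon\Theta_u\to\Theta_u$, and by Proposition~\ref{P:first_return_exp_irred_tt} this map is an expanding irreducible train track map. An expanding map has spectral radius $\lambda(f_u)>1$, whereas every permutation matrix has spectral radius $1$; hence $A_u(\mathbf 1)$ cannot be a permutation matrix.

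Finally I would show that for every $i,j$ there is some $k>0$ such that $(A_u(\mathbf t)^k)_{ij}$ is a nonzero Laurent polynomial with nonnegative coefficients. Nonnegativity is automatic from the first step, so the only content is nonzeroness. Again appealing to Proposition~\ref{P:first_return_exp_irred_tt}, the integer transition matrix $A_u(\mathbf 1) = A(f_u)$ is irreducible, so by the standard definition of irreducibility there exists $k>0$ with $(A(f_u)^k)_{ij}>0$. Evaluating at $\mathbf t=\mathbf 1$ commutes with taking products, so $(A_u(\mathbf t)^k)_{ij}(\mathbf 1)=(A(f_u)^k)_{ij}>0$. A Laurent polynomial with nonnegative coefficients that evaluates to a strictly positive integer at $\mathbf 1$ must itself be nonzero, so $(A_u(\mathbf t)^k)_{ij}$ is the required nonnegative Laurent polynomial. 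This completes the verification, and since this step reduces cleanly to irreducibility of $A(f_u)$, I do not anticipate any real obstacle---the entire argument is essentially a repackaging of Propositions~\ref{P:evaluatetransition} and \ref{P:first_return_exp_irred_tt} through the definition of a Perron-Frobenius matrix.
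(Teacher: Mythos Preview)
Your proposal is correct and follows essentially the same route as the paper's proof: verify nonnegativity of the coefficients from the construction, use Proposition~\ref{P:evaluatetransition} to identify $A_u(\mathbf 1)$ with the transition matrix of $f_u$, and then deduce both ``not a permutation matrix'' and the existence of $k$ with $(A_u(\mathbf t)^k)_{ij}\neq 0$ from the fact that $f_u$ is expanding and irreducible. Your citation of Proposition~\ref{P:first_return_exp_irred_tt} (valid for all $u\in\Csec$) is in fact more apt than the paper's appeal to Theorem~\ref{T:DKL_BC} (which concerns only $u\in\A$).
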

\begin{proof}
The entries of $A_u({\bf t})$ are nonnegative Laurent polynomials in $\Z[{\bf t}^{\pm 1}]$.  By Proposition~\ref{P:evaluatetransition}, $A_u({\bf 1})$ is the transition matrix for $f_u$ which is an expanding irreducible train track map by Theorem \ref{T:DKL_BC}.  Therefore, $A_u({\bf 1})$ is not a permutation matrix.  Moreover, for every $i,j$ there exists $k$ so that $(A_u({\bf 1})^k)_{ij} > 0$, which implies $(A_u({\bf t})^k)_{ij}$ is a positive Laurent polynomial, as required.
\end{proof}

\subsection{The McMullen cone} \label{S:mcmullen cone}

From Theorem~\ref{T:determinant formula}, we have $\poly = \det(xI - A_u({\bf t}))$, and we let
\[ \C_X(u) = \C(\det(xI - A_u(\mathbf{t})),x^m) = \C(\poly,x^m) \subset H^1(X;\R) \]
be the cone from Theorem \ref{T:PF}, which contains $u$.  Since any two primitive integral elements $u,u' \in \Csec$ define the same polynomial $\poly \in \Z[H]$ up to units in $\Z[H]$, it follows that we either have $\C_X(u)= \C_X(u')$ or $\C_X(u) \cap \C_X(u') = \emptyset$.

\begin{lemma} \label{L:contained}
For any two primitive integral elements $u,u' \in \Csec$, $\C_X(u) = \C_X(u')$.  Furthermore, writing $\C_X = \C_X(u)$ for any (hence every) primitive integral $u \in \Csec$, we have $\Csec \subseteq \C_X$.
\end{lemma}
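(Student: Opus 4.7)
The plan is to first prove the containment $\Csec \subseteq \C_X(u)$ for an arbitrary primitive integral $u \in \Csec$; the equality $\C_X(u) = \C_X(u')$ for two such classes will then fall out as an immediate corollary. Indeed, once the containment is known, any primitive integral $u' \in \Csec$ satisfies $u' \in \Csec \subseteq \C_X(u)$ by the containment applied to $u$, and also lies in $\C_X(u')$ trivially by Theorem~\ref{T:PF}. But $\C_X(u)$ and $\C_X(u')$ are open dual cones attached to (possibly different) vertices of the Newton polytope $N(\poly)$, and such open dual cones are pairwise either equal or disjoint; sharing the point $u'$ therefore forces $\C_X(u) = \C_X(u')$.

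For the containment, denote by $v_u \in H$ the vertex of $N(\poly)$ corresponding to the leading term $x^m$ of $\det(xI - A_u(\mathbf{t}))$, so that $\C_X(u) = \C(\poly,x^m)$ is by definition cut out by the strict linear inequalities $u''(v_u) > u''(v)$ as $v$ ranges over the other vertices of $N(\poly)$. The idea is to use the Leibniz expansion of the determinant, grouped by cycle decompositions in the transition graph of $A_u(\mathbf{1})$:
\[ \poly \;=\; \sum_{\mathcal C}(-1)^{\|\mathcal C\|}\, x^{m-\|\mathcal C\|}\, w(\mathcal C), \qquad w(\mathcal C)\;=\;\prod_{C\in \mathcal C}w(C) \in \Z[H_u], \]
where $\mathcal C$ ranges over vertex-disjoint collections of simple cycles, $\|\mathcal C\|$ is the total length, and each $w(C) \in \Z[H_u]$ is the product of the relevant entries of $A_u(\mathbf t)$ along $C$. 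The trivial decomposition $\mathcal C=\emptyset$ contributes precisely the monomial $v_u = m\cdot x_u$; every other term is supported at lattice points of the form $(m-\|\mathcal C\|)\cdot x_u + h$, with $\mathcal C \neq \emptyset$ and $h \in H_u$ ranging over the monomials of $w(\mathcal C)$.

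The central geometric input, extracted from the construction of $A_u(\mathbf t)$ in \S\ref{S:transition_matrices}, is that each choice of monomial from each entry of $A_u(\mathbf t)$ along a cycle $C$ corresponds to a lift of a closed edge-path in $\hTheta_u$ realizing a periodic orbit of $f_u$, and hence to a closed orbit $\mathcal O_C$ of the reparametrized semiflow $\psi^u$ in $X$. Because $x^{-1}\circ \tpsi_1^u$ restricts to a lift of $f_u$ on $\tTheta_u$, tracking the $H$-displacement of this closed edge-path yields the identity
\[ [\mathcal O_C] \;=\; |C|\cdot x_u - w(C) \in H, \]
so that summing over $C \in \mathcal C$ gives $v_u - v_{\mathcal C} = \|\mathcal C\|\cdot x_u - w(\mathcal C) = \sum_{C}[\mathcal O_C]$ as a positive sum of closed-orbit homology classes.

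To finish, invoke Theorem~\ref{T:cone_of_sections}, which identifies $\Csec$ with the Fried cone $\D$ of classes strictly positive on every closed orbit. For $u'' \in \Csec$ and any nonempty $\mathcal C$, the displayed identity gives $u''(v_u - v_{\mathcal C}) = \sum_{C}u''([\mathcal O_C]) > 0$. Since every vertex $v \neq v_u$ of $N(\poly)$ lies in the support of $\poly$ and hence equals $v_{\mathcal C}$ for some nonempty $\mathcal C$, we conclude $u''(v_u) > u''(v)$ for all such $v$, i.e., $u'' \in \C_X(u)$. The main obstacle here is precisely this cycle-polynomial identification $[\mathcal O_C] = |C|\cdot x_u - w(C)$: it demands a careful bookkeeping verifying that the algebraic weight $w(C) \in \Z[H_u]$ produced by the Leibniz expansion matches the $H_u$-component of the homology class of the closed orbit obtained by lifting the cycle $C$ through iterates of $\tpsi_1^u$ on $\tTheta_u$.
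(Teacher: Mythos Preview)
Your argument is correct in substance, but it takes a very different and considerably heavier route than the paper's proof. The paper gives a short topological argument: the dual cones $\C_X(u)$ and $\C_X(u')$ are open and are either equal or disjoint; if they were disjoint, the segment from $u$ to $u'$ in the convex set $\Csec$ would meet $\partial\overline{\C_X(u)}$ at a rational point, yielding a primitive integral $u'' \in \Csec$ on that boundary; but then the open cone $\C_X(u'')$ (which contains $u''$) would intersect $\C_X(u)$, forcing $\C_X(u) = \C_X(u'') \ni u''$ and contradicting $u'' \in \partial\overline{\C_X(u)}$. The containment $\Csec \subseteq \C_X$ then follows because every primitive integral point of $\Csec$ lies in $\C_X$, and such points are dense in the convex cone $\Csec$. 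No information about $\poly$ beyond the definition of the dual cone and Theorem~\ref{T:PF} is used.

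Your approach instead proves $\Csec \subseteq \C_X(u)$ directly by invoking the cycle-polynomial expansion of $\det(xI - A_u(\mathbf t))$ and the orbit identity $[\mathcal O_C] = |C|\cdot x_u - w(C)$, together with $\Csec = \D$. This is exactly the machinery the paper develops \emph{after} Lemma~\ref{L:contained} (as Theorem~\ref{T:AHR determinant} and Lemma~\ref{L:orbit form}) in order to prove the stronger equality $\C_X = \Csec$ (Theorem~\ref{T:cones_are_equal}); you are effectively front-loading that work to prove the weaker inclusion. Two minor remarks: the sign in your displayed expansion should be $(-1)^{\#(\mathcal C)}$ (number of cycles), not $(-1)^{\|\mathcal C\|}$, though this is immaterial since only the Newton polytope is relevant; and the orbit identity you flag as ``the main obstacle'' is indeed nontrivial---the paper proves it as a separate lemma (Lemma~\ref{L:orbit form}), so you would need to supply that argument, adapted from $u_0$ to a general primitive integral $u \in \Csec$. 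The payoff of your approach is that once this machinery is in place, the subsequent proof of Theorem~\ref{T:cones_are_equal} becomes shorter; the cost is that the present lemma, which has a five-line proof, inherits all of that overhead.
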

\begin{proof}
Suppose $\C_X(u) \neq \C_X(u')$ for two primitive integral classes $u,u' \in \Csec$, so that as mentioned above $\C_X(u) \cap \C_X(u') = \emptyset$. Because the cones are defined by linear inequalities with integer coefficients, it follows that the line segment in $H^1(X;\R)$ between $u$ and $u'$ meets the boundary of $\overline{\C_X(u)}$ at some rational point.  This rational point is a rational multiple of some primitive integral point $u'' \in \Csec$, and hence $u''$ lies in the boundary of $\overline{\C_X(u)}$.  Since the cones are open, it follows that $\C_X(u'') \cap \C_X(u) \neq \emptyset$, and so $\C_x(u) = \C_x(u'')\ni u''$. This contradicts the fact that $u''$ is in the boundary of $\overline{\C_X(u)}$.   Therefore, $\C_X(u) = \C_X(u')$ for all primitive integral $u,u' \in \Csec$ and we let this common cone be denoted $\C_X$.

It follows that every primitive integral point and hence every rational line of $\Csec$ is contained in $\C_X$. By convexity, $\C_X$ then contains the convex hull of such rays which, by Proposition~\ref{P:Csec convex}, is equal to $\Csec$.
\end{proof}

\begin{defn}\label{D:McMcone}
We call the cone $\C_X\subseteq H^1(G,\R)$ constructed in Lemma~\ref{L:contained} the \emph{McMullen cone} associated to the folded mapping torus $X$.
\end{defn}

In fact the containment $\Csec \subseteq \C_X$ can be promoted to an equality:

\begin{theorem:cones-are-equal}[McMullen polynomial detects $\Csec$]
The McMullen cone $\C_X$ is equal to the cone of sections $\Csec$.
\end{theorem:cones-are-equal}

The proof of Theorem~\ref{T:cones_are_equal} is somewhat involved and so is postponed to the end of this section.  For now we use it to prove that $\mathfrak H$ extends to all of $\Csec$ with many nice properties.

\begin{example}[Computing the McMullen cone $\C_X$]
\label{E:A}
The McMullen polynomial for our running example is, as calculated in Example~\ref{Ex:calculating poly}, given by
\[ \poly(t,x) = x^4 -t^2x^3 - t^3 x^2 -t x^2 -t^3 x - t^2x - x -t\in \Z[H].\]
Accordingly, we calculate that the dual cone $\C(\poly(t,x),x^4)$ is the interior of the convex hull of the rays through $(1,2)$ and $(-1,0)$; this is exactly equal to the cone of sections $\Csec$ as calculated in Example~\ref{Ex:the_cones}, and so $\C_X$ has already been illustrated in Figure~\ref{F:Acone-Scone}. We note that the equality $\C_X=\Csec$ here is in agreement with Theorem~\ref{T:cones_are_equal}.
\end{example}

\subsection{Stretch factors}

In the notation we have established the specialization of $\poly(\mathbf t,x)$ at any point $({\bf s},w) \in H^1(X;\R)$ is given by
\[  \poly_{({\bf s},w)}(\zeta) = \poly(\zeta^{\bf s},\zeta^w).\]
We now define a function $k\colon \Csec\to \R_+\cup \{\infty\}$ by setting
\[ k({\bf s},w) = \sup \{ \zeta \in \R_+ \mid \poly_{({\bf s},w)}(\zeta) = 0 \} \]
for $({\bf s},w)\in \Csec$.
If there does not exist $\zeta \in \R_+$ such that $\poly_{({\bf s},w)}(\zeta) = 0$, we interpret the above definition as $k({\bf s},w) =\infty$.
We note that from the definition we have $k(qu) = k(u)^{1/q}$ for any $u \in \Csec$ and $q \in \R_+$ (with the convention that $\infty^{1/q} = \infty$).

Note that for any primitive integral element $({\bf s},w)=u\in \Csec$,  Theorem~\ref{T:main_theorem} shows that $k({\bf s},w) = \lambda(\f_u)$, the Perron-Frobenius eigenvalue of the transition matrix $A(f_u)$ of the expanding irreducible train track map $f_u\colon \Theta_u\to\Theta_u$. Thus for such $({\bf s},w)\in \Csec$ we have $k({\bf s},w) > 1$, and consequently $k>1$ on rays through integral points in $\Csec$.
A priori, $k$ could be equal to infinity on rays through non-integral points.  The next proposition shows that this is in fact not the case, and $1 < k({\bf s},w) < \infty$ for all $({\bf s},w)\in \Csec$.

\begin{proposition} \label{P:PFconsequence}
Let $A_u(\mathbf t)$ be the Perron-Frobenius matrix from Lemma~\ref{L:perron-frobenius}, and let $\mathfrak G$ be the corresponding graph as in Theorem~\ref{T:PF}. The following hold:
\begin{enumerate}
\item For any $({\bf s},w)\in \mathfrak G$ we have $\poly_{({\bf s},w)}(e) = 0$.
\item  $\mathfrak G$ is the level set $k({\bf s},w) = e$ in $\Csec$.  
\item For all $({\bf s},w) \in \Csec$, $1 < k({\bf s},w) < \infty$ and the function $k\colon\Csec \to (1,\infty)$ is real-analytic.
\end{enumerate}
\end{proposition}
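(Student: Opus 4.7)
\medskip

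\noindent\textbf{Proof plan.} For part (1), I will combine the determinant formula with the definition of $\mathfrak G$. By Theorem~\ref{T:determinant formula}, $\poly({\bf t},x) = \det(xI - A_u({\bf t}))$ up to units in $\Z[{\bf t}^{\pm 1}, x^{\pm 1}]$ for any primitive integral $u \in \Csec$. If $({\bf s},w) \in \mathfrak G$, then by definition of $\mathfrak G$ we have $w = \xi({\bf s}) = \log E(e^{\bf s})$, so $e^w = E(e^{\bf s})$ is precisely the Perron--Frobenius eigenvalue of the matrix $A_u(e^{\bf s})$. Substituting ${\bf t} = e^{\bf s}$ and $x = e^w$ into the determinant formula immediately yields $\poly_{({\bf s},w)}(e) = \det(e^w I - A_u(e^{\bf s})) = 0$.

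For part (2), the key observation is that, by Lemma~\ref{L:perron-frobenius}, $A_u({\bf t})$ is a Perron--Frobenius matrix; hence for every ${\bf t} \in \R_+^{b-1}$ the non-negative real matrix $A_u({\bf t})$ is irreducible, so its Perron eigenvalue $E({\bf t})$ is its \emph{unique} positive real eigenvalue (all other eigenvalues being negative or strictly complex). Consequently, for $\zeta > 0$ the equation $\poly_u(\zeta) = \det(\zeta^w I - A_u(\zeta^{\bf s})) = 0$ holds iff the positive real number $\zeta^w$ equals $E(\zeta^{\bf s})$. Setting $\alpha = \log \zeta$, this becomes $\alpha w = \xi(\alpha {\bf s})$, which is exactly the condition $\alpha \cdot ({\bf s},w) \in \mathfrak G$. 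By Lemma~\ref{L:contained}, $\Csec \subseteq \C_X$, so the ray through any $u = ({\bf s},w) \in \Csec$ is a ray in $\C_X$ and, by Theorem~\ref{T:PF}(2)--(3), meets $\mathfrak G$ in exactly one positive multiple $q_0(u) \cdot u$. Therefore the set of positive real roots of $\poly_u$ is the singleton $\{e^{q_0(u)}\}$, and hence $k(u) = e^{q_0(u)}$. Part (2) follows from the equivalences $u \in \mathfrak G \iff q_0(u) = 1 \iff k(u) = e$.

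For part (3), the representation $k(u) = e^{q_0(u)}$ with $q_0(u) \in (0,\infty)$ immediately gives $1 < k(u) < \infty$ on $\Csec$. Real-analyticity I plan to deduce from the implicit function theorem applied to the real-analytic function $(\zeta,u) \mapsto \poly_u(\zeta) = \poly(\zeta^{\bf s}, \zeta^w)$ on $\R_+ \times \Csec$ at the point $\zeta = k(u)$. The essential input is that $\zeta = k(u)$ is a \emph{simple} root of $\poly_u$: since $E(k(u)^{\bf s}) = k(u)^w$ is a simple eigenvalue of the irreducible non-negative matrix $A_u(k(u)^{\bf s})$, the adjugate of $k(u)^w I - A_u(k(u)^{\bf s})$ is nonzero, and a short application of Jacobi's formula then shows $\partial_\zeta \poly_u \neq 0$ at $\zeta = k(u)$. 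The implicit function theorem then produces a real-analytic parametrization $u \mapsto k(u)$ on a neighborhood of each point of $\Csec$.

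The main obstacle I anticipate is verifying simplicity of the root at $\zeta = k(u)$. Simplicity of the eigenvalue $E(\zeta_0^{\bf s})$ is standard for irreducible non-negative matrices, but one must carefully transfer this through the substitution $(\zeta^{\bf s},\zeta^w)$ to obtain simplicity of the corresponding root of the single-variable polynomial $\poly_u(\zeta)$. Using Jacobi's formula together with the fact that the uniqueness assertion of Theorem~\ref{T:PF}(2) rules out a tangential crossing of $\mathfrak G$ with the ray through $u$, one confirms $\partial_\zeta \poly_u(k(u)) \neq 0$, which is the crux of the real-analyticity argument.
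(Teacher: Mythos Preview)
Your argument for part (1) is correct and essentially identical to the paper's.

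Your argument for part (2) contains a genuine error. You assert that for an irreducible non-negative matrix $A_u({\bf t})$, the Perron eigenvalue $E({\bf t})$ is its \emph{unique} positive real eigenvalue, ``all other eigenvalues being negative or strictly complex.'' This is false: for example $\left(\begin{smallmatrix} 3 & 1 \\ 1 & 3 \end{smallmatrix}\right)$ is positive (hence irreducible) with eigenvalues $4$ and $2$. Perron--Frobenius only guarantees that $E({\bf t})$ is simple and has maximal \emph{modulus} among all eigenvalues. Consequently your claim that the positive real roots of $\poly_u$ form a singleton is unjustified: there may be other $\zeta > 0$ for which $\zeta^w$ is a smaller positive eigenvalue of $A_u(\zeta^{\bf s})$. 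The paper repairs this as follows. From part (1), $e \leq k({\bf s},w)$ for $({\bf s},w) \in \mathfrak G$. If $k_0 := k({\bf s},w) > e$, set $\lambda_0 = \log k_0 > 1$; then $e^{\lambda_0 w}$ is \emph{some} positive eigenvalue of $A_u(e^{\lambda_0 {\bf s}})$, so $e^{\lambda_0 w} \leq E(e^{\lambda_0 {\bf s}})$, i.e.\ the point $\lambda_0 \cdot({\bf s},w)$ lies on or below $\mathfrak G$. But by Theorem~\ref{T:PF}(2) the ray meets $\mathfrak G$ only at $({\bf s},w)$, and for small $\lambda$ one has $\lambda w < \xi(\lambda{\bf s})$ (since $\xi({\bf 0}) = \log E({\bf 1}) > 0$), so for $\lambda_0 > 1$ the point must lie strictly \emph{above} $\mathfrak G$---a contradiction. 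Your argument can be salvaged by replacing the false uniqueness claim with the inequality $\zeta^w \leq E(\zeta^{\bf s})$ whenever $\poly_u(\zeta)=0$.

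For part (3) your implicit-function-theorem route is genuinely different from the paper's and could be made to work, but it is more laborious. The paper instead observes that for $({\bf s},w)\in\Csec$ the unique $q>0$ with $({\bf s}/q,w/q)\in\mathfrak G$ satisfies $k({\bf s},w)=e^{1/q}$ by homogeneity; since $\mathfrak G$ is the graph of the real-analytic function $\xi$ (from McMullen's proof of Theorem~\ref{T:PF}), the scaling $q$ and hence $k$ are real-analytic. This sidesteps the simplicity-of-root verification you flag as delicate---a verification which is indeed nontrivial because the substitution $({\bf t},x)=(\zeta^{\bf s},\zeta^w)$ couples the $x$- and ${\bf t}$-dependence of $\det(xI-A_u({\bf t}))$, so simplicity of $E$ as an eigenvalue does not immediately give $\partial_\zeta\poly_u(k(u))\neq 0$ without further argument (e.g., ruling out tangential contact of the ray with $\mathfrak G$).
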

\begin{proof}
Fix $({\bf s},w) \in \mathfrak G$, so that $w = \log(E(e^{\bf s}))$.  According to Theorem \ref{T:PF} the ray though $({\bf s},w)$ is contained in $\C_X$, and by Theorem \ref{T:cones_are_equal}, it follows that $({\bf s},w) \in \Csec$.  We claim that $k({\bf s},w) = e$.  To prove this claim, first observe that
\[ \poly_{({\bf s},w)} (e) =  \poly(e^{\bf s},e^w) = \poly(e^{\bf s},e^{\log(E(e^{\bf s}))}) = \poly(e^{\bf s},E(e^{\bf s})) = 0.\]
This last equality follows from the fact that $E(e^{\bf s})$ is an eigenvalue of $A_u(e^{\bf s})$, and $\poly(e^{\bf s},x)$ is its characteristic polynomial.
Thus (1) is verified.

By definition, we now have $e \leq k({\bf s},w)$.  Suppose that $k({\bf s},w) = k_0 > e$ for some $({\bf s},w)$.  Set $\lambda_0 = \log(k_0) > 1$.  We then have
\[ 0 = \poly_{({\bf s},w)}(k_0) = \poly(k_0^{\bf s},k_0^w) = \poly(e^{\log( k_0^{\bf s})},e^{\log (k_0^w)}) = \poly(e^{\lambda_0 {\bf s}},e^{\lambda_0 w}) = \poly_{(\lambda_0 {\bf s},\lambda_0 w)}(e).\]
But this says that $x = e^{\lambda_0 w}$ is a zero of $\poly(e^{\lambda_0 {\bf s}},x)$ and hence is an eigenvalue of $A_u(e^{\lambda_0 {\bf s}})$.  By definition $E(e^{\lambda_0 {\bf s}})$ is the largest such eigenvalue and thus it follows that
\[ E(e^{\lambda_0 {\bf s}}) \geq e^{\lambda_0 w}. \]
Therefore
\[ \log(E(e^{\lambda_0 {\bf s}})) \geq \lambda_0 w.\]
We claim that this is a contradiction. To see this, first observe that by Theorem \ref{T:PF} the ray through $({\bf s},w)$ intersects $\mathfrak G$ in exactly one point, namely the point $({\bf s},w)$.  For all $\lambda > 1$ we must have $\lambda ({\bf s},w)$ above the graph $\mathfrak G$---that is, $\lambda w > \log(E(e^{\lambda {\bf s}}))$.    Since $\lambda_0 > 1$, we arrive at the desired contradiction. Thus (2) is verified.

To see that (3) holds, let $({\bf s},w)\in \Csec$ be arbitrary. Then there exists a unique $q>0$ such that $({\bf s}_\ast,w_\ast)=({\bf s}/q,w/q)\in \mathfrak G$. Then  $k({\bf s}_\ast,w_\ast)=e$ and
\[
k({\bf s},w)=k(q{\bf s}_\ast,qw_\ast)=e^{1/q}, \text{ so that } 1 < k({\bf s},w) < \infty.
\]
Since $\mathfrak G$ is the graph of a real-analytic function (see the proof of Theorem \ref{T:PF} in \cite[Appendix A]{Mc}), the above formula also shows that the function $k\colon \Csec \to (1,\infty)$ is real-analytic.  Compare with the proof of \cite[Corollary 5.4]{Mc}
\end{proof}

We now prove Theorem~\ref{T:continuity/convexity again} from the introduction:

\begin{theorem:continuity-convexity}[Convexity of stretch factors]
There exists a real-analytic, homogeneous of degree $-1$ function $\mathfrak H\colon \Csec \to \R$ such that:
\begin{enumerate}
\item $1/\mathfrak H$ is positive and concave, hence $\mathfrak H$ is convex.
\item For every primitive integral $u\in \Csec\subset H^1(X;\R)$ with dual compatible section $\Theta_u$, first return map $f_u$, and injective endomorphism $\phi_u = \overline{(f_u)}_*$ as in Theorem~\ref{T:splittings} we have 
\[\mathfrak H(u)=\log(\Lambda(u)) = \log(\lambda(\phi_u)) =\log(\lambda(f_u)) = h(f_u).\]
\item $\mathfrak H(u)$ tends to infinity as $u\to\partial \Csec$.
\end{enumerate}
\end{theorem:continuity-convexity}
\begin{proof}
In view of Proposition~\ref{P:PFconsequence} and Lemma~\ref{L:contained}, there is a real-analytic function $k\colon\Csec\to (1,\infty)$ such that for 
every $u=({\bf s},w)\in \Csec$ we have
\[ k({\bf s},w) = \sup \{ \zeta \in \R_+ \mid \poly_{({\bf s},w)}(\zeta) = 0 \}.\]

Define $\mathfrak H\colon \Csec\to R$ as $\mathfrak H(u):=\log k(u)$ for $u\in \Csec$. Then for any primitive integral $u=({\bf s},w)\in \Csec$, Theorem~\ref{T:main_theorem} implies that $\mathfrak H(u)>0$ is equal to the topological entropy $h(f_u)$ of $f_u$, and that $e^{\mathfrak H(u)}$ is equal to the spectral radius of the transition matrix of $f_u$ and also to the stretch factor $\Lambda(u)=\lambda(\phi_{u})$, proving (2). Since the function $k(u)$ satisfies $k(qu)=k(u)^{1/q}$ for any $u\in \Csec$ and $q>0$, it follows that $\mathfrak H(u)=\log k(u)$ is homogeneous of degree $-1$, that is, $\mathfrak H(qu)=\mathfrak H(u)/q$ for any $u\in \Csec$ and $q>0$. By Proposition~\ref{P:PFconsequence}(2), we know that $\mathfrak G$ is the the level set of $\mathfrak H(u)=1$. Therefore the concavity of $1/\mathfrak H$ follows from convexity of the hypersurface $\mathfrak G$, as proved by McMullen in his proof of \cite[Corollary 5.4]{Mc}, thus (1) follows.

All that remains is to prove (3).  For this, we suppose $\{ u_n \} \subset \Csec$ is a sequence in $\Csec$ converging to $u \in \partial \Csec$ for which $\mathfrak H(u_n)$ is bounded.  By homogeneity of $\mathfrak H$ there exists such a sequence with $u_n \in \mathfrak G$ for all $n$.  But since $\mathfrak G$ is a closed subset of $H^1(X;\R)$, it follows that $u \in \mathfrak G \cap \partial \Csec$. This is impossible since $\mathfrak G \subset \C_X$ by Theorem \ref{T:PF}, $\C_X = \Csec$ by Theorem \ref{T:cones_are_equal}, and $\Csec \cap \partial \Csec = \emptyset$ since $\Csec$ is an open cone.
\end{proof}

\subsection{The McMullen cone is the cone of sections}
\label{S:cones are cones}

We now return to the proof of Theorem \ref{T:cones_are_equal} and show that $\C_X = \Csec$. 

We fix attention on the original monodromy $f \colon \Gamma \to \Gamma$ defining $X$.  Let $u_0 \in H^1(X;\R)$ be the associated primitive integral class, $H = H_0 \oplus \Z$ with $H_0 = H_{u_0}$ the corresponding splitting, and ${\bf s},w$ and ${\bf t} = e^{\bf s},x = e^w$ the bases adapted to $u_0$ as in Definition \ref{D:adapted to z}.  Let $\bar E = \{\bar \sigma_1,\ldots,\bar \sigma_m\}$ be representative edges in $\tGamma \subset \hGamma$ of the $H$--orbit of edges.  Recall from \S \ref{S:graph module} the construction of the matrix $A({\bf t})= A_{u_0}({\bf t})$ with entries in $\Z[H_0] = \Z[{\bf t}^{\pm 1}]$ in terms of $E$ so that $A({\bf 1}) = A(1,\ldots,1)$ is the transition matrix of $f \colon \Gamma \to \Gamma$ by Proposition \ref{P:evaluatetransition}.

Next we let $\mathcal G$ denote the directed graph associated to $A({\bf 1})$ and $\mathcal Y$ the set of all closed oriented circuits in $\mathcal G$, as in \S\ref{S:cone_of_sections}.   Following the construction in \cite{AHR} (and also used by Hadari \cite{HadariComm}), we now associate to the matrix $A({\bf t})$ a labeling on the edges of $\mathcal G$ by elements of $H_0$ (this labeling differs slightly from \cite{AHR} due to our convention on the action of $H$ on $T(\mathcal F) \cong T(\hGamma)$). First note that $A_{ij}({\bf t}) \in \Z[{\bf t}^{\pm 1}]$ is the coefficient of $[\sigma_i]$ in the expression of $x \cdot [\sigma_j]$ as a $\Z[{\bf t}^{\pm 1}]$--linear combination of the elements of $E$.  
\begin{conv}
For the remainder of this subsection we assume that $A_{ij}({\bf t})$ is written as a positive sum of elements of $H$, that is, it is a sum of monomials in $t_1^{\pm 1},\ldots,t_{b-1}^{\pm 1}$ with coefficient $1$.  We refer to these simply as the {\em terms} or the {\em monomials} of $A_{ij}({\bf t})$.  We also consider the monomial $1$, so $A_{ij}({\bf t})$ may also have several terms which are just $1$.
\end{conv}
The terms in the polynomial $A_{ij}({\bf t})$ are in a 1-1 correspondence with the edges of $\mathcal G$ from the $j^{th}$ vertex to the $i^{th}$ vertex (again appealing to Proposition \ref{P:evaluatetransition}), and we use any such correspondence to label those edges by the terms of $A_{ij}({\bf t})$.  Denote the labeled graph as $\mathcal G({\bf t})$.

Given a circuit $y \in \mathcal Y$, let $|y|$ denote the length of the circuit;  that is, $|y|$ is the number of vertices in $y$.  We also observe that $y$ gives rise to a monomial $p_y({\bf t})$ which is a product of the monomials in the circuit $y \subset \mathcal G({\bf t})$.

Next let $\mathcal Y'$ denote the set of subsets of pairwise disjoint circuits in $\mathcal G$.  That is,
\[ \mathcal Y' = \{ {\bf y} = \{y_1,\ldots,y_k \} \subset \mathcal Y \mid y_i \cap y_j = \emptyset \mbox{ for all } i \neq j  \} \]
Every ${\bf y} = \{y_1,\ldots,y_k \} \in \mathcal Y'$ also has a length, $|{\bf y}| = |y_1| + \cdots + |y_k|$, and an associated monomial $p_{\bf y}({\bf t}) = p_{y_1}({\bf t})\cdots p_{y_k}({\bf t})$.   We also record the number of circuits in ${\bf y}$ as $\#({\bf y}) = k$.  For a set with one element $\{ y \} \in \mathcal Y'$, we denote this $y \in \mathcal Y'$.

Finally, for every $n \geq 1$ we let $\mathcal Y^{(n)}$ denote the subset of $\mathcal Y'$ consisting of ${\bf y}$ with $|{\bf y}| = n$.  Thus $\mathcal Y'$ is the disjoint union of $\mathcal Y^{(n)}$ over all $n$ from $1$ up to $m$, the number of vertices of $\mathcal G$.  

We now have the following very useful formula which is essentially the ``cycle polynomial'' of \cite{AHR}, but differs slightly, again due to our conventions on the actions of $H$ on $T(\hGamma)$. This formula  is also closely related to the ``clique polynomial'' recently studied by McMullen \cite{McClique} in the general setting of directed graphs.

\begin{theorem} \label{T:AHR determinant} \cite{AHR}
With the notation above we have
\begin{equation} \label{Eq:AHR formula}
\poly({\bf t},x) = \det(xI - A({\bf t})) = x^m + \left( \sum_{n = 1}^m \left(\sum_{{\bf y} \in \mathcal Y^{(n)}} (-1)^{\#({\bf y})} p_{\bf y}({\bf t}) \right) x^{m-n} \right)
\end{equation}
\end{theorem}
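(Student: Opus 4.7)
The plan is to prove the identity by a direct combinatorial expansion of the determinant, recognizing that terms are in bijection with families of pairwise disjoint circuits in $\mathcal G$. I expect this to be essentially bookkeeping, with the only real subtlety being to chase the signs through carefully.

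First I would apply the Leibniz formula to $\det(xI - A(\mathbf t))$:
\[
\det(xI - A(\mathbf t)) = \sum_{\sigma \in S_m} \mathrm{sgn}(\sigma)\prod_{i : \sigma(i)=i}\bigl(x - A_{ii}(\mathbf t)\bigr)\prod_{i : \sigma(i)\neq i}\bigl(-A_{i,\sigma(i)}(\mathbf t)\bigr).
\]
For each permutation $\sigma$ I would then distribute the product over the fixed points of $\sigma$: every term in the expansion is specified by choosing a subset $T \subseteq \mathrm{Fix}(\sigma)$ (contributing $x$ from each $i\in T$) and using $-A_{ii}(\mathbf t)$ at each $i\in \mathrm{Fix}(\sigma)\setminus T$. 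Re-indexing by the complementary set $S = \{1,\dots,m\}\setminus T$ together with the restriction $\tau = \sigma|_S$, this repackages the sum as
\[
\det(xI - A(\mathbf t)) = \sum_{S\subseteq\{1,\dots,m\}}\ \sum_{\tau \in S_S}\mathrm{sgn}(\tau)\prod_{i\in S}\bigl(-A_{i,\tau(i)}(\mathbf t)\bigr)\; x^{m-|S|},
\]
where $S_S$ is the symmetric group on $S$ and the convention is that the empty $S$ contributes $x^m$.

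Next I would interpret each inner product geometrically. Decompose $\tau$ into disjoint cycles $c_1,\dots,c_r$ of lengths $k_1,\dots,k_r$. Since $\mathrm{sgn}(\tau)=\prod(-1)^{k_j-1}$ while the sign contribution from the $-A_{i,\tau(i)}$ factors along a $k_j$-cycle is $(-1)^{k_j}$, each cycle contributes an overall sign of $-1$. Thus the term associated to $\tau$ on $S$ is
\[
(-1)^r\prod_{j=1}^r\Bigl(A_{i_1,\tau(i_1)}(\mathbf t)A_{\tau(i_1),\tau^2(i_1)}(\mathbf t)\cdots A_{\tau^{k_j-1}(i_1),i_1}(\mathbf t)\Bigr),
\]
where $i_1$ is any basepoint of $c_j$. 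Recalling that the monomials of $A_{ij}(\mathbf t)$ are in bijection with edges of $\mathcal G(\mathbf t)$ from the $j^{\mathrm{th}}$ to the $i^{\mathrm{th}}$ vertex (and labeled by $H_0$--elements), expanding each $A_{i,\tau(i)}$ as a sum of its monomials shows that this cycle-product is, up to the sign $(-1)^r$, equal to $\sum p_{\mathbf y}(\mathbf t)$ summed over all tuples $\mathbf y = (y_1,\dots,y_r)$ of closed edge-paths in $\mathcal G$ whose underlying vertex sets are precisely $c_1,\dots,c_r$. Because the cycles $c_j$ are pairwise disjoint and a permutation cycle traverses each of its vertices exactly once, each $y_j$ is in fact a (simple) circuit, and moreover the $y_j$'s are pairwise disjoint. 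Hence $\mathbf y \in \mathcal Y'$ with $\#(\mathbf y) = r$ and $|\mathbf y| = |S|$.

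Finally, summing over all choices, each element $\mathbf y \in \mathcal Y'$ arises exactly once: given $\mathbf y=\{y_1,\dots,y_r\}$, the set $S$ is determined as the union of vertex sets, and $\tau$ is the unique permutation whose cycle structure implements the circuits $y_j$ in the correct direction. This yields
\[
\det(xI - A(\mathbf t)) = x^m + \sum_{\mathbf y \in \mathcal Y'} (-1)^{\#(\mathbf y)} p_{\mathbf y}(\mathbf t)\, x^{m-|\mathbf y|},
\]
and regrouping by $n = |\mathbf y|$ gives the stated formula. The main obstacle, as noted, is purely a sign/orientation accounting: verifying that the Leibniz signs combine with the signs of $-A_{i,\sigma(i)}$ to produce exactly $(-1)^{\#(\mathbf y)}$, and that the direction of the circuits in $\mathcal G$ is traversed consistently with the convention defining $A(\mathbf t)$.
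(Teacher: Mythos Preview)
Your proposal is correct and follows essentially the same approach as the paper: expand $\det(xI - A(\mathbf t))$ via the Leibniz formula, separate out the $x$--contributions from diagonal entries, interpret the remaining cycle products as circuits in $\mathcal G(\mathbf t)$, and verify that the Leibniz sign combined with the $(-1)$'s from the $-A_{ij}$ factors yields $(-1)^{\#(\mathbf y)}$. Your re-indexing by the pair $(S,\tau)$ with $\tau\in S_S$ is a slightly cleaner packaging of what the paper does by writing the permutation in disjoint cycle notation and singling out the $1$--cycles that contribute $x$, but the content is identical.
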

For completeness, and since our definition differs slightly from \cite{AHR}, we include the proof of this theorem. 
\begin{proof}
We use the definition of the determinant as a product
\begin{equation} \label{Eq:defn of det}
\prod_{\rho \in S_m} \left( {\rm sgn}(\rho) \prod_{i=1}^m (xI - A({\bf t}))_{\rho(i) i} \right)
\end{equation}
over all permutations $\rho$ of the set $\{1,\ldots,m\}$, where ${\rm sgn}(\rho) \in \{ \pm 1 \}$ is the sign of the permutation.  Expanding this expression out (without cancellation), we analyze each nonzero monomial $q_0({\bf t},x)$ of this expression and produce exactly one monomial in the expansion of \eqref{Eq:AHR formula} (also without cancellation).

The monomial $q_0({\bf t},x)$ is obtained from some permutation $\rho$ by taking the product for $i = 1,\ldots,m$ of some choice of a single term of the polynomial $(xI-A({\bf t}))_{\rho(i) i}$, for each $i$.  To better understand $q_0({\bf t},x)$, first write $\rho$ in its disjoint cycle notation, and to simplify the notation, we assume that this representation of $\rho$ takes the following form:
\[ (m \, m-1 \, \ldots \, k_k+1 ) (k_k \, \ldots \, k_{k-1}+1) \cdots (k_2 \, \ldots \, k_1+1)(k_1)(k_1-1) \cdots (2)(1).\]
Here we are composing {\em right to left instead of left to right} as is classical because this better matches our other conventions.  In particular $(3 \, 2 \, 1)$ sends $1$ to $2$, $2$ to $3$ and $3$ to $1$.

Next note that the only cycles in this decomposition that can contribute a power of $x$ to $q_0({\bf t},x)$ are the $1$--cycles since these correspond precisely to entries on the diagonal.  We further assume that by reindexing if necessary (and without loss of generality) these $1$--cycles are the initial $k_0$ appearing, for some $0 \leq k_0 \leq k_1$.  Thus $q_0({\bf t},x) = x^{k_0}q_1({\bf t})$.  To see what $q_1({\bf t})$ is, we write the product of monomials defining it as a product, over each of the remaining cycles in the disjoint cycle representation of $\rho$, of some monomials of the associated entries of the matrix.  So, for the remaining $1$--cycles $(k_1),(k_1-1),\ldots,(k_0+1)$ we obtain monomials
\[ m_{k_1}({\bf t}), \ldots , m_{k_0+1}({\bf t}) \]
where for each $i = k_0 + 1,\ldots,k_1$, the monomial $m_i({\bf t})$ is a term of $-A_{ii}({\bf t})$.

Every other cycle $(k_{i+1} \, \ldots \, k_i + 1)$ gives us a monomial which is a product of monomials
\[ {\bf m}_{k_i+1}({\bf t}) = m_{k_i+1 , k_{i+1}}({\bf t}) m_{k_{i+1},k_{i+1}-1}({\bf t}) \cdots m_{k_i+3 , k_i+2}({\bf t}) m_{k_i+2 , k_i+1}({\bf t}),\]
and each $m_{i,j}({\bf t})$ is a term of $-A_{ij}({\bf t})$.
Then
\[ q_1({\bf t}) = {\rm sgn}(\rho){\bf m}_{k_k+1}({\bf t}) \cdots {\bf m}_{k_1+1}({\bf t}) m_{k_1}({\bf t}) \cdots m_{k_0+1}({\bf t}).\]

Now we observe that, because $q_1({\bf t}) \neq 0$, each of our cycles (except the first $k_0$) determines a circuit in $\mathcal G({\bf t})$: specifically, the $1$--cycle $(j)$ determines a circuit $y$ of length $1$ around the loop edge of $\mathcal G({\bf t})$ labeled by $-m_j({\bf t})$.  That is to say, the monomial associated to this $y$ is precisely $p_y({\bf t}) = -m_j({\bf t})$.

On the other hand, the cycle $(k_{i+1} \, \ldots \, k_i + 1)$ determines a circuit $y$ of length $|y| = k_{i+1} - k_i$ (equal to the length of the cycle) labeled by the monomials (which as we traverse the circuit are read right-to-left in the following list)
\[ \left( -m_{k_i+1 , k_{i+1}}({\bf t}) \right) , \left( -m_{k_{i+1} , k_{i+1}-1}({\bf t}) \right) , \ldots , \left( -m_{k_i+3 , k_i+2}({\bf t}) \right) , \left( -m_{k_i+2 , k_i+1}({\bf t}) \right). \]
The monomial associated to this circuit is precisely
\[ p_y({\bf t}) = (-1)^{|y|} {\bf m}_{k_i+1}({\bf t}). \]
Finally note that because these are disjoint cycles, the vertices of the circuits are all distinct, hence the circuits are pairwise disjoint, and therefore their union determines an element ${\bf y} \in \mathcal Y'$.  Writing ${\bf y} = \{ y_1 , \ldots, y_r \}$ where $r = \#({\bf y})$ is the number of circuits in ${\bf y}$, the previous calculations prove that
\[ q_1({\bf t}) = {\rm sgn}(\rho) (-1)^{|{\bf y}|}p_{\bf y}({\bf t}) .\]
Since the sign of a permutation is the product of the signs of its disjoint cycles, and the sign of a $j$--cycle is $(-1)^{j+1}$, we see that
\[ {\rm sgn}(\rho) = (-1)^{|y_1| + |y_2| + \cdots + |y_r| + r} = (-1)^{|{\bf y}| + \#({\bf y})}.\]
Combining this with the previous equation, and the fact that $q_0({\bf t},x) = q_1({\bf t})x^{k_0}$ and $k_0 = m - |{\bf y}|$ we find that our original monomial has the form
\[ q_0({\bf t},x) = (-1)^{\#({\bf y})} p_{\bf y}({\bf t}) x^{m-|{\bf y}|}.\]
This is a term in the sum \eqref{Eq:AHR formula}.  Reversing this discussion, we can find a corresponding term in the expansion of the determinant, proving \eqref{Eq:AHR formula}.
\end{proof}

We need one more ingredient before proving Theorem~\ref{T:cones_are_equal}. Recall from \S\ref{S:cone_of_sections} that  every circuit $y \in \mathcal Y$ corresponds to a closed orbit $\mathcal O_y \subset X$ of $\flow$. This closed orbit defines an element of $H$ which we denote $[\mathcal O_y]$.
\begin{lemma} \label{L:orbit form}
For every $y \in \mathcal Y$ we have $[\mathcal O_y] = (p_y({\bf t}))^{-1}x^{|y|}$.
\end{lemma}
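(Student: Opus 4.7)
The plan is to compute $[\mathcal O_y]$ by lifting the closed orbit to a path in $\tX$ and reading off the deck transformation that realizes its based class. Write $y = t_1\cdots t_k$ with each $t_i$ directed from $e_{i-1}$ to $e_i$ and $e_k = e_0$, so that $|y|=k$ and $p_y(\mathbf{t}) = h_1\cdots h_k$ where $h_i \in H_0$ is the monomial labeling $t_i$. Let $p \in e_0$ be the fixed point of $f^k|_\alpha$ defining $\mathcal O_y$, and fix the lift $\tilde p \in \sigma_{e_0} \subset \tGamma$; then $[\mathcal O_y]$ is the unique $g \in H$ whose deck transformation satisfies $\tpsi_k(\tilde p) = g(\tilde p)$.

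The key step is to decode how the labeling of $\mathcal G(\mathbf{t})$ records the flow in $\tX$. By the definition of $A_{ij}(\mathbf{t})$ via the basic relation $x\cdot[\sigma_{e_{i-1}}] = [x^{-1}\tpsi_1(\sigma_{e_{i-1}})]$, the term $h_i$ of $A_{e_i,e_{i-1}}(\mathbf{t})$ records an occurrence of the edge $h_i^{-1}(\sigma_{e_i})$ in the edge path $x^{-1}\tpsi_1(\sigma_{e_{i-1}}) \subset \tGamma$. Translating back by the deck transformation $x$, this is the same as the occurrence of the edge $(xh_i^{-1})(\sigma_{e_i})$ in $\tpsi_1(\sigma_{e_{i-1}}) \subset x(\tGamma)$. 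Consequently, if $\tilde q_{i-1} \in \sigma_{e_{i-1}}$ is the point selected at stage $i-1$ (with $\tilde q_0 = \tilde p$), then $\tpsi_1(\tilde q_{i-1}) = (xh_i^{-1})(\tilde q_i)$ for a unique $\tilde q_i \in \sigma_{e_i}$, namely the lift chosen by the circuit $y$.

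Since deck transformations commute with $\tpsi$, an easy induction then gives
\[ \tpsi_k(\tilde p) = (xh_1^{-1})\cdots(xh_k^{-1})(\tilde q_k) = \bigl(x^{|y|}\, p_y(\mathbf{t})^{-1}\bigr)(\tilde q_k), \]
where $\tilde q_k \in \sigma_{e_k} = \sigma_{e_0}$. Projecting this equation down to $X$ and using that $p$ is fixed by $f^k|_\alpha$ forces $\tilde q_k = \tilde p$, and therefore $[\mathcal O_y]$ is the deck transformation $x^{|y|}p_y(\mathbf{t})^{-1} = p_y(\mathbf{t})^{-1}x^{|y|}$, as required. The only real obstacle is the bookkeeping: since the module action of $h \in H$ corresponds to the deck transformation $h^{-1}$, and since $\tpsi_1$ shifts $\tGamma$ to $x(\tGamma)$ rather than $x^{-1}(\tGamma)$, one must track these conventions carefully at each step so that the accumulated factor $x^{|y|}$ appears with the correct sign in the final answer.
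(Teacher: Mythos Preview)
Your proof is correct and follows essentially the same approach as the paper's own argument: both lift the closed orbit to $\tX$, track the flow edge-by-edge using the labeling of $\mathcal G(\mathbf t)$ to see that $\tpsi_1$ carries the relevant edge $\sigma_{e_{i-1}}$ over $(xh_i^{-1})(\sigma_{e_i})$, and then iterate (using that deck transformations commute with $\tpsi$) to read off the covering element $x^{|y|}p_y(\mathbf t)^{-1}$. Your version is slightly more explicit in tracking individual points $\tilde q_i$ and in justifying $\tilde q_k = \tilde p$ by projecting, but the underlying computation is identical.
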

Here $p_y({\bf t})$ is a monomial with coefficient $1$ so that it and its inverse, can be viewed of as elements of $H$.
\begin{proof}
To simplify the proof of the lemma, we suppose that the closed circuit $y$ runs around the vertices $1,2,\ldots,k-1,k,1$ in order, where $k = |y|$.   Recall that $\bar E = \{ \bar \sigma_1,\ldots,\bar \sigma_k \}$ are the edges of $\Gamma$ with $\sigma_i$ an edge in the preimage of $\bar \sigma_i$.  The closed orbit $\mathcal O_y$ starts at a point $ \xi \in \bar \sigma_1$, then runs through $\bar \sigma_2,\bar \sigma_3,\ldots,\bar \sigma_k$ before returning to $\bar \sigma_1$ at the point $\flow_k(\xi) = \xi$.  To see what $[\mathcal O_y]$ is as an element of $H$, we lift the path $s \mapsto \flow_s(\xi)$ for $s \in [0,k]$ to a path in $\tX$, and find the covering transformation taking the initial point to the terminal point.  The lift is obtained by simply picking a point $\tilde \xi \in \tX$ in the preimage of $\xi$ and considering the lifted flow: $s \mapsto \widetilde \flow_s(\tilde \xi)$, for $s \in [0,k]$.   Since the covering group is abelian, it does not matter which point $\tilde \xi$ we pick, so we assume $\tilde \xi \in \sigma_1$.

Now consider the monomials on the edges of the circuit $y$, and denote these $p_1({\bf t}),\ldots,p_k({\bf t})$. By definition, $p_i({\bf t}) \in H_0$ has the property that the edge path $x^{-1}(\widetilde \flow_1(\sigma_i))$ contains the edge $p_i({\bf t}) \cdot \sigma_{i+1} = (p_i({\bf t}))^{-1}(\sigma_{i+1})$ for all $i$, with $i+1$ taken modulo $k$ (recall the convention for the action of $H$ on the edges of $\hGamma$ in \S\ref{S:graph module}).  Therefore, we see that $\widetilde \flow_1(\sigma_i)$ contains the edge $(x^{-1} p_i({\bf t})) \cdot \sigma_{i+1}$ for all $i$.  
From this, we have
\[  \begin{array}{lllllll} \widetilde \flow_k(\sigma_1) & \supset  \quad x^{-1}p_1({\bf t}) \cdot \widetilde \flow_{k-1}(\sigma_2) \quad \supset \quad  x^{-2}p_1({\bf t})p_2({\bf t}) \cdot \widetilde \flow_{k-2}(\sigma_3) \quad \supset \quad \cdots \\\\
 & \cdots \quad  \supset  \quad x^{-k+1}p_1({\bf t})p_2({\bf t}) \cdots p_{k-1}({\bf t}) \cdot \widetilde \flow_1(\sigma_k) \quad  \supset  \quad x^{-k}p_1({\bf t})p_2({\bf t}) \cdots p_k({\bf t}) \cdot \sigma_1.
\end{array} \]
It follows that $\widetilde \flow_k(\tilde \xi)$ lies in this edge, and hence the covering transformation $h$ taking $\tilde \xi$ to $\widetilde \flow_k(\tilde \xi)$ is precisely
\[ h = x^k (p_y({\bf t}))^{-1} = x^{|y|}(p_y({\bf t}))^{-1} \]
as required.
\end{proof}

\begin{proof}[Proof of Theorem \ref{T:cones_are_equal}]
We have already seen in Lemma \ref{L:contained} that $\Csec \subseteq \C_X$.  We let $u \in \partial \Csec$ and prove that $u \in \partial \C_X$.  By convexity of $\C_X$ this will prove the theorem.  We note that the containment of cones implies $u \in \overline{ \C_X}$, so it suffices to prove that $u \not \in \C_X$.  Let $y \in \mathcal Y$ be the shortest length circuit (that is, with minimal $|y|$) for which $u([\mathcal O_y])= 0$.   Since $\Csec = \D$ by Theorem \ref{T:cone_of_sections}, we are guaranteed that some such $\mathcal O_y$ exists.

\begin{claim} \label{Cl:orbit term}
In $\poly(x,{\bf t}) = \det(xI - A({\bf t}))$, the term $p_y({\bf t})x^{m-|y|}$ has a nonzero coefficient $c_0$ (after expanding and cancellation).
\end{claim}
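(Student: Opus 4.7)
The plan is to apply the AHR-type formula of Theorem~\ref{T:AHR determinant} to read off $c_0$ and then exploit the minimality of $|y|$ together with $u \in \partial \Csec$ to show that only singleton subfamilies contribute. By Theorem~\ref{T:AHR determinant}, the coefficient of $x^{m-|y|}$ in $\poly({\bf t},x)$ is the element $\sum_{{\bf y}' \in \mathcal Y^{(|y|)}} (-1)^{\#({\bf y}')}\, p_{{\bf y}'}({\bf t})$ of $\Z[H_0]$, so the sought coefficient $c_0$ of the monomial $p_y({\bf t})$ is
$$c_0 = \sum_{{\bf y}'} (-1)^{\#({\bf y}')},$$
summed over those ${\bf y}' \in \mathcal Y^{(|y|)}$ for which $p_{{\bf y}'}({\bf t}) = p_y({\bf t})$ as monomials in $H_0$. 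The core task is thus to classify all such ${\bf y}'$.

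First I would take an arbitrary such family ${\bf y}' = \{y_1,\dotsc,y_k\}$ and translate its defining conditions into the homology of $X$. Since $\sum_i |y_i| = |y|$ and $\prod_i p_{y_i}({\bf t}) = p_y({\bf t})$, Lemma~\ref{L:orbit form} yields
$$\sum_{i=1}^k [\mathcal O_{y_i}] = (p_y({\bf t}))^{-1} x^{|y|} = [\mathcal O_y] \in H.$$
Applying $u$ and using $u([\mathcal O_y])=0$ gives $\sum_i u([\mathcal O_{y_i}]) = 0$. Because $u \in \partial \Csec$ and $\Csec = \D$ by Theorem~\ref{T:cone_of_sections}, the class $u$ lies in $\overline{\D}$, so $u([\mathcal O_{y_i}]) \geq 0$ for every $i$; hence each $u([\mathcal O_{y_i}])$ must vanish. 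The minimality of $|y|$ then forces $|y_i| \geq |y|$ for all $i$, and combined with $\sum_i |y_i| = |y|$ and $k \geq 1$ the only possibility is $k = 1$ with $|y_1| = |y|$.

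It follows that every ${\bf y}'$ contributing to $c_0$ is a singleton, each contributing $(-1)^{1} = -1$, so $c_0 = -N$, where $N \geq 1$ is the number of circuits in $\mathcal Y$ of length $|y|$ with associated monomial equal to $p_y({\bf t})$. In particular $c_0 \neq 0$, which is the claim. The only potentially delicate point I anticipate is bookkeeping the passage from the multiplicative equality $p_{{\bf y}'}({\bf t}) = p_y({\bf t})$ in $\Z[H_0]$ to the corresponding additive identity in $H$ on which Lemma~\ref{L:orbit form} is applied, but this is a routine exponent computation built into the statement of that lemma.
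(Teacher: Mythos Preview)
Your proof is correct and follows essentially the same approach as the paper's. Both arguments use the AHR formula (Theorem~\ref{T:AHR determinant}) to express the coefficient, invoke Lemma~\ref{L:orbit form} to convert the monomial equality into a homology identity, apply $u$ together with $u\in\partial\D$ to force each $u([\mathcal O_{y_i}])=0$, and then use the minimality of $|y|$ to rule out families with $k>1$. The only cosmetic difference is that the paper phrases the last step as a contradiction (a canceling term would require $\#({\bf y}')>1$, impossible), whereas you directly classify all contributing families as singletons and conclude $c_0=-N$ with $N\geq 1$; your formulation is slightly more informative but logically equivalent.
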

\begin{proof}
We note that there is a term of this form after expanding \eqref{Eq:AHR formula} and {\em before canceling}, so we just need to make sure it survives the cancellation.  
Note that after expanding, the coefficient of $p_y({\bf t})x^{m-|y|}$ is just $(-1)^{\#(y)} = -1$.  Thus, for there to be cancellation, there must be some ${\bf y} = \{y_1,\ldots,y_k\} \in \mathcal Y'$ so that $p_{\bf y}({\bf t})x^{m-|{\bf y}|} = p_y({\bf t})x^{m-|y|}$, but for which the coefficient after expanding is $+1$ (all coefficients are equal to $\pm 1$, and if they were all $-1$, there could be no cancellation).  But since the coefficient is $(-1)^{\#({\bf y})} = 1$, we must have $\#({\bf y}) = k > 1$.  

Since $p_{\bf y}({\bf t}) =p_{y_1}({\bf t}) \cdots p_{y_k}({\bf t})$, applying Lemma \ref{L:orbit form}, we have
\[ [\mathcal O_{y_1}]\cdots [\mathcal O_{y_k}] = (p_{y_1}({\bf t}))^{-1}x^{|y_1|} \cdots (p_{y_k}({\bf t}))^{-1}x^{|y_k|} = (p_{\bf y}({\bf t}))^{-1}x^{|{\bf y}|} = (p_y({\bf t}))^{-1}x^{|y|}.\]
On the other hand, $u \in \partial \Csec = \partial \D$ and so $u([\mathcal O_{y_i}]) \geq 0$ for all $i = 1,\ldots,k$.  The previous equation, together with the fact that $u$ is a homomorphism implies
\[ 0 = u((p_y({\bf t}))^{-1}x^{|y|}) = u((p_{\bf y}({\bf t}))^{-1}x^{|{\bf y}|}) = u([\mathcal O_{y_1}]) + \cdots + u([\mathcal O_{y_k}]) \]
Since the terms on the right are all greater than or equal to $0$, they must all be equal to zero.  This contradicts the minimality assumption on the length of $y$ (since each $y_i$ has length less than that of $y$), and this completes the proof of the claim.
\end{proof}

Writing $p_y({\bf t}) = t_1^{j_1} \cdots t_{b-1}^{j_{b-1}}x^{|y|}$, Lemma \ref{L:orbit form} implies that in the additive basis for $H$ we have
\[ [\mathcal O_y] = -j_1 s_1 - j_2 s_2 - \cdots  - j_{b-1} s_{b-1} + |y| w \]
Thus the condition $u([\mathcal O_y]) = 0$ implies
\begin{equation}
\label{Eq:key cone relation}
|y|w(u) = j_1 s_1(u) + \cdots + j_{b-1} s_{b-1}(u).
\end{equation}

We set ${\bf j} = (j_1,\ldots,j_{b-1},m-|y|)$ so that $c_0 ({\bf t}x)^{\bf j}$ is a nonzero term of $\poly({\bf t},x)$ according to Claim \ref{Cl:orbit term}.
The ``leading term'' $x^m$ is the one that defines the McMullen cone
\[ \C_X = \C(\poly,x^m)\]
as described in \S\ref{S:mcmullen cone}.
Writing ${\bf j}' = (0,0,\ldots,0,m)$, we have $x^m = ({\bf t}x)^{{\bf j}'}$.  However, for the class $u$, Equation \eqref{Eq:key cone relation} implies that if we write $u$ in the coordinates, $({\bf s}(u),w(u))$, we have
\begin{eqnarray*}
{\bf j} \cdot ({\bf s}(u),w(u)) & = & (j_1,\ldots,j_{b-1},m-|y|)\cdot ({\bf s}(u),w(u))\\
& = & j_1s_1(u) + \cdots + j_{b-1}s_{b-1}(u) + (m-|y|)w(u)\\
& = & |y|w(u) + mw(u) -|y|w(u) = m w(u) \\
& = &{\bf j}' \cdot ({\bf s}(u),w(u)).
\end{eqnarray*}
But then $u$ cannot be in $\C(\poly,x^m)$ since this requires ${\bf j}' \cdot ({\bf s}(u),w(u)) > {\bf j}({\bf s}(u),w(u))$.  This completes the proof of  Theorem~\ref{T:cones_are_equal}.
\end{proof}

\section{Closed $1$--forms and foliations}
\label{S:closed 1-forms and foliations}

Given $u \in \Csec$, let $\omega = \omega^u$ be a tame, flow-regular closed $1$--form representing $u$.  In this section, we describe the foliation $\Omega_u$ associated to $\omega^u$.  This is analogous to the foliation tangent to the kernel of a closed $1$--form in the classical smooth manifold setting.  This discussion quickly leads to a proof of Theorem~\ref{T:BNS}.  After this, we specialize to the case $u \in \A$, where $\omega^u$ can be chosen so that it and $\Omega_u$ satisfy stronger geometric and topological conditions analogous to those satisfied in the $3$--manifold setting.

For each $\xi \in X$, setting $\gamma_\xi(s) = \flow_s(\xi)$ we let
\[ r_\xi(s) = \int_{\gamma_\xi([0,s])} \omega^u.\]
This can be used to define a reparameterization of $\flow$ by $\flow^u_s(\xi) = \flow_{r_\xi^{-1}(s)}(\xi)$ so that arcs of flowlines map isometrically to $\R$ by functions in $\omega^u$.  We lift this reparameterization $\widetilde \flow^u_s$ to $\tX$.

Let $(\widetilde \fib_u \colon \tX \to \R) \in p^*(\omega^u)$ be a function defined on all of $\tX$.   Then as in \S\ref{S:closed 1-forms cohomology} we have
\[ \widetilde \fib_u \circ h (\xi) = \widetilde \fib_u(\xi) + u(h)\]
for all $\xi \in \tX$ and $h \in H$.
Let $\tOmega_{y,u}$ denote the fiber $\tOmega_{y,u} = \widetilde \fib_u^{-1}(y)$, for $y \in \R$.  This $\tOmega_{y,u}$ is naturally a graph with vertex set $\tOmega_{y,u}^{(0)}$ equal to the intersection with the $1$--skeleton $X^{(1)}$.  As such, the valence of $\tOmega_{y,u}$ is uniformly bounded.  Each of these fibers $\tOmega_{y,u}$ covers a graph $\Omega_{y,u}$ that includes into $X$.

We define the {\em foliation $\Omega_u$ defined by $\omega^u$} to be the decomposition of $X$ into the sets $\Omega_{y,u}$, which we call the  {\em leaves}.  We note that two leaves $\Omega_{y,u} = \Omega_{y',u}$ if and only if $y$ and $y'$ differ by an element of $u(H)$.  As such, the leaves of $\Omega_u$ are in a one-to-one correspondence with elements of $\R / u(H)$.  Thus, when we write $\Omega_{y,u}$, it should be understood that $y$ is representing the coset $y + u(H)$ in $\R/u(H)$.  

\begin{lemma}
The reparameterized semiflow $\flow^u$ maps leaves to leaves, $\flow^u_s(\Omega_{y,u}) = \Omega_{y+s,u}$.
\end{lemma}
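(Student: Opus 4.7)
The plan is to establish the lemma by proving the two containments $\flow^u_s(\Omega_{y,u}) \subseteq \Omega_{y+s,u}$ and $\Omega_{y+s,u} \subseteq \flow^u_s(\Omega_{y,u})$ separately, working upstairs in $\tX$ where the leaves are genuine level sets of the globally-defined function $\widetilde \fib_u$ and then pushing down via $p$.

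For the forward inclusion I would lift everything to $\tX$. Given $\tilde \xi \in \tOmega_{y,u}$, let $\tilde \gamma_{\tilde \xi}(t) = \tflow_t(\tilde \xi)$ be the flowline through $\tilde \xi$ in $\tX$ and let $\tilde r_{\tilde \xi}(t) = \int_{\tilde \gamma_{\tilde \xi}([0,t])} p^*(\omega^u)$ be the lift of the reparameterization function, so that $\tflow^u_s(\tilde \xi) = \tflow_{\tilde r_{\tilde \xi}^{-1}(s)}(\tilde \xi)$. The key computational point is that since $\widetilde \fib_u \in p^*(\omega^u)$, the function $t \mapsto \widetilde \fib_u(\tilde \gamma_{\tilde \xi}(t))$ is precisely an antiderivative of the integrand, so
\[ \widetilde \fib_u(\tflow_t(\tilde \xi)) - \widetilde \fib_u(\tilde \xi) \;=\; \int_{\tilde \gamma_{\tilde \xi}([0,t])} p^*(\omega^u) \;=\; \tilde r_{\tilde \xi}(t). \]
Setting $t = \tilde r_{\tilde \xi}^{-1}(s)$ gives $\widetilde \fib_u(\tflow^u_s(\tilde \xi)) = y + s$, which is exactly the statement that $\tflow^u_s(\tilde \xi) \in \tOmega_{y+s,u}$. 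Applying $p$ then yields $\flow^u_s(\Omega_{y,u}) \subseteq \Omega_{y+s,u}$.

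For the reverse inclusion (the surjectivity half), I would use the existence of biinfinite flowlines through every point of $X$, established already in the proof of Proposition-Definition \ref{PD:flow regular to sone}. Given $\eta \in \Omega_{y+s,u}$, lift to $\tilde \eta \in \tOmega_{y+s,u}$ and lift a biinfinite flowline through $\eta$ to one through $\tilde \eta$. Flow-regularity of $\omega^u$ forces $\widetilde \fib_u$ to be strictly increasing along this flowline, and by the same antiderivative identity, following the backward flowline continuously decreases $\widetilde \fib_u$ without bound. Hence there is a unique parameter value at which $\widetilde \fib_u$ equals $y$; call the corresponding point $\tilde \xi$. Then $\tilde \xi \in \tOmega_{y,u}$, and by construction of the reparameterization on the forward flowline from $\tilde \xi$ we have $\tflow^u_s(\tilde \xi) = \tilde \eta$. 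Projecting to $X$ gives $\eta \in \flow^u_s(\Omega_{y,u})$.

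The only mildly delicate point is making sure backward flow through a point $\tilde \eta$ that might lie on non-manifold locus (branching) of $\tX$ is legitimate — but this is exactly why we invoke the biinfinite flowline result from Proposition-Definition \ref{PD:flow regular to sone} rather than trying to invert the semiflow pointwise. Aside from this, both containments reduce to the tautology that $\widetilde \fib_u$ is a primitive of $p^*(\omega^u)$ along flowlines, which is essentially the defining property of the reparameterization. I expect the whole argument to occupy only a few lines once set up.
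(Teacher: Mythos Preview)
Your proposal is correct and follows essentially the same approach as the paper: both work in $\tX$ and use that, by construction of the reparameterization, $\widetilde\fib_u(\tflow^u_s(\tilde\xi)) = \widetilde\fib_u(\tilde\xi) + s$, which immediately gives $\tflow^u_s(\tOmega_{y,u}) \subseteq \tOmega_{y+s,u}$. The paper's proof in fact stops there and does not spell out the reverse inclusion; you go further and establish surjectivity explicitly via biinfinite flowlines, which is a nice addition (the paper presumably leaves it implicit, since the leaves partition $X$ and $\flow^u_s$ is onto). Your justification that $\widetilde\fib_u$ decreases without bound along backward flowlines is correct given the standing tameness assumption at the start of \S\ref{S:closed 1-forms and foliations}, which yields a uniform positive lower bound on $D_\flow\omega^u$ by compactness.
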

\begin{proof}
We verify that the corresponding statement holds in $\tX$, and then the lemma follows.  By construction of the reparameterization, we know that $s \mapsto \widetilde \fib_u(\widetilde \flow^u_s(\xi))$ is an isometry.  From this fact we deduce that if $\xi \in \tOmega_{y,u}$ then
\[ \widetilde \fib_u (\widetilde \flow^u_s(\xi)) = \widetilde \fib_u(\xi) + s = y + s.\]
So, $\widetilde \flow^u_s(\xi) \in \tOmega_{y+s,u}$, as required.
\end{proof}

We note that in the special case that $u$ is primitive integral, then the leaves are compact (in fact they are sections).  Indeed, in this case $\omega^u$ defines a map $\fib_u \colon X \to \R/u(H) = \R/\Z = \sone$, the descent of $\widetilde \fib_u \colon \tX \to \R$, and $\Omega_{y,u} = \Theta_{y,u}$.

Recall that $\BNS(G) \subset (H^1(G;\R) \setminus \{ 0 \})/\R_+$ denotes the BNS-invariant of $G$, which consists precisely of those $u \in H^1(G;\R)$ for which $H$ is finitely generated over a finitely generated submonoid of $u^{-1}([0,\infty)) < G$ (see the remarks following the Meta-Theorem).  There is an alternate, more geometric description of $\BNS(G)$ due to Bieri-Neumann-Strebel \cite[Theorem G]{BNS} which allows us to deduce the following.

\begin{proposition} 
\label{P:S is in BNS}
The cone $\Csec \subset H^1(X;\R) = H^1(G;\R)$ projects into $\BNS(G)$.
\end{proposition}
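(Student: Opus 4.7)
The plan is to verify the Bieri--Neumann--Strebel geometric criterion \cite[Theorem G]{BNS} for each $u\in\Csec$ by using the lifted, reparameterized semi-flow on the universal cover of $X$.

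Fix $u\in\Csec$ and choose a tame flow-regular closed $1$--form $\omega^u$ representing $u$, as provided by the discussion following Definition~\ref{D:cone_sections}. Passing to the universal cover $\widehat X\to X$ and integrating $\omega^u$ from a basepoint yields a $u$--equivariant primitive $\widehat\eta_u\colon \widehat X\to\R$ (so $\widehat\eta_u(g\cdot\xi)=\widehat\eta_u(\xi)+u(g)$ for $g\in G$, $\xi\in \widehat X$). The reparameterized semi-flow $\flow^u$ from \S\ref{S:closed 1-forms and foliations} lifts to a $G$--equivariant semi-flow $\widehat\flow^u$ on $\widehat X$ satisfying $\widehat\eta_u(\widehat\flow^u_s(\xi))=\widehat\eta_u(\xi)+s$ for all $\xi\in\widehat X$ and $s\geq 0$.

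I will apply \cite[Theorem G]{BNS} in the form asserting that $[u]\in\BNS(G)$ provided the superlevel set $\widehat X_u^+:=\widehat\eta_u^{-1}([0,\infty))$ is path-connected. To verify this, given $\xi_1,\xi_2\in \widehat X_u^+$, I will construct a connecting path in $\widehat X_u^+$ as follows. By compactness of $X$, a compact fundamental domain $K\subset\widehat X$ has $\widehat\eta_u(K)$ contained in an interval of length at most some $M>0$. Flowing both points forward by a sufficiently large time $T$ (say $T>M$) carries $\widehat\flow^u_T(\xi_i)$ into $\widehat\eta_u^{-1}([T,\infty))$. Using a path in the compact connected space $X$ between the projections $p(\widehat\flow^u_T(\xi_1))$ and $p(\widehat\flow^u_T(\xi_2))$ and lifting, together with $G$--equivariance of $\widehat\flow^u$, I will produce a path at height $\geq T-M>0$ between the flowed points, staying in $\widehat X_u^+$. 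Finally, concatenating the forward flowline from $\xi_1$, this high-level connector, and the reverse flowline back to $\xi_2$ (which exists since each $\xi_i$ is a preimage of $\widehat\flow^u_T(\xi_i)$ under the semi-flow) yields a path in $\widehat X_u^+$, so the criterion applies.

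The main obstacle I expect is the rigorous construction of the high-level connector path and careful bookkeeping of the values of $\widehat\eta_u$ along it; since $\flow$ is only a semi-flow, one must track preimages under $\widehat\flow^u_T$ rather than inverting. The required control comes precisely from the bound $M$ on the oscillation of $\widehat\eta_u$ across a fundamental domain, together with the observation that the relevant deck translations relating lifts of nearby fundamental domains have $u$--value bounded below by $-M$, so the connecting path cannot fall far below height $T$. Once this quantitative estimate is established and the version of \cite[Theorem G]{BNS} is matched to our setting, the proposition follows.
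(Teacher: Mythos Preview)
Your overall strategy---invoke the geometric criterion of \cite[Theorem~G]{BNS} and verify it by pushing paths forward under the lifted semiflow---is exactly the paper's approach. However, the specific construction of your ``high-level connector path'' does not work as written, and the repair you propose in the final paragraph does not address the real issue.

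The problem is the step where you lift a path in $X$ joining $p(\widehat\flow^u_T(\xi_1))$ and $p(\widehat\flow^u_T(\xi_2))$. Such a lift starting at $\widehat\flow^u_T(\xi_1)$ terminates at $g\cdot\widehat\flow^u_T(\xi_2)$ for some $g\in G$, and there is no reason for $g$ to be trivial. Your remark that ``deck translations relating lifts of nearby fundamental domains have $u$--value bounded below by $-M$'' does not help: the path in $X$ may cross many fundamental domains, and the discrepancies accumulate without bound. No choice of $T$ cures this.

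The fix is immediate and makes the argument essentially identical to the paper's. Since $\widehat X$ is path-connected, simply choose \emph{any} path $\gamma$ in $\widehat X$ from $\xi_1$ to $\xi_2$; let $-s\le 0$ be the minimum of $\widehat\eta_u$ along $\gamma$. Then $\widehat\flow^u_s\circ\gamma$ is a path in $\widehat X_u^+$ from $\widehat\flow^u_s(\xi_1)$ to $\widehat\flow^u_s(\xi_2)$, and concatenating with the two forward flow-segments (which lie in $\widehat X_u^+$ since $\widehat\eta_u(\xi_i)\ge 0$) gives the desired path. No lifting from $X$, no fundamental-domain bookkeeping. The paper runs the same idea in the torsion-free abelian cover $\tX$ using the $\pi_1$--surjectivity formulation of Theorem~G: take any loop $\gamma$ in $\tX$, flow it forward until it lies in the superlevel set, and observe that $\flow^u_s(\gamma)$ is homotopic to $\gamma$.
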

\begin{proof}
The construction of \cite{BNS} associates to any $u \in H^1(G;\R) \setminus \{ 0 \}$ a $u$--equivariant continuous map $\chi' \colon \tX \to \R$.  The authors prove that $u \in \BNS(G)$ if and only if $\pi_1(\chi'^{-1}([0,\infty)))$ surjects onto $\pi_1(\tX)$ (or if $\chi'^{-1}([0,\infty))$ is disconnected, one considers the fundamental group of the unbounded component).  They further note that modifying $\chi'$ to $\chi''$ by an equivariant, bounded homotopy, does not change the property of $\pi_1$--surjectivity.  Therefore, any continuous $u$--equivariant map $\tX \to \R$ will suffice in applying this characterization. 

Now suppose $u \in \Csec$ and let $(\widetilde \fib_u \colon \tX \to \R) \in \widetilde \omega^u$, where $\omega^u$ is a closed flow-regular $1$--form representing $u$.   According to \cite[Theorem G]{BNS}, we need only prove that $\pi_1(\widetilde \fib_u^{-1}([0,\infty))$ surjects onto $\pi_1(\tX)$.  For this we take any loop $\gamma \in \tX$, and let $-s \in \R$ be the minimum value of $\widetilde \fib_u$ on $\gamma$.  Then $\flow^u_s(\gamma) \subset \widetilde \fib_u^{-1}([0,\infty))$, and $\flow^u_s(\gamma)$ is clearly homotopic to $\gamma$.
\end{proof}

In light of Proposition~\ref{prop:stretch_is_locally_bounded} and Theorem~\ref{T:continuity/convexity again}, it follows that $\C_X$ in fact detects a full component of $\BNS(G)$:

\begin{theorem:cone-is-BNScpt}[McMullen polynomial detects component of $\BNS(G)$]
The McMullen cone $\C_X$ projects onto a full component of the BNS-invariant $\BNS(G)$. That is, $\{[u] \mid u\in \C_X\}$ is a connected component of $\BNS(G)$.
\end{theorem:cone-is-BNScpt}
\begin{proof}
Let $\widehat{\BNS}(G) = \{u\in H^1(G;\R) \mid [u]\in \BNS(G)\}$, and let $\widehat{\BNS}_0(G)$ denote the component containing $u_0$. Proposition~\ref{P:S is in BNS} shows that $\C_X\subset \widehat{\BNS}_0(G)$. Supposing the reverse inclusion fails, then $\widehat{\BNS}_0(G)$ necessarily contains a point of $\partial\C_X$. Since $\widehat{\BNS}_0(G)$ is open and the faces of $\C_X$ are rationally defined, this implies that $\widehat{\BNS}_0(G)$ contains a rational point $u\in \partial \C_X\cap \QBNS(G)$. By Theorem~\ref{T:continuity/convexity again} we know that $\log(\Lambda)$ agrees with $\mathfrak{H}$ and is consequently unbounded on $\C_X\cap \QBNS(G)\cap U$ for any open neighborhood $U$ of $u$. As this contradicts Proposition~\ref{prop:stretch_is_locally_bounded}, we conclude $\widehat{\BNS}_0(G)\subset \C_X$ as claimed.
\end{proof}

\subsection{In the cone $\A$}
\label{S:foliations in A}

For $u \in \A$, the closed $1$--form $\omega^u$ can be chosen to enjoy better properties than for an arbitrary $u \in \Csec$ (as is true for the flow-regular maps to $\sone$ in the integral case).
Before we describe the properties we will want for these closed $1$--forms, we recall the following from \cite[Definition 5.2]{DKL}.
\begin{defn}[Local model]
\label{D:local model}
A {\em local model} for $\flow$ is a subset of $X$ of the form
\[ \M(K,s_0) = \bigcup_{0 \leq s \leq s_0} \flow_s(K)\]
where $s_0 > 0$ and $K$ is a closed contractible neighborhood of a point of $\Gamma_t$, for some $t$.
We call $K$ the {\em bottom} of $\M$ and $\flow_{s_0}(K)$ the {\em top} and we call $\M$ minus the top and bottom the {\em flow-interior} of $\M$.  For any $0 \leq s_1 \leq s_2 \leq s_0$,
\[ \bigcup_{s_1 \leq s \leq s_2} \flow_s(K) \]
is also a local model, and we call it a {\em local submodel} of $\M(K,s_0)$.
\end{defn}
The local models were used in \cite{DKL} to ensure that for integral classes $u \in \A$, the reparameterized flows $\flow^u_s$ mapped each fiber $\fib_u^{-1}(y) = \Theta_{y,u}$ to the fiber $\Theta_{y+s,u}$, for $y,y+s \in \sone$, by a homotopy equivalence;  see \cite[Corollary 6.20]{DKL}.  The key point was that we could cover $X^{(1)}$ by finitely many flow-interiors of local models so that each component of the fiber $\Theta_{y,u}$ intersected with any of the local models $\M$ in the finite set is contained in a fiber of the original fibration $\fib \colon X \to \sone$.

\begin{defn}[non-singular closed $1$--form]
We will say that a closed $1$--form $\omega$ on $X$ is {\em nonsingular} if it is flow-regular and the following holds
\begin{enumerate}
\item For any point $\xi$ in the $1$--skeleton, there is a simply-connected neighborhood $U$ of $\xi$ containing a local model $\M(K,s_0)$ such that the fibers of $\omega_U$ intersected with $\M(K,s_0)$ have the form $\psi_s(K)$ for some $0 \leq s \leq s_0$ (and are hence contained in fibers of the original fibration $\fib \colon X \to \sone$).
\item The integral over any oriented $1$--cell of $X$ is positive.
\end{enumerate}
\end{defn}
In particular, note that if $\omega$ is a nonsingular closed $1$--form, then by the second condition the $1$--cocycle $z$ determined by integrating $\omega$ over $1$--cells is a positive $1$--cocycle, and hence $u = [z] \in \A$.  We write $\omega = \omega^z = \omega^u$ in this case.  

If $u  \in \A$ is primitive integral and $\omega^u$ is constructed as in \S\ref{S:flow-regular closed 1-forms and sections} from the flow-regular map $\fib_u$ from Theorem \ref{T:DKL_BC}, then \cite[Lemma 6.16]{DKL} guarantees that $\omega^u$ is non-singular.    This fact can be used to show that every element of $\A$ is represented by a nonsingular closed $1$--form.

\begin{proposition} \label{P:non-singular 1-forms exist}
For every class $u \in \A$, there is a nonsingular closed $1$--form $\omega^u$ representing $u$.
\end{proposition}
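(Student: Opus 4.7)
Given a class $u \in \A$, the plan is to realize $\omega^u$ as a positive linear combination of nonsingular closed $1$--forms representing primitive integral classes. Since $\A$ is an open convex cone whose integral rays are dense, I would first write $u = \sum_{i=1}^{k} c_i u_i$ with each $u_i \in \A$ primitive integral and each $c_i > 0$. For each $u_i$, Theorem~\ref{T:DKL_BC} together with \cite[Lemma 6.16]{DKL} yields a nonsingular closed $1$--form $\omega^{u_i}$ representing $u_i$, and the candidate $1$--form is $\omega^u := \sum c_i \omega^{u_i}$. Because the map from closed $1$--forms to $H^1(X;\R)$ is linear, $\omega^u$ is automatically closed and represents $u$; the task is to verify that it is nonsingular.

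Two of the three defining conditions are immediate: the flow derivative $D_\flow \omega^u = \sum c_i D_\flow \omega^{u_i}$ is a positive linear combination of continuous positive functions, so $\omega^u$ is flow-regular, and similarly $\int_\sigma \omega^u = \sum c_i \int_\sigma \omega^{u_i} > 0$ for every oriented $1$--cell $\sigma$ of $X$. The substantive point is the local-model condition (1). The key observation here is that if a closed $1$--form has level sets of the form $\{\flow_s(K)\}_{0 \leq s \leq s_0}$ on a local model $\M(K,s_0)$, then its restriction to this local model pulls back from a function of the flow parameter $s$ alone; hence any positive linear combination of such forms continues to have level sets $\{\flow_s(K)\}$ on $\M(K,s_0)$. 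Consequently, once the $\omega^{u_i}$ are all set up with respect to one common cover of $X^{(1)}$ by local models, $\omega^u$ inherits the local-model property.

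The main obstacle is arranging this uniformity. My approach is to revisit the construction in \cite[Lemma 6.16]{DKL}: the local models appearing there are intrinsic to $(X,\flow)$ and the cell structure of $\Gamma$, not to the particular positive cocycle driving the construction, so a single finite collection $\{\M(K_j,s_{0,j})\}$ of local models whose flow-interiors cover $X^{(1)}$ can be used for every $\omega^{u_i}$ at once. If the cited construction does not manifestly deliver this, one can replace each local model by a common submodel, shrinking in the flow direction and in the transverse direction as needed; compactness of $X$ ensures that the resulting cover is still finite, and the local-model property is preserved under such shrinking since a restriction of a function of $s$ alone remains a function of $s$ alone. With this uniform choice, the paragraph above applies verbatim, and $\omega^u$ is nonsingular.
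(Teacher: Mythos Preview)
Your proposal is correct and follows essentially the same approach as the paper: write $u$ as a positive linear combination of primitive integral classes in $\A$, take the corresponding combination of the nonsingular $1$--forms furnished by \cite[Lemma~6.16]{DKL}, and verify nonsingularity, with the local-model condition handled by choosing a single common cover of $X^{(1)}$ by local models that works for all the $\omega^{u_i}$ simultaneously. Your explanation of why condition~(1) passes to positive linear combinations (via the observation that on each local model the form depends only on the flow parameter~$s$) is in fact more explicit than the paper's, which simply asserts that ``the first property of non-singularity then also follows.''
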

\begin{proof}
Suppose $u_1,\ldots,u_n \in \A$ are integral classes such that $u$ can be written as a positive linear combination of these elements
\[ u = \sum_{i=1}^n \zeta_i u_i.\]
Let $\omega^{u_i}$ be a nonsingular closed $1$--form as above.   Then $u$ is represented by the closed $1$--form
\[ \omega^u = \sum_{i=1}^n \zeta_i \omega^{u_i} .\]
Since positive combinations of flow-regular maps are flow-regular, it follows that $\omega^u$ is flow-regular.

If $z_i$ is the positive cocycle representing $u_i$ obtained from $\omega_i$, then the cocycle $z$ representing $u$ obtained from $\omega^u$ is precisely $\sum \zeta_i z_i$.  Therefore, the the second conditions of non-singularity is satisfied.

By choosing the covering of $X^{(1)}$ in \cite[Lemma 6.16]{DKL} by sufficiently small local models (see also \cite[Proposition 5.6]{DKL}), we can assume that the same covering works for all of $u_1,\ldots,u_n$, and that each are contained in simply connected open sets in $X$.  The first property of non-singularity then also follows.
\end{proof}

Now for any $u \in \A$, we assume that $u$ is represented by a nonsingular closed $1$--form and $\Omega_u$ is the foliation defined by $\omega^u$.  We also write $\widetilde \fib_u \colon \tX \to \R$ with fibers $\tOmega_{y,u} \subset \tX$ covering the leaves $\Omega_{y,u}$.

\begin{theorem} \label{T:like kernels of 1-forms}
Suppose $\omega^u$ is a nonsingular closed $1$--form on $X$ defining $u \in \A$.  Then for every $y \in \R$, the inclusion of the fibers $\tOmega_{y,u} = \widetilde \fib_u^{-1}(y) \to \tX$ is a homotopy equivalence (in particular, the fibers are connected).  Furthermore, for every $s \geq 0$, the restriction of the reparameterized flow
\[ \tflow^u_s \colon \tOmega_{y,u} \to \tOmega_{y+s,u} \]
is a homotopy equivalence.
\end{theorem}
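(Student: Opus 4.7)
The plan is to first reduce the second assertion (that $\tflow^u_s$ is a homotopy equivalence on leaves) to the first (that each inclusion $i_y : \tOmega_{y,u} \hookrightarrow \tX$ is a homotopy equivalence), and then to prove the first by verifying that $i_y$ induces an isomorphism on $\pi_1$ between two aspherical spaces. For the reduction, observe that $\{\tflow^u_{ts}\}_{t \in [0,1]}$ is a homotopy from $\mathrm{id}_{\tX}$ to $\tflow^u_s$, so restriction to $\tOmega_{y,u}$ shows $i_{y+s} \circ \tflow^u_s \simeq i_y$ as maps $\tOmega_{y,u} \to \tX$. Granted that $i_y$ and $i_{y+s}$ are homotopy equivalences, this formally forces $\tflow^u_s : \tOmega_{y,u} \to \tOmega_{y+s,u}$ to be one as well.

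For the main assertion, note that each leaf $\tOmega_{y,u}$ is a graph: the local model condition on $\omega^u$ ensures that near every point of $X^{(1)}$ the level sets of $\omega^u$ are slices $\flow_s(K)$ of a contractible set $K$, so leaves meet each $2$--cell in a disjoint union of arcs and the $1$--skeleton in isolated points, and this structure lifts to $\tX$. Since $\tX$ covers the aspherical space $X$, both $\tX$ and $\tOmega_{y,u}$ are aspherical, so it suffices to show $(i_y)_\ast : \pi_1(\tOmega_{y,u}) \to \pi_1(\tX)$ is an isomorphism. When $u \in \A$ is a positive rational multiple of a primitive integral class, this follows directly from Theorem~\ref{T:DKL_BC}: that result gives $\pi_1(\Theta_{y,u}) = \ker(u : G \to \Z)$, and since any $u \in H^1(G;\R)$ factors through $G \to H$, we have $\pi_1(\tX) = \ker(G \to H) \subseteq \ker(u)$, so the component $\tOmega_{y,u}$ of the preimage in $\tX$ satisfies $\pi_1(\tOmega_{y,u}) \cong \pi_1(\tX)$.

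For general $u \in \A$, using Proposition~\ref{P:non-singular 1-forms exist} I would write $\omega^u = \sum_i \zeta_i \omega^{u_i}$ with $u_i \in \A$ primitive integral and $\zeta_i > 0$, so $\widetilde\fib_u = \sum_i \zeta_i \widetilde\fib_{u_i}$. The goal then becomes showing that $\widetilde\fib_u : \tX \to \R$ is a Serre fibration with fiber $\tOmega_{y,u}$; once this is established, the long exact sequence of homotopy groups combined with the contractibility of $\R$ gives the required $\pi_1$-isomorphism. The forward semiflow $\tflow^u$ directly supplies the homotopy-lifting property for paths in $\R$ that are monotone non-decreasing, and the general case reduces to this via a piecewise approximation, using the decomposition above together with the local model condition to glue lifts coherently across the $1$--skeleton.

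The principal obstacle is carrying out this Serre-fibration argument for irrational $u$. The forward-only nature of $\tflow^u$ is the chief complication: one cannot flow points with $\widetilde\fib_u > y$ back to the leaf $\tOmega_{y,u}$. Overcoming this requires exploiting the positive-combination structure $\omega^u = \sum \zeta_i \omega^{u_i}$ together with the local model condition so that the lifting problem over $\R$ can be decomposed into pieces solvable with forward flow alone, and then verifying that the resulting local trivializations agree across the finitely many local models covering $X^{(1)}$---this consistency is precisely what the nonsingularity condition on $\omega^u$ is designed to ensure.
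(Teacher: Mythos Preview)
Your reduction of the second assertion to the first is valid, and the rational case is handled correctly. The genuine gap is in the irrational case: you propose to show that $\widetilde\fib_u \colon \tX \to \R$ is a Serre fibration, but you do not carry this out---you explicitly flag the obstacle (the semiflow goes only forward, so lifting a path in $\R$ that decreases is not directly available) and then gesture at ``piecewise approximation'' and ``local trivializations agree across local models'' without supplying the mechanism. This is not a minor omission: because $\tflow^u$ genuinely folds (it is not locally injective through the $1$--skeleton), $\widetilde\fib_u$ is not a fiber bundle, and backward lifts are neither unique nor continuously choosable in general. Making a Serre or Dold fibration argument work would require constructing a coherent backward section of the semiflow over small intervals, and nothing in the nonsingularity hypothesis supplies this; the hypothesis controls the \emph{shape} of the fibers near $X^{(1)}$, not the invertibility of the flow.

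The paper sidesteps this entirely by reversing your order of deduction. It first proves that $\tflow^u_s \colon \tOmega_{y,u} \to \tOmega_{y+s,u}$ is a homotopy equivalence directly, using only the forward semiflow: one covers $X^{(1)}$ by finitely many flow-interiors of local models with a uniform ``thickness'' $\ell>0$, so that for any $0 \le s \le \ell$ the restriction of $\tflow^u_s$ to each piece $\tOmega_{y,u}\cap \widetilde{\M}'_i$ is \emph{controlled rel boundary} in the sense of \cite[Definition~5.3]{DKL}, while on the complement (interior of $2$--cells) it is a homeomorphism. Then \cite[Proposition~5.5]{DKL} gives that $\tflow^u_s$ is a homotopy equivalence for $s\le\ell$, and the semigroup property extends this to all $s$. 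Connectedness and the $\pi_1$--isomorphism for the inclusion $\tOmega_{y,u}\hookrightarrow\tX$ are then deduced from the leaf-to-leaf homotopy equivalences by flowing loops and null-homotopies forward into a single fiber. The point is that the local-model/controlled-map machinery from \cite{DKL} is precisely the substitute for the backward flow you are missing.
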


Before we give the proof, we observe the following corollary.  Here $\BNS_s(G) = \BNS(G) \cap -\BNS(G)$ is the {\em symmetrized} BNS-invariant.

\begin{corollary} \label{C:A is in BNS_0}
The cone $\A$ projects into $\BNS_s(G)$.
\end{corollary}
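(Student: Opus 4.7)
The plan is to mimic the argument of Proposition \ref{P:S is in BNS}, but now applied to the negative half-space, using the extra information provided by Theorem \ref{T:like kernels of 1-forms}. Recall $\BNS_s(G) = \BNS(G) \cap -\BNS(G)$. Since $\A \subset \Csec$, Proposition \ref{P:S is in BNS} already gives $[u] \in \BNS(G)$ for every $u \in \A$, so the only thing to verify is that $[-u] \in \BNS(G)$ as well.

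To apply the Bieri-Neumann-Strebel criterion \cite[Theorem G]{BNS} to $-u$, I would choose a nonsingular closed $1$--form $\omega^u$ representing $u$ (this exists by Proposition \ref{P:non-singular 1-forms exist}) and take $\widetilde \fib_u \colon \tX \to \R$ to be a lift. Then $-\widetilde\fib_u$ is a $(-u)$-equivariant continuous map, so by \cite[Theorem G]{BNS} it suffices to show that the inclusion
\[ (-\widetilde\fib_u)^{-1}([0,\infty)) \;=\; \widetilde\fib_u^{-1}((-\infty,0]) \;\hookrightarrow\; \tX \]
is $\pi_1$-surjective (and that the relevant component is unbounded, which is automatic from equivariance).

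The key step is then Theorem \ref{T:like kernels of 1-forms}: for $u \in \A$ the inclusion $\tOmega_{0,u} = \widetilde\fib_u^{-1}(0) \hookrightarrow \tX$ is a homotopy equivalence. Since $\tOmega_{0,u} \subset \widetilde\fib_u^{-1}((-\infty,0])$, it follows immediately that the inclusion $\widetilde\fib_u^{-1}((-\infty,0]) \hookrightarrow \tX$ is $\pi_1$-surjective, which establishes $[-u] \in \BNS(G)$. (One may alternatively observe that the reparameterized semiflow $\widetilde\flow^u$ deformation retracts $\widetilde\fib_u^{-1}((-\infty,0])$ onto $\tOmega_{0,u}$, confirming in particular that this preimage is connected.)

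There is essentially no obstacle here beyond invoking the tools already in place: the nontrivial content is entirely contained in Theorem \ref{T:like kernels of 1-forms}, which fails in general for $u \in \Csec \setminus \A$ (indeed, by the running example, $\Csec$ can contain classes outside $\BNS_s(G)$, so the homotopy-equivalence statement for leaves cannot extend to all of $\Csec$). Thus the corollary is an immediate consequence of the $\pi_1$-surjectivity of leaves for classes in $\A$.
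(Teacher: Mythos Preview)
Your proof is correct and follows essentially the same approach as the paper: use Theorem~\ref{T:like kernels of 1-forms} to conclude that the fiber $\tOmega_{0,u} \hookrightarrow \tX$ is a homotopy equivalence, hence the half-space $\widetilde\fib_u^{-1}((-\infty,0])$ is $\pi_1$--surjective, and then apply \cite[Theorem~G]{BNS}. The only cosmetic difference is that the paper handles both half-spaces $\widetilde\fib_u^{-1}([0,\infty))$ and $\widetilde\fib_u^{-1}((-\infty,0])$ simultaneously via Theorem~\ref{T:like kernels of 1-forms}, rather than citing Proposition~\ref{P:S is in BNS} for the $[u]\in\BNS(G)$ direction.
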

\begin{proof}
For $u \in \A$, we have the inclusion of $\tOmega_{0,u} = \widetilde \fib_u^{-1}(\{0 \}) \to \tX$ is a homotopy equivalence by Theorem \ref{T:like kernels of 1-forms}.  Therefore, $\widetilde \fib_u^{-1}([0,\infty))$ and $\widetilde \fib_u^{-1}((-\infty,0])$ are $\pi_1$--surjective.  As in the proof of Proposition \ref{P:S is in BNS}, we can appeal to Theorem G of \cite{BNS} to conclude that both $u$ and $-u$ are in $\BNS(G)$ and hence $u \in \BNS_s(G)$.
\end{proof}

\begin{remark}
For a fibered hyperbolic $3$--manifold, every cohomology class in the cone on the fibered face of a hyperbolic $3$--manifold is represented by a nowhere vanishing closed $1$--form that evaluates positively on the vector field defining the suspension flow (of any fiber in the cone on the face).  In particular, the kernel of the $1$--form is tangent to a taut foliation, and so by the Novikov-Rosenberg Theorem the leaves $\pi_1$--inject into the $3$--manifold; see e.g.~\cite{Calegari07}.  Furthermore, reparameterizing the flow (so that the vector field evaluates to $1$ on the $1$--form) the leaves map homeomorphically from one onto another.
\end{remark}

\begin{proof}
We first note that when $u$ is a scalar multiple of a primitive integral $u' \in \A$, then $\tOmega_{y,u}$ is a component of the preimage in $\tX$ of the graph $\Theta_{y',u'}$ (though the vertex set for $\tOmega_{y,u}$ might be a proper subset of those coming from the standard graph structure of $\Theta_{y',u'}$ if $\Theta_{y',u'}$ is $\mathcal F$--compatible).  In this case, the conclusion follows from \cite[Corollaries 6.20 and 6.21]{DKL}.
We therefore assume that $u$ is not a scalar multiple of an integral element.  We will prove the last statement, and then deduce the rest from this.  

We assume that all local models considered in what follows satisfy the condition (1) of nonsingularity for $\omega^u$.  We choose a finite cover $\MM$ of $X^{(1)}$ by flow-interiors of such local models satisfying the following conditions for some $\ell > 0$.
\begin{enumerate}
\item[(i)] Any $\M \in \MM$ intersects $X^{(1)}$ in a contractible subset containing at most $1$ vertex;
\item[(ii)] Given $\M(K,s_0),\M(K',s_0') \in \MM$, if $\flow_s(K) \cap \flow_{s'}(K') \neq \emptyset$ for some $s \in [0,s_0],s' \in [0,s_0']$, then $\flow_s(K) \cap \flow_{s'}(K) \cap X^{(1)} \neq \emptyset$ is a single point;
\item[(iii)] Any arc of a flowline $\gamma_\xi(s) = \flow_s(\xi)$ that nontrivially intersects $X^{(1)}$, and for which the integral of $\omega^u$ is at most $\ell$, is contained in a local model.
\end{enumerate}
Appealing to \cite[Proposition 5.6]{DKL} we can find a finite cover of $X^{(1)}$ by flow-interiors of local models so that the first and second conditions are satisfied.  This is done by first choosing sufficiently small, pairwise disjoint flow-interiors of local models, $\M_1,\ldots,\M_k$, one containing each $0$--cell of $X$, and intersecting $X^{(1)}$ in a contractible set.  Then we appropriately choose very tiny flow-interiors of local models about each point $\xi \in X^{(1)} \setminus (\M_1 \cup \cdots \cup \M_k)$ which are also all disjoint from the $0$--cells.  These are chosen so that they intersect $X^{(1)}$ in an arc (necessarily contained in a single $1$--cell), and are so small that any two which intersect different $1$--cells of $X^{(1)}$ are disjoint.  By compactness, we can take a finite subcover $\MM$, which we do, noting that the flow-interiors covering the $0$--cells chosen first are necessarily in this set.
The existence of $\ell > 0$ so that the third condition is satisfied is then also a consequence of compactness of $X^{(1)}$.

Now we lift each local model in $\MM$ to a local model in $\tX$ via the covering map $p \colon \tX \to X$ (which is possible since each $\M \in \MM$ is contained in a simply-connected open subset of $X$), and denote the collection as $\widetilde \MM$.  Observe that the conditions (i)--(iii) are also satisfied by $\widetilde \MM$ with the same $\ell > 0$ (though this is no longer a finite cover).

Fix any $y \in \R$, look at the fiber $\tOmega_{y,u} = \widetilde \fib_u^{-1}(y)$, and consider the set
\[ \tOmega_{[y,y+\ell],u} = \bigcup_{0 \leq s \leq \ell} \psi_s^u(\tOmega_{y,u}) = \widetilde \fib_u^{-1}([y,y+\ell]).\]
Suppose that the arc $\gamma_\xi([0,\ell])$ of the the flowline through $\xi \in \tOmega_{y,u}$ nontrivially intersects $\tX^{(1)}$.  Then this arc is contained in a local model $\widetilde \M \in \widetilde \MM$ by condition (iii).   The intersection $\tOmega_{[y,y+\ell],u} \cap \widetilde \M$ is a local submodel $\widetilde \M' \subset \widetilde \M$ (most likely not a local model in $\widetilde \MM$).

Note that $\widetilde \M' \cap \tX^{(1)}$ is a component of $\tOmega_{[y,y+\ell],u} \cap \tX^{(1)}$ by (i).    We enumerate the components of $\tOmega_{[y,y+\ell],u} \cap \tX^{(1)}$ and choose $\widetilde \M_i \in \widetilde \MM$ containing the $i^{th}$ component of $\tOmega_{[y,y+\ell],u} \cap \tX^{(1)}$ and let
\[ \widetilde \M'_i = \widetilde \M_i \cap \tOmega_{[y,y+\ell],u}.\]
By condition (ii), $\widetilde \M'_i \cap \widetilde \M'_j \neq \emptyset$ if and only if $\widetilde \M'_i \cap \widetilde \M'_j \cap \tX^{(1)} \neq \emptyset$.  Since $\widetilde \M'_i \cap \tX^{(1)}$ is the $i^{th}$ component, it follows that $\widetilde \M'_i \cap \tX^{(1)} = \widetilde \M'_j \cap \tX^{(1)}$ and hence $i = j$.

For any $0 \leq s \leq \ell$, we can subdivide $\tOmega_{y,u}$ and $\tOmega_{y+s,u}$ so that the intersections of these with any $\widetilde \M'_i$ are subgraphs $\tOmega_{y,u,i} \subset \tOmega_{y,u}$ and $\tOmega_{y+s,u,i} \subset \tOmega_{y+s,u}$, respectively.  The restriction of the reparameterized flow $\widetilde \flow_s^u \colon \tOmega_{y,u,i} \to \tOmega_{y+s,u,i}$ is controlled rel boundary in the sense of \cite[Definition 5.3]{DKL}.  In any complementary component, $\widetilde \flow_s^u$ restricts to a homeomorphism (since this is contained in the interior of a $2$--cell where the semiflow restricts to an honest local flow).  We can now apply \cite[Proposition 5.5]{DKL} to guarantee that $\widetilde \flow_s^u$ restricts to a homotopy equivalence from $\tOmega_{y,u}$ to $\tOmega_{y+s,u}$.
\begin{remark}
Proposition 5.5 of \cite{DKL} was only stated for finite graphs because that was the only case of interest.  The proof is valid in the current setting as well.
\end{remark}

Thus we have proved the last part of the theorem for any $y \in \R$ and any $0 \leq s \leq \ell$.  Since $\widetilde \flow_s^u$ is a semiflow, it also holds for any $s \geq 0$.

Next we claim that every fiber is connected. To see this, observe that since the semiflow restricts to a homotopy equivalence of fibers, it maps distinct components to distinct components.  Thus flowing forward and backward (i.e. taking the preimage under the semiflow) for any two components of a fiber defines disjoint open sets.  Thus we have a bijection between the components of a fiber and the components of $\tX$, but $\tX$ is connected, and hence so is every fiber.

Finally, to see that the inclusion $\tOmega_{y,u} \to \tX$ is a homotopy equivalence, we note that both spaces are Eilenberg-Maclane spaces for (possibly infinitely generated) free groups, so we need only prove that the map is an isomorphism on fundamental groups.  Any loop in $\tX$ based at a point $\xi_0$ of $\tOmega_{0,u}$ is contained in the region between two fibers $\tOmega_{y,u}$ and $\tOmega_{y',u}$ for some $y < 0 < y'$.  This loop flows forward onto a loop in $\tOmega_{y',u}$, but since the flow restricts to homotopy equivalences between fibers, it follows that the loop is in the image of $\pi_1(\Omega_{0,u}, \xi_0)$.  Similarly, for any loop in ${0,u}$ based at $\xi_0$, suppose there is a null-homotopy of this loop to a point.  This null-homotopy can be flowed forward to some fiber $\tOmega_{y,u}$ and since the flow restricts to a homotopy equivalence $\tOmega_{0,u} \to \tOmega_{y,u}$, the loop must have already been null-homotopic in $\tOmega_{0,u}$.
This completes the proof.
\end{proof}

Theorem \ref{T:like kernels of 1-forms} quickly implies our theorem from the introduction.

\begin{theorem:transversefoliations} [$\pi_1$--injective foliations]
Given $u \in \A$, there exists a flow-regular closed $1$--form
$\omega^u$ representing $u$ with associated foliation $\Omega_u$ of
$X$ having the following property.  There is a reparameterization of $\flow$, denoted $\flow^u$, so that for each $y \in \R$ the inclusion of the fiber $\Omega_{y,u} \to X$ is $\pi_1$--injective and induces an isomorphism $\pi_1(\Omega_{y,u}) \cong \ker(u)$.
Furthermore, for every $s \geq 0$ the restriction
\[ \flow^u_s \colon \Omega_{y,u} \to \Omega_{y+s,u} \]
of $\flow_s^u$ to any leaf $\Omega_{y,u}$ is a homotopy equivalence.
\end{theorem:transversefoliations}
\begin{proof} 
Let $\omega^u$ be the nonsingular closed $1$--form from Theorem \ref{T:like kernels of 1-forms}.

We observe that the image of $\ker(u)$ in $H$, which we denote $\ker_H(u)$, preserves the fiber $\tOmega_{y,u} = \widetilde \fib_u^{-1}(y)$ in $\tX$ via the action of $H$ on $\tX$.  Indeed, $\ker_H(u)$ is precisely the stabilizer in $H$ of $\tOmega_{y,u}$.  Since the inclusion of $\tOmega_{y,u} \to \tX$ is a homotopy equivalence by Theorem \ref{T:like kernels of 1-forms}, so is $\tOmega_{y,u}/\ker_H(u) \to \tX/\ker_H(u)$.  The further quotient $\tX/\ker_H(u) \to X$ maps $\tOmega_{y,u}/\ker_H(u)$ homeomorphically onto $\Omega_{y,u}$ since it is the quotient by the action of $H/\ker_H(u) = H/stab_H(\tOmega_{y,u})$.  Since $\tX/\ker_H(u) \to X$ is the cover corresponding to $\ker(u)$, we see that the inclusion $\Omega_{y,u}$ induces an isomorphism $\pi_1(\Omega_{y,u}) \cong \pi_1(\tX/\ker_H(u)) = \ker(u) < G$, as required.

By Theorem \ref{T:like kernels of 1-forms} $\widetilde \flow^u_s \colon \tOmega_{y,u} \to \tOmega_{y+s,u}$ is a homotopy equivalence for all $y,s \in R$, $s \geq 0$.   Since $\tX \to X$ is a covering map (hence restricting to a covering map on all fibers of $\widetilde \eta$) and since this covering map is semiflow equivariant, it follows that $\flow^u_s \colon \Omega_{y,u} \to \Omega_{y+s,u}$ is also a homotopy equivalence.
\end{proof}

\section{Lipschitz flows}
\label{S:lipschitz flows}

Associated to every primitive integral class $u \in \Csec$, there is a rich geometric structure on $X$ described by the following.

\begin{proposition} \label{P:motivating metric}
For every primitive integral $u \in \Csec$, there exists a geodesic metric $d_u$ on $X$ such that for the reparameterized semiflow $\psi^u$ we have:
\begin{enumerate}
\item The semiflow-line $s \mapsto \psi_s^u(\xi)$ is a local geodesic for all $\xi \in X$;
\item The metric $d_u$ restricts to a path metric on each fiber $\Theta_{y,u}$ and the maps
\[ \{\psi_s^u \colon \Theta_{y,u} \to \Theta_{y+s,u}\}_{s \geq 0}\]
are $\lambda^s$--homotheties on the interior of every edge, where $\lambda = \lambda(f_u)$, is the stretch factor of the first return map $f_u$.
\end{enumerate}
\end{proposition}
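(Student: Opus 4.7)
The plan is to construct $d_u$ by equipping the cross section $\Theta_u$ with its canonical eigenmetric and propagating this across $X$ via the reparameterized flow $\psi^u$, in a manner that gives each $2$-cell a constant-curvature Riemannian metric.

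First I would invoke Proposition~\ref{P:first_return_exp_irred_tt} together with \cite[Corollary A.7]{DKL} to endow $\Theta_u$ with the canonical metric structure $\mathcal L$ whose path metric $d_\mathcal L$ makes $f_u$ a local $\lambda$-homothety on each edge, where $\lambda=\lambda(f_u)>1$. By Convention~\ref{conv:notation_convention}, $\psi^u$ is the unique reparameterization with first return time $1$ to $\Theta_u$ and $\psi^u_1\vert_{\Theta_u}=f_u$, so every point of $X$ has the form $\psi^u_s(\xi)$ for some $\xi\in\Theta_u$ and $s\in[0,1]$. This gives a continuous surjection $\Theta_u\times[0,1]\to X$ identifying $(\xi,1)$ with $(f_u(\xi),0)$.

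Next I would put on $\Theta_u\times[0,1]$ the Riemannian length structure
\[
ds^2 \;=\; dt^2 \,+\, \lambda^{2t}\, dx_\mathcal L^2,
\]
where $t$ denotes the $[0,1]$ coordinate and $dx_\mathcal L$ is arc length on $\Theta_u$ with respect to $d_\mathcal L$ (defined on the complement of $V\Theta_u$, which is dense in every $2$-cell). The key compatibility check is that this length structure descends across the identification $(\xi,1)\sim(f_u(\xi),0)$: at $t=1$ an edge $e\subset\Theta_u\times\{1\}$ carries arc-length element $\lambda\,dx_\mathcal L$ and hence total length $\lambda\,\ell_\mathcal L(e)$, while the image edge path $f_u(e)\subset\Theta_u\times\{0\}$ carries arc-length element $dx_\mathcal L$ and has total length $\ell_\mathcal L(f_u(e))=\lambda\,\ell_\mathcal L(e)$ by the defining property of the eigenmetric; since $f_u\vert_e$ is linear, the identification is an edge-wise isometry. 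This yields a length structure on $X$, and compactness of $X$ promotes it to a geodesic metric $d_u$ with Riemannian restriction on each $2$-cell of constant curvature $-(\log\lambda)^2$.

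Finally, conditions (1) and (2) follow by reading off local forms. In $(x,t)$ coordinates, a flowline $s\mapsto\psi^u_s(\xi)$ becomes $s\mapsto(x_0,s)$ (reinitializing at each identification jump, which preserves unit vertical speed), and such a vertical curve is a local geodesic of unit speed in $dt^2+\lambda^{2t}\,dx_\mathcal L^2$, giving (1); for (2), the fiber $\Theta_{y,u}$ is (a quotient of) the slice $\Theta_u\times\{y\}$ and inherits the arc-length element $\lambda^y\,dx_\mathcal L$, so $\psi^u_s$ identifies horizontal slices with the $x$-coordinate fixed, scaling arc length by $\lambda^s$ on the interior of every edge. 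The principal obstacle is the consistency check across the identification and within the trapezoidal cell structure of $X$: because $f_u$ may wrap an edge of $\Theta_u$ around an entire edge path, the eigenmetric matching must be verified edge by edge along $f_u(e)$, which is precisely what the canonical eigenmetric property delivers; compatibility with the given cell structure of $X$ is then ensured by the $\mathcal F$-compatibility of $\Theta_u$ together with its standard graph structure from Definition~\ref{D:standard_graph}.
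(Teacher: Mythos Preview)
Your approach is exactly the direct construction the paper sketches informally immediately after the statement: put the canonical eigenmetric on $\Theta_u$ and propagate via the warped product $dt^2+\lambda^{2t}\,dx_{\mathcal L}^2$. However, there is a real gap. The surjection $\Theta_u\times[0,1]\to X$, $(\xi,s)\mapsto\psi^u_s(\xi)$, does \emph{not} in general identify only $(\xi,1)$ with $(f_u(\xi),0)$: because $\psi$ is a semiflow on the folded mapping torus, distinct points $\xi_1,\xi_2\in\Theta_u$ can satisfy $\psi^u_s(\xi_1)=\psi^u_s(\xi_2)$ for some $0<s<1$ (flowlines merge along vertical $1$--cells of $X$ before first return). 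Your length structure descends only if these intermediate identifications are isometric, and you do not verify this; the appeal to $\mathcal F$--compatibility and the standard graph structure does not address it.

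The fix is short. If small edge-arcs $\alpha_1\ni\xi_1$ and $\alpha_2\ni\xi_2$ have $\psi^u_{s_0}(\alpha_1)=\psi^u_{s_0}(\alpha_2)$ for some $s_0\in(0,1)$, then they remain identified thereafter, so $f_u(\alpha_1)=f_u(\alpha_2)$ as arcs in $\Theta_u$. Since $f_u$ is a local $\lambda$--homothety for the eigenmetric on each edge, $\lambda\,\ell_{\mathcal L}(\alpha_1)=\ell_{\mathcal L}(f_u(\alpha_1))=\ell_{\mathcal L}(f_u(\alpha_2))=\lambda\,\ell_{\mathcal L}(\alpha_2)$, hence $\ell_{\mathcal L}(\alpha_1)=\ell_{\mathcal L}(\alpha_2)$. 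With this check your warped-product metric descends to $X$ and the rest of your argument goes through.

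For comparison, the paper does not argue this way formally. It instead deduces Proposition~\ref{P:motivating metric} as the primitive integral special case of Theorem~\ref{T:lipschtiz flows}, whose proof builds $d_u$ from a twisted transverse measure on $\mathcal F$ (coming from the Perron--Frobenius eigenvector) via local ``hyperbolic model maps'' on transversals. That route treats all of $\Csec$ uniformly; yours is more elementary but specific to the integral case, and once the missing isometry check above is supplied it is a valid alternative.
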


The metric can be built explicitly by taking the canonical eigenmetric on $\Theta_u$, defining a metric on the fibers so that (2) holds, and then extending to a metric on $X$ so that the semiflow-lines from (1) have unit speed and are ``perpendicular'' to the fibers (in the interior of each $2$--cell, this makes sense, and then the $2$--cells are glued together by isometries).

The main result of this section is that one can carry out an analogous construction for {\em any} cohomology class $u \in \Csec$.  This mirrors the {\em Teichm\"uller flow} from Theorem 1.1 of \cite{Mc} for fibered hyperbolic $3$--manifolds.  There the flow mapped leaves of the foliation tangent to the kernel of a $1$--form to leaves and was a Teichm\"uller mapping.  In our construction, all maps are Lipschitz with constant stretch factor, and so we call these {\em Lipschitz flows}.

\begin{theorem:Lipschitzflows} [Lipschitz flows]
For every $u \in \Csec$, let $\mathfrak H(u)$ be as in Theorem \ref{T:continuity/convexity again}, $\omega^u$ a tame flow-regular closed $1$--form representing $u$, $\flow^u$ the associated reparameterization of $\flow$ and $\Omega_u$ the foliation defined by $\omega^u$.  Then there is a geodesic metric $d_u$ on $X$ such that:
\begin{enumerate}
\item The semiflow-lines $s \mapsto \psi_s^u(\xi)$ are local geodesics for all $\xi \in X$,
\item The metric $d_u$ determines a path metric on each (component of a) leaf of $\Omega_{y,u}$ of the foliation $\Omega_u$ defined by $\omega^u$ making it into a (not necessarily finite) simplicial metric graph,
\item The restriction of the reparameterized semiflow
\[ \{\psi_s^u \colon \Omega_{y,u} \to \Omega_{y+s,u}\}_{s \geq 0}\]
are $\lambda^s$--homotheties on the interior of every edge, where $\lambda = e^{\mathfrak H(u)}$.
\item The induced path metric from $d_u$ on the interior of any $2$--cell $U$ is locally isometric to a constant negative curvature Riemannian metric.
\end{enumerate}
\end{theorem:Lipschitzflows}

In the special case that $u \in \Csec$ is primitive integral, the $1$--form $\omega^u$ is given by restricting the fibration $\eta_u \colon X \to \sone$ to simply connected open sets and lifting the resulting maps to the universal covering $\R \to \sone$.  In particular, the leaves $\Omega_{y,u}$ are precisely the fibers $\Theta_{y,u}$ and the metric $d_u$ satisfies the conclusion of Proposition \ref{P:motivating metric}.  Thus, Proposition \ref{P:motivating metric} will follow from Theorem \ref{T:lipschtiz flows}.  We recall that the label $y$ for the leaves $\Omega_{y,u}$ makes sense as an element of $\R/u(H)$.

\subsection{Twisted transverse measures on $\mathcal F$.}
\label{S:twisted transverse measures}

For the remainder of this section we fix a primitive integral class $u_0 \in \A$, which we may as well assume is the class associated to our original fibration $\fib \colon X \to \sone$ with $\Theta_{u_0} = \Gamma$ and first return map $f_{u_0} = f \colon \Gamma \to \Gamma$.  We write $H_0 = H_{u_0} < H$, and let ${\bf s} = (s_1,\ldots,s_{b-1})$, $(t_1,\ldots,t_{b-1}) = {\bf t} = e^{\bf s} =  (e^{s_1},\ldots,e^{s_{b-1}})$, $w$, $x = e^w$ be adapted to $u_0$ as in Definition \ref{D:adapted to z}. 

Any class $u \in H^1(X;\R) = \Hom(G,\R)$ can be used to turn the additive group $\R$ into a module over $\Z[H]$ as follows.  Since $u \colon G \to \R$ factors through $H$, we can view the exponential of $u$ as a homomorphism $\rho =  \rho_u = e^u \colon H \to \R_+$ to the multiplicative group of positive real numbers.  This defines an action of $H$ on $\R$ by $h \cdot y = \rho(h) y$ for $h \in H$ and $y \in \R$.  Let $\R_u$ denote $\R$ with this module structure.  Given ${\bf j} = (j_1,\ldots,j_{b-1})$, if we write ${\bf t}^{\bf j} = t_1^{j_1} \cdots t_{b-1}^{j_{b-1}} \in H$, then we can think of this as a function $H^1(X;\R) \to \R_+$, and we have
\[ \rho({\bf t}^{\bf j}) = {\bf t}(u)^{\bf j} = t_1(u)^{j_1} \cdots t_{b-1}(u)^{j_{b-1}}.\]
Similarly, $\rho(x^j) = x(u)^j$.

\begin{remark}
To clarify the notation we note that ${\bf t}(u) \in \R_+^n$, while for ${\bf j} = (j_1,\ldots,j_{b-1})$ we have ${\bf t}(u)^{\bf j} \in \R_+$.
\end{remark}

The construction of the metric $d_u$ uses the following proposition.   Recall that $\mathfrak G \subset \C_X \subset H^1(X;\R) \cong \R^b$ is the graph of ${\bf s} \mapsto \log(E(e^{\bf s}))$ as in Theorem \ref{T:PF}.

\begin{proposition} \label{P:twistedMF1}
For any $u \in \mathfrak G$, there exists a homomorphism of $\Z[H]$--modules
\[ \mu = \mu_u \colon T(\mathcal F) \to \R_u \]
such that for every transversal $\tau$ we have $\mu([\tau]) > 0$.  Moreover, $\mu$ is unique up to scaling.
\end{proposition}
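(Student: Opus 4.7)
The plan is to translate the problem into a linear algebra problem on $\R^m$ via the finite presentation of $T(\mathcal F)$ and then invoke Perron--Frobenius theory. First, using the isomorphism $T(\mathcal F) \cong T(\hGamma)$ from Proposition~\ref{P:moduleisomorphism} (with cross section $\Gamma = \Theta_{u_0}$) together with the presentation $\Z[H]^E \xrightarrow{xI - A({\bf t})} \Z[H]^E \to T(\hGamma) \to 0$ supplied by Proposition~\ref{P:presentationmatrix4graph}, I observe that any $\Z[H]$--module map $\mu \colon T(\mathcal F) \to \R_u$ is completely determined by the values $v_j := \mu([\sigma_j]) \in \R$ for $j = 1,\ldots,m$, and the only constraint on these values comes from the basic relations $x \cdot [\sigma_j] = \sum_i A_{ij}({\bf t}) \cdot [\sigma_i]$. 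Applying $\mu$ to these relations and remembering that $H$ acts on $\R_u$ through $\rho_u = e^u \colon H \to \R_+$, I will show that such a $\mu$ descends to a well-defined map on $T(\hGamma)$ if and only if the vector $\mathbf{v} = (v_1,\ldots,v_m)$ satisfies
\[
e^{w(u)}\, v_j \;=\; \sum_{i=1}^{m} A_{ij}\!\bigl(e^{\mathbf{s}(u)}\bigr)\, v_i \qquad (j = 1,\ldots,m),
\]
equivalently $\mathbf{v}\,\bar A = e^{w(u)}\,\mathbf{v}$, where $\bar A := A(e^{\mathbf{s}(u)})$ is the real matrix obtained by specializing the Laurent-polynomial entries of $A({\bf t})$ at the positive vector ${\bf t} = e^{\mathbf{s}(u)}$.

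Next, I will apply Perron--Frobenius theory. By Lemma~\ref{L:perron-frobenius}, $A({\bf t})$ is Perron--Frobenius in McMullen's sense, so its specialization at any positive vector is a non-negative, irreducible real matrix whose leading eigenvalue is exactly $E(e^{\mathbf{s}(u)})$. Because $u \in \mathfrak G$, the defining condition $w(u) = \log(E(e^{\mathbf{s}(u)}))$ gives $e^{w(u)} = E(e^{\mathbf{s}(u)})$, so the scalar appearing in the eigenvector equation above is precisely the Perron--Frobenius eigenvalue of $\bar A$. The classical Perron--Frobenius theorem then produces a strictly positive left eigenvector $\mathbf{v}$, unique up to positive scaling, and setting $\mu([\sigma_j]) := v_j$ extends $\Z[H]$--linearly to the sought-after homomorphism $\mu$; uniqueness of $\mu$ up to scaling is then immediate from the one-dimensionality of the Perron--Frobenius eigenspace.

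It remains to verify positivity of $\mu$ on \emph{every} transversal $\tau$, not just on the chosen generators. For this I will reuse the surjectivity portion of the proof of Proposition~\ref{P:moduleisomorphism}: after flowing $\tau$ forward until it maps homeomorphically onto some edge path $e_1 \cdots e_k$ in a single component of $\hGamma$, the subdivision and flow relations force
\[
[\tau] \;=\; [e_1] + \cdots + [e_k] \qquad \text{in } T(\mathcal F).
\]
Writing each $e_i = h_i \cdot \sigma_{j(i)}$ with $h_i \in H$ and $j(i) \in \{1,\ldots,m\}$ then gives
\[
\mu([\tau]) \;=\; \sum_{i=1}^{k} \rho_u(h_i)\, v_{j(i)} \;>\; 0,
\]
since $\rho_u$ takes only positive values and each $v_{j(i)} > 0$. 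The main conceptual content of the whole argument is thus the identification of the descent constraint on $\mathbf{v}$ as the Perron--Frobenius eigenvector equation at the \emph{exact} eigenvalue singled out by the condition $u \in \mathfrak G$; once this translation is in place, existence, uniqueness up to scaling, and strict positivity all fall out of standard Perron--Frobenius theory together with the structural features of $T(\mathcal F)$ already established.
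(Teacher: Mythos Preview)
Your proof is correct and follows essentially the same strategy as the paper: both translate the problem via the isomorphism $T(\mathcal F)\cong T(\hGamma)$ and its presentation into the left Perron--Frobenius eigenvector equation for $A({\bf t}(u))$ at eigenvalue $x(u)=E({\bf t}(u))$, then invoke Perron--Frobenius theory for existence and positivity, and finally use the flow-forward description of transversals for positivity on all of $T(\mathcal F)$. Your uniqueness argument is actually slightly more direct than the paper's---you invoke simplicity of the Perron--Frobenius eigenvalue immediately, whereas the paper derives the eigenvector equation for an arbitrary $\mu'$ via the iterated relation $\mu'(\sigma)=E({\bf t}(u))^{-j}\mu'(\sigma)A({\bf t}(u))^j$ and a limiting argument, which is unnecessary once the $j=1$ case is in hand.
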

The condition that $\mu$ is a $\Z[H]$--module homomorphism means that it is a group homomorphism, and for every $h \in H$ and transversal $\tau$ we have
\[ \mu(h \cdot [\tau]) =\rho(h) \mu([\tau]).\]

\begin{proof}
Recall that $u_0 \in \A$ is our initial primitive integral class coming from the graph $\Gamma = \Theta_{u_0}$.  As in Section \ref{S:graph module}, we let $E = \{ \sigma_1,\ldots,\sigma_m \}$ be the representative edges in $\tGamma \subset \hGamma$ and $A({\bf t}) = A_{u_0}({\bf t})$ be the associated matrix for the action $\Z[H_0]^E \to \Z[H_0]^E$ for the lift $x^{-1} \circ \tpsi_1$ of $f \colon \Gamma \to \Gamma$.  

We will construct a homomorphism of $\Z[H]$--modules
\[ \bar \mu \colon \Z[H]^E \to \R_u\]
so that for every edge $e$ of $\hGamma$ we have $\bar \mu (e) > 0$ and if $\tpsi_1(e) = e_1 \cdots e_j$, then
\begin{equation} \label{Eq:bar mu homomorphism}
\bar \mu(e) = \bar \mu (e_1) + \ldots + \bar \mu(e_j).
\end{equation}
By definition of $T(\hGamma)$, such a homomorphism will descend to a homomorphism of $\Z[H]$--modules
\[ \mu \colon T(\hGamma) \cong T(\mathcal F) \to \R_u.\]
Since every transversal $\tau$ flows forward into an edge path of $\hGamma$, hence is equivalent in $T(\mathcal F)$ to a positive linear combination of edges of $\hGamma$, we will have $\mu([\tau]) >0$ as required.

Now, since $u \in \mathfrak G$ we have $w(u) = \log(E(e^{{\bf s}(u)}))$ or equivalently
\[ x(u) = E({\bf t}(u)).\]
Let ${\bf U}({\bf t})$ be the (left) Perron-Frobenius eigenvector associated to $E({\bf t})$.  Thus, we have
\[ {\bf U}({\bf t}) A({\bf t}) = E({\bf t})  {\bf U}({\bf t}).\]
Given $\ell \in \{ 1, \ldots,m \}$, define
\[ \bar \mu (\sigma_\ell) = {\bf U}_\ell({\bf t}(u)). \]
For an arbitrary element ${\bf t}^{\bf j} x^j$ with ${\bf t}^{\bf j} = t_1^{j_1} \cdots t_{b-1}^{j_{b-1}} x^j$ and $\ell \in \{1,\ldots,m\}$ we define
\[ \bar \mu({\bf t}^{\bf j} x^j \cdot \sigma_\ell) = \rho({\bf t}^{\bf j} x^j) \bar \mu(\sigma_\ell) = {\bf t}(u)^{\bf j} x(u)^j {\bf U}_\ell({\bf t}(u)).\]

Since $\Z[H]^E$ is a free abelian group, there is a unique extension to a group homomorphism.  By construction, this is a $\Z[H]$--module homomorphism:
\begin{eqnarray*}
\bar \mu({\bf t}^{\bf j'} x^{j'} \cdot ({\bf t}^{\bf j} x^j \cdot \sigma_\ell)) & = & \bar \mu({\bf t}^{\bf j' + j} x^{j' + j} \cdot \sigma_\ell)\\
& = & {\bf t}(u)^{\bf j' + j} x(u)^{j' + j} \bar \mu (\sigma_\ell) \\
&  = & {\bf t}(u)^{\bf j'}x(u)^{j'}({\bf t}(u)^{\bf j}x(u)^j \bar \mu (\sigma_\ell))\\
& = & \rho({\bf t}^{\bf j'}x^{j'}) \bar \mu({\bf t}^{\bf j} x^j \cdot \sigma_\ell)
\end{eqnarray*}

Finally, to prove (\ref{Eq:bar mu homomorphism}), we note that an arbitrary edge $e$ has the form ${\bf t}^{\bf j} x^j \cdot \sigma_\ell$ and so by definition of $A({\bf t})$ and the fact that $\bar \mu$ is a homomorphism, we are left to prove that for every $\ell = 1,\ldots,m$ we have
\[ \bar \mu(x \cdot \sigma_\ell) = \bar \mu \left( \sum_{i=1}^m A_{i,\ell} ({\bf t}) \cdot \sigma_i \right) .\]
For this, we note that since $x(u) = E({\bf t}(u))$ we have
\begin{eqnarray*}
\bar \mu(x \cdot \sigma_\ell) & = & x(u) {\bf U}_\ell({\bf t}(u))\\
& = &  E({\bf t}(u)) {\bf U}_\ell({\bf t}(u))\\
& = & \sum_{i=1}^m A_{i,\ell}({\bf t}(u)) {\bf U}_i({\bf t}(u))\\
& = & \sum_{i=1}^m A_{i,\ell}(\rho(t_1),\ldots,\rho(t_{b-1})) \bar \mu (\sigma_i)\\
& = & \bar \mu \left( \sum_{i=1}^m A_{i,\ell}({\bf t}) \cdot \sigma_i \right)
\end{eqnarray*}
as required.  Thus, $\bar \mu$ descends to $\mu \colon T(\hGamma) = T(\mathcal F) \to \R_u$.

To prove the uniqueness, suppose $\mu'$ is some other homomorphism with $\mu'([\tau]) > 0$ for all transversals $\tau$.  We note that any homomorphism is determined by its values on $[\sigma_1],\ldots,[\sigma_m]$, and so we need only verify that there is some $\lambda > 0$ so that
\[ \mu'([\sigma_\ell]) = \lambda \mu([\sigma_\ell]) \]
for all $\ell  = 1,\ldots,m$.  For this, we write $\sigma = ([\sigma_1],\ldots,[\sigma_m])$ as a row vector, and write $\mu'({\bf \sigma}) = (\mu'([\sigma_1]),\ldots,\mu'([\sigma_m]))$ and $x \cdot {\bf \sigma} = (x \cdot [\sigma_1],\ldots,x \cdot [\sigma_m])$.  With this convention, the relations can be expressed in matrix form as
\[ x \cdot {\bf \sigma} = {\bf \sigma} \cdot A({\bf t}).\]
Then we have
\begin{eqnarray*} \mu'({\bf \sigma}) & = & \mu'(x^{-1} x \cdot {\bf \sigma})\\
& = & x(u)^{-1} \mu'(x \cdot {\bf \sigma})\\
& = &E({\bf t}(u))^{-1} \mu'( {\bf \sigma} \cdot A({\bf t})) \\
& = & E({\bf t}(u))^{-1} \mu'( {\bf \sigma}) A({\bf t}(u))
\end{eqnarray*}
This shows that $\mu'({\bf \sigma})$ is an eigenvector for the Perron-Frobenius eigenvalue $E({\bf t}(u))$.  Since $\mu(\sigma) = {\bf U}({\bf t}(u))$ is the unique eigenvector for $E({\bf t}(u))$ up to scalar multiples, it follows that $\mu'(\sigma)$ is a scalar multiple of $\mu(\sigma)$, and we are done.
\end{proof}

We think of $\mu$ as defining a ``transverse measure'' on $\mathcal F$ which is twisted by the action of $H$ and the homomorphism $\rho_u$.   It is only defined on transversals, and not arbitrary arcs in $2$--cells transverse to $\mathcal F$.  This transverse measure can be extended to any arc, as we explain in Remark \ref{Rm:extending to transverse measure}.

Before we proceed, we note that $\mu$ satisfies a monotonicity condition.

\begin{lemma} \label{L:monotonicity}
For any pair of transversals $\tau,\tau'$, if $\tau \subseteq \tau'$ then
\[ \mu([\tau]) \leq \mu([\tau'])\]
with equality if and only if $\tau = \tau'$.

If $\tau_1,\tau_2,\ldots,\tau_n \subset \tau$ are subtransversals of the transversal $\tau$, pairwise intersecting at most in their endpoints, then
\[ \mu([\tau_1]) + \cdots + \mu([\tau_n]) \leq \mu([\tau]).\]
Equality holds if and only if $\tau = \tau_1 \cup \cdots \cup \tau_n$.
\end{lemma}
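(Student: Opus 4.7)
The strategy is to exploit the basic subdivision relations defining $T(\mathcal F)$ together with the positivity property $\mu([\sigma])>0$ supplied by Proposition~\ref{P:twistedMF1}, the key point being that the endpoints of any transversal lie on vertex leaves so that subdivision at such points is always available. The second assertion will be reduced to the first.

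For the first statement, suppose $\tau\subseteq \tau'$. Since $\tau$ and $\tau'$ both lie in a single $2$--cell of $\tX$ and both have their endpoints on vertex leaves, the two endpoints of $\tau$ divide $\tau'$ into $\tau$ together with one or two complementary arcs $\alpha,\beta$, each of which is either a single point (when an endpoint of $\tau$ coincides with an endpoint of $\tau'$) or an arc with endpoints on vertex leaves, i.e.\ a transversal. Treating empty $\alpha$'s and $\beta$'s as contributing $0$, I will apply the subdivision relation of Definition~\ref{D:transversals module}(1) at each (genuine) endpoint of $\tau$ inside $\tau'$, which is legitimate because that endpoint lies in a vertex leaf and each of the arcs $\alpha\cup\tau$, $\tau\cup\beta$ is again a transversal. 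This yields, in $T(\mathcal F)$,
\[
[\tau'] = [\alpha] + [\tau] + [\beta],
\]
with the convention that $[\alpha]=0$ (resp.\ $[\beta]=0$) if $\alpha$ (resp.\ $\beta$) degenerates to a point. Applying $\mu$ and using $\mu([\alpha]),\mu([\beta])\geq 0$ with strict inequality whenever the arc is nondegenerate gives $\mu([\tau])\leq \mu([\tau'])$ with equality precisely when both $\alpha$ and $\beta$ are points, i.e.\ when $\tau=\tau'$.

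For the second statement, reindex so that $\tau_1,\dots,\tau_n$ are linearly ordered along $\tau$. Group maximal runs of consecutive $\tau_i$'s that actually abut (share an endpoint) into arcs $\sigma_1,\dots,\sigma_k\subseteq \tau$; each $\sigma_j$ is then a transversal (its endpoints, being endpoints of some $\tau_i$, lie on vertex leaves), and the $\sigma_j$'s are pairwise disjoint subtransversals of $\tau$. Iterating the subdivision relation inside each $\sigma_j$ at the vertex leaves separating consecutive $\tau_i$'s yields $[\sigma_j]=\sum_{i\in I_j}[\tau_i]$ in $T(\mathcal F)$, where $I_j$ indexes the $\tau_i$'s comprising $\sigma_j$; summing over $j$ gives $\sum_j \mu([\sigma_j])=\sum_i \mu([\tau_i])$. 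To compare this with $\mu([\tau])$, I subdivide $\tau$ at the endpoints of the $\sigma_j$'s (again legitimate since these endpoints lie on vertex leaves) to write
\[
[\tau]=\sum_{j=1}^k[\sigma_j]+\sum_\ell [\rho_\ell]
\]
in $T(\mathcal F)$, where the $\rho_\ell$'s are the (possibly empty) complementary subarcs of $\tau\setminus(\sigma_1\cup\cdots\cup \sigma_k)$, each a transversal when nondegenerate. Applying $\mu$ and discarding the nonnegative terms $\mu([\rho_\ell])$ yields $\sum_i \mu([\tau_i])=\sum_j \mu([\sigma_j])\leq \mu([\tau])$, with equality precisely when each $\rho_\ell$ degenerates to a point, i.e.\ when $\tau=\tau_1\cup\cdots\cup\tau_n$.

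The only mildly subtle point is the careful bookkeeping of degenerate complementary arcs in both parts; once one consistently allows these to carry the value $0$ in $T(\mathcal F)$, the proof is essentially a direct application of Proposition~\ref{P:twistedMF1} combined with iterated use of the subdivision relation. No appeal to the flow relation (Definition~\ref{D:transversals module}(2)) is needed.
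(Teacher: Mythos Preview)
Your proposal is correct and follows essentially the same approach as the paper: subdivide using the basic relation of Definition~\ref{D:transversals module}(1) at the vertex-leaf endpoints, then apply the strict positivity of $\mu$ from Proposition~\ref{P:twistedMF1}. Your treatment of the second statement is slightly more detailed than the paper's (which simply says ``similar reasoning''), but the underlying argument is identical.
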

\begin{proof}
We can subdivide $\tau'$ as $\tau' = \tau_1 \tau_2 \tau_3$, where $\tau_2 = \tau$, and where either or both of $\tau_1,\tau_3$ may be empty.  Then
\[ \mu(\tau') = \mu(\tau_1) +\mu(\tau_2) + \mu(\tau_3) \geq \mu(\tau_2) =\mu(\tau).\]
Equality holds if and only if $\mu(\tau_1) = \mu(\tau_3) = 0$ and hence if and only if $\tau_1$ and $\tau_3$ are empty, which is to say $\tau = \tau_1 = \tau'$.

The last statement holds for similar reasoning by subdividing $\tau$ into a concatenation of subtransversals consisting of $\tau_1,\ldots,\tau_n$, together with a (possibly empty) collection of other subtransversals.
\end{proof}

\subsection{Constructing the metric}

Recall that the foliation $\Omega_u$ is the descent to $X$ of the foliation by fibers $\tOmega_u$ of $\tX$.
We want to define a metric on fibers $\tOmega_{y,u}$ using $\mu$ and then push it down to a metric on the leaves of $\Omega_u$, but there is a technical difficulty we must address first.  To begin, we just define a notion of length for certain paths.

Given a transversal $\tau$ contained in some $\tOmega_{y,u}$, we define the $\mu$--length of $\tau$ to be
\[ \ell_\mu(\tau) = e^y \mu([\tau]).\]
We can similarly define the $\mu$--length of any path $\tau' = \tilde \psi_s^u(\tau) \subset \tOmega_{s+y,u}$, where $\tau$ is a transversal in $\tOmega_{y,u}$, by
\[ \ell_\mu(\tau') = e^{y+s} \mu([\tau]).\]
We note that while $\tau$ is really a subset of $\tX$, we need to think of $\tau'$ as a parametrized path (though the parametrization is unimportant) since the restriction of $\tpsi_s^u$ to $\tau$ need not be injective.
It is straightforward to check that this length is independent of the transversal $\tau$ that flows into $\tau'$.  We will call such a path $\tau'$ a {\em flowed transversal}.

\begin{lemma} \label{L:mu length under flow}
For any $y,s \in \R$ and $s \geq 0$ and any flowed transversal $\tau'$ we have
\[ \ell_\mu(\tpsi_s^u(\tau')) = e^s \ell_\mu(\tau').\]
\end{lemma}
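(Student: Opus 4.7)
The plan is to prove this essentially directly from the definition of $\ell_\mu$ on flowed transversals, once we have fixed a transversal $\tau$ that flows into $\tau'$. Unwind the definitions as follows: by hypothesis, $\tau'$ is a flowed transversal, so there exists a transversal $\tau \subset \tOmega_{y_0,u}$ and a time $s_0 \ge 0$ with $\tau' = \tpsi_{s_0}^u(\tau)$; necessarily $\tau' \subset \tOmega_{y_0+s_0,u}$, so that $y = y_0 + s_0$. Then $\tpsi_s^u(\tau') = \tpsi_s^u(\tpsi_{s_0}^u(\tau)) = \tpsi_{s+s_0}^u(\tau)$ is itself a flowed transversal (coming from the same transversal $\tau$, with time $s+s_0 \ge 0$), and it lies in $\tOmega_{y_0+s_0+s,u} = \tOmega_{y+s,u}$.

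Apply the definition of $\mu$--length to each side: by definition
\[
\ell_\mu(\tau') = e^{y_0 + s_0}\,\mu([\tau]) = e^{y}\,\mu([\tau]),
\]
and analogously, since $\tpsi_s^u(\tau')$ is the flow of $\tau$ by time $s+s_0$,
\[
\ell_\mu(\tpsi_s^u(\tau')) = e^{y_0 + s_0 + s}\,\mu([\tau]) = e^{y+s}\,\mu([\tau]) = e^s\bigl(e^{y}\mu([\tau])\bigr) = e^s\,\ell_\mu(\tau'),
\]
which is the desired identity.

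The only subtlety — and the step that should be pointed out, though the excerpt notes it has already been verified in the preceding paragraph — is well-definedness of $\ell_\mu$ on flowed transversals, i.e.\ independence of the choice of underlying transversal $\tau$ and flow time $s_0$. Once that is granted, there is no real content beyond the computation above: the factor $e^s$ appears simply because the exponential height function $y \mapsto e^y$ picks up a factor $e^s$ under the time--$s$ flow, while $\mu([\tau])$ is intrinsic to the transversal $\tau$ and therefore unchanged by reparameterization. Thus I expect no obstacle; the lemma is a bookkeeping statement designed to isolate the homothety factor $e^s$ that will later (in the proof of Theorem~\ref{T:lipschtiz flows}) become the stretch factor $\lambda^s = e^{s\mathfrak H(u)}$ after rescaling $u$ to lie on the hypersurface $\mathfrak G$.
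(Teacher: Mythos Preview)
Your proof is correct and follows essentially the same approach as the paper: both take a transversal $\tau$ flowing onto $\tau'$, apply the definition of $\ell_\mu$ to $\tau'$ and to $\tpsi_s^u(\tau')$ using the same underlying $\tau$, and read off the factor $e^s$. The only difference is notational (you use $y_0$ for the base level and $y=y_0+s_0$ for $\tau'$, whereas the paper uses $y$ for the base level), and your additional remarks on well-definedness and the role of the lemma are consistent with the surrounding text.
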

\begin{proof}
Take a transversal $\tau \in \tOmega_{y,u}$ and $s_0$ so that $\tpsi_{s_0}^u(\tau) = \tau'$.  Then
\[ \ell_\mu(\tpsi_s^u(\tau')) = e^{y + s_0 + s} \mu([\tau]) = e^s (e^{y+s_0} \mu([\tau])) = e^s \ell_\mu(\tau'). \qedhere \]
\end{proof}

\begin{lemma} \label{L:mu length descends}
The $H$--action preserve $\mu$--lengths of flowed transversals:
\[ \ell_\mu(\tau) = \ell_\mu(h(\tau)).\]
\end{lemma}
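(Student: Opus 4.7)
The plan is to unravel the definitions and do a short direct computation. Let $\tau'$ be a flowed transversal, so by definition we may write $\tau' = \tpsi^u_{s_0}(\tau)$ for some honest transversal $\tau$ contained in a fiber $\tOmega_{y,u}$; then $\ell_\mu(\tau') = e^{y+s_0}\mu([\tau])$. I must compute $\ell_\mu(h(\tau'))$ and show it equals the same quantity.

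First I would handle the case of an unflowed transversal $\tau \subset \tOmega_{y,u}$. There are three key facts to combine. (i) The action of $H$ by deck transformations on $\tX$ is equivariant with respect to $\widetilde{\eta}_u$ in the sense that $\widetilde{\eta}_u \circ h = \widetilde{\eta}_u + u(h)$, as recorded in \S\ref{S:closed 1-forms cohomology}; consequently $h(\tau) \subset \tOmega_{y+u(h),u}$, and $h(\tau)$ is still a transversal since $h$ permutes leaves of $\mathcal F$ and preserves the vertex leaves. (ii) Under the $\Z[H]$-module convention that $h \cdot \tau = h^{-1}(\tau)$, the identity $h(\tau) = h^{-1} \cdot \tau$ gives $[h(\tau)] = h^{-1}\cdot[\tau]$ in $T(\mathcal F)$. (iii) Since $\mu$ is a $\Z[H]$-module homomorphism to $\R_u$, we have $\mu(h^{-1}\cdot[\tau]) = \rho(h^{-1})\mu([\tau]) = e^{-u(h)}\mu([\tau])$, using $\rho = e^u$. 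Putting these together,
\[
\ell_\mu(h(\tau)) = e^{y+u(h)}\,\mu([h(\tau)]) = e^{y+u(h)}\cdot e^{-u(h)}\,\mu([\tau]) = e^y\mu([\tau]) = \ell_\mu(\tau).
\]

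For the case of a general flowed transversal $\tau' = \tpsi^u_{s_0}(\tau)$, I would observe that the deck action commutes with the lifted semiflow, i.e.\ $h \circ \tpsi^u_{s_0} = \tpsi^u_{s_0} \circ h$, because $\tpsi^u$ was defined by lifting $\flow^u$ and the covering map $p\colon \tX \to X$ is semiflow-equivariant by construction. Therefore $h(\tau') = \tpsi^u_{s_0}(h(\tau))$ is itself a flowed transversal: it is the image under $\tpsi^u_{s_0}$ of the transversal $h(\tau) \subset \tOmega_{y+u(h),u}$. By the definition of $\mu$-length for flowed transversals and the computation in the previous paragraph,
\[
\ell_\mu(h(\tau')) = e^{y+u(h)+s_0}\,\mu([h(\tau)]) = e^{y+u(h)+s_0}\cdot e^{-u(h)}\,\mu([\tau]) = e^{y+s_0}\mu([\tau]) = \ell_\mu(\tau'),
\]
which is the desired identity.

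There is no serious obstacle here; the only mild subtlety is keeping the two conventions straight (the deck action $h$ versus the module action $h\cdot$, which differ by inversion) so that the factor $\rho(h^{-1}) = e^{-u(h)}$ exactly cancels the factor $e^{u(h)}$ coming from the fact that $h$ shifts fibers by $u(h)$. Once these are aligned, the lemma is immediate, and it is essentially the statement that the twisting in $\mu$ is chosen precisely to make $\ell_\mu$ descend to a well-defined notion of length on leaves of $\Omega_u$ in $X$ (which will be exploited in the next steps of the metric construction).
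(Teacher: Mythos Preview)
Your proof is correct and follows essentially the same approach as the paper. The only cosmetic difference is that the paper first reduces to the case of an honest transversal by invoking Lemma~\ref{L:mu length under flow} together with the commutation of $H$ with $\tpsi^u$, whereas you handle the flowed case by a direct computation; the underlying calculation and the key points (the fiber shift by $u(h)$, the module convention $h(\tau) = h^{-1}\cdot\tau$, and the resulting cancellation $e^{u(h)}e^{-u(h)}$) are identical.
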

\begin{proof}
Since $\tpsi^u$ commutes with the action $H$, appealing to Lemma \ref{L:mu length under flow} we may assume that $\tau$ is a transversal contained in $\tOmega_{y,u}$.  Suppose $h \in H$, and then $h(\tau) = h^{-1} \cdot \tau$ is contained in the fiber $\tOmega_{y+u(h),u}$.  Because $\mu$ is a homomorphism of $\Z[H]$--modules we have
\[ \ell_\mu(h(\tau)) = e^{y+u(h)} \mu(h^{-1} \cdot [\tau]) = e^y e^{u(h)} e^{-u(h)} \mu([\tau]) = e^y \mu([\tau]) = \ell_\mu(\tau). \qedhere \]
\end{proof}

By a transversal $\tau$ in a leaf $\Omega_{y,u} \subset X$ we mean the image of a transversal in $\tOmega_{y,u}$ under the covering projection $p \colon \tX \to X$.   We similarly define flowed transversal $\tau$ in $\Omega_{y,u} \subset X$ and note that by Lemma \ref{L:mu length descends}, the $\mu$--length $\ell_\mu(\tau)$ of any such $\tau$ is well-defined as the $\mu$--length back in $\tX$.  Moreover, from Lemma \ref{L:mu length under flow} we have
\[ \ell_\mu(\psi^u_s(\tau)) = e^s \ell_\mu(\tau)\]
for all $s \in \R$ and any transversal $\tau$ in any leaf $\Omega_{y,u}$.

\begin{lemma} \label{L:no atoms}
Let $y,\epsilon \in \R$ with $\epsilon > 0$, and $\xi \in \Omega_{y,u}$ be given.  Then there is a finite set of paths $\tau_1,\ldots,\tau_j$ in $\Omega_{y,u}$ whose union forms a neighborhood of $\xi$, which are each flowed transversals, and for which the sum of the $\mu$--lengths is less than $\epsilon$.  If $\xi$ is not a vertex of $\Omega_{y,u}$ then we can take $j = 1$ and $\tau = \tau_1$ to be a transversal.
\end{lemma}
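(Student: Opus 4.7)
My plan is to exploit the scaling identity $\ell_\mu(\psi_s^u(\tau)) = e^s \ell_\mu(\tau)$ from Lemma \ref{L:mu length under flow}: flowing forward a long time $s$ scales $\mu$-lengths by $e^s$, so if the flowed image of a small leaf-neighborhood of $\xi$ has uniformly bounded $\mu$-length, the original neighborhood has $\mu$-length at most $e^{-s}$ times that bound, which can be made less than $\epsilon$ by choosing $s$ large.

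The first step is to establish a uniform constant $M > 0$ such that every flowed transversal contained in a single 2-cell of $X$ has $\mu$-length at most $M$. Since $X$ has only finitely many 2-cells, it suffices to verify this in each cell separately: any transversal in a 2-cell $U$ is a subarc of a spanning transversal running from one vertical 1-cell of $U$ to the other (whose endpoints lie on vertex leaves), so by monotonicity (Lemma \ref{L:monotonicity}) its $\mu$-length is bounded above by that of the spanning transversal, and spanning transversals within $U$ have $\mu$-lengths bounded by the one at the top of $U$ (using Lemma \ref{L:mu length under flow} with the bounded flow-height of $U$).

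Next I would choose $s > 0$ large enough that $e^{-s} j M < \epsilon$, where $j$ is the valence of $\xi$ in $\Omega_{y,u}$ (with $j=1$ when $\xi$ lies in the interior of a 2-cell), arranged so that $\psi_s^u(\xi)$ lies in the interior of a single 2-cell $U'$ of $X$; this second condition holds for all but countably many $s$. In the interior case, density of vertex leaves of $\mathcal F$ provides a transversal $\tau$ containing $\xi$ in its interior of arbitrarily small geometric diameter; by continuity of $\psi_s^u$, for small enough diameter $\psi_s^u(\tau) \subset U'$, and so $\ell_\mu(\psi_s^u(\tau)) \leq M$, whence $\ell_\mu(\tau) \leq e^{-s}M < \epsilon$. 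In the vertex case, a leaf-neighborhood of $\xi$ consists of $j$ arcs $\tau_1,\ldots,\tau_j$ emanating from $\xi$, each inside a 2-cell adjacent to the 1-cell containing $\xi$; each $\tau_i$ (or a subarc) is a flowed transversal, obtained by pulling back a short time along flowlines interior to its 2-cell to a transversal with endpoints on vertex leaves and then reapplying the flow. Shrinking each $\tau_i$ so that $\psi_s^u(\tau_i) \subset U'$ yields $\ell_\mu(\tau_i) \leq e^{-s}M < \epsilon/j$, and summing gives total $\mu$-length less than $\epsilon$.

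The main technical obstacle is verifying that the forward-flowed images $\psi_s^u(\tau_i)$ really do remain inside the single 2-cell $U'$ as the $\tau_i$ shrink; this is a continuity argument using the openness of $U'$ and the fact that $\psi_s^u(\tau_i)$ contracts to the single interior point $\psi_s^u(\xi) \in U'$ as $\tau_i \to \{\xi\}$. A secondary subtlety is confirming that the arcs at a vertex $\xi$ are genuinely flowed transversals in the sense needed to apply Lemma \ref{L:mu length under flow}, which reduces to local invertibility of $\psi^u$ on 2-cell interiors combined with density of vertex leaves to produce the required pulled-back transversals with endpoints on vertex leaves.
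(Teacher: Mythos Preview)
Your approach is genuinely different from the paper's: the paper argues by contradiction using recurrence of the forward orbit of $\xi$ (either periodicity or accumulation), showing that a putative lower bound $\epsilon$ on neighborhood $\mu$-lengths would force some fixed transversal to have infinite $\mu$-length. Your direct route via a uniform bound $M$ and the scaling identity is cleaner in spirit and avoids the case split on the orbit type.

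There is, however, a real gap. Your claim that $\psi_s^u(\xi)$ lies in the interior of a $2$--cell for all but countably many $s$ is false whenever $\xi$ lies on a \emph{vertex leaf} of $\mathcal F$: the vertical $1$--skeleton is forward-invariant under $\psi$, so once the flowline through $\xi$ enters a vertical $1$--cell it remains in the $1$--skeleton for all later times. Since vertex leaves are dense, this situation is unavoidable. In the non-vertex case this is easily patched---when $\psi_s^u(\xi)$ sits on a vertical $1$--cell, split $\psi_s^u(\tau)$ at that point (which \emph{is} on a vertex leaf) into two transversals, each in a single $2$--cell, giving the bound $2M$ instead of $M$.

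The more serious issue is in the vertex case. Flowed transversals always have endpoints on vertex leaves (vertex leaves are leaves of $\mathcal F$, hence flow-invariant), so the half-arcs $\tau_i$ emanating from $\xi$ are flowed transversals only if $\xi$ itself lies on a vertex leaf. A vertex of $\Omega_{y,u}$ on a skew $1$--cell need not satisfy this, and your proposed fix---pulling back within the adjacent $2$--cell---cannot produce a transversal with the $\xi$-endpoint on a vertex leaf, since the pulled-back point lies on the same leaf of $\mathcal F$ as $\xi$. The remedy is to take, for each $2$--cell $T_k$ below $\xi$, a preimage $\xi_k'\in T_k$ and a small transversal around $\xi_k'$; flowing forward yields a flowed transversal passing \emph{through} $\xi$ (into the $2$--cell above), and finitely many of these cover a neighborhood. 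Your uniform-bound argument then applies to each.
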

\begin{proof}
We give the proof for $\xi$ not a vertex.

We suppose that the conclusion is false.  Thus there is a sequence of flowed transversals $\{ \tau_n\}$ in $\Omega_{y,u}$ containing $\xi$ in their interiors for which
\[ \ell_\mu(\tau_n) \geq \epsilon \]
and for which the intersection is $\{ \xi\}$.  Since $\xi$ is not a vertex of $\Omega_{y,u}$, it lies in the interior of a $2$--cell, and hence these flowed transversals can be taken to be transversals.  Such a sequence exists since vertex leaves of $\mathcal F$ are dense.

Suppose first that $\xi$ lies on a periodic orbit of $\psi^u$.  That is, for some $r >0$ we have $\psi_{rj}^u(\xi) = \xi$ for all integers $j \geq 0$.  Then note that for any $j$, there is some $n$ so that $\tau_n$ is so small that
\[ \psi_{rj}^u(\tau_n) \subset \tau_1.\]
On the other hand, together with Lemma \ref{L:monotonicity} this implies
\[ \ell_\mu(\tau_1) \geq \ell_\mu(\psi_{rj}^u(\tau_n)) = e^{rj} \ell_\mu(\tau_n) > e^{rj} \epsilon\]
Since the left- and right-hand side does not depend on $n$, and the inequality between these holds for all $j$, this contradicts the fact that $\ell_\mu(\tau_1)$ is some finite number.

A similar argument works if $\xi$ flows into a point which is periodic, so we assume that this is not the case.  Therefore, we can assume that $\{\flow^u_s(\xi)\}_{s \geq 0}$ accumulates somewhere.  It follows that there is a sequence of times $s_j \to \infty$ so that $\flow^u_{s_j}(\xi) \in \Omega_{y',u}$ for all $j$ and some $y'$, and that these points are contained in a single transversal $\tau$ in $\Omega_{y',u}$.   Then, for any $j \geq 0$, we can find $n$ so that $\tau_n$ is so small that
\[ \flow^u_{s_1}(\tau_n), \flow^u_{s_2}(\tau_n), \ldots, \flow^u_{s_j}(\tau_n) \]
are pairwise disjoint transversals inside $\tau$.   Now we note that $\ell_\mu(\tau)$ is some finite number, whereas by Lemma \ref{L:monotonicity} we have
\[ \ell_\mu(\tau) \geq \ell_\mu(\flow^u_{s_1}(\tau_n)) + \cdots + \ell_\mu(\flow^u_{s_j}(\tau_n)) \geq j \epsilon.\]
Since the right-hand side tends to infinity, this is a contradiction.

This completes the proof for the case that $\xi$ is not a vertex.   The case that $\xi$ is a vertex is similar and we leave that to the reader.
\end{proof}

\begin{remark} \label{Rm:extending to transverse measure}
Lemma~\ref{L:no atoms} can also be used to extend $\mu$ to any arc transverse to $\mathcal F$ (say, contained in a $2$--cell, so that transversality makes sense).  The reason is that we can approximate any such arc by transversals, and the lemma tells us that the limit of the values of $\mu$ will be independent of the sequence of transversals used.
\end{remark}

For every $y$ and every edge $e$ of $\Omega_{y,u}$, we next define a path metric called the {\em $\mu$--path-metric} on $e$ inducing the given topology, so that the length of any transversal is the $\mu$--length.  For any two points $\xi,\xi'$ contained in $e$, we consider a sequence of transversals whose endpoints limit to $\xi$ and $\xi'$.  By Lemma~\ref{L:monotonicity}, the limit of the $\mu$--lengths of these edges is independent of the sequence, and we call this the {\em $\mu$--path distance between $\xi$ and $\xi'$}.
The transversals and flowed transversals define a basis for the topology on $e$, and so Lemmas~\ref{L:monotonicity} and \ref{L:no atoms}, together with the fact that every transversal has positive $\mu$--length (Proposition \ref{P:twistedMF1}), further implies that the $\mu$--path distance is a path metric inducing the given topology.

We now define a metric on $X$ for which the induced path metric on every edge $e$ of $\Omega_{y,u}$ is precisely the $\mu$--path-metric.
This metric on $X$ is obtained as a quotient path metric of a collection of ``nice'' subsets of the hyperbolic plane, and will be constructed in a couple steps.

First, for every $y \in \R$ and every transversal $\tau \subset \Omega_{y,u} \subset X$ let $\beta_\tau \colon [0,r_0] \to \tau$ denote a unit speed parameterization with respect to the $\mu$--path-metric (note that $\tau$ is contained in some edge).
Next, let $s_0 > 0$ be any positive real number and define a map
\[ B_{\tau,s_0} \colon [0,r_0] \times [0,s_0] \to X \]
by
\[ B_{\tau,s_0}(r,s) = \psi^u_s(\beta_\tau(r)).\]
We give $[0,r_0] \times [0,s_0]$ the Riemannian metric $e^{2s} dr^2 + ds^2$, and note that this is isometric to the region in the hyperbolic plane bounded by two asymptotic geodesics and two tangent horocycles with its induced path metric.   We call this the {\em hyperbolic metric on $[0,r_0] \times [0,s_0]$}.   Also observe that $B_{\tau,s_0}$ maps $[0,r_0] \times \{s\}$ into $\Omega_{y+s,u}$ and for any arc of $[0,r_0] \times \{s\}$ that maps into an edge $e$ of $\Omega_{y+s,u}$, the map on this arc is a path isometry to the $\mu$--path-metric $e$.

We will also want to consider this same construction in a slightly more general setting, namely when $\tau$ is an injective flowed transversal.  In this case, $\tau$ will be a concatenation of finitely many transversals and the $\mu$--path metric makes sense on each of these and extends to a metric on all of $\tau$.  We then proceed choosing a unit speed parameterization $B_\tau \colon [0,r_0] \to \tau$ and constructing the map $B_{\tau,s_0} \colon [0,r_0] \times [0,s_0] \to X$ as above.

We call any $B_{\tau,s_0}$ which is a homeomorphism onto its image a {\em hyperbolic model map}.   Suppose $\delta \colon [0,1] \to X$ is a path contained in the image of some hyperbolic model map $B_{\tau,s_0}$.  Define the $d_u$--length of $\delta$ to be
\[ \ell_u(\delta) = \ell(B_{\tau,s_0}^{-1}(\delta)) \]
where on the right, $\ell$ denotes length in the hyperbolic metric on $[0,r_0] \times [0,s_0]$.  Given a path $\delta \colon [0,1] \to X$ which can be subdivided into subpaths, each of which is contained in the image of some hyperbolic model map, we define the $d_u$--length to be the sum of the $d_u$--lengths of the subpaths.  We call such a path a {\em tame path}.  The $d_u$--length of any tame path is independent of the set of hyperbolic model maps used to calculate its length---this follows from the fact that on the overlap of any two, the composition $B_{\tau,s_0} \circ B_{\tau',s_0'}^{-1}$ (where defined) is a path isometry.

Now we define a pseudo-metric $d_u$ on $X$ by
\[ d_u(\xi,\xi') = \inf \{ \ell_u(\delta) \mid \delta \mbox{ is a tame path with } \delta(0) = \xi, \delta(1) = \xi' \}.\]
\begin{proposition} \label{P:pseudo-metric is metric}
The pseudo-metric $d_u$ is a geodesic metric on $X$.  Furthermore, for every point $\xi$ in the interior of a $2$--cell, there is a hyperbolic model map $B_{\tau,s_0}$ and a disk $\Delta \in [0,r_0] \times [0,s_0]$ isometric to a disk in the hyperbolic plane so that $B_{\tau,s_0}\big|_{\Delta}$ maps isometrically onto a neighborhood of $\xi$.

For any other point $\xi$, there exists a finite set of hyperbolic model maps $\{ B_{\tau_i,s_{0,i}} \}_{i=1}^n$ and disks $\Delta_i \subset [0,r_{0,i}] \times [0,s_{0,i}]$ isometric to disks in the hyperbolic plane such that $B_{\tau_i,s_{0,i}}$ restricts to an isometric embedding and the union of the images defines a neighborhood of $\xi$.
\end{proposition}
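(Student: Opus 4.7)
The plan is to deduce the proposition from a local structure result: every point $\xi \in X$ admits a neighborhood on which $d_u$ is realized by the pullback metric under a hyperbolic model map (or, at points of $X^{(1)}$, by the quotient metric from finitely many such pullbacks). From this it will follow that $d_u$ is a genuine metric inducing the original topology on $X$, and the Hopf--Rinow theorem for length spaces (Bridson--Haefliger, Proposition~I.3.7) will then give geodesicity, since $(X,d_u)$ is a compact, locally compact length space.

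For the hyperbolic charts at a $\xi$ in the interior of a $2$-cell, let $y=\widetilde\eta_u(\xi)$ and consider the leaf $\Omega_{y,u}$. By density of vertex leaves of $\mathcal F$, I can choose a transversal $\tau_0 \subset \Omega_{y,u}$ containing $\xi$ in its interior that lies entirely in the ambient $2$-cell. Flowing $\tau_0$ backward by some small time gives a transversal $\tau$ so that $\xi$ lies in the interior of the image of $B_{\tau, s_0}$ for some $s_0 > 0$. After shrinking $\tau$ and $s_0$ if necessary, the regularity of $\psi$ on the interior of a $2$-cell ensures $B_{\tau,s_0}$ is a homeomorphism onto its image and hence a hyperbolic model map. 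By the very definition of $d_u$-length as a pullback of the hyperbolic length, the induced map of $[0,r_0]\times[0,s_0]$ into $(X,d_u)$ is distance-nonincreasing. The reverse inequality for nearby points follows from the standard trick: any tame path leaving a small hyperbolic ball around the preimage of $\xi$ has $d_u$-length at least the radius of the ball, so for sufficiently close points the infimum is attained by the hyperbolic geodesic in the chart.

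For $\xi \in X^{(1)}$, I would carry out the construction above independently in each $2$-cell adjacent to $\xi$, producing finitely many hyperbolic model maps $\{B_{\tau_i,s_{0,i}}\}_{i=1}^n$ whose images cover a neighborhood of $\xi$. Where two such maps overlap---necessarily in an arc contained in a $1$-cell or in a leaf of $\Omega_u$---the transition map is a path isometry, for the following reason: arcs in any leaf are parametrized at unit speed for the $\mu$-path-metric, which depends only on $\mu$ and not on the chart by Proposition~\ref{P:twistedMF1} and Lemma~\ref{L:mu length descends}, while the reparameterized semiflow is transported equivariantly by both charts in accordance with Lemma~\ref{L:mu length under flow}. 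Gluing the charts along these overlaps realizes a neighborhood of $\xi$ as a finite union of hyperbolic disks meeting isometrically along geodesic arcs. With the local picture established in both cases, the pseudo-metric $d_u$ is seen to be a metric (distinct points in any chart have positive hyperbolic distance), to induce the given topology on $X$, and to be a length metric, whence the Hopf--Rinow conclusion applies.

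The main obstacle I expect is the geometric verification at the $1$-skeleton: ruling out the possibility that some exotic tame path threading through several $2$-cells provides a shortcut that makes $d_u$ strictly smaller than the intrinsic metric of the glued hyperbolic model. This will be handled by showing, via a uniform lower bound on the $d_u$-length of any tame arc crossing a compact annular collar around $\xi$, that sufficiently short tame paths from $\xi$ cannot escape the union of local charts; the required bound follows from compactness of $X$ together with positivity of $\mu$ on the finitely many $H$-orbits of transversals arising from the standard cell structure.
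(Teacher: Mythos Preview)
Your overall strategy---build local hyperbolic charts, verify that $d_u$ agrees with the chart metric locally, then conclude that $d_u$ is a genuine length metric---matches the paper's, and your treatment of interior points is essentially correct. The serious gap is at the $1$--skeleton, precisely at the step you yourself flag as the main obstacle.

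You propose to handle the shortcut problem by an ``annular collar'' lower bound: any tame arc crossing a small collar around $\xi$ has $d_u$--length bounded below, so short tame paths cannot escape the union of charts. But this only reduces the problem to comparing $d_u$ with the intrinsic metric of the \emph{union} of charts, and here your argument is incomplete. You assert that the charts ``meet isometrically along geodesic arcs,'' but in fact the images $V_i = B_{\tau_i,s_0}([0,r_0]\times[0,s_0])$ can overlap in genuinely two--dimensional regions, not just arcs; and even where they do meet along arcs, knowing that the transition maps are path isometries on overlaps (which is already used to make $d_u$--length well-defined) does not by itself rule out a tame path that winds through several $V_i$ being shorter in $d_u$ than the hyperbolic geodesic in any single chart. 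Your final appeal to ``compactness of $X$ together with positivity of $\mu$'' does not supply the missing inequality: positivity of $\mu$ bounds horizontal lengths, but gives no control on paths that oscillate between charts with small horizontal displacement in each.

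The paper's resolution is a concrete mechanism you are missing. Having chosen the models so that the ``top'' $V^+ = \bigcup_i V_i^+$ (the intersection of the neighborhood with a single leaf) is a \emph{tree}, one defines for each $i$ a retraction $\Pi_i \colon V \to V_i$ by first applying closest--point projection $V^+ \to V_i^+$ in the tree and then conjugating by the local flow. The point is that closest--point projection in a tree is $1$--Lipschitz, and the flow is a homothety in both source and target, so each $\Pi_i \circ B_{\tau_j,s_0}$ is $1$--Lipschitz (and the identity when $i=j$). This immediately shows that $B_{\tau_i,s_0}$ is distance--preserving from the hyperbolic metric to the path pseudo--metric on $V$, which is the inequality your argument lacks. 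The collar estimate then follows as a consequence, not as an independent input.
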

\begin{proof}
For a point $\xi$ in the interior of a $2$--cell, we can find a hyperbolic model map that is a homeomorphism onto a neighborhood of $\xi$.  Restricting to a sufficiently small disk provides the required neighborhood.

\begin{figure}[htb]
\begin{center}
\includegraphics[width=13cm]{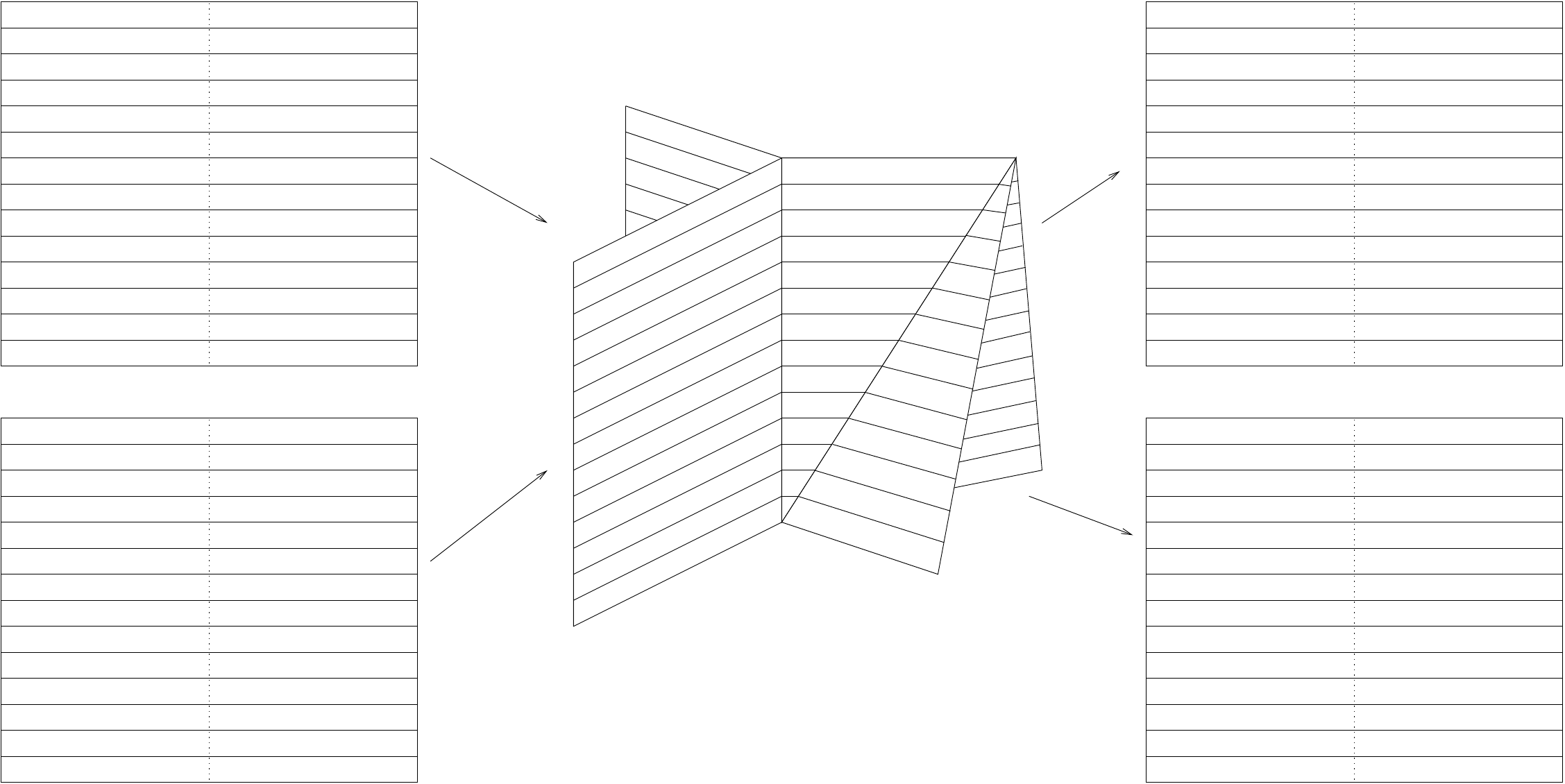}
\caption{The maps $B_{\tau_1,s_0},B_{\tau_2,s_0}$ from $[0,r_0] \times [0,s_0]$ (left) to the neighborhood $V$ (middle), and the projections $\Pi_1$ and $\Pi_2$ back onto $[0,r_0] \times [0,s_0]$.  In this example, the map $\Pi_2 \circ B_{\tau_1,s_0}$ maps the upper left rectangle to the lower right rectangle, and is given by $(r,s) \mapsto (r',s)$ where $r' = r$ if $r \geq r_0/2$ and $r' = r_0/2$ if $r \leq r_0/2$.} \label{F:gluing models}
\end{center}
\end{figure}

For a point $\xi$ which is not in the interior of a $2$--cell, we choose a finite number of hyperbolic model maps $\{B_{\tau_i,s_{0,i}}\}_{i=1}^n$ for which the images, denoted $\{ V_i \}_{i=1}^n$, contain $\xi$ and so that the union $V = V_1 \cup \ldots \cup V_n$ is a (closed) neighborhood of $\xi$.  By shrinking the model maps if necessary, we can assume that (1) the domains are all the same, so $s_{0,i} = s_0$ and $r_{0,i} = r_0$ for some $r_0,s_0 > 0$ and all $i$, (2) the defining transversals (or flowed transversals) $\tau_1,\ldots,\tau_n$ are contained in a single leaf $\Omega_{y,u}$, and (3) $\xi = B_{\tau_i,s_0}(r_0/2,s_0/2)$ for all $i$.  These conditions are easy to arrange and primarily serve to simplify the notation.

We note that the ``top'' $V_i^+ = B_{\tau_i,s_0}([0,r_0], \times \{s_0\}) \subset V_i$ is contained in the leaf $\Omega_{y+s_0,u}$ for all $i$, and the union $V^+ = V_1^+ \cup \ldots \cup V_n^+$ forms a neighborhood of the point $\flow_{s_0/2}^u(\xi)$ (which is also the point $B_{\tau_i,s_0}(r_0/2,s_0)$, for all $i$) inside $\Omega_{y+s_0,u}$.   By shrinking the models further if necessary, we can assume that $V^+$ is a contractible subset of the graph $\Omega_{y+s_0,u}$, hence $V^+$ is a tree.

\begin{claim}
For each $i = 1,\ldots,n$, there is a map $\Pi_i \colon V \to V_i $ such that
\[ \Pi_i \circ B_{\tau_j,s_0} \colon [0,r_0] \times [0,s_0] \to [0,r_0] \times [0,s_0] \]
satisfies the following conditions with respect to the hyperbolic metrics on domain and range:
\begin{enumerate}
\item[(i)] for every $i,j$, $\Pi_i \circ B_{\tau_j,s_0}$ is $1$--Lipschitz, and
\item[(ii)] for every $i$, $\Pi_i \circ B_{\tau_i,s_0}$ is the identity.
\end{enumerate}
\end{claim}
See Figure \ref{F:gluing models}.
\begin{proof}[Proof of Claim.]
For each $i$, the top $V_i^+$ is an arc, and we first define the restriction of $\Pi_i$ to $V^+$ to be the ``closest point projection'' $V^+ \to V_i^+$; since $V^+$ is a tree, this closest point projection is independent of the choice of a path metric used to define it.    For each $i,j = 1,\ldots,n$, we compose $\Pi_i$ with the model maps and their inverses to obtain a map $\pi_{i,j} \colon [0,r_0] \to [0,r_0]$ given by
\[ \pi_{i,j}(r) = B_{\tau_i,s_0}^{-1}(\Pi_i(B_{\tau_j,s_0}(r,s_0))).\]
Observe that when $i=j$, we have $\pi_{i,i}$ is the identity.   Furthermore, when $i \neq j$, $\pi_{i,j}$ is $1$--Lipschitz: in fact, because of the way closest point projections work in a tree, we can subdivide $[0,r_0]$ into at most three segments, so that on each segment $\pi_{i,j}$ is either an isometry, or constant.

Now, given any point $\xi_0 \in V$ we define $\Pi_i(\xi_0)$ to be the point obtained by first flowing up for some time $s$ to $V^+$, projecting by closest point projection to $V_i^+$, then {\em within $V_i$} flowing backward a time $s$.  This last map in the composition is possible because inside $V_i$, the semiflow restricts to a local flow:  since $B_{\tau_i,s_0}$ is a homeomorphism onto its image, this local flow is just the conjugate of the local flow $\nu$ on $[0,r_0] \times [0,s_0]$ defined by $\nu_t(r,s) = (r,s+t)$.  With this notation we can write down the map $\Pi_i$
\[ \Pi_i(\xi_0) = \nu_{-s}(B_{\tau_i,s_0}^{-1}(\Pi_i(\flow_s^u(\xi_0)))) \]
where $s \geq 0$ is the smallest number so that $\flow_s(\xi_0) \in V^+$.   

For any $i,j = 1,\ldots,n$, the composition $\Pi_i \circ B_{\tau_j,s_0}$ takes the simple form
\[ \Pi_i \circ B_{\tau_j,s_0}(r,s) = (\pi_{i,j}(r),s).\]
It now follows from the properties of $\pi_{i,j}$ mentioned above that $\Pi_i$ satisfies (i) and (ii) from the claim, for all $i$ and $j$.
\end{proof}

Now suppose that $(r,s),(r',s') \in [0,r_0] \times [0,s_0]$ are any two points.  If $\delta$ is tame path between $B_{\tau_i,s_0}(r,s)$ and $B_{\tau_i,s_0}(r',s')$ contained in $V$, then $\Pi_i(\delta)$ is a path between $(r,s)$ and $(r',s')$ with hyperbolic length no more than the $d_u$--length of $\delta$ (since $\Pi$ is $1$--Lipschitz on each hyperbolic model where we compute lengths).  Therefore, with respect to the path pseudo-metric on $V$ induced by $d_u$--lengths, the map $B_{\tau_i,s_0}$ preserves distances.  In particular, this pseudo-metric on $V$ is a metric.

Finally, let $\epsilon > 0$ be half the hyperbolic distance from $(r_0/2,s_0/2)$ to the boundary $[0,r_0] \times \{0,s_0\} \cup \{0,r_0\} \times [0,s_0]$, and let $\Delta$ be a hyperbolic ball of radius $\epsilon$ centered at $(r_0/2,s_0/2)$.
\begin{claim}
The restriction of $B_{\tau_i,s_0}$ to $\Delta$ is distance preserving with respect to the $d_u$ pseudo-metric. 
\end{claim}
\begin{proof}[Proof of Claim.]
Observe that from the discussion above, in the path metric on $V$ induced by $d_u$--lengths, the union of the images of $\Delta$
\[ B_{\tau_1,s_0}(\Delta) \cup \ldots \cup B_{\tau_n,s_0}(\Delta)\]
is precisely the $\epsilon$--ball about $\xi$.  Furthermore, for any two points $(r,s),(r',s') \in \Delta$, a path $\delta$ between $B_{\tau_i,s_0}(r,s)$ and $B_{\tau_i,s_0}(r',s')$ which leaves $V$ must contain two subpaths, each connecting the frontier of $B_{\tau_1,s_0}(\Delta) \cup \ldots \cup B_{\tau_n,s_0}(\Delta)$ to the frontier of $V$, completely contained in $V$.  Considering appropriate projections $\Pi_j$ and $\Pi_k$, we see that these subpaths must each have length at least $\epsilon$, and so the path $\delta$ has length at least $2 \epsilon$.  Since the hyperbolic distance between $(r,s)$ and $(r',s')$ is less than $2 \epsilon$, it follows that the hyperbolic geodesic in $\Delta$ between these points projects to a path whose $d_u$--length is strictly shorter than any path not entirely contained in $V$.  But then distances between points in $B_{\tau_i,s_0}(\Delta)$ are computed in terms of $d_u$--lengths of paths in $V$.  We have already seen that $B_{\tau_i,s_0}$ is distance preserving in the $d_u$--path metric on $V$, and so $B_{\tau_i,s_0}$ is distance preserving as a map from $\Delta$ into $X$.
\end{proof}
All that remains is to see that $d_u$ is actually a metric, and not a pseudo-metric---as a path metric on a compact space, it will necessarily be geodesic.  For that, we just need to know that no two points have distance $0$.  This follows from the arguments just given: a path from some point $\xi$ to any other point $\xi'$ is either entirely contained in the neighborhood of $\xi$ built from hyperbolic models, and hence is bounded away from zero, or else must exit the neighborhood and hence has length bounded away from $0$ (by the radius of the hyperbolic disk $\Delta$).  This completes the proof.
\end{proof}

\begin{corollary} \label{C:induced path metric on leaves}
For any $y \in \R$ the path metric on $\Omega_{y,u}$ induced by $d_u$ makes $\Omega_{y,u}$ into a simplicial metric graph.  Moreover, the path metric on any edge $e$ induced by $d_u$ is precisely the $\mu$--path-metric on $e$.
\end{corollary}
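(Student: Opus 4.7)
The plan is to deduce this corollary directly from the local hyperbolic model description of $d_u$ given by Proposition~\ref{P:pseudo-metric is metric}, combined with the identification between the hyperbolic length of horocyclic arcs in the domain and the $\mu$--length of flowed transversals in the image. Specifically, if $B_{\tau,s_0}$ is a hyperbolic model with $\tau \subset \Omega_{y',u}$ parametrized at unit speed in the $\mu$--path-metric, then the horocyclic arc $[r_1,r_2] \times \{s\}$ in the domain has hyperbolic path length $e^s(r_2-r_1)$, which by Lemma~\ref{L:mu length under flow} exactly equals the $\mu$--length of its image $\psi^u_s(\beta_\tau([r_1,r_2])) \subset \Omega_{y'+s,u}$. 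This numerical coincidence, promoted through the isometric embeddings supplied by Proposition~\ref{P:pseudo-metric is metric}, is the engine of the proof.

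First I would verify the second assertion. Fix an edge $e \subset \Omega_{y,u}$, which (since its endpoints lie in $X^{(1)}$) is a transversal contained in a single $2$--cell. For any interior point $\xi \in e$, the semiflow $\psi^u$ is a local flow near $\xi$, so I may choose a hyperbolic model $B_{\tau,s_0}$ positioning $\xi$ as $B_{\tau,s_0}(r_0/2, s_0/2)$ with $e$ appearing as the horocyclic arc $\{s = s_0/2\}$. On the isometric disk $\Delta$ about $(r_0/2,s_0/2)$ provided by Proposition~\ref{P:pseudo-metric is metric}, the $d_u$--path-length of any sub-arc of $e \cap B_{\tau,s_0}(\Delta)$ equals its hyperbolic horocyclic length, which equals its $\mu$--length by the observation above. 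For a vertex $\xi$ of $\Omega_{y,u}$ I would apply the second part of Proposition~\ref{P:pseudo-metric is metric} to obtain a finite collection of hyperbolic models $\{B_{\tau_i,s_0}\}$ whose restrictions to disks $\Delta_i$ cover a neighborhood of $\xi$ by isometric embeddings; each edge incident to $\xi$ appears inside some $\Delta_i$ as a horocyclic arc at height $s_0/2$, and the same length identification applies. Covering an arbitrary sub-arc of $e$ by finitely many such neighborhoods and summing, the $d_u$--path-length of any sub-arc agrees with its $\mu$--length, so the path metric induced on $e$ by $d_u$ is the $\mu$--path-metric.

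For the first assertion I would combine the above with three structural facts: every edge of $\Omega_{y,u}$ has finite positive $\mu$--length by Proposition~\ref{P:twistedMF1} and Lemma~\ref{L:monotonicity}; the valence of $\Omega_{y,u}$ at each vertex is controlled by the uniformly bounded valence of $\tOmega_{y,u}$ noted in \S\ref{S:closed 1-forms and foliations}; and the previous step identifies each edge isometrically with an interval of length $\ell_\mu(e)$. Together these give $\Omega_{y,u}$ the structure of a simplicial metric graph on which the $d_u$--induced path metric agrees edgewise with the $\mu$--path-metric. The main technical obstacle will be verifying that the hyperbolic models can always be positioned so that each point of a leaf (including vertices) lies in the interior of the domain of some model meeting the leaf only in a horocycle at the desired height; this is immediate in the interior of a $2$--cell by local reversibility of $\psi^u$, and at vertices it is a rebookkeeping of the gluing construction already carried out in the proof of Proposition~\ref{P:pseudo-metric is metric}, where the projections $\Pi_i$ and the disks $\Delta_i$ are built precisely to allow such identifications.
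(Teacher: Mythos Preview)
Your proposal is essentially correct and follows the same approach as the paper: both deduce the corollary from Proposition~\ref{P:pseudo-metric is metric} by intersecting the local hyperbolic model neighborhoods with the leaf $\Omega_{y,u}$ and using the identification of horocyclic hyperbolic length with $\mu$--length of flowed transversals. The paper's proof is terser and explicitly invokes the \emph{tameness} of $\omega^u$ (finitely many critical points on each skew $1$--cell) to guarantee that the intersection of a neighborhood with the leaf consists of finitely many flowed transversals glued along arcs; you instead lean on the graph structure of $\tOmega_{y,u}$ already recorded in \S\ref{S:closed 1-forms and foliations}, which is fine since tameness is part of the standing hypotheses in \S\ref{S:lipschitz flows}.

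One minor terminological slip: you assert that an edge $e \subset \Omega_{y,u}$ ``is a transversal'' because its endpoints lie in $X^{(1)}$. This is not quite right---by Definition~\ref{D:the_transversals} a transversal must have endpoints on \emph{vertex leaves} of $\mathcal F$, which is a stronger condition than merely lying in $X^{(1)}$. Fortunately this does not damage your argument, since you immediately pass to hyperbolic models $B_{\tau,s_0}$ (built from genuine transversals $\tau$) rather than using $e$ itself as the base of a model, and the $\mu$--path-metric on $e$ is already defined in the paper by approximation regardless of whether $e$ is a transversal.
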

\begin{proof}
Because $\omega^u$ is tame, the restriction to any skew $1$--cell has only finitely many critical points.  Therefore the components of intersection of the neighborhoods from Proposition \ref{P:pseudo-metric is metric} with a leaf $\Omega_{y,u}$ define neighborhoods of points in $\Omega_{y,u}$ which are obtained by gluing finitely many flowed transversals of the form $B_{\tau_i,s_0}([0,r_0] \times \{s \})$ along finitely many arcs.  Furthermore, the metric $d_u$ induces the $\mu$--path-metric on these flowed transversals.  Since every point in any $\Omega_{y,u}$ has such a neighborhood, the corollary follows.
\end{proof}

We are now ready to give the
\begin{proof}[Proof of Theorem~\ref{T:lipschtiz flows}]
Suppose first that $u \in \Csec$ lies in $\mathfrak G$ so that $\lambda = e^{\mathfrak H(u)} = e$.   Let $d_u$ be the metric constructed above.  Proposition \ref{P:pseudo-metric is metric} guarantees that $d_u$ is a metric, and provides enough local information to guarantee that $d_u$ satisfies all the properties of the theorem as we now explain.

First, the flowlines of the local flow $\nu_t(r,s) = (r,t+s)$ on the $[0,r_0] \times [0,s_0]$ are unit speed geodesics in the hyperbolic metric on $[0,r_0] \times [0,s_0]$ and the hyperbolic model maps $B_{\tau_i,s_0}$ conjugate the local flow to the restriction of the semiflow on the image.   Since $B_{\tau_i,s_0}$ is an isometric embedding, (1) follows.  Part (2) follows from Corollary \ref{C:induced path metric on leaves}.  In the hyperbolic metric on $[0,r_0] \times [0,s_0]$, $\nu_s$ maps $[0,r_0] \times \{ s' \}$ to $[0,r_0] \times \{ s' + s \}$ and is an $e^s$--homothety.  Since $\lambda = e$ and $B_{\tau_i,s_0}$ conjugates $\nu$ to the restriction of $\flow^u$, (3) is also true.  Finally, part (4) is an immediate consequence of the first part of Proposition \ref{P:pseudo-metric is metric} since it says that the metric on the interior of every $2$--cell is locally isometric to the hyperbolic plane, hence has constant curvature $-1$.

For an arbitrary class $u \in \Csec$, we use the fact that $\Csec$ is contained in the McMullen cone $\C_X$ hence the ray through $u$ intersects $\mathfrak G$ in a unique point $u'$.   Let $t > 0$ be such that $u' = t u$ then $\lambda = e^{\mathfrak H(u)} = e^{\mathfrak H(\frac{1}{t}u')} = e^{t \mathfrak H(u')} = e^t$.   Let $\omega^{u'}$ be the closed $1$--form used above since $u' \in \mathfrak G$.  Then $\omega^u = t \omega^{u'}$ represents $u$, and the foliation $\Omega_u$ is exactly the same as the foliation $\Omega_{u'}$ except that the leaves are given by $\Omega_{y,u} = \Omega_{ty,u'}$ (since $\widetilde \fib_{u'} = t \widetilde \fib_u$).
Similarly, the reparameterization $\flow^u$ is related to $\flow^{u'}$ by $\flow^u_s = \flow^{u'}_{ts}$. 

Now let $d_u = \frac{1}{t} d_{u'}$.  Since $\flow^u_s = \flow^{u'}_{ts}$, the $\flow^u_s$--flow lines are local geodesics, and on any edge of any leaf of $\Omega_u$ ($=\Omega_{u'}$), $\flow^u_s = \flow^{u'}_{ts}$ is still an $e^{ts}$--homothety (this is unaffected by scaling the metric).  Since $\lambda = e^t = e^{\mathfrak H(u)}$, this completes the proof.
\end{proof}


\appendix

\section{Characterizing sections}
\label{S:characterizing_sections}

The goal of this appendix is to prove Proposition~\ref{P:dual_sections}, which explicitly constructs dual cross sections for all integral classes in the Fried cone $\D$.  
To facilitate the construction, we give a combinatorial characterization of $\D$ in terms of a positivity condition for cellular cocycle representatives with respect to a certain trapezoidal subdivision of $X$.
This characterization is similar in spirit to the definition of the cone $\A$ from \cite{DKL}.  This allows us to borrow many of the techniques from \cite{DKL} for constructing sections and apply them to classes in the Fried cone $\D$.  The constructions require a number of modifications and new ideas, and while the resulting sections still provide a wealth of information about the associated cohomology class, they do  not (and can not) behave as nicely as those from Theorem~\ref{T:DKL_BC} in general.  

We begin by recalling some of the relevant notions about Trapezoidal subdivisions from \cite{DKL}.  Then we introduce the combinatorial characterization of $\D$ and analyze some of its consequences. Next we describe a procedure for subdividing the cell structure and refining the cocycle so that it more closely resembles a cocycle representing an element of $\A$.  From this we finally construct a section dual to any primitive integral class $u \in \D$ following the construction in \cite{DKL} with appropriate modifications.

\subsection{Trapezoidal subdivisions}
\label{S:trapezoidal_subdivisions}

Our construction of cross sections will involve working with various trapezoidal subdivisions of the cell structure on $X$. To this end we recall from \cite[\S6.3]{DKL} the relevant structure of these subdivisions as well as some terminology and notation for working with them. A \emph{trapezoidal cell structure} on $X$ is one in which every $1$--cell is either vertical or skew, and  every $2$--cell is a trapezoid. Each $1$--cell $e$ is equipped with a globally defined positive orientation so that the restriction $\eta\vert_{e}$ is orientation preserving. The boundary of a trapezoidal $2$--cell $T$ consists of four arcs $\ell_-(T)$, $\ell_+(T)$, $e_-(T)$, and $e_+(T)$ which we refer to as the left, right, bottom and top arcs of $T$, respectively. Each of these arcs is a union of $1$--cells and may therefore be regarded as a cellular $1$--chain in $X$. The trapezoid induces a \emph{$T$--orientation} on each of these arcs in which $\ell_\pm(T)$ and $e_-(T)$ are given the positive orientation; the sides $\ell_\pm(T)$ are then distinguished by the convention that $e_-(T)$ is oriented from $\ell_-(T)$ to $\ell_+(T)$. The $T$--orientation on the top arc $e_+(T)$ is defined so that it is also oriented from $\ell_-(T)$ to $\ell_+(T)$. This $T$--orientation on $e_+(T)$ may not agree with the globally defined positive orientation, and we define the \emph{sign}  $\zeta(T)\in \{\pm1\}$ of $T$ so that the $1$--cells comprising the $1$--chain $\zeta(T)e_+(T)$ appear with positive sign. 

In any trapezoidal cell structure we require that the sides $\ell_\pm(T)$ of each trapezoid consist of vertical $1$--cells (however $\ell_+(T)$ may degenerate to a vertex), and that the bottom arc $e_-(T)$ consists of a \emph{single} skew $1$--cell.  Conversely each skew $1$--cell is equal to $e_-(T)$ for a unique trapezoid $T$. On the other hand, the top arc $e_+(T)$ of a trapezoid may consist of several skew $1$--cells. Finally, we note that each $0$--cell is the initial endpoint of a unique (positively oriented) vertical $1$--cell.

\subsection{A combinatorial characterization of the Fried cone $\D$}
\label{S:the_cone}

Recall that $f\colon \Gamma\to \Gamma$ is an expanding irreducible train track map with transition matrix $A(f)$ and associated transition graph $\mathcal G(f)$.  The circuits $\mathcal Y$ of $\mathcal G(f)$ determine a finite set of closed orbits $\{ \mathcal O_y\}_{y \in \mathcal Y}$ as explained in \S\ref{S:cone_of_sections}. 

We subdivide $X$ along the closed orbits $\mathcal O_y$ for all circuits $y\in \mathcal{Y}$. More precisely, for a given circuit $y\in \mathcal Y$, each skew $1$--cell $\sigma$ of $X$ that intersects $\mathcal O_y$ is subdivided by adding a $0$--cell at $\sigma\cap \mathcal O_y$, and each trapezoidal $2$--cell $T$ of $X$ that intersects $\mathcal O_y$ is subdivided into two trapezoids by adding the vertical $1$--cell $T\cap \mathcal O_y$. The cell structure obtained by performing these subdivisions for each $y\in \mathcal{Y}$ is called the \emph{circuitry cell structure} on $X$ and will be denoted $\XC$. Since $\mathcal{Y}$ is finite, we note that $\XC$ is indeed a finite trapezoidal subdivision of $X$.

\begin{defn}[Vertical positivity]
\label{D:B-cone}
A cellular $1$--cocycle $z\in Z^1(Y;\R)$ of a trapezoidal subdivision $Y$ of $X$ is said to be \emph{vertically positive} if $z(\sigma) > 0$ for every vertical $1$--cell $\sigma$ of $Y$. 
\end{defn} 

\begin{proposition}
\label{P:homological_characterization}
The Fried cone $\D$ is equal to the set of cohomology classes that can be represented by vertically positive $1$--cocycles of the circuitry cell structure $\XC$.
\end{proposition}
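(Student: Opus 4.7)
The plan is to prove the two containments separately, where the ``$\Leftarrow$'' direction is straightforward from the construction of $\XC$ and the ``$\Rightarrow$'' direction reduces to a standard potential argument on a functional directed graph.

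First I would handle the easier direction: suppose $z$ is a vertically positive cellular $1$--cocycle on $\XC$ representing $u \in H^1(X;\R)$. By Proposition~\ref{P:Fried finiteness}, to show $u \in \D$ it suffices to verify $u(\mathcal O_y) > 0$ for every circuit $y \in \mathcal Y$. But the circuitry cell structure was built precisely so that each $\mathcal O_y$ is a cycle of vertical $1$--cells of $\XC$: every intersection of $\mathcal O_y$ with a skew $1$--cell was made a $0$--cell, and the subarcs between these $0$--cells are by construction vertical $1$--cells. Consequently $u(\mathcal O_y) = z(\mathcal O_y) = \sum_{\sigma} z(\sigma) > 0$, where the sum ranges over the finitely many vertical $1$--cells comprising $\mathcal O_y$.

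For the reverse containment, let $u \in \D$ and pick any cellular $1$--cocycle $z_0 \in Z^1(\XC;\R)$ representing $u$. The goal is to find a $0$--cochain $c$ so that the cohomologous cocycle $z = z_0 + \delta c$ satisfies $z(\sigma) > 0$ for every vertical $1$--cell $\sigma$ of $\XC$. The key combinatorial observation is that, by the conventions of \S\ref{S:trapezoidal_subdivisions}, every $0$--cell is the initial endpoint of a \emph{unique} positively oriented vertical $1$--cell. Thus the vertical $1$--skeleton of $\XC$, with vertices $\XC^{(0)}$ and directed edges the positively oriented vertical $1$--cells weighted by $z_0$, is a \emph{functional directed graph}: every vertex has out-degree exactly one. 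It follows that each weak component has exactly one directed cycle with a forest of in-trees hanging off of it, and each such directed cycle, being a closed concatenation of flowline arcs, is a closed orbit $\mathcal O$ of $\flow$ lying in $\XC^{(1)}$. Since $u \in \D$, the weight $z_0(\mathcal O) = u(\mathcal O)$ of every such directed cycle is strictly positive.

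The last step is the explicit construction of $c$, proceeding component by component. On a directed cycle $\mathcal O$ with vertices $v_0, v_1, \dotsc, v_{k-1}$ and edges $\sigma_i\colon v_i \to v_{i+1}$ (indices mod $k$), I would set $c(v_0) = 0$ and $c(v_{i+1}) - c(v_i) = \frac{u(\mathcal O)}{k} - z_0(\sigma_i)$, which is consistent around the cycle since the corrections sum to $u(\mathcal O) - u(\mathcal O) = 0$, and which produces modified weight $\frac{u(\mathcal O)}{k} > 0$ on every edge of $\mathcal O$. Then I would extend $c$ over the in-trees by backward induction from the cycle: if $\sigma\colon v \to v'$ is a vertical edge with $c(v')$ already defined, set $c(v) = z_0(\sigma) + c(v') - 1$, yielding modified weight $1$ on $\sigma$. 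This defines $c$ on all of $\XC^{(0)}$, and by construction $z = z_0 + \delta c$ is vertically positive, so $u = [z]$ is represented by a vertically positive cocycle as required.

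I expect the mildly delicate step to be the structural observation that the vertical $1$--skeleton is a functional digraph and the identification of its directed cycles with closed orbits of $\flow$ in $\XC^{(1)}$; once that is in hand, the potential argument is essentially bookkeeping. No deep tools are required beyond the combinatorics of trapezoidal cell structures recalled in \S\ref{S:trapezoidal_subdivisions} and the finiteness reduction of Proposition~\ref{P:Fried finiteness}.
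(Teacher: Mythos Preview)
Your proposal is correct and follows essentially the same approach as the paper: both directions argue exactly as you do, with the paper phrasing the structure of the vertical $1$--skeleton as closed orbits (``periodic'' vertical $1$--cells) together with rooted trees oriented toward those orbits, which is precisely your functional-digraph decomposition into a unique cycle per component with in-trees attached. Your explicit formulas for the potential $c$ make concrete what the paper describes more informally (``distribute the value equally'' on the cycle, then ``working from the root down'' on the trees), but the argument is the same.
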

\begin{proof}
Recall from Proposition \ref{P:Fried finiteness} that $\D\subset H^1(X;\R)$ consists exactly of those classes which are positive on all orbits $\mathcal O_y$ for $y\in \mathcal Y$. Since each $\mathcal O_y$ is comprised of vertical $1$--cells of $\XC$, it follows immediately that every vertically positive cocycle of the circuitry cell structure $\XC$ represents a class in $\D$. 

It remains to prove that each class in $\D$ can be represented by a vertically positive cocycle of $\XC$. Let $u \in \D$ so that $u(\mathcal{O})>0$ for every closed orbit $\mathcal O\in \mathfrak{O}(\flow)$ of $\flow$. Let us say that a vertical $1$--cell of $\XC$ is periodic if it lies on a closed orbit of $\flow$ and that it is nonperiodic otherwise. As closed orbits of $\flow$ are necessarily disjoint, we see that the periodic $1$--cells form a disjoint union of finitely many $\gamma_1,\dotsc,\gamma_k$ closed orbits of $\flow$ (some of which are of the form $\mathcal{O}_y$ for $y\in \mathcal Y$). The (oriented) nonperiodic $1$--cells, on the other hand, form a disjoint union of oriented rooted trees $T_1,\dotsc,T_n$ (oriented towards the root) with the root of each tree lying on one of the closed orbits $\gamma_i$.  

Choose any cocycle $z_0\in Z^1(\XC;\R)$ representing $u$. Since $u\in \mathcal{D}$, we then have $z_0(\gamma_i)> 0$ for each of the closed orbits $\gamma_1,\dotsc,\gamma_k$. 
Therefore, by adding coboundaries associated to the vertices of $\gamma_i$ to distribute the value $z_0(\gamma_i)$ equally among all periodic $1$--cells comprising $\gamma_i$, we obtain a cohomologous cocycle $z'_0$ that is positive on all periodic $1$--cells.

Having accomplished this, we may now freely add coboundaries associated to any vertex in any tree $T_j$, except for the root, without affecting the value of $z'_0$ on the periodic $1$--cells. Since each $T_j$ is a tree, it straightforward to add such coboundaries -- working from the root down -- so that the resulting cocycle $z$ is positive on all nonperiodic $1$--cells as well.  Thus $z$ is a vertically positive representative of $u$, as desired.
\end{proof}

Recall that our objective is to prove $\D\subseteq \Csec$ by constructing cross sections dual to all integral classes $u\in \D$. The first step towards this goal is the following lemma, which is a quantified strengthening of Proposition~\ref{P:Fried finiteness}.

\begin{lemma}[Positive on flowlines]
\label{L:positive_on_flowlines}
Given $u \in \D$, there exists $\kappa > 0$ with the following property. Let $X'$ be any trapezoidal subdivision of $X$ and suppose that $\xi\in X$ and $t> 0$ are such that $\xi$ and $\flow_t(\xi)$ both lie on the same skew $1$--cell $\sigma$ of $X'$. If $\gamma\subset X$ denotes the closed loop obtained by concatenating the flowline from $\xi$ to $\flow_t(\xi)$ with the arc of $\sigma$ connecting $\flow_t(\xi)$ to $\xi$, then we have 
\[u(\gamma) \geq \kappa(t - 1).\]
\end{lemma}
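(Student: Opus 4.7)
The plan is to reduce the lemma to a counting estimate on closed walks in the transition graph $\mathcal G = \mathcal G(f)$, where the positivity of $u$ on the finite set of circuit-orbits $\{\mathcal O_y\}_{y\in \mathcal Y}$ will yield the desired linear growth. A first observation is that the statement is independent of the trapezoidal subdivision $X'$: if $\sigma$ is a skew $1$-cell of $X'$ then $\sigma$ is contained in a unique skew $1$-cell $\bar\sigma$ of the original cell structure on $X$, and the loop $\gamma$ produced by concatenating the flowline from $\xi$ to $\flow_t(\xi)$ with the closing arc in $\sigma$ is the same loop as the one formed using the arc in $\bar\sigma$. Thus we may assume $X'=X$ and that $\sigma = \bar\sigma$.

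Next, I would encode the flowline arc from $\xi$ to $\flow_t(\xi)$ as an edge-path in $\mathcal G$ as in Section~\ref{S:cone_of_sections}. As the flowline proceeds, it alternately traverses vertical and skew $1$-cells of $X$; recording the sequence of skew $1$-cells traversed (together with the data of which occurrence inside $f(\cdot)$ determines the successor) produces a closed combinatorial edge-path $w$ in $\mathcal G$ based at the vertex corresponding to $\bar\sigma$. I then claim that $\gamma$ and the associated closed orbit $\mathcal O_w$ are homologous in $X$: both follow the same sequence of skew $1$-cells and vertical $1$-cells, differing only by arcs within the contractible skew $1$-cell $\bar\sigma$ (compare the proof of Lemma~\ref{L:additive cone map}). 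Consequently $u(\gamma)=u(\mathcal O_w)$.

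With this translation in hand, I would invoke Lemma~\ref{L:additive cone map} to write $w=y_1\cup\dotsb\cup y_n$ with each $y_i\in \mathcal Y$ and
\[
u(\gamma) \;=\; u(\mathcal O_w) \;=\; \sum_{i=1}^n u(\mathcal O_{y_i}).
\]
Because $\mathcal Y$ is a finite set and $u\in \D$, both
\[
\kappa_0 \;:=\; \min_{y\in \mathcal Y} u(\mathcal O_y) \;>\; 0
\qquad\text{and}\qquad
L \;:=\; \max_{y\in \mathcal Y} |y|
\]
are well-defined positive constants depending only on $u$, $f$, and $X$. Then $u(\gamma)\geq n\kappa_0$, while $|w|=\sum_i|y_i|\leq nL$, so $n\geq |w|/L$ and hence $u(\gamma)\geq (\kappa_0/L)\,|w|$.

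The final step is to relate $|w|$ to the flow-time $t$. Since $f$ is combinatorial and $X$ is built so that every vertical $1$-cell has unit $\flow$-length, a flow-arc of length $t$ starting and ending on skew $1$-cells must cross at least $\lfloor t\rfloor$ vertical $1$-cells, and each such crossing is recorded by a directed edge of $w$. Allowing one unit of slack for the fractional parts at the endpoints of the arc gives $|w|\geq t-1$. Setting $\kappa=\kappa_0/L$ then produces the desired estimate $u(\gamma)\geq \kappa(t-1)$. The main obstacle I anticipate is the bookkeeping in this last step: one must verify carefully, using the precise construction of $X$ from $f$, that the flowline-arc and the loop $\gamma$ really do correspond to an edge-path $w$ of length at least $t-1$ (rather than, say, $t/C$ for some larger constant), but this is a local combinatorial check at each skew $1$-cell traversal.
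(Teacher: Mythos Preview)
Your plan is the same as the paper's: replace $\gamma$ by a closed orbit $\mathcal O_w$, decompose $w$ into circuits via Lemma~\ref{L:additive cone map}, bound $u(\gamma)\geq n\kappa_0\geq (\kappa_0/L)\,|w|$, and finally relate $|w|$ to $t$. Two of your steps need correction, though.

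First, in your encoding step you write ``the vertex corresponding to $\bar\sigma$'', but the vertices of $\mathcal G(f)$ are the \emph{edges of $\Gamma$}, not the skew $1$--cells of $X$, and a generic skew $1$--cell of $X$ does not lie in $\Gamma$. The paper handles this by first flowing $\xi$ forward by some time $s\geq 0$ to a point $q\in\Gamma$, replacing $\gamma$ by the homotopic loop $\flow_s(\gamma)$, and then straightening the closing arc within the edge $e\subset\Gamma$ containing $q$; the resulting loop $\gamma'$ is then homotopic to a genuine closed orbit $\mathcal O_w$ through a fixed point of $f^k$ on $e$.

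Second, the assertion ``every vertical $1$--cell has unit $\flow$-length'' is false (compare the proof of Lemma~\ref{L:linear_ascent}, which only uses that flowlines meet a skew $1$--cell within time $2$), so counting vertical crossings does not give $|w|$. The length $|w|=k$ is the number of times the flowline crosses $\Gamma$, and since $\Gamma$ has first return time exactly $1$, the discrepancy $|t-k|$ equals the $\fib$--length of the closing arc along $\sigma$; the construction in \cite[\S4.4]{DKL} guarantees every skew $1$--cell projects under $\fib$ to an arc of length $<1$, whence $k>t-1$.
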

\begin{proof}
Let $\delta>0$ be the minimum of $u(\mathcal O_y)$ for $y\in \mathcal{Y}$ and let $L\geq 1$ denote the longest (combinatorial) length of any circuit in $\mathcal{G}(f)$.   Set $\kappa = \delta/L$.

Choose the smallest $s \geq 0$ so that $q = \flow_s(\xi)\in \Gamma\subset X$, and consider the the homotopic loop $\flow_s(\gamma)$. Let $e\in E\Gamma$ be the edge of $\Gamma$ containing $q$. By pushing the `slanted' portion of the loop $\flow_s(\gamma)$ (running along $\flow_s(\sigma)$) onto $e$, we see that there is some integer $k\geq 1$ (near to $t$) so that $\flow_s(\gamma)$ is homotopic to the closed loop $\gamma'$ obtained by concatenating the flowline from $q$ to $\flow_{k}(q)$ with the arc along $e$ from $\flow_k(q)$ to $q$. In fact, $\abs{t-k}$ here is equal to the length of the image of the arc along $\sigma$ from $\flow_t(\xi)$ to $\xi$ under the map $\fib\colon X\to \sone$. Since this is a subarc of some skew $1$--cell of $X$ and, by construction (see \cite[\S4.4]{DKL}), every skew $1$--cell of $X$ projects under $\eta\colon X\to \sone$ to an arc of length less than $1$, we have $k\in(t-1,t+1)$.

There is a subinterval $\alpha \subset e$ containing $q$ on which $f^k$ is an affine homeomorphism onto $e$, and we let $q_0 \in \alpha$ denote the fixed point of $f^k$.  There is thus a closed orbit $\mathcal O_w$ through $q_0$, which is homotopic to $\gamma'$ and which also crosses $\Gamma$ $k$ times.  It follows that the associated closed path $w$ in $\mathcal G(f)$ has combinatorial length $k$.  By Lemma \ref{L:additive cone map} we can now write
\[ [\gamma] = [\gamma'] = [\mathcal O_w] = [\mathcal O_{y_1}] + \cdots + [\mathcal O_{y_n}]\]
for some circuits $y_i\in \mathcal{Y}$ whose combinatorial lengths sum to $k$. Since each $y_i$ has length at most $L$, we have
\[u(\gamma) = u(\mathcal{O}_{y_1}) + \dotsb + u(\mathcal{O}_{y_n}) \geq n\delta \geq k\delta/L \geq (t-1)\delta/L = \kappa(t-1).\qedhere\]
\end{proof}

\subsection{Height}
\label{S:height}

Recall that $p\colon \tX\to X$ is the universal torsion-free abelian cover of $X$; the induced homomorphism $p_*\colon H_1(\tX;\R)\to H_1(X;\R)$ is then trivial by definition of $\tX$.  Given a trapezoidal subdivision $Y$ of $X$, let $\widetilde{Y}$ denote the lifted trapezoidal cell structure on the universal torsion-free abelian cover and choose a vertex $v_0\in\widetilde{Y}^{(0)}$ to serve as a basepoint. Every cocycle $z\in Z^1(Y;\R)$ then determines a \emph{height function} $h_z\colon \widetilde{Y}^{(0)}\to \R$ whose value at a vertex $v\in \widetilde{Y}^{(0)}$ is defined to be
\[h_z(v) = \tilde{z}(\Sigma),\]
where $\tilde{z} = p^*(z)$ is the pull-back of $z$ and $\Sigma$ is any cellular $1$--chain on $\widetilde{Y}$ with boundary $\partial \Sigma = v - v_0$. If $\Sigma'$ is any other $1$--chain with $\partial \Sigma' = v-v_0$, then $\Sigma'-\Sigma$ is a cycle and so $p_*(\Sigma'-\Sigma)$ is nullhomologous in $H_1(X;\R)$. Therefore $\tilde{z}(\Sigma' - \Sigma) = z(p_*(\Sigma'-\Sigma)) = 0$ showing that $h_z(v)$ is well-defined. We note that for any two vertices $v,v'\in \widetilde{Y}^{(0)}$, the difference $h_z(v') - h_z(v)$ is equal to $\tilde{z}(\Sigma)$ for any $1$--chain in $\widetilde{Y}$ with $\partial \Sigma = v' - v$. For notational convenience, we similarly define the height of a $1$--cell $\sigma$ of $\widetilde{Y}$ as $h_z(\sigma) = \min\{h_z(t(\sigma)),h_z(o(\sigma))\}$.

\begin{lemma}
\label{L:short_flowlines_have_bounded_height}
Let $Y$ be a trapezoidal subdivision of $X$ with a cellular cocycle $z\in Z^1(Y;\R)$. Then there exists $L\ge 0$ such that if $\widetilde{T}$ is a trapezoid of $\widetilde{Y}$ and $\sigma_\pm$ are skew $1$--cells along the arcs $e_\pm(\widetilde T)$, then $\abs{h_z(\sigma_+) - h_z(\sigma_-)}\leq L$.
\end{lemma}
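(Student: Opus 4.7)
The plan is to exploit the finiteness of the trapezoidal subdivision $Y$ of the compact complex $X$ together with the fact that $\tilde z = p^*(z)$ is determined cell-by-cell by the cocycle $z$ on $Y$. Concretely, I would define two constants depending only on $(Y,z)$: let
\[ M := \max\{\,|z(\sigma)| : \sigma \text{ is a 1-cell of } Y\,\} \]
and let $N$ be the maximum number of (oriented) $1$-cells appearing in the boundary loop $\partial T$ of any trapezoidal $2$-cell $T$ of $Y$. Both maxima are finite because $X$ is compact and $Y$ is a finite trapezoidal cell structure on $X$.

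Next, I would observe that any trapezoid $\widetilde T$ of $\widetilde Y$ projects homeomorphically onto some trapezoid $T = p(\widetilde T)$ of $Y$ (the covering is cellular), so the $1$-cells of $\partial \widetilde T$ are in bijection with those of $\partial T$. Their $\tilde z$-values are identical to the corresponding $z$-values of the projected cells, and therefore the number of $1$-cells in $\partial \widetilde T$ is at most $N$ and each has $|\tilde z|$ value at most $M$.

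Now, given skew $1$-cells $\sigma_- \subset e_-(\widetilde T)$ and $\sigma_+ \subset e_+(\widetilde T)$, pick endpoints $v_\pm$ of $\sigma_\pm$. Both vertices lie on the boundary cycle $\partial \widetilde T$, so one of the two subarcs of $\partial \widetilde T$ connecting $v_-$ to $v_+$ provides a cellular $1$-chain $\Sigma$ with $\partial \Sigma = v_+ - v_-$ consisting of at most $N$ $1$-cells of $\widetilde Y$. By the definition of the height function,
\[ h_z(v_+) - h_z(v_-) \;=\; \tilde z(\Sigma), \]
and therefore $|h_z(v_+) - h_z(v_-)| \leq NM$. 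Since $h_z(\sigma_\pm)$ is defined as the minimum of $h_z$ over the two endpoints of $\sigma_\pm$, the quantity $|h_z(\sigma_+) - h_z(\sigma_-)|$ is likewise bounded by $NM$ (the difference of minima of two pairs of numbers, each pair differing from the other by at most $NM$ termwise via the same boundary-path bound applied to any choice of endpoints, is at most $NM$). Setting $L := NM$ completes the argument.

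There is no real obstacle here: the lemma is a direct consequence of the compactness of $X$ (giving finiteness of cells and bounded cocycle values), the cellularity of the covering map $p$ (ensuring $\tilde z$ inherits bounded values), and the fact that every pair of vertices of $\widetilde T$ can be joined within $\partial \widetilde T$ using a controlled number of $1$-cells. The only small care needed is the bookkeeping to pass from vertex-heights to cell-heights via the $\min$ in the definition of $h_z(\sigma)$, which costs at most another factor already absorbed into $L = NM$.
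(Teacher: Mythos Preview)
Your proposal is correct and follows essentially the same approach as the paper: bound the height difference by the total variation of $\tilde z$ along a path in $\partial\widetilde T$, using that $\widetilde T$ projects to one of finitely many trapezoids of $Y$. The paper streamlines the bookkeeping slightly by taking $L=\max_T\sum_{\sigma\in\partial T}|z(\sigma)|$ directly and choosing $v_\pm$ to be the endpoints \emph{achieving} $h_z(\sigma_\pm)$, which avoids your final step of passing from arbitrary endpoints to minima.
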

\begin{proof}
For any trapezoid $T$ of $Y$, let $M(T)$ denote the quantity
\[M(T) = \sum_{\sigma\in\partial T} \abs{z(\sigma)},\]
where the sum is over all $1$--cells that occur in the $1$--chain $\partial T$. Take $L$ to be the maximum value of $M(T)$ among all (finitely many) trapezoids $T$ of $Y$. For skew $1$--cells $\sigma_\pm$ as in the statement of the lemma, let $v_\pm$ denote the vertex of $\sigma_\pm$ achieving $h_z(\sigma_\pm)$. Then there exists a $1$--chain $\Sigma$ consisting of cells in $\partial\widetilde T$ for which $\partial \Sigma = v_+ - v_-$. By the above definition of $L$, it then follows that
\[\abs{h_z(\sigma_+) - h_z(\sigma_-)} = \abs{h_z(v_+) - h_z(v_-)} = \abs{\tilde{z}(\Sigma)} \leq L.\qedhere\]
\end{proof}

Combining this with Lemma~\ref{L:positive_on_flowlines}, we can now prove the following.

\begin{lemma}[Linear ascent]
\label{L:linear_ascent}
Let $Y$ be a trapezoidal subdivision of $X$ and let $z\in Z^1(Y;\R)$ be a cocycle representing a class $[z]\in \D$. Then there exists $\kappa > 0$ and $N\geq 0$ satisfying the following property. Suppose that $\xi\in \widetilde Y$ and $t> 0$ are such that $\xi$ and $\tflow_t(\xi)$ lie on skew $1$--cells $\sigma$ and $\sigma'$ of $\widetilde{Y}$, respectively. Then
\[h_z(\sigma') - h_z(\sigma) \geq \kappa t - N.\]
\end{lemma}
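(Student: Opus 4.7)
The plan is to track the flowline from $\xi$ to $\tflow_t(\xi)$ skew-cell by skew-cell, using Lemma~\ref{L:short_flowlines_have_bounded_height} to control the height change across a single trapezoid and Lemma~\ref{L:positive_on_flowlines} to extract strictly positive height gains whenever the projected flowline returns to the same skew $1$--cell of $Y$. The challenge is to combine these two tools so that the many small, possibly negative, single-trapezoid contributions are outweighed by finitely many ``long-jump'' positive contributions, producing an honest linear-in-$t$ lower bound.

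First I would set up the following notation. Let $0 = t_0 < t_1 < \dotsb < t_k = t$ be the times at which the lifted flowline crosses skew $1$--cells $\sigma = \sigma_0, \sigma_1, \dotsc, \sigma_k = \sigma'$ of $\widetilde Y$, and let $\tau_i = p(\sigma_i)$ be the projected skew $1$--cells of $Y$. Let $M$ be the (finite) number of skew $1$--cells of $Y$, $T_{\max}$ the maximum time the flow needs to cross any trapezoid of $Y$, $L$ the constant from Lemma~\ref{L:short_flowlines_have_bounded_height} applied to $z$, and $\kappa$ the constant from Lemma~\ref{L:positive_on_flowlines} applied to $u = [z]$. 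The key combinatorial device is a greedy ``jumping'' sequence $0 = a_0 < a_1 < \dotsb < a_K = k$ defined by the rule that $a_{l+1}$ is the largest index $j$ with $\tau_j = \tau_{a_l+1}$. Each step $a_l \to a_{l+1}$ consists of a single trapezoid-crossing $\sigma_{a_l} \to \sigma_{a_l+1}$ followed by a possibly ``long jump'' $\sigma_{a_l+1} \to \sigma_{a_{l+1}}$. A pigeonhole argument bounds $K \leq M$: if $l < l'$ then $a_{l'}+1 > a_{l+1}$, so by the maximality in the definition of $a_{l+1}$ the skew cells $\tau_{a_l+1}$ and $\tau_{a_{l'}+1}$ must differ, so the indices $\tau_{a_l + 1}$ for $0 \leq l < K$ are pairwise distinct.

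Next I decompose the total height change as
\[
h_z(\sigma_k) - h_z(\sigma_0) = \sum_{l=0}^{K-1} \Big(\big[h_z(\sigma_{a_l+1}) - h_z(\sigma_{a_l})\big] + \big[h_z(\sigma_{a_{l+1}}) - h_z(\sigma_{a_l+1})\big] \Big).
\]
Lemma~\ref{L:short_flowlines_have_bounded_height} bounds each trapezoid-crossing term below by $-L$. For the long-jump term, the projected flowline in $X$ from $p(\tflow_{t_{a_l+1}}(\xi))$ to $p(\tflow_{t_{a_{l+1}}}(\xi))$ begins and ends on the common skew $1$--cell $\tau_{a_l+1} = \tau_{a_{l+1}}$ of $Y$; closing it up along this cell produces a loop $\gamma$ in $X$, and lifting $\gamma$ to $\widetilde Y$ starting in $\sigma_{a_l+1}$ identifies $\sigma_{a_{l+1}} = h \cdot \sigma_{a_l+1}$, where $h = [\gamma] \in H$. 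The equivariance relation $h_z(h \cdot v) - h_z(v) = u(h)$ (immediate from the definition of $h_z$) then gives
\[
h_z(\sigma_{a_{l+1}}) - h_z(\sigma_{a_l+1}) = u(h) = u(\gamma) \geq \kappa(t_{a_{l+1}} - t_{a_l+1} - 1),
\]
the last inequality being Lemma~\ref{L:positive_on_flowlines}.

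Finally, summing these contributions and using $\sum_l(t_{a_l+1} - t_{a_l}) \leq K T_{\max}$ together with the telescoping identity $\sum_l(t_{a_{l+1}} - t_{a_l}) = t$ yields $\sum_l(t_{a_{l+1}} - t_{a_l+1}) \geq t - KT_{\max}$, and therefore
\[
h_z(\sigma_k) - h_z(\sigma_0) \geq \kappa t - K(L + \kappa + \kappa T_{\max}) \geq \kappa t - M(L + \kappa + \kappa T_{\max}).
\]
Setting $N = M(L + \kappa + \kappa T_{\max})$ completes the argument. The main subtlety I anticipate is verifying rigorously that the long-jump homology class $h = [\gamma] \in H$ indeed satisfies $\sigma_{a_{l+1}} = h \cdot \sigma_{a_l+1}$ (so that the positivity estimate from Lemma~\ref{L:positive_on_flowlines} translates directly into a positive estimate on $h_z$), and confirming that $T_{\max}$ is finite despite the fact that the trapezoids of $Y$ may have top arcs consisting of many skew cells; both points follow from finiteness of $Y$ and compactness, but warrant careful bookkeeping.
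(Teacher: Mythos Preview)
Your proof is correct and follows essentially the same strategy as the paper's. Both arguments partition the flowline into at most $M$ blocks (where $M$ is the number of skew $1$--cells of $Y$), each consisting of a single trapezoid-crossing controlled by Lemma~\ref{L:short_flowlines_have_bounded_height} followed by a ``long jump'' between two lifts of the same skew cell controlled by Lemma~\ref{L:positive_on_flowlines}; the only cosmetic differences are that the paper indexes by time-pairs $(s_i,t_i)$ rather than by your greedy index sequence $a_l$, and invokes the explicit crossing-time bound $2$ built into the construction of $X$ in place of your $T_{\max}$.
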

\begin{proof}
Let us declare two skew $1$--cells of $\widetilde{Y}$ to be equivalent if they cover the same $1$--cell of $Y$, and let us denote the corresponding set of equivalence classes by $\mathcal{W}$. Note that $\mathcal{W}$ is finite. Let $\kappa>0$ be the constant provided by Lemma~\ref{L:positive_on_flowlines}, and let $L\geq 0$ be as in Lemma~\ref{L:short_flowlines_have_bounded_height}.

Write $\alpha_1 = \sigma$ and let $W_1\in \mathcal{W}$ be its equivalence class. Set $s_1 = 0$ and define
\[t_1 = \max\{s\in [0,t] \mid \tflow_s(\xi)\text{ lies in a skew $1$--cell in }W_1\}\]
to be the last time that the flowline from $\xi$ to $\tflow_t(\xi)$ hits a skew $1$--cell in $W_1$. Denote the skew $1$--cell containing $\tflow_{t_1}(\xi)$ by $\beta_1\in W_1$. If $t_1=t$ then we stop, otherwise let
\[s_2 = \min\{s \in (t_1,t] \mid \tflow_s(\xi)\text{ lies in a skew $1$--cell}\}\]
be the next time that the flowline hits a skew $1$--cell. We again let $\alpha_2$ denote the skew $1$--cell containing $\tflow_{s_2}(\xi)$ and let $W_2\in \mathcal{W}$ be its equivalence class. Notice that $W_2\neq W_1$ by definition of $t_1$ and $s_2$. We now similarly define
\[t_2 = \max\{s\in [0,t] \mid \tflow_s(\xi)\text{ lies in a skew $1$--cell in }W_2\}\]
to be the last time the flowline hits a skew $1$--cell in $W_2$ and let $\beta_2\in W_2$ denote that particular $1$--cell. Note that $t_2\ge s_2$ by definition.

Continuing in this manner, we find a sequence of times
\[0 = s_1 \leq t_1 < s_2 \leq t_2 < \dotsb < s_n \leq t_n = t\]
such that for each $1\leq i \leq n$ the points $\tflow_{s_i}(\xi)$ and $\tflow_{t_i}(\xi)$ lie on skew $1$--cells $\alpha_i$ and $\beta_i$, respectively, that are both members of the same equivalence class $W_i$, and that moreover these equivalence classes $W_1,\dotsc,W_n$ are all distinct. In particular we note that $n\leq \abs{\mathcal{W}}$.

Now, the trapezoidal cell structure on $X$ was explicitly constructed so that every flowline must hit a skew $1$--cell of $X$ (and thus also of $Y$) within time $2$; see \cite[\S4.4]{DKL}. Therefore for each $1 \leq i < n$ we have $s_{i+1} - t_{i} \leq 2$ by definition of $s_{i+1}$. Furthermore, since the flowline from $\tflow_{t_i}(\xi)$ to $\tflow_{s_{i+1}}(\xi)$ is disjoint from all other skew $1$--cells, it must be that $\beta_i$ and $\alpha_{i+1}$ lie along the bottom and top arcs $e_-(\widetilde T)$ and $e_+(\widetilde T)$, respectively, for some trapezoidal $2$--cell $\widetilde T$ of $\widetilde Y$. Lemma~\ref{L:short_flowlines_have_bounded_height} therefore implies $\abs{h_z(\alpha_{i+1}) - h_z(\beta_i)}\leq L$ for all $1\leq i < n$.

On the other hand, since $\alpha_i$ and $\beta_i$ are in the same equivalence class, Lemma~\ref{L:positive_on_flowlines} implies that $h_z(\beta_i) - h_z(\alpha_i) \geq \kappa(t_i - s_i - 1)$. Therefore we may conclude
\begin{align*}
h_z(\sigma') - h_z(\sigma)
&= \left(\sum_{i=1}^nh_z(\beta_i) - h_z(\alpha_i)\right) + \left(\sum_{i=1}^{n-1} h_z(\alpha_{i+1})- h_z(\beta_i)\right)\\
&\geq \left(\sum_{i=1}^n \kappa(t_i - s_i - 1)\right)  - L(n-1)\\
&= \kappa\left(t - \sum_{i=1}^{n-1} s_{i+1} - t_i\right) - \kappa n - L(n-1)\\
&\geq \kappa t - (3\kappa + L)\abs{\mathcal{W}}\qedhere
\end{align*}
\end{proof}

This ``linear ascent'' is a crucial feature of classes in the Fried cone.  For an integral class $u\in \D$ with cover $X_u\to X$ corresponding to $\ker(u)$, this shows that lifting any biinfinite flowline to $X_u$ yields a path that exits both ends of $X_u$ (compare \cite[Theorem A]{FriedS} and \cite[Theorem 1.1]{Wang}).

\subsection{Depth}
\label{S:depth}

In order to construct cross sections dual to certain cohomology classes, we will need to represent those classes by cocycles that enjoy certain properties. Here we introduce the first of these properties and show that classes in the cone $\D$ have such representatives.

A biinfinite flowline in $\tX$ is a path $\tilde{\gamma}\colon \R\to \tX$ such that for all $t\in \R$ and $s\geq 0$ we have $\tflow_s(\tilde\gamma(t)) = \tilde\gamma(s+t)$. In this case we say that the set $\tilde\gamma((-\infty,0])$ is a \emph{backwards} flowline of $\tilde\gamma(0)$. Notice that every point of $\tX$ has infinitely many backwards flowlines.

For a trapezoidal subdivision $Y$ of $X$ and a vertex $v\in \widetilde{Y}^{(0)}$,  a skew $1$--cell $\sigma$ of $\widetilde{Y}$ is said to be \emph{behind} $v$ if there is a backwards flowline of $v$ that intersects $\sigma$. 

\begin{defn}[Depth]
\label{D:depth}
Let $Y$ be a trapezoidal subdivision of $X$ and let $z\in Z^1(Y;\R)$ be a cellular cocycle. The \emph{$z$--depth} of a vertex $v\in \widetilde{Y}^{(0)}$ is defined to be the number of $1$--cells $\sigma$ of $\widetilde{Y}$ for which there exists some biinfinite flowline $\tilde{\gamma}$ through $v = \tilde{\gamma}(0)$ and a time $t < 0$ such that
\begin{itemize}
\item  $\tilde{\gamma}(t)\in \sigma$, and
\item $h_z(v)\leq h_z(\sigma')$ for every skew $1$--cell $\sigma'$ of $\widetilde{Y}$ that intersects $\tilde{\gamma}([t,0])$.
\end{itemize}
The equivariance of $\tflow$ and $\tilde{z}$ under deck transformations of $\widetilde{Y}$ implies that vertices of $\widetilde{Y}$ that project to the same point of $Y$ necessarily have the same $z$--depth. Thus we also define the $z$--depth of a vertex of $Y$ to the $z$--depth of any of its lifts. 
\end{defn}

While the $z$--depth of a vertex could in general be infinite, this is never the case when the cocycle $z$ represents a class in $\D$:

\begin{lemma}
\label{L:finite_depth}
Let $Y$ be a trapezoidal subdivision of $X$ and let $z\in Z^1(Y;\R)$ be a cocycle for which $[z]\in \D$. Then for any $M\geq 0$ and any vertex $v$ of $\widetilde{Y}$, there are only finitely many skew $1$--cells $\sigma$ of $\widetilde{Y}$ behind $v$ satisfying
\[h_z(\sigma) \geq h_z(v) - M.\]
In particular, every vertex of $\widetilde{Y}$ has finite $z$--depth.
\end{lemma}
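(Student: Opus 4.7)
The plan is to combine the linear ascent Lemma~\ref{L:linear_ascent} with a compactness argument. Linear ascent will force any skew $1$--cell $\sigma$ behind $v$ with $h_z(\sigma)\geq h_z(v)-M$ to be reachable from $v$ by backwards flow in uniformly bounded time, and compactness of the set of such backwards preimages will then yield the desired finiteness.

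For the time bound, let $\kappa,N$ be the constants from Lemma~\ref{L:linear_ascent}. By construction of the trapezoidal cell structure on $X$ every point of $\tX$ lies within forward flow-time $2$ of a skew $1$--cell, so I may pick a skew $1$--cell $\sigma_v$ of $\widetilde{Y}$ and a time $s_0\in[0,2]$ with $\tflow_{s_0}(v)\in\sigma_v$; a routine variant of Lemma~\ref{L:short_flowlines_have_bounded_height} supplies a uniform constant $L'$ with $h_z(\sigma_v)\leq h_z(v)+L'$. Now suppose $\sigma$ is a skew cell behind $v$ with $h_z(\sigma)\geq h_z(v)-M$, and choose $\xi\in\sigma$ and $t>0$ with $\tflow_t(\xi)=v$. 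Then $\tflow_{t+s_0}(\xi)\in\sigma_v$, so Lemma~\ref{L:linear_ascent} gives
\[
h_z(\sigma_v)-h_z(\sigma)\geq\kappa(t+s_0)-N\geq\kappa t-N,
\]
and substituting the bounds on $h_z(\sigma_v)$ and $h_z(\sigma)$ yields $t\leq(M+L'+N)/\kappa=:T$.

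For the compactness step I would consider
\[
K=\{\xi\in\tX\mid\tflow_r(\xi)=v\text{ for some }r\in[0,T]\}.
\]
With $\Phi(\xi',r)=\flow_r(\xi')$, the preimage $\Phi^{-1}(p(v))$ is a closed subset of the compact space $X\times[0,T]$ and hence compact. Lifting backwards flowlines uniquely from the basepoint $v\in\tX$ (by path-lifting for the cover $p\colon\tX\to X$) gives a continuous bijection from $\Phi^{-1}(p(v))$ onto its analogue $\tilde\Phi^{-1}(v)\subset\tX\times[0,T]$, whose inverse is continuous by uniqueness of lifts. Hence $\tilde\Phi^{-1}(v)$ is compact, so $K$ is compact in $\tX$, and $K$ therefore meets only finitely many $1$--cells of $\widetilde{Y}$. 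The first assertion of the lemma follows since each qualifying skew $\sigma$ lies in $K$.

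For the ``in particular'' statement, any $1$--cell $\sigma$ contributing to the $z$--depth of $v$ meets a backwards flowline $\tilde\gamma$ from $v$ at some time $t<0$; since every point is within forward flow-time $2$ of a skew $1$--cell, there is a skew $\sigma'$ meeting $\tilde\gamma$ at some time in $[t,t+2]$, and the depth hypothesis forces $h_z(\sigma')\geq h_z(v)$. Applying the time bound above to $\sigma'$ with $M=0$ bounds $|t|$ uniformly in $\sigma$, so all such $\sigma$ lie in a compact set analogous to $K$, yielding finiteness. The main technical obstacle is verifying compactness of $K$ in the noncompact cover $\tX$; the key is that lifting backwards flowlines from the fixed basepoint $v$ is unique, which lets us transfer the problem to the compact base $X\times[0,T]$.
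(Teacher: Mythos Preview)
Your proof is correct and follows essentially the same line as the paper's: use Lemma~\ref{L:linear_ascent} to bound the backward flow time needed to reach any qualifying skew cell, then conclude finiteness from the fact that bounded-time backward flow from $v$ meets only finitely many $1$--cells. The only notable difference is in this last step---the paper argues combinatorially (there are only finitely many backward flowlines from $v$ up to agreement on $[-L,0]$, and the union of these arcs meets finitely many cells), whereas you argue topologically via compactness of $\Phi^{-1}(p(v))\subset X\times[0,T]$ and unique path-lifting to $\tX$. Both are valid; the paper's route is a bit more direct here, and it also avoids your detour through a uniform constant $L'$ by simply defining the time bound $L$ in terms of $h_z(\sigma')$ for a single skew cell $\sigma'$ hit by flowing forward from the fixed vertex $v$ (note, incidentally, that every $0$--cell in a trapezoidal structure already lies on a skew $1$--cell, so one may take $s_0=0$ and $L'=0$).
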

\begin{proof}
Let $\kappa > 0$ and $N\geq 0$ be the constants provided by Lemma~\ref{L:linear_ascent}. Since every flowline of $X$ eventually hits a skew $1$--cell, we can find $s\geq 0$ so that $\tflow_s(v)$ lies in a skew $1$--cell $\sigma'$ of $\widetilde{Y}$. Set $L = \kappa\inv(h_z(\sigma') - h_z(v) + M + N)$.

Suppose that $\tilde{\gamma}$ is any biinfinite flowline through $v =\tilde{\gamma}(0)$ and that $t < 0$ is such that $\tilde{\gamma}(t)$ lies on a skew $1$--cell $\sigma$ of $\widetilde{Y}$ satisfying $h_z(\sigma)\geq h_z(v) - M$. Then by Lemma~\ref{L:linear_ascent} we have
\[h_z(\sigma') - h_z(\sigma) \geq \kappa(s-t) - N,\]
which together with the assumption  on $\sigma$ implies
\[\kappa(s-t) - N \leq h_z(\sigma') - h_z(v) + M.\]
In particular we see that
\[-t \leq  \kappa\inv\big(h_z(\sigma')- h_z(v) +M + N\big)  -s \leq  L.\]

The above paragraph shows that any skew $1$--cell $\sigma$ of $\widetilde{Y}$ behind $v$ that satisfies $h_z(\sigma)\geq h_z(v) - M$ must intersect $\tilde{\gamma}([-L,0])$ for some backwards flowline of $v$. But if we declare two biinfinite flowlines to be equivalent if they agree on $[-L,0]$, then there are are only finitely many equivalence classes of biinfinite flowlines $\tilde{\gamma}$ satisfying $\tilde{\gamma}(0) = v$. Thus, taking the union of $\tilde{\gamma}([-L,0])$ over all flowlines $\tilde{\gamma}$ satisfying $\tilde{\gamma}(0) = v$, we see that there are only finitely many skew $1$--cells intersecting this set.
\end{proof}

Once one knows that every vertex has finite depth, it is straightforward to build a refined cocycle for which every vertex in fact has zero depth. 
We recall here that a \emph{refinement} of a cellular cocycle $z\in Z^1(Y;\R)$ to a subdivision $Y'$ of $Y$ is a cocycle $z'\in Z^1(Y';\R)$ such that $z'(\sigma) = z(\sigma)$ for every $1$--cell $\sigma$ of $Y$, where in the expression $z'(\sigma)$ we regard $\sigma$ as a $1$--chain in $Y'$.

\begin{lemma}[A shallow refinement]
\label{L:shallow_refinement}
Let $Y$ be a trapezoidal subdivision of $X$ and let $z\in Z^1(Y;\R)$ be a vertically positive cocycle for which $[z]\in \D$. Then there exists a trapezoidal subdivision $Y'$ of $Y$ and a vertically positive refinement $z'\in Z^1(Y';\R)$ of $z$ such that every vertex of $Y'$ has $z'$--depth equal to zero.
\end{lemma}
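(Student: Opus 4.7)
I would start with the following observation that simplifies the target: a vertex $v$ has $z$-depth zero if and only if, on every backward flowline from $v$, the \emph{first} backward skew $1$--cell encountered has $z$-height strictly less than $h_z(v)$. Indeed, if the first such cell $\sigma_1$ satisfies $h_z(\sigma_1) \ge h_z(v)$ then $\sigma_1$ itself witnesses positive depth, whereas if $h_z(\sigma_1) < h_z(v)$ then $\sigma_1$ serves as a witnessing intermediate skew cell for every later backward skew cell on that flowline. Thus the lemma reduces to constructing $Y'$ and $z'$ so that at every vertex of $Y'$ the first backward skew $1$--cell along each backward flowline has strictly smaller $z'$-height.

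The construction proceeds in two stages. First I would subdivide each skew $1$--cell of $Y$ into many pieces, distributing the original $z$-value monotonically so that heights along each skew arc of each trapezoid become monotone, and then add vertical $1$--cells cutting each trapezoid $T$ of $Y$ horizontally into many thin sub-trapezoids. With sufficient subdivision, each sub-trapezoid $T^\star$ becomes \emph{thin} in the sense that $\max h_{z_1}(e_-(T^\star)) < \min h_{z_1}(e_+(T^\star))$, where $z_1$ is the refined cocycle: the height variation along the short sub-arcs $e_\pm(T^\star)$ is made arbitrarily small, whereas the definite positive height gain across $\ell_\pm(T^\star)$ guaranteed by vertical positivity persists. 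Throughout, $z_1$ is vertically positive because the values assigned to new vertical sub-cells are convex combinations of the original positive values $z(\ell_-(T))$ and $z(\ell_+(T))$.

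Second, in each thin sub-trapezoid $T^\star$, I would pick a height $H_{T^\star}$ in the (now nonempty) interval $(\max h_{z_1}(e_-(T^\star)), \min h_{z_1}(e_+(T^\star)))$, introduce new vertices on $\ell_\pm(T^\star)$ at height $H_{T^\star}$, and add a \emph{level} skew $1$--cell $\sigma_{T^\star}$ connecting them. The final refinement $z'$ assigns $z'(\sigma_{T^\star}) = 0$ and splits the original value of $\ell_\pm(T^\star)$ into two strictly positive pieces, namely the height differences above and below $H_{T^\star}$; vertical positivity is preserved. A case analysis over the possible locations of a vertex $v$ of $Y'$ (on a top arc, on a level cell, or at an interior corner of some $T^\star$) then shows that every backward flowline from $v$ first meets either $\sigma_{T^\star}$, of height $H_{T^\star} < h_{z'}(v)$, or a sub-skew-cell of $e_-(T^\star)$ of height strictly less than $H_{T^\star} \le h_{z'}(v)$. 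Combining this with the characterization of the first paragraph yields $z'$-depth zero at every vertex.

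The hard part will be the global consistency of the first stage: adding a vertical $1$--cell inside one trapezoid $T$ introduces a new vertex on a skew cell of $\partial T$ shared with a neighboring trapezoid, which could in principle force recursive subdivisions. Leveraging the compactness of $X$ (so that $Y$ has only finitely many cells), Lemma~\ref{L:finite_depth} to bound the problematic skew cells behind each vertex, and the fact that after sufficient initial subdivision of skew cells one can perform a single coordinated global horizontal slicing simultaneously in every trapezoid, one avoids any infinite recursion and produces the desired trapezoidal subdivision $Y'$ with vertically positive refinement $z'$.
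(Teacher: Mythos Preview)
Your first-paragraph characterization of depth zero is correct and would be a useful simplification. The real gap is in Stage~1. You claim that the new vertical $1$--cells automatically receive positive $z_1$-values because they are ``convex combinations of the original positive values $z(\ell_-(T))$ and $z(\ell_+(T))$,'' but this is false. When you insert a vertical $1$--cell $\beta$ from a subdivision point $p\in e_-(T)$ to its flow-image $q\in e_+(T)$, the cocycle condition forces $z_1(\beta)=h_{z_1}(q)-h_{z_1}(p)$; nothing expresses this as a convex combination of $z(\ell_\pm(T))$. Concretely, suppose $e_+(T)$ has an interior vertex $v$ with $h_z(e_-(T))\ge h_z(v)$ --- this is precisely the positive-depth situation the lemma must repair. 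Distributing heights monotonically along $e_-(T)$ forces every subdivision point $p$ to satisfy $h_{z_1}(p)\ge h_z(e_-(T))\ge h_z(v)$, so the vertical cell landing near $v$ gets value $\approx h_z(v)-h_{z_1}(p)\le 0$. Vertical positivity therefore fails exactly in the case of interest, and your Stage~1 thinness condition $\max h_{z_1}(e_-(T^\star))<\min h_{z_1}(e_+(T^\star))$ (which, by your own first paragraph, is already equivalent to depth zero at every vertex) cannot be obtained this way. Note too that if Stage~1 did succeed, Stage~2 would be superfluous.

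The paper's argument is not a global construction but an inductive reduction. One picks a single vertex $\tilde v$ of maximal depth $K$, finds one offending trapezoid $T$ with $\tilde v\in e_+(T)$ and $h_z(e_-(T))\ge h_z(\tilde v)$, and inserts one vertical $1$--cell $\beta$ from the flow-preimage $\tilde w\in e_-(T)$ up to $\tilde v$. Crucially, one \emph{chooses} the value $z'(\beta)=\delta>0$ (rather than letting it be forced by a prior choice of skew heights), and then lets the two new skew sub-cells of $e_-(T)$ take whatever values result; there is no positivity requirement on skew cells. The particular $\delta$, dictated by the discreteness in Lemma~\ref{L:finite_depth}, ensures that no existing vertex's depth increases, that $\tilde v$'s depth strictly drops, and that the new vertex $\tilde w$ has depth at most $K-1$. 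Iterating reduces the maximal depth to zero in finitely many steps.
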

\begin{proof}
Let $K$ denote the largest depth of any vertex in $Y$, and choose a vertex $\tilde v$ of $\widetilde{Y}$ with $z$--depth equal to $K$. We may assume $K \geq 1$. Lemma~\ref{L:finite_depth} implies that the set
\[\{h_z(\tilde v) - h_z(\tilde \sigma) \mid \text{$\tilde \sigma$ is a skew $1$--cell behind $\tilde v$}\}\]
is discrete. Therefore we may choose $\delta > 0$ such that every skew $1$--cell $\tilde \sigma$ behind $\tilde v$ satisfying $h_z(\tilde\sigma) < h_z(\tilde v)$ also satisfies $h_z(\tilde \sigma) < h_z(\tilde v) - \delta$.

Since $\tilde v$ has $z$--depth equal to $K\geq 1$, Definition~\ref{D:depth} implies that there exists biinfinite flowline $\tilde{\gamma}$ with $\tilde{\gamma}(0) = \tilde v$, a time $t< 0$, and a skew $1$--cell $\tilde\sigma$ of $\widetilde{Y}$ satisfying $h_z(\tilde v)\leq h_z(\tilde\sigma)$ such that $\tilde{\gamma}(t)\in \tilde\sigma$ and such that $\tilde{\gamma}((t,0))$ is disjoint from all skew $1$--cells of $Y$. Letting $\widetilde T$ denote the trapezoid of $\widetilde{Y}$ for which $e_-(\widetilde{T}) = \tilde\sigma$, it follows that $\tilde v\in e_+(\widetilde{T})$. Now, if the arc $\tilde{\gamma}([t,0])$ were equal to $\ell_\pm(\widetilde{T})$ and thus comprised of vertical $1$--cells, then the vertical positivity of $z$ would imply $h_z(\tilde\sigma) < h_z(\tilde v)$. As this is not the case, the arc $\tilde{\gamma}([t,0])$ must therefore traverse the interior of $\widetilde{T}$.

Recalling that $p\colon \widetilde{Y}\to Y$ is the covering map, let $T = p(\widetilde T)$ and $\sigma = p(\tilde \sigma)$. It follows that $p(\tilde v)$ lies in the interior of $e_+(T)$ and that $w = p(\tilde{\gamma}(t))$ is an interior point of $e_-(T) = \sigma$. We now subdivide $\sigma$ into two skew $1$-cells $\alpha_1,\alpha_2$ by adding a vertex at $w$ and subdivide $T$ into two trapezoids $T_1$ and $T_2$ by adding the vertical $1$--cell $\beta = p(\tilde{\gamma}([t,0]))$. Call the resulting subdivision $Y'$ and let $\tilde w$ and $\tilde \alpha_i$ respectively denote the unique lifts of $w$ and $\alpha_i$ that are contained in $\widetilde{T}$. We refine $z$ to a cocycle $z'\in Z^1(Y';\R)$ by setting $z'(\beta) = \delta$ and defining $z'(\alpha_i)$ subject to the cocycle condition for $T_i$. Notice that $z'$ is again vertically positive.

Let us now consider the $z'$--depth of vertices in $Y'$. First recall that $h_z(\tilde v)\leq h_z(\tilde\sigma) = \min\{h_z(t(\tilde\sigma)),h_z(o(\tilde\sigma))\}$. Since $h_{z'}(\tilde w) =  h_{z'}(\tilde v) - \delta$ by construction, it follows that $h_{z'}(\tilde w) < h_z(t(\tilde\sigma)),h_z(o(\tilde\sigma))$. Therefore $h_{z'}(\tilde\alpha_i) < h_z(\tilde\sigma)$ for $i=1,2$, and the same holds for all equivariant translates of $\tilde\alpha_1,\tilde\alpha_2,\tilde\sigma$. In particular, we see that whenever $\tilde\alpha_i$ is behind some vertex $\tilde{v}_0$ of $\widetilde{Y}$ satisfying $h_z(\tilde\sigma) < h_z(\tilde{v}_0)$, then we also have $h_{z'}(\tilde\alpha_i) < h_{z'}(\tilde{v}_0)$ for $i=1,2$. This shows that the $z'$--depth of every vertex of $\widetilde{Y}$ is bounded above by its $z$--depth. Moreover, since $h_{z'}(\tilde \alpha_i) = h_{z'}(\tilde v) - \delta$ we see that neither of the new skew $1$--cells $\tilde\alpha_i$ contribute to the $z'$--depth of $\tilde v$, whereas $\tilde\sigma$ did contribute to the $z$--depth of $\tilde v$. Thus the $z'$--depth of $\tilde v$ is strictly less than its $z$--depth.

It remains to analyze the $z'$--depth of the new vertex $w$. Notice that the set of skew $1$--cells of $\widetilde{Y}'$ behind $\tilde w$ is a subset of the ones behind $\tilde v$. Furthermore any skew $1$--cell $\tilde\sigma_0$ of $\widetilde{Y}'$ behind $\tilde v$ satisfying $h_{z'}(\tilde\sigma_0) < h_{z'}(\tilde v)$ also satisfies $h_{z'}(\tilde\sigma_0) < h_{z'}(\tilde w) = h_{z'}(\tilde v) - \delta$ by the choice of $\delta$. Thus the set of skew $1$--cells of $\widetilde{Y}'$ counting towards the $z'$--depth of $\tilde w$ is a subset of those counting towards the $z'$--depth of $\tilde v$. This shows that the $z'$--depth of $\tilde w$ bounded by the $z'$--depth of $\tilde v$, which we have seen is at most $K-1$.

Recall that $K$ was chosen to be the maximal $z$--depth of any vertex in $Y$. We have constructed a subdivision $Y'$ and cocycle refinement $z'\in Z^1(Y';\R)$ so that no vertex has $z'$--depth more than $K$ and so that the number of vertices in $Y'$ with $z'$--depth equal to $K$ is strictly less than the corresponding number of vertices in $Y$. By repeating this recursively, we eventually arrive at the desired subdivision in which every vertex has depth zero.
\end{proof}

\subsection{Consistent signage and the refinement procedure}
\label{S:signage}

We now introduce the second property we require of our cocycles in order to construct dual cross sections. Recall that if $T$ is a trapezoidal $2$--cell in a trapezoidal subdivision $Y$ of $X$, then we may write its top arc as a cellular $1$--chain $e_+(T) = \zeta(T)(\sigma_1 + \dotsb +\sigma_k)$ where each $\sigma_i$ is a (positively oriented) skew $1$--cell in $Y$.

\begin{defn}[Consistently signed]
\label{D:consistently_signed}
Let $Y$ be a trapezoidal subdivision of $X$ and let $z\in Z^1(Y;\R)$ be a cocycle. We say that $z$ is \emph{consistently signed} if it is vertically positive and for every trapezoid $T$ of $Y$ with top arc $e_+(T) = \zeta(T)(\sigma_1 + \dotsb + \sigma_k)$, all of the numbers $z(\sigma_i)$ are nonzero and have the same sign.
\end{defn}

Before showing how to arrange for a cocycle to be consistently signed, we first recall the subdivision procedure introduced in \S6.4 of \cite{DKL}. Briefly, if $Y$ is a trapezoidal subdivision of $X$, then following \cite[Definition 6.8]{DKL} the \emph{standard subdivision} $\widehat{Y}$ of $Y$ is obtained as follows: For each trapezoid $T$ of $Y$ whose top arc $e_+(T) = \zeta(T)(\sigma_1 + \dotsb + \sigma_k)$ contains at least $k\geq 2$ skew $1$--cells, we subdivide the bottom arc $e_-(T)$ (which is a single skew $1$--cell) into $k$ skew $1$--cells that flow homeomorphically onto the skew $1$--cells $\sigma_1,\dotsc,\sigma_k$ comprising $e_+(T)$, and we add $k-1$ vertical $1$--cells through $T$ connecting the newly added vertices on $e_-(T)$ to the already existing vertices along $e_+(T)$. (Note that the first step of the standard subdivision procedure, in which all invariant bands are subdivided, has no effect here since $f$ is an expanding irreducible train track map; see Remark 6.7 of \cite{DKL}.)

\begin{lemma}
\label{L:can_sign_consistently}
Let $Y$ be a trapezoidal subdivision of $X$ and let $z\in Z^1(Y;\R)$ be a vertically positive cocycle such that every vertex of $v$ has $z$--depth equal to zero. Then there exists a trapezoidal subdivision $Y'$ of $Y$ and a consistently signed cocycle $z'\in Z^1(Y';\R)$ that refines $z$.
\end{lemma}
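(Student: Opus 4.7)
The overall strategy is to pass to the standard subdivision $\widehat{Y}$ of $Y$ from \cite[Definition 6.8]{DKL}, whose key feature is that every trapezoid of $\widehat{Y}$ has a top arc consisting of a single skew $1$-cell. In such a cell structure, consistent signage of a refinement $z'$ reduces to two conditions: vertical positivity, together with nonvanishing of $z'$ on every skew $1$-cell.

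A refinement $z'$ of $z$ to $\widehat{Y}$ is determined by the values assigned to the newly introduced vertical $1$-cells. For each trapezoid $T$ of $Y$ with multi-cell top $e_+(T) = \zeta(T)(\sigma_1 + \dotsb + \sigma_k)$, $k \geq 2$, the standard procedure introduces $k-1$ new vertical $1$-cells $\beta_1,\dotsc,\beta_{k-1}$ partitioning $T$ into sub-trapezoids $T_1,\dotsc,T_k$ and subdivides the bottom $e_-(T)$ into skew cells $\tau_1,\dotsc,\tau_k$. The values $b_j = z'(\beta_j)$ may be chosen freely, and the cocycle condition on each $T_i$ then expresses $z'(\tau_i)$ as an affine-linear function of $(b_1,\dotsc,b_{k-1})$. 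Requiring $z'(\tau_i) \neq 0$ excludes a single hyperplane from the space of choices, so a generic point of $(b_1,\dotsc,b_{k-1})$ in the positive orthant $(0,\infty)^{k-1}$ simultaneously preserves vertical positivity and gives every new skew cell a nonzero value.

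The main obstacle I anticipate is that an old skew cell $\sigma$ of $Y$ that is not subdivided by the standard procedure retains its original value $z(\sigma)$, which could equal zero. To handle this, I would precede the standard subdivision by an auxiliary subdivision: for each skew cell $\sigma$ of $Y$ with $z(\sigma) = 0$, add an interior vertex $w_\sigma$ on $\sigma$ and, through each trapezoid $T$ of $Y$ containing $\sigma$ on its boundary, add a new vertical $1$-cell $\gamma_T$ along an arc of a flowline emanating from $w_\sigma$. A refined cocycle extends to this enlarged structure by choosing a positive value for each $\gamma_T$; the cocycle conditions on the resulting sub-trapezoids then distribute $z(\sigma) = 0$ across the two new halves of $\sigma$ as $\epsilon + (-\epsilon)$ for a quantity $\epsilon$ that depends nontrivially on the chosen values, so generic positive choices force $\epsilon \neq 0$ and both halves become nonzero of opposite signs. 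Applying the standard subdivision to this preliminary structure, and then using the freedom described in the previous paragraph to select the remaining new vertical values, yields the desired subdivision $Y'$ and consistently signed refinement $z'$.

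The main technical difficulty is ensuring that all of these subdivisions combine into a single valid trapezoidal subdivision of $X$. When several problematic skew cells appear on the boundary of the same trapezoid, the interior vertices $w_\sigma$ and the associated flowline arcs $\gamma_T$ must be chosen simultaneously and compatibly so that they do not intersect each other inside $T$ and so that the resulting cells remain trapezoids. Because each interior point $w_\sigma$ varies over an open interval and each compatibility requirement is a closed codimension-one condition, a simultaneous generic choice exists; an analogous genericity argument governs the compatibility of the auxiliary subdivision with the subsequent standard subdivision.
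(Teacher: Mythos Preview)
Your opening claim—that after standard subdivision every trapezoid of $\widehat{Y}$ has a top arc consisting of a single skew $1$-cell of $\widehat{Y}$—is incorrect. The top arc of a trapezoid $T'$ of $\widehat{Y}$ is a single skew $1$-cell $\sigma$ of the \emph{old} structure $Y$, but $\sigma$ is the bottom arc $e_-(T)$ of a trapezoid $T$ of $Y$ and is therefore itself subdivided in $\widehat{Y}$ into pieces $\alpha_1,\dots,\alpha_m$ (one for each cell along $e_+(T)$). Thus $e_+(T')$, viewed in $\widehat{Y}$, is $\zeta(T')(\alpha_1+\dots+\alpha_m)$, and consistent signage requires all $z'(\alpha_j)$ to be nonzero \emph{and of the same sign}. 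Your genericity argument only arranges nonvanishing; it does nothing to force agreement of signs. Your auxiliary step for cells with $z(\sigma)=0$ makes matters worse: it produces two halves of opposite sign, which then sit together on the top arc of the trapezoid below and violate the definition outright.

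The deeper gap is that you never use the zero-depth hypothesis, and it cannot be dispensed with. The paper sets $z'(\alpha_j)=\rho_j\, z(\sigma)$ for a positive partition of unity $\sum\rho_j=1$, so the $\alpha_j$ automatically inherit the sign of $z(\sigma)$. The remaining task is to keep the new vertical cells $\beta_j$ positive, i.e.\ to ensure $h_{z'}(u_j)<h_z(v_j)$, where $u_j$ is the new vertex on $\sigma$ and $v_j$ the old vertex on $e_+(T)$ above it. Zero depth gives exactly $h_z(\sigma)<h_z(v_j)$ for every such $v_j$; since any partition-of-unity choice places each $h_{z'}(u_j)$ between the two endpoint heights of $\sigma$, one can push them close to the low end and stay below $v_j$. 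Without zero depth some $v_j$ may satisfy $h_z(v_j)\le h_z(\sigma)$, and then monotonicity of the $h_{z'}(u_j)$ (which is what same-sign on the $\alpha_j$ means) forces $h_{z'}(u_j)\ge h_z(\sigma)\ge h_z(v_j)$, so no vertically positive same-sign refinement exists. The hypothesis is doing essential work that a generic-choice argument cannot replace.
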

\begin{proof}
In fact we will take $Y'$ to be the standard subdivision $\widehat{Y}$. Let $T'$ be any trapezoidal $2$--cell of $Y'$. Then, by nature of the standard subdivision procedure, the top arc $\zeta(T')e_+(T')$ of $T'$ is equal, when considered as a $1$--chain in $Y$, to a single skew $1$--cell $\sigma$ of $Y$. This skew $1$--cell $\sigma$ is in turn equal to $e_-(T)$ for a unique trapezoid $T$ of $Y$. 
Writing the top arc of $T$ as a concatenation of skew $1$--cells of $Y$, we let $v_0,v_1,\ldots,v_k$ be the vertices of $Y$ along $e_+(T)$ (traversed in that order with the $T$--orientation), so that $v_0$ is the top endpoint of $\ell_-(T)$ and $v_k$ is the top endpoint of $\ell_+(T)$. Then in the subdivision procedure, new vertices $u_1,\dotsc,u_{k-1}$ are placed along $\sigma$ (indexed so that $u_i$ flows onto $v_i$), and for each $1\le i < k$ a vertical $1$--cell $\beta_i$ is added connecting $u_i$ to $v_i$. We also let $u_0=o(\sigma)$ and $u_k = t(\sigma)$ denote the bottom endpoints of $\ell_-(T)$ and $\ell_+(T)$, respectively. Thus in $Y'$, the skew $1$--cell $\sigma$ is subdivided into the $k$ skew $1$--cells $\alpha_i = (u_{i-1},u_i)$ for $1 \leq i \leq k$.

With this notation, writing the top arc of $T'$ as a $1$--chain in $Y'$ we now have $e_+(T') = \zeta(T')(\alpha_1 + \dotsb + \alpha_k)$. Thus we must define $z'$ on the new $1$--cells $\alpha_i$ and $\beta_i$ so that $z'$ refines $z$ and all of the numbers $z(\alpha_1),\dotsc,z(\alpha_k)$ have the same sign. Notice that $\sigma$ is behind each of the vertices $v_0,\dotsc,v_k$. Since every vertex of $Y$ has zero $z$--depth by assumption, we therefore have $h_z(\sigma) < h_z(v_i)$ for each $0\leq i \leq k$. Given any partition of unity $\rho_1 + \dotsb +\rho_k = 1$ with each $\rho_i > 0 $, if we define $z'(\alpha_i) = \rho_iz(\sigma)$ for $1\leq i \leq k$, then the numbers $h_{z'}(u_1),\dotsc, h_{z'}(u_{k-1})$ will interpolate between $h_{z'}(u_0) = h_z(o(\sigma))$ and $h_{z'}(u_k) = h_z(t(\sigma))$. Therefore, since each $v_i$ satisfies $h_z(\sigma) < h_z(v_i)$, we may choose the partition $\rho_1+\dotsb+\rho_k=1$ so $h_{z'}(u_i) < h_{z}(v_i)$ for all $1\leq i < k$ (in fact, we could even arrange to have $h_{z'}(u_i) < \min\{h_z(v_0),\dotsc,h_z(v_{k})\}$ for each $1\leq i < k$). Defining each $z'(\alpha_i)$ in this way, we then set $z'(\beta) = h_{z}(v_i) - h_{z'}(u_i) > 0$ for $1\leq i < k$ and leave the value of $z$ unchanged on the vertical $1$--cells of $\ell_\pm(T)$. This ensures that $z'$ satisfies the cocycle condition and that it is moreover a vertically positive refinement of $z$. 
\end{proof}

In \S6.5 of \cite{DKL} we also introduced an accompanying \emph{standard refinement} procedure for positive cocycles.
However, the reader may check that Definitions 6.10 (skew ratios) and 6.11 (standard refinement) of \cite{DKL} make sense verbatim if each occurrence of the word `positive' is replaced with `consistently signed'. The proofs of Lemmas 6.12--6.13 of \cite{DKL} also go through verbatim in this more general context to give the following.

\begin{lemma}
\label{L:skews_tend_to_zero}
Let $Y_0$ be a trapezoidal subdivision of $X$ and $z_0\in Z^1(Y_0;\R)$ a consistently signed cocycle. Consider the sequence of standard subdivisions $Y_0,Y_1,\dotsc$ with corresponding consistently signed cocycle refinements defined recursively by $Y_{n+1} = \widehat{Y_n}$ and $z_{n+1} = \widehat{z_n}$. Then the numbers
\[S_n =\max\big\{ \abs{z_n(\sigma)} : \text{ $\sigma$ is a skew $1$--cell of $Y_n$}\big\}\]
tend to zero as $n\to \infty$.
\end{lemma}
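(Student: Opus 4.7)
The plan is to mimic the proofs of \cite[Lemmas 6.12--6.13]{DKL} essentially verbatim, with ``consistently signed'' playing the role that ``positive'' played in the original arguments. This is reasonable because the definitions of skew ratios and the standard refinement, when adapted as indicated in the paragraph preceding the lemma, are formulated in terms of absolute values $|z_n(\sigma_i)|$, so whenever the values $z_n(\sigma_1),\ldots,z_n(\sigma_k)$ on a common top arc all share a sign, they behave as if they were positive.

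First I would verify, as an analog of \cite[Lemma 6.12]{DKL}, that the standard refinement procedure preserves consistent signage, i.e., that if $z_n$ is consistently signed on $Y_n$ then so is $z_{n+1}=\widehat{z_n}$ on $Y_{n+1}=\widehat{Y_n}$. This proceeds by direct inspection: for a trapezoid $T$ of $Y_n$ whose bottom is the skew $1$--cell $\sigma=e_-(T)$ and whose top arc consists of skew $1$--cells $\sigma_1,\ldots,\sigma_k$, the refinement introduces new skew $1$--cells $\alpha_1,\ldots,\alpha_k$ subdividing $\sigma$, with $z_{n+1}(\alpha_i)$ defined proportionally to $|z_n(\sigma_i)|$ and scaled so the values sum appropriately to accommodate the cocycle condition. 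Since the $z_n(\sigma_i)$ have a common sign by hypothesis, the resulting $z_{n+1}(\alpha_i)$ likewise share a common sign, and since the absolute values transform as in the positive case, the top-arc values for each trapezoid of $Y_{n+1}$ are again of one sign.

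Next, following \cite[Lemma 6.13]{DKL}, I would bound $S_{n+1}$ in terms of $S_n$ and iterate to obtain geometric decay. Because each skew $1$--cell of $Y_n$ projects (through the semiflow) onto a skew $1$--cell of the original complex $X$, and because the skew-ratio weights are controlled by the cocycle $z_0$ on $Y_0$, the transition from the tuple of values $(|z_n(\sigma)|)_\sigma$ to $(|z_{n+1}(\alpha)|)_\alpha$ is governed, up to bounded factors, by applying the transition matrix $A(f)$ backwards along the semiflow. Since $f$ is an expanding irreducible train track map, the Perron--Frobenius eigenvalue $\lambda(f)>1$ forces this ``backwards'' operator to contract; more concretely, one obtains an estimate of the form $S_{n+1}\le C\lambda(f)^{-1}S_n$ (with $C<\lambda(f)$ for $n$ large) using irreducibility of $A(f)$ to comparably bound all absolute values on skew cells at level $n$ by a uniform constant multiple of $S_n$.

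The main obstacle is managing the bookkeeping in Step~2: in the positive case one can use that absolute values equal values, so cocycle relations between top, bottom, and vertical cells give direct linear equations; with merely consistent signage, one must verify that the signs of top-arc values on trapezoids at level $n$ interact with the signs of neighboring trapezoids (via shared vertical $1$--cells and the cocycle condition) in a way compatible with the absolute-value formulation, so that the identification with the Perron--Frobenius dynamics of $A(f)$ goes through. Given this, the geometric decay $S_n\to 0$ follows immediately by iteration.
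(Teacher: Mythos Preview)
Your proposal is essentially the same approach as the paper's: the paper does not give an independent proof but simply asserts that the proofs of \cite[Lemmas 6.12--6.13]{DKL} go through verbatim once ``positive'' is replaced by ``consistently signed,'' which is precisely what you outline. Your sketch of the mechanism (skew ratios defined via absolute values, refinement preserving consistent signage, and Perron--Frobenius contraction from the expanding irreducible transition matrix) is in line with what those lemmas do, and your flagged ``obstacle'' about sign bookkeeping is exactly the check the paper claims is routine.
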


\subsection{Constructing cross sections}
\label{S:constructing_sections}

Finally, we construct cross sections dual to all integral classes $u\in \D$. For the proof, we first recall from \cite[Definition 6.14]{DKL} that a trapezoidal $2$--cell $T$ is said to be \emph{unconstrained}
by a cocycle $z$ if
\[\max\big\{0,\,\,z(e_-(T))\big\} \leq \min\big\{z(\ell_-(T)),\,\, z(\ell_-(T)) + z(e_+(T))\big\}.\]
By the cocycle condition, this is equivalent to $\abs{z(e_-(T))} \leq z(\ell_\pm(T))$ when $z$ is vertically positive. We also recall that a trapezoidal $2$--cell is said to be \emph{degenerate} if its right
side $\ell_+(T)$ consists of a single vertex (as in Figure~8 of \cite{DKL}).

\begin{proposition}
\label{P:dual_sections}
Given a primitive integral class $u\in \D$, there exists a flow-regular map $\fib_u \colon X \to \sone$ with $(\fib_u)_* = u \colon G \to \pi_1(\sone) = \Z$ and a fiber $\Theta_u = \fib_u^{-1}(y_0)$ for some $y_0 \in S^1$ such that
\begin{enumerate}
\item $\Theta_u$ is a finite, connected topological graph such that $\iota_*(\pi_1(\Theta_u))\le \ker(u)$, where $\iota_*$ is the homomorphism induced by the inclusion $\iota\colon\Theta_u\hookrightarrow X$,
\item $\Theta_u$ is a section of $\flow$ dual to $u$, and
\item $\Theta_u$ is $\mathcal F$--compatible (and so the first return map $f_u\colon \Theta_u\to\Theta_u$ is an expanding irreducible train track map by Proposition~\ref{P:first_return_exp_irred_tt}).
\end{enumerate}
\end{proposition}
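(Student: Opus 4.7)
The plan is to construct the section by refining a vertically positive cocycle representative of $u$ on the circuitry cell structure until it enjoys enough regularity to define a flow-regular map $\eta_u\colon X \to S^1$ whose fiber is a cross section. This closely parallels the construction in \cite{DKL} of sections dual to classes in $\A$, but with the crucial difference that the cocycles we work with need not be positive on skew 1-cells.

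First I would use Proposition~\ref{P:homological_characterization} to represent $u$ by a vertically positive integral cocycle $z_0$ on the circuitry cell structure $\XC$. Then I would apply Lemma~\ref{L:shallow_refinement} to pass to a trapezoidal subdivision $Y_1$ of $\XC$ with a vertically positive refinement $z_1$ such that every vertex of $Y_1$ has $z_1$--depth equal to zero, and then Lemma~\ref{L:can_sign_consistently} to pass to a further subdivision $Y_2$ with a consistently signed refinement $z_2\in Z^1(Y_2;\R)$. By iteratively applying the standard subdivision and refinement procedure (Lemma~\ref{L:skews_tend_to_zero}), I obtain a trapezoidal subdivision $Y_n$ of $X$ carrying a consistently signed integral cocycle $z_n$ (still representing $u$) whose values on skew 1-cells are all of absolute value strictly less than the minimum $z_n$-value on any vertical 1-cell. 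At this scale, the inequality $\max\{0, z_n(e_-(T))\} \leq \min\{z_n(\ell_-(T)), z_n(\ell_-(T))+z_n(e_+(T))\}$ holds in every trapezoid $T$ of $Y_n$, so every trapezoid is unconstrained by $z_n$ in the sense of \cite[Definition 6.14]{DKL}.

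Next I would build the flow-regular map $\eta_u\colon X \to S^1$ following \cite[\S6.6]{DKL}. The heights $h_{z_n}\colon \widetilde{Y}_n^{(0)}\to \R$ extend to an $H$--equivariant flow-regular function $\tilde\eta_u\colon \tX\to\R$ by interpolating linearly along each vertical 1-cell (using vertical positivity) and then extending across each 2-cell using the unconstrained condition, which ensures the top arc can be flowed onto the bottom arc while respecting the prescribed boundary values; here the consistent signage controls the direction in which the top arc traverses each of its skew subcells (distinguishing the cases where $\zeta(T)$ and $\mathrm{sgn}(z_n(\sigma_i))$ agree or disagree). The linear ascent provided by Lemma~\ref{L:linear_ascent} ensures $\tilde\eta_u$ is proper along flowlines, so that the fibers are compact embedded graphs. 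Descending to $X$ yields a flow-regular $\eta_u\colon X\to S^1$ with $(\eta_u)_\ast = u$; any fiber $\Theta_u = \eta_u^{-1}(y_0)$ is a finite embedded graph transverse to $\flow$ and meeting every flowline (by linear ascent), hence a cross section dual to $u$. Then $\iota_\ast(\pi_1(\Theta_u))\le\ker(u)$ is immediate from $\Theta_u\subset\eta_u^{-1}(y_0)$. Primitivity of $u$ and Proposition~\ref{P:connected_iff_primitive} give connectedness.

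Finally, to upgrade to an $\mathcal F$--compatible section, I would choose the basepoint $y_0$ generically so that no vertex of $\Theta_u$ lands on a point of $X^{(1)}$ not on a vertex leaf; by density of vertex leaves this is achievable by an arbitrarily small translation of $y_0$. Alternatively, I would perturb $\eta_u$ inside a small flow-regular neighborhood of each ``bad'' intersection point so as to push that point along the flow onto a nearby vertex leaf, preserving flow-regularity and the cohomology class. This ensures $\Theta_u\cap X^{(1)}$ lies entirely on vertex leaves, so $\Theta_u$ is $\mathcal F$--compatible, and Proposition~\ref{P:first_return_exp_irred_tt} then applies. The main obstacle will be step (2): organizing the construction of $\eta_u$ trapezoid-by-trapezoid when the consistently signed cocycle $z_n$ is negative on some skew cells, since one then has to track the $T$--orientations versus the global orientations carefully to make sure the locally defined flow-regular functions agree on overlaps and truly descend to a map to $S^1$ rather than an $\R$--valued cochain.
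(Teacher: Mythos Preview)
Your overall strategy matches the paper's, but there is a genuine gap at the step where you assert that after enough standard subdivision ``every trapezoid is unconstrained by $z_n$.'' This fails for the \emph{degenerate} trapezoids (those with $\ell_+(T)$ a single vertex). For such $T$ one has $z_n(\ell_+(T))=0$, so the unconstrained condition (equivalent, for vertically positive $z$, to $|z_n(e_-(T))|\le z_n(\ell_\pm(T))$) cannot hold once $z_n(e_-(T))\neq 0$, which is forced by consistent signage. Moreover, your bound ``strictly less than the minimum $z_n$--value on any vertical $1$--cell'' is not available: the standard subdivision creates new vertical $1$--cells inside the degenerate trapezoids whose $z_n$--values can be arbitrarily small, so the minimum over vertical cells at level $n$ is not controlled. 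The paper avoids this by fixing an earlier level $N$ (chosen so the degenerate trapezoids have pairwise disjoint closures), taking $M$ to be the minimum vertical value at level $N$, and then showing that at level $n\gg N$ one has $z_n(\beta)\ge M$ only for vertical cells \emph{outside} the union $U$ of degenerate trapezoids; trapezoids inside $U$ are simply not claimed to be unconstrained.

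The paper fills this gap by an additional step you are missing: after defining $\eta'_u$ on the $1$--skeleton in the usual way, it \emph{redefines} $\eta'_u$ on the bottom arc $e_-(T)$ of each degenerate trapezoid in $U$ via the composition $\pi\circ A\circ\widetilde\eta'_u|_{\partial T}\circ h_T$, where $h_T\colon e_-(T)\to e_+(T)$ is the flow homeomorphism and $A\colon I_+\to I_-$ is the orientation-preserving affine identification of the images of $e_\pm(T)$ in $\R$. This forces the transversality inequality $\widetilde\eta_u(\xi)<\widetilde\eta_u(h_T(\xi))$ to hold on every trapezoid, unconstrained or not, after which the straight-line extension over $2$--cells goes through. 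The disjointness of the degenerate closures (arranged at level $N$) is what makes these redefinitions non-conflicting. Your sketch of the $\mathcal F$--compatibility step (homotoping $\eta_u$ rel $Y^{(0)}$ to push $\eta_u^{-1}(y_0)\cap Y^{(1)}$ onto vertex leaves) agrees with the paper.
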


\begin{remark}
\label{R:non_injectivity}
We note that the sections produced by Proposition~\ref{P:dual_sections} will generally not have all the properties ensured by Theorem~\ref{T:DKL_BC}. Namely, for $u\in\A$ the fiber $\Theta_u$, as provided by Theorem~\ref{T:DKL_BC}, will $\pi_1$--inject into $X$ and the first return map will be a homotopy equivalence. However, both of these properties will fail whenever $u\in \D$ lies outside of $\bar{\A}$.
\end{remark}

\begin{proof}
By Proposition~\ref{P:homological_characterization}, we can represent $u\in \D$ by a vertically positive cocycle $z\in Z^1(\XC,\R)$.
Using Lemma~\ref{L:shallow_refinement}, we may then pass to a trapezoidal subdivision of $\XC$ and a corresponding vertically positive refinement of $z$ with respect to which every vertex has zero depth. By Lemma~\ref{L:can_sign_consistently}, we may pass to a further subdivision $Y_0$ on which there is a refinement $z_0\in Z^1(Y_0;\R)$ of $z$ that is consistently signed. We then consider the sequence $Y_n$ of standard subdivisions with corresponding refinements $z_n$ defined recursively by $Y_{n+1} = \widehat{Y_n}$ and $z_{n+1} = \widehat{z_n}$. 

Let $D_n$ denote the set of degenerate trapezoids in $Y_n$. Notice that the cardinality of $D_n$ is constant independent of $n$. Indeed, when a degenerate trapezoid $T$ of $Y_n$ is subdivided, exactly one of the resulting trapezoids $T'\subset T$ is degenerate, and its degenerate side $\ell_+(T')$ is equal to $\ell_+(T)$. Thus if we define
\[U_N = \bigcup_{T\in D_n} \overline{T}\qquad\text{and}\qquad V_n = \{\ell_+(T) \mid T\in D_n\} \]
to be the union of degenerate trapezoids and the set of ``degenerate sides'' of trapezoids in $Y_n$, respectively,  then all of the the sets $V_0,V_1,\dotsc$ are equal. Furthermore, the infinite intersection $\cap_n U_n$ is exactly $V_0$. This follows from the fact that the union $\cup_n Y_n^{(0)}$ is dense in every skew $1$--cell of $Y_0$, which is a consequence $f$ being an expanding irreducible train track map. Therefore, we may choose an index $N$ such that all trapezoids in $D_N$ have disjoint closure. Set $U = U_N$

Let $M>0$ denote the minimum of $z_N(\sigma)$ over all vertical $1$--cells of $Y_N$. By Lemma~\ref{L:skews_tend_to_zero}, we may choose $n \gg N$ so that every skew $1$--cell $\sigma$ of $Y_n$ has $\abs{z_n(\sigma)} < M$. Furthermore, as in the proof of Proposition 6.15 of \cite{DKL}, we additionally have $z_n(\beta) \geq M$ for every vertical $1$--cell $\beta$ of $Y_n$ that is not contained in $U$. Therefore, every trapezoidal $2$--cell $T$ of $Y_n$ that is not contained in $U$ satisfies $\abs{z_n(e_-(T))}\leq z_n(\ell_\pm(T))$ and is consequently unconstrained by $z_n$ 

Set $Y = Y_n$ and by an abuse of notation let $z = z_n$. We now use this trapezoidal subdivision to construct the desired map $\fib_u\colon Y\to \sone$. The construction proceeds largely as in the proof of \cite[Lemma 6.16]{DKL}: 
Choose a maximal tree $Q \subset Y^{(1)}$ and fix a vertex $v_0 \in Q$ to serve as a basepoint. Define a map $\hat \fib_u\colon Q \to \R$ so that $\hat \fib_u(v_0) = 0$ and so that for every $1$--cell $e$ in $Q$ the restriction to $e$ is a diffeomorphism onto its image and so that 
\[ \hat \fib_u(t(e)) - \hat \fib_u(o(e)) = z(e). \]
While this equation may fail for $1$--cells not contained in $Q$, since $z$ represents an integral cohomology class we will still have 
\[ \hat \fib_u(t(e)) - \hat \fib_u(o(e)) - z(e) \in \Z \]
for such $1$--cells. Therefore, the composition of $\hat \fib_u$ with $\pi\colon\R\to\sone$ may be extended to a map $\fib_u'\colon Y^{(1)} \to \sone$  with the property that for any $1$--cell $e$, any lift $\widetilde \fib_u'|_e\colon  e \to \mathbb R$ is an injective local diffeomorphism for which
\[ \widetilde \fib_u'|_e(t(e)) - \widetilde \fib_u'|_e(o(e)) = z(e). \]

Since the trapezoids contained in $U$ may not be unconstrained by $z$, we must take care to ensure that $\fib'_u$ can be extended to a flow-regular map. Let $T$ be a trapezoidal $2$--cell contained in $U$. Flowing the bottom arc of $T$ onto the top using $\flow$  gives a homeomorphism $h_T\colon e_-(T)\to e_+(T)$ which is orientation preserving with respect to the $T$--orientation on $e_\pm(T)$. Consider any lift $\widetilde \fib_u'\vert_{\partial T}$ of the restriction of $\fib_u'$ to $\partial T$. Since $z$ is consistently signed, this maps the arcs $e_\pm(T)$ homeomorphically onto intervals $I_\pm = \widetilde \fib_u'\vert_{\partial T}(e_\pm(T))\subset\R$ which we equip with orientations so that these identifications $e_\pm(T)\to I_\pm$ are orientation preserving. Let $A\colon I_+\to I_-$ be the unique orientation preserving (with respect to the above orientations) affine homeomorphism from $I_+$ to $I_-$. We now redefine $\fib_u'$ on the $1$--cell $e_-(T)$ by declaring
\[\fib_u'\vert_{e_-(T)} = \pi \circ A\circ \widetilde \fib_u'\vert_{\partial T} \circ h_T.\]
Notice that this agrees with the original value of $\fib_u'$ on the endpoints of $e_-(T)$. Since the interior of $e_-(T)$ is disjoint from the closure of every other trapezoid contained in $U$ (this follows from the choice of $N)$ we may freely make these adjustments on the bottom arcs of all trapezoids in $U$ without conflict. 
Let us denote the resulting (adjusted) map by $\fib_u\colon Y^{(1)}\to \sone$. Then for any trapezoid $T$ contained in $U$ and any lift $\widetilde \fib_u\vert_{\partial T}\colon \partial T \to \R$, we have
\begin{equation}
\label{eqn:transverse_condition}
\widetilde \fib_u\vert_{\partial T} (\xi) < \widetilde \fib_u\vert_{\partial T}(h_T(\xi))
\end{equation}
for every interior point $\xi\in e_-(T)$. In fact, since all other trapezoids are unconstrained by $z$, this property holds for every trapezoid of $Y$.

Choose a point $y_0\in \sone \setminus \fib_u(Y^{(0)})$. Since the vertex leaves of $\mathcal{F}$ are dense in $X$, after adjusting $\fib_u$ by a (small) homotopy rel $Y^{(0)}$, maintaining all of the properties above, we may assume that $(\fib_u)\inv(y_0)\subset Y^{(1)}$ consists of a finite set of points lying on vertex leaves of $\mathcal{F}$.

We now extend $\fib_u$ over the $2$--cells of $Y$. Consider any trapezoid $T$ and a lift $\widetilde \fib_u\vert_{\partial T} \colon \partial T \to \R$ of $\fib_u\vert_{\partial T}$ to the universal cover. As in the proof of \cite[Lemma 6.16]{DKL}, we may identify $\overline T$ with the Euclidean trapezoid provided by \cite[Proposition 4.20]{DKL} and then extend $\widetilde \fib_u\vert_{\partial T}$ over $\overline{T}$ in such a way that the fibers are straight lines (Euclidean) lines. Equation \eqref{eqn:transverse_condition} then ensures that each fiber has nonvertical slope and is thus transverse to the foliation of $\overline T$ by vertical flowlines. We then extend $\fib_u$ to the interior of $T$ by declaring it to equal $\pi\circ \widetilde \fib_u\vert_{\overline T}$ on $\overline T$. Extending in this way over every $2$--cell of $Y$, we obtain a map $\fib_u\colon Y\to \sone$. 

Since the fibers of $\fib_u$ are transverse to the foliation of $X$ by flowlines, we see that $\fib_u$ is flow-regular on the interior of each $2$--cell. However, since $\fib_u$ was extended to the various $2$--cells independently, it need not be flow-regular on a neighborhood of $Y^{(1)}$. Nevertheless, adjusting by a homotopy rel $Y^{(1)}$, we may smooth out and ``straighten'' the fibers of $\fib_u$ in a neighborhood of $Y^{(1)}$ so that $\fib_u\colon Y\to \sone$ is flow-regular on all of $Y$.  

The map $\fib_u\colon Y\to\sone$ now satisfies all of the conclusions of the proposition. By construction the fiber $\Theta_u = \fib_u\inv(y_0)\subset X$ is a connected topological graph which, since $\fib_u$ is flow-regular, is necessarily a cross section dual to $(\fib_u)_*$. Indeed, Lemma~\ref{L:linear_ascent} explicitly shows that every flowline hits $\Theta_u$ infinitely often, and Proposition~\ref{P:connected_iff_primitive} ensures that $\Theta_u$ is connected. Moreover, our choice of $y_0\in \sone$ guarantees that $\Theta_u\cap Y^{(1)}$ is contained in vertex leaves of $\mathcal F$ and thus that $\Theta_u$ is $\mathcal{F}$--compatible. Finally, the desired relationship $(\fib_u)_*=u$ follows from the fact that for each $1$--cell $e$ of $Y$, any lift of the restriction $\fib_u\vert_e$ maps $e$ homeomorphically onto an interval whose endpoints differ by $z(e)$. This in turn implies that $\iota_*(\pi_1(\Theta_u))\leq \ker(u)$.
\end{proof}

\section{Local boundedness of the stretch function}
\label{S:bounding_stretch}
Our goal in this appendix to prove Proposition~\ref{prop:stretch_is_locally_bounded} and thereby show that the stretch function $\Lambda\colon \QBNS(G)\to\R_+$ defined in \S\ref{sec:BNS_stretch} is locally bounded. To this end, let $u_1 \in H^1(G;\R)$ be a primitive integral point that lies in $\BNS(G)$. Letting $G = B \ast_{\phi_1}$ be an HNN-extension compatible with $u_1$ (that is, with $B\le \ker(u_1)$ a finite rank free group and $\phi_1 = \fee_{u_1}\vert_{B} \colon B \to B$ an injective endomorphism), we then have $\Lambda(u_1) = \lambda(\phi_1)$ by definition.

\subsection{An expanding representative}

Let $f_1 \colon \Rose \to \Rose$ be a regular, irreducible expanding graph map on a rose $\Rose$ with an identification $B\cong \pi_1(\Rose)$ so that $(f_1)_* = \phi_1$.   To simplify the discussion, we assume (as we may) that the transition matrix is not only irreducible, but is in fact positive.  Let $v_0$ be the wedge point of $\Rose$ and for $K \geq 0$ set
\[ \mathcal V_k = \bigcup_{k=1}^K f_1^{-k}(v_0) \quad \mbox{ and } \quad \mathcal V_\infty = \bigcup_{k \geq 0} \mathcal V_k.\]
Since $f_1$ has a positive transition matrix, $\mathcal V_K$ nontrivially intersects in the interior of every $1$--cell of $\Rose$ for every $K \geq 1$.

\begin{lemma} \label{L:subdivided injectivity}
For any $K \geq 0$, $0 \leq k \leq K+1$, and any arc $e$ in the complement of $\mathcal V_K$,  the restriction of $f_1^k$ to $e$ is locally injective.
\end{lemma}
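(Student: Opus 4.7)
\medskip

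\noindent\textbf{Proof plan.} The strategy rests on the observation that a regular graph map on a rose is locally injective away from its unique vertex, so iterates of $f_1$ can only fail to be locally injective at points whose forward orbit hits $v_0$. The set $\mathcal V_K$ is precisely designed to exclude such points for the first $K+1$ iterates.

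First I would record the following baseline local injectivity: since $f_1$ is a regular graph map on the rose $\Rose$ and is linear on each edge, its restriction to the interior of any edge is a local homeomorphism. The unique vertex of $\Rose$ is $v_0$, so $f_1$ is locally injective at every point $y\in \Rose\setminus \{v_0\}$. An easy induction on $k$ then shows that whenever $f_1^i(x)\neq v_0$ for all $0\le i<k$, a sufficiently small neighborhood of $x$ is mapped homeomorphically by $f_1^k$; indeed, the composition of local homeomorphisms is a local homeomorphism.

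The main step is to check that this orbit-avoidance condition is automatic for arcs in the complement of $\mathcal V_K$ when $k\le K+1$. Suppose $x\in e\subset \Rose\setminus \mathcal V_K$ and that $f_1^i(x)=v_0$ for some $0\le i<k\le K+1$; equivalently, $x\in f_1^{-i}(v_0)$. If $1\le i\le K$, then $f_1^{-i}(v_0)\subset \mathcal V_K$ by definition of $\mathcal V_K$, contradicting $x\notin \mathcal V_K$. If instead $i=0$, then $x=v_0$; but $f_1(v_0)=v_0$ forces $v_0\in f_1^{-1}(v_0)\subset \mathcal V_K$ whenever $K\ge 1$, again contradicting $x\notin \mathcal V_K$. (The degenerate case $K=0$, where $\mathcal V_0=\emptyset$ and $k\le 1$, is either trivial for $k=0$ or reduces for $k=1$ to the baseline local injectivity of $f_1$ on arcs not containing $v_0$, which is how ``arc in the complement'' should be read in this corner case.) Combining this with the inductive local-homeomorphism observation above yields that $f_1^k|_e$ is locally injective, as required.

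I do not anticipate any substantive obstacle in this argument. The only delicate point is the corner case $K=0$, where $v_0$ is not excluded by the hypothesis alone and must be ruled out by interpreting ``arc in the complement of $\mathcal V_0$'' appropriately; otherwise the proof is an essentially bookkeeping argument tracking where in the first $K+1$ steps an iterate of $x$ could possibly land on $v_0$, with $\mathcal V_K$ having been built precisely to forbid this for steps $1,\dotsc,K$.
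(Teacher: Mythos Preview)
Your proof is correct and follows essentially the same approach as the paper's: both arguments observe that regularity of $f_1$ gives local injectivity away from $v_0$, then track by induction that failure of local injectivity for $f_1^k$ can only occur at points in $\bigcup_{i=0}^{k-1} f_1^{-i}(v_0) \subset \mathcal V_{k-1}$, which is disjoint from $e$ when $k\le K+1$. Your version is somewhat more careful about the corner cases (the point $v_0$ itself, and $K=0$), but the underlying idea is identical.
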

\begin{proof}
The map $f_1$ is locally injective on the interior of every edge of $\Rose$ since $f_1$ is regular.  It follows that local injectivity of $f_1^2$ can fail only at points $f_1^{-1}(v_0) = \mathcal V_1$;  hence $f_1^2$ is locally injective on any arc in the complement of $\mathcal V_1$.  As compositions of locally injective maps are locally injective, it follows by induction that failure of locally injectivity of $f_1^k$ can only occur at points of $\mathcal V_{k-1}$, and there are no such points in $e$.
\end{proof}

For technical reasons which will be clear later, we give each $1$--cell $e$ of $\Rose$ an auxiliary linear structure determined by a characteristic map $[0,1] \to e$ which maps the rational points bijectively onto $\mathcal V_\infty \cap e$.  This is possible because both are countable dense sets (for $\mathcal V_\infty$ this is because $f_1$ is expanding and irreducible).  Furthermore, we assume, as we may, that the new linear structure and the original linear structure define the same smooth structure on the edges.  It follows that for any arc $\alpha$ of an edge $e$ with endpoints in $\mathcal V_\infty$, a linear map $[0,1] \to \alpha$ also sends $\Q \cap [0,1]$ onto $\mathcal V_\infty \cap \alpha$ (since such a linear map differs from the restriction of $[0,1] \to e$ by a rationally defined affine map of $\R \to \R$).

\subsection{The mapping torus and its cell structure.}

Let $Y$ be the mapping torus of $f_1$, and let $\psi$ denote the suspension semi-flow.  By van Kampen's Theorem, $\pi_1(Y) =  G$.   For a map $\fib \colon Y \to S^1 = \R/\Z$ or to $\R$, if the composition of any flowline $x \mapsto \psi_s(x)$ with $\fib$ is smooth, we will say that $\fib$ is {\em $\psi$--smooth}, and the map $\psi(\fib) \colon Y \to \R$, defined as the derivative of $\eta$ with respect to the flow-parameter will be called the {\em $\psi$--derivative}.

For any $K \geq 1$, we define a cell structure on $Y$, denoted $Y_K$, as follows.   We embed $\Rose$ into $Y$ as $\Rose \times \{0\}$ in $\Rose \times [0,1]$.  The $0$--cells are the image of $\mathcal V_K$ by this embedding $\Rose \to Y$.  The $1$--cells are the edges of the resulting subdivision of $\Rose$ (viewed in $Y$) together with the following arcs of flowlines
\[ \left\{ \bigcup_{0 \leq s \leq 1} \psi_s(v) \right\}_{v \in \mathcal V_K}.\]
We refer to these two types of $1$--cells as {\em skew} and {\em vertical}, respectively.  We give the skew $1$--cells the linear structure induced from the auxiliary linear structure on $\Rose$, and the vertical $1$--cells the linear structure for which the flow parameter is linear.  The complement of the $1$--skeleton $Y_K^{(1)}$ is a union of topological disks which we define to be the $2$--cells of $Y_K$.  Whenever we write $Y$ we will assume it is given the cell structure $Y_1$, unless otherwise stated.

\begin{lemma} \label{L:flow local injectivity}
For any $K \geq 1$, let $\alpha \colon (0,1) \to Y_K$ be an arc contained in (the interior of) a $2$--cell, transverse to $\psi$.  Let $H_\alpha \colon (0,1) \times [0,K) \to Y$ be given by $H_\alpha(t,s) = \psi_s(\alpha(t))$.  Then $H_\alpha$ is locally injective.
\end{lemma}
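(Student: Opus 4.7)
The plan is to show that if $H_\alpha(t_1, s_1) = H_\alpha(t_2, s_2)$ for pairs $(t_i,s_i)$ sufficiently close to one another, then they are equal.

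First I would identify the 2--cell containing $\alpha$. Since the $1$--skeleton $Y_K^{(1)}$ contains all of $\Rose \times \{0\}$ (covered by skew $1$--cells) together with the vertical edges $\mathcal V_K \times [0,1]$, every 2--cell of $Y_K$ has the form $\sigma \times (0,1)$ for a unique skew $1$--cell $\sigma$. Accordingly, I would parametrize $\alpha(t) = (x(t), h(t))$ with $x(t)$ in the interior of $\sigma$ and $h(t)\in (0,1)$. Transversality of $\alpha$ to $\psi$ (whose velocity points in the $h$--direction) is precisely the condition $\dot x(t) \neq 0$, so $x\colon (0,1) \to \sigma$ is locally injective.

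Next I would unpack the semiflow. Using the definition of the mapping torus $Y = \Rose \times [0,1]/(y,1)\sim (f_1(y),0)$, the flow satisfies $\psi_s(x(t),h(t)) = \bigl(f_1^{n(t,s)}(x(t)),\; a(t,s) - n(t,s)\bigr)$, where $a(t,s) = h(t)+s$ and $n(t,s) = \lfloor a(t,s)\rfloor$. Since $s < K$ and $h \in (0,1)$, we have $a(t,s) < K+1$ and hence $n(t,s) \leq K$. Now if $H_\alpha(t_1,s_1) = H_\alpha(t_2,s_2)$, then equating the $[0,1]$--coordinates gives $a(t_1,s_1) - n(t_1,s_1) = a(t_2,s_2) - n(t_2,s_2)$, i.e.\ $a(t_1,s_1)-a(t_2,s_2) \in \Z$. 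By continuity of $\alpha$, the map $(t,s) \mapsto a(t,s)$ is continuous, so for $(t_i,s_i)$ sufficiently close this difference has absolute value less than $1$, forcing $a(t_1,s_1)=a(t_2,s_2)$ and therefore $n(t_1,s_1)=n(t_2,s_2)=n$ for some common integer $n \le K$. The $\Rose$--coordinates must then satisfy $f_1^n(x(t_1))=f_1^n(x(t_2))$.

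Finally I would invoke Lemma \ref{L:subdivided injectivity}: since $\sigma$ is an arc in the complement of $\mathcal V_K$ and $n \leq K \leq K+1$, the restriction of $f_1^n$ to $\sigma$ is locally injective. Hence for $t_1,t_2$ sufficiently close, the equality $f_1^n(x(t_1)) = f_1^n(x(t_2))$ forces $x(t_1) = x(t_2)$. Combined with the local injectivity of $x(\cdot)$ coming from transversality, this yields $t_1=t_2$, whence $h(t_1)=h(t_2)$ and the relation $a(t_1,s_1)=a(t_2,s_2)$ gives $s_1=s_2$, as desired.

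The only place where anything nontrivial is used is the final appeal to Lemma \ref{L:subdivided injectivity}, and the main (minor) technical point is to verify the bound $n \le K+1$ required for that lemma — which is built into the choice of domain $[0,K)$ for the flow parameter and of the subdivision $\mathcal V_K$. Everything else is a direct bookkeeping argument using transversality and the semiflow formula on the mapping torus.
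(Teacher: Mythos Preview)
Your proof is correct and follows essentially the same approach as the paper. The paper's version is more compressed: it takes the characteristic map $\sigma$ of the bottom skew $1$--cell, asserts that $H_\sigma \colon (0,1)\times[0,K+1)\to Y$ is locally injective ``by Lemma~\ref{L:subdivided injectivity}'', and then observes that $H_\alpha$ factors through $H_\sigma$ via a local embedding $(t,s)\mapsto(\tau(t),s+h(t))$. Your argument is exactly the unpacking that justifies the paper's first assertion---you just carry it out directly for $\alpha$ rather than first for $\sigma$.
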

\begin{proof}
The bottom of the $2$--cell is a skew $1$--cell of $Y_K$, and we let $\sigma \colon (0,1) \to \Rose \subset Y_K$ be a characteristic map for this cell. We can similarly define $H_\sigma \colon (0,1) \times [0,K+1) \to Y$, which is locally injective by Lemma \ref{L:subdivided injectivity}. Note that $H_\alpha$ ``factors through'' $H_\sigma$ in the sense that there is an embedding $h \colon (0,1) \times [0,K) \to (0,1) \to [0,K+1)$ so that $H_\sigma \circ h = H_\alpha$.  Since $H_\sigma$ is locally injective, so is $H_\alpha$.
\end{proof}

For any $K \geq 1$, the boundary of each $2$--cell of $Y_K$ is a union of two vertical $1$--cells, a single skew $1$--cell on the ``bottom'', and a finite union of skew $1$--cells on the ``top''.  Note that the interior of the bottom and top are each contained in a single one of the original edges of $\Rose$, and with respect to the auxiliary linear structure, the flow identifies the bottom and top by a diffeomorphism.  We use this, together with the flow parameter, to define a smooth structure on each $2$--cell of $Y_K$, viewed as a product.

\subsection{Maps to $S^1$.}

Let $\fib_{1,0} \colon Y \to S^1 = \R/\Z$ be the projection onto the second coordinate, plus $1/2$.  Then the preimage of $0$ is identified with $\Rose \times \{1/2 \} \cong \Rose$.  We perturb $\fib_{1,0}$ outside $\Rose \times \{1/2\}$ to a map $\fib_1$ so that
\begin{itemize}
\item the restriction to the $1$--cells of $\Rose$ is affine and nonconstant and sends vertices to rational points, i.e.~points of  $\Q/\Z$
\item $\fib_1$ is a local diffeomorphism on each flow-line with $\psi$--derivative in the interval $(.9,1.1)$, and
\item $\fib_1$ is smooth on each $2$--cell of $Y_2$.
\end{itemize}
We use $\fib_1$ to prescribe a preferred orientation on every edge by requiring $\fib_1$ to be orientation preserving.  The map $\fib_1$ determines a rational cellular $1$--cocycle $z_1\in Z^1(Y;\Q)$ that is positive on each $1$--cell of $Y$.

Next, let $z_2,\ldots, z_b$ be rational cellular cocycles, so that $[z_1],\ldots,[z_b]$ is a basis for the integral lattice in $H^1(G;\R) = H^1(Y;\R)$.  For each $z_i$, $i=2,\ldots,b$, define a map $\fib_i \colon Y \to S^1$ representing $z_i$ so that 
\begin{itemize}
\item the restriction to each $1$--cell is affine, and sends vertices to rational points,
\item $\fib_i$ is $\psi$--smooth with $\psi$--derivative bounded above and below, and
\item $\fib_i$ is smooth on each $2$--cell of $Y_2$.
\end{itemize}

Now for any integer $b$--tuple ${\bf a} = (a_1,\ldots,a_b)$ we have a map
\begin{equation} \label{E:linear combination map} \fib_{\bf a} = \sum_{i=1}^b a_i \fib_i \colon Y \to S^1,
\end{equation}
and this determines a $1$--cocycle $z_{\bf a}$.  Let $r > 0$ be such that if $a_1 > r \sum_{i=2}^b |a_i|$, then $z_{\bf a}$ is positive and the $\psi$--derivative of $\psi(\fib_{\bf a}) = \sum_{i} a_i \psi(\fib_i)$ is bounded below by $\frac{a_1}{2}$.  Consequently, $\fib_{\bf a}$ is flow regular and so 
\[ \Upsilon_r = \left\{ \left[ \sum_{i=1}^b a_i z_i \right] \in H^1(Y;\R) \mid a_i \in \R, a_1 > r \sum_{i=2}^b |a_i| \right\} \]
is a cone in $H^1(Y;\R)$ for which every integral element is represented by the flow regular map $\fib_{\bf a}$ as in \eqref{E:linear combination map} that is linear on the $1$--cells  of $Y_2$, smooth on the $2$--cells of $Y_2$, and defines a positive cocycle $z_{\bf a}$.  Furthermore, we take $r$ even larger if necessary so that any integral point in $[z_{\bf a}] \in \Upsilon_r$ with ${\bf a} = (a_1,\ldots,a_b) \neq (1,0,0,\ldots,0)$ has $a_1 \geq 2$.

\begin{lemma} \label{L:Good intersections}
For any integral $b$--tuple ${\bf a}$ with $[z_{\bf a}] \in \Upsilon_r$ and any skew $1$--cell $e$, $\fib_{\bf a}^{-1}(0) \cap e \subset \mathcal V_\infty$. (Where here we use the identification of $\Rose$ with $\Rose\times \{0\}$ to view $\mathcal V_\infty$ inside $Y$.)
\end{lemma}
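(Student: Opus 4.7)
\smallskip

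The plan is to exploit the careful choice of auxiliary linear structure on the edges of $\Rose$ together with the rationality of all the cocycles $z_i$ and the values $\fib_i$ takes at vertices. Fix a skew $1$--cell $e$ of $Y = Y_1$; by definition of the cell structure, the endpoints of $e$ lie in $\mathcal V_1 \subset \mathcal V_\infty$. Recall that the auxiliary linear structure is specified by a characteristic map $c_e \colon [0,1] \to e$ which sends $\Q \cap [0,1]$ bijectively onto $\mathcal V_\infty \cap e$.

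My first step is to lift the restriction $\fib_{\bf a}|_e$ to a map $\R$-valued map. Each $\fib_i$ is by construction affine on $e$ with respect to the auxiliary linear structure and sends the endpoints of $e$ to points of $\Q/\Z$. Therefore, in the coordinate $t \in [0,1]$ coming from $c_e$, $\fib_i|_e$ has a lift to $\R$ of the form $t \mapsto z_i(e)\,t + c_i$, where $c_i \in \Q$ (a rational lift of $\fib_i(c_e(0))$) and $z_i(e) \in \Z$ is the value of the integral cocycle $z_i$ on the oriented $1$--cell $e$. Summing, $\fib_{\bf a}|_e$ lifts to the affine function
\[ \tilde\fib_{\bf a}(t) = z_{\bf a}(e)\,t + c_{\bf a}, \qquad c_{\bf a} = \sum_i a_i c_i \in \Q, \quad z_{\bf a}(e) = \sum_i a_i z_i(e) \in \Z. \]

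Next, since $[z_{\bf a}]\in \Upsilon_r$, the cocycle $z_{\bf a}$ is positive, so $z_{\bf a}(e) > 0$; in particular $z_{\bf a}(e) \neq 0$. The preimage $\fib_{\bf a}^{-1}(0) \cap e$ therefore corresponds under $c_e$ to the set of $t \in [0,1]$ with $\tilde \fib_{\bf a}(t) \in \Z$, i.e., to
\[ \left\{ t \in [0,1] \,\Bigl|\, t = \tfrac{k - c_{\bf a}}{z_{\bf a}(e)} \mbox{ for some } k \in \Z \right\} \subset \Q \cap [0,1]. \]
Because $c_e$ maps $\Q \cap [0,1]$ into $\mathcal V_\infty$, this immediately gives $\fib_{\bf a}^{-1}(0) \cap e \subset \mathcal V_\infty$, as required.

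I do not foresee any serious obstacle here: the lemma is essentially a bookkeeping consequence of three compatible rationality properties (the vertex values of each $\fib_i$, the cocycle values $z_i(e)$, and the identification $c_e(\Q) = \mathcal V_\infty \cap e$). The only point that one must verify carefully is that $z_{\bf a}$ is integral (it is, since each $z_i$ is integral and ${\bf a}$ is an integer tuple) and that $z_{\bf a}(e) \neq 0$ (it is, by positivity of $z_{\bf a}$ on $\Upsilon_r$). Once these are in place the computation above finishes the argument.
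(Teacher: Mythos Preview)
Your proof is correct and follows essentially the same approach as the paper: lift $\fib_{\bf a}|_e$ to an affine map $[0,1]\to\R$ with rational slope and rational intercept, observe that preimages of integers are rational, and conclude via the auxiliary linear structure identification $c_e(\Q\cap[0,1]) = \mathcal V_\infty\cap e$. One small slip: the paper takes $z_2,\dotsc,z_b$ to be \emph{rational} cellular cocycles whose cohomology classes form an integral basis, so you cannot assert $z_i(e)\in\Z$; however this is harmless, since your argument only needs $z_{\bf a}(e)\in\Q\setminus\{0\}$ (rationality plus the positivity from $[z_{\bf a}]\in\Upsilon_r$) to deduce that the solutions $t=(k-c_{\bf a})/z_{\bf a}(e)$ are rational.
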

\begin{proof}
The $1$--cell $e$ is the image of a $1$--cell in the subdivision of $\Rose$ along $\mathcal V_1$.  Because of our choice of linear structure, the characteristic map $\sigma \colon [0,1] \to e$ has $\sigma(\Q \cap [0,1]) \subset \mathcal V_\infty$.  Compose this with $\fib_{\bf a}$ to produce a map $\sigma_{\bf a} = \fib _{\bf a} \circ \sigma$, and choose a lift to $\mathbb R$
\[ \tilde \sigma_{\bf a} \colon [0,1] \to \mathbb R.\]
Because each $\fib_i$ was assumed affine with vertices sent to rational points, it follows that
\[ \tilde \sigma_{\bf a}(x) = cx + d,\]
where $c,d \in \Q$.  Suppose $\fib_{\bf a}(\sigma(x)) = 0$.  Then $cx + d  \in \Z$, and hence $x \in \Q$ so that $\sigma(x) \in \mathcal V_\infty$ as required.
\end{proof}

\subsection{Reparameterized semi-flow and first return maps}

Fix an integral $b$--tuple ${\bf a}$ with $[z_{\bf a}] \in \Upsilon_r$. We reparameterize the semiflow $\psi$ as $\psi^{\bf a}$ so that the composition of a flowline with $\fib_{\bf a}$ is a local isometry to $S^1$. The first return map of $\psi$ (or equivalently of $\psi^{\bf a}$) to $\Theta_{\bf a}:= \fib_{\bf a}^{-1}(0)$ is then the restriction of the time $1$ map, $\psi^{\bf a}_1$, and we denote this by $f_{\bf a} \colon \Theta_{\bf a} \to \Theta_{\bf a}$.

Next, let $Z = \Theta_{\bf a} \cap Y_2^{(1)}$ be the intersection of $\Theta_{\bf a}$ with the $1$--skeleton of $Y_2$, which is a finite set of points.  From Lemma \ref{L:Good intersections}, every point of $Z$ flows forward into $v_0$, and we let $Z_{\bf a}$ be the intersection of $\Theta_{\bf a}$ with the flow lines starting at $Z$.  This is also a finite subset of $\Theta_{\bf a}$, and $f_{\bf a}(Z_{\bf a}) \subset Z_{\bf a}$ by construction.

\begin{proposition}
Given an integral $b$--tuple ${\bf a}$ with $\Theta_{\bf a}$, $Z_{\bf a}$, $\psi^{\bf a}$ and $f_{\bf a}$ be as above.  Then $\Theta_{\bf a}$ is a graph with vertex set $Z_{\bf a}$ and $f_{\bf a} \colon \Theta_{\bf a} \to \Theta_{\bf a}$ is a regular graph map.  Furthermore, $f_{\bf a}$ is expanding and irreducible.
\end{proposition}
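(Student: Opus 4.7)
I would break the proposition into three assertions and address them in sequence: the finite graph structure on $\Theta_{\bf a}$ with vertex set $Z_{\bf a}$, the regularity of $f_{\bf a}$, and finally its expanding irreducibility.

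First I would establish the graph structure. Since $\fib_{\bf a}$ is smooth on each $2$--cell of $Y_2$ and is flow-regular with $\psi$-derivative bounded below by $a_1/2 > 0$ (by the choice of $r$), the zero set $\Theta_{\bf a} = \fib_{\bf a}^{-1}(0)$ meets each $2$--cell of $Y_2$ in a finite disjoint union of smooth arcs transverse to $\psi$. Hence $Z = \Theta_{\bf a} \cap Y_2^{(1)}$ is finite and $\Theta_{\bf a}$ is naturally a finite graph with vertex set $Z$. By Lemma~\ref{L:Good intersections} every point of $Z$ lies on a flowline meeting $\Rose\times\{0\}$ in $\mathcal V_\infty = \bigcup_k f_1^{-k}(v_0)$; flowing forward, any such flowline reaches $v_0$ in at most $K$ iterates of $f_1$ and then remains on the closed $\psi$-orbit through $v_0$, which meets $\Theta_{\bf a}$ in finitely many points. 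Thus each forward orbit through $Z$ contributes only finitely many points to $\Theta_{\bf a}$, so $Z_{\bf a}$ is finite. Since $Z_{\bf a}$ is $f_{\bf a}$-invariant by construction, subdividing along $Z_{\bf a}\setminus Z$ turns $\Theta_{\bf a}$ into a finite graph with vertex set $Z_{\bf a}$.

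Second, for regularity of $f_{\bf a}$: vertices map to vertices because $f_{\bf a}(Z_{\bf a}) \subseteq Z_{\bf a}$ by the very definition of $Z_{\bf a}$ as $\Theta_{\bf a}$ intersected with the forward orbits through $Z$. For local injectivity on edge-interiors I would appeal to Lemma~\ref{L:flow local injectivity}. Each edge of $\Theta_{\bf a}$ is a smooth transverse arc in the interior of a $2$--cell of $Y_2$. The reparameterization $\psi^{\bf a}$ slows $\psi$ by a factor at most $1/\psi(\fib_{\bf a}) \le 2/a_1 \le 2$, so the time--$1$ map of $\psi^{\bf a}$ corresponds to applying $\psi_\tau$ for some $\tau < 2$. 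Writing $H_e(t,s) = \psi_s(e(t))$ as in the lemma, local injectivity of $H_e$ on $(0,1)\times[0,2)$ restricts to local injectivity of $f_{\bf a}|_e = H_e(\,\cdot\,,\tau)$.

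The main obstacle, and the heart of the proof, is showing expansion and irreducibility. Here I would exploit the expanding irreducibility (in fact positivity) of the transition matrix of $f_1$. Since $\psi^{\bf a}$-time $1$ corresponds to $\psi$-time approximately $1/a_1$ (because $\psi(\fib_{\bf a})\approx a_1$), iterating $f_{\bf a}$ a total of $a_1$ times advances each point of $\Theta_{\bf a}$ around the mapping torus once, up to a bounded perturbation measured by the value of $\fib_{\bf a}$ along the closed orbit. For each edge $e$ of $\Theta_{\bf a}$, the backward $\psi$-flow provides a smooth projection $\pi_e \colon e \to \Rose\times\{0\}$ whose image is a subarc of an edge of $\Rose$ with endpoints in $\mathcal V_\infty$ (by Lemma~\ref{L:Good intersections}). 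I would show that under this projection, the map $f_{\bf a}^{a_1}$ on $\Theta_{\bf a}$ is carried to $f_1$ on $\Rose$ (up to the perturbation above). Since $f_1$ is expanding, $\|f_1^n(\pi_e(e))\|$ grows without bound, and each crossing of a point of $\mathcal V_\infty$ by $f_1^n(\pi_e(e))$ corresponds to $f_{\bf a}^{a_1 n}(e)$ passing through a vertex of $Z_{\bf a}$; this gives expansion. For irreducibility, the positivity of the transition matrix of $f_1$ ensures that for every pair of edges $e,e'$ of $\Theta_{\bf a}$ there is $N$ with $f_1^N(\pi_e(e))$ covering $\pi_{e'}(e')$; translating back, $f_{\bf a}^{a_1 N}(e)$ traverses $e'$, completing the argument.
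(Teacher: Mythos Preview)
Your treatment of the graph structure and of the invariance $f_{\bf a}(Z_{\bf a})\subseteq Z_{\bf a}$ matches the paper's. There is a small slip in the regularity argument: the first-return time in the original parameter $\psi$ is not a single constant $\tau$ along an edge $e$, so you cannot write $f_{\bf a}|_e = H_e(\,\cdot\,,\tau)$. What is needed (and what the paper does) is to use the pointwise reparameterization $h_x$, producing an embedding $(t,s)\mapsto (t,h_{e(t)}(s))$ of $(0,1)\times[0,1]$ into $(0,1)\times[0,2)$ through which $H_e^{\bf a}(t,s):=\psi_s^{\bf a}(e(t))$ factors as $H_e\circ h$; local injectivity of $H_e$ then yields local injectivity of $H_e^{\bf a}$ and in particular of $f_{\bf a}|_e=\psi_1^{\bf a}|_e$. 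The paper also separates off the case ${\bf a}=(1,0,\dots,0)$ (where $f_{\bf a}=f_1$) so that otherwise $a_1\geq 2$ gives $h_x'\leq 2/a_1\leq 1$; your bound $2/a_1\leq 2$ is marginal when $a_1=1$.

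Your argument for expansion and irreducibility is considerably more elaborate than needed and contains imprecise steps: the assertion that $f_{\bf a}^{a_1}$ is carried to $f_1$ ``up to perturbation'' is vague, and the claimed correspondence between crossings of $\mathcal V_\infty$ and vertices of $Z_{\bf a}$ is not literal (only the finitely many forward $\psi$--orbits through $Z$ meet $Z_{\bf a}$, not every flowline through $\mathcal V_\infty$). The paper bypasses all of this with a two-line observation: any arc $\alpha_0$ contained in an edge of $\Theta_{\bf a}$ flows forward by $\psi$ onto a nondegenerate arc $\alpha_1$ in an edge of $\Rose$; since $f_1$ is expanding and irreducible, some $f_1^n(\alpha_1)$ covers all of $\Rose$, and flowing further forward then hits every point of $\Theta_{\bf a}$. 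This single remark gives irreducibility (any edge eventually maps over any other) and expansion (combinatorial lengths of $f_{\bf a}^n(e)$ are unbounded) simultaneously, without ever aligning iterates of $f_{\bf a}$ with iterates of $f_1$.
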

\begin{proof}
The complement of $Z_{\bf a}$ in $\Theta_{\bf a}$ is a collection of arcs, each contained in a $2$--cell, transverse to $\psi$ since $f_{\bf a}$ is flow-regular, thus $\Theta_{\bf a}$ is a graph.  We have already observed that $f_{\bf a}(Z_{\bf a}) \subset Z_{\bf a}$.  Next we prove that the restriction of $f_{\bf a}$ to any edge $e$ of $\Theta_{\bf a}$ is locally injective.  If ${\bf a} = (1,0,0,\ldots,0)$, then this $f_{\bf a} = f \colon  \Rose \to \Rose$, with $\Rose$ subdivided at $\mathcal V_2$, and we're done.  So, we assume that ${\bf a} \neq (1,0,0,\ldots,0)$, and hence $a_2 \geq 2$.

Choose a parameterization $\alpha \colon (0,1) \to e$, and consider the two maps
\[ H_\alpha^{\bf a} \colon (0,1) \times [0,1] \to Y \mbox{ and } H_\alpha \colon (0,1) \times [0,2) \to Y \]
given by $H_\alpha^{\bf a}(t,s) = \psi_s^{\bf a}(\alpha(t))$ and $H_\alpha(t,s) = \psi_s(\alpha(t))$.  Because the $\psi$--derivative of $\fib_{\bf a}$ is at least $\frac{a_2}{2} \geq \frac{2}{2} = 1$, it follows that the reparameterization at any point $x$ is defined by a monotone function $h_x \colon [0,\infty) \to [0,\infty)$, so that $\psi_s^{\bf a}(x) = \psi_{h_x(s)}(x)$ with $h_x' \leq 1$.  In particular, there is an embedding $h_\alpha \colon (0,1) \times [0,1] \to (0,1) \times [0,2)$ so that $H_\alpha^{\bf a} = H_\alpha \circ h_\alpha$.   According to Lemma \ref{L:flow local injectivity}, $H_\alpha$ is locally injective, and therefore, so is $H_\alpha^{\bf a}$.  Consequently, $f_{\bf a} = \psi_1^{\bf a}|_{\Theta_{\bf a}}$ is locally injective on $e$, and since $e$ was arbitrary, $f_{\bf a}$ is regular.

The fact that $f_{\bf a}$ is expanding and irreducible follows, just as in \S\ref{sec:first_return_maps}, from the assumption that $f_1 \colon \Rose \to \Rose$ is expanding and irreducible: any arc $\alpha_0$ contained in an edge $e_0$ of $\Theta_{\bf a}$ flows forward by $\psi$ onto a non-degenerate arc $\alpha_1$ contained in an edge $e_1$ of $\Rose$.  This in turn eventually flows forward onto all of $\Rose$ (since $f_1$ is expanding and irreducible), and consequently can be flowed further forward to hit any point of $\Theta_{\bf a}$.
\end{proof}

\subsection{Bounding the stretch factor}
Note that $\Upsilon_r\subset H^1(Y;\R) = H^1(G;\R)$ is an open cone containing $u_1$. For any primitive integral $b$--tuple ${\bf a}$ with $[z_{\bf a}]\in \Upsilon_r$, consider the graph $\Theta_{\bf a}$ and first return map $f_{\bf a}$. By construction $\Theta_{\bf a}$ is a cross section of $\psi$ and is dual to the class $[z_{\bf a}]$. Since $[z_{\bf a}]$ is primitive integral, Proposition~\ref{P:connected_iff_primitive} ensures that $\Theta_{\bf a}$ is connected. Van Kampen's theorem therefore gives $G = \pi_1(\Theta_{\bf a})\ast_{(f_{\bf a})_*}$ and so just as in the train track case (\S\ref{S:endomorphisms}) we may write $G = Q_{\bf a}\ast_{\phi_{\bf a}}$, where $\phi_{\bf a}$ is the descent to the stable quotient $Q_{\bf a}$ of $(f_{\bf a})_* \colon \pi_1(\Theta_{\bf a}) \to \pi_1(\Theta_{\bf a})$. In particular, this proves that $u_{\bf a}=[z_{\bf a}]\in \QBNS(G)$ and that $\Lambda(u_{\bf a}) = \lambda(\phi_{\bf a})$. Furthermore $\log(\Lambda(u_{\bf a}))$ is bounded from above by the log of the Perron-Frobenius eigenvalue of the transition matrix of $f_{\bf a}$, which is the entropy $h(f_{\bf a})$:
\begin{equation} \label{E:bounding stretch} 
\log(\Lambda(u_{\bf a})) = \log(\lambda((f_{\bf a})_*)) \leq h(f_{\bf a}).
\end{equation}
For any integer $k > 0$, $h(f_{k{\bf a}}) = \frac{1}{k} h(f_{\bf a})$, so that ${\bf a} \mapsto h(f_{\bf a})$ naturally extends by degree $-1$ homogeneity to the rational rays as well, and hence \eqref{E:bounding stretch} holds for all rational points in $\Upsilon_r$.

\begin{proposition} For any rational $u = [z_{\bf a}] \in \Upsilon_r \subset H^1(G;\R)$, there is a neighborhood of $U\subset \Upsilon_r$ of $u$ and a constant $R >0$ so that $\Lambda \leq R$ on all rational points of $U$.
\end{proposition}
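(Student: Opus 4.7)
The proof combines a homogeneity reduction with a periodic-orbit comparison between the original semi-flow $\psi$ (the suspension of $f_1$) and its reparameterization $\psi^{\bf a}$. Shrink $U$ so that $U\subset\Upsilon_r$ is bounded and $\epsilon:=\inf\{\|u'\|:u'\in U\}>0$ in some fixed norm on $H^1(G;\R)$. Any rational $u'\in U$ has the form $u'=v/k$ with $v=[z_{\bf a}]$ primitive integral in $\Upsilon_r$ and $k=\|v\|/\|u'\|$ rational, and the degree $-1$ homogeneity of $\log\Lambda$ (Definition~\ref{D:stretch_function}) combined with \eqref{E:bounding stretch} gives
\[
\log\Lambda(u')=k\log\Lambda(v)\le k\,h(f_{\bf a})=\frac{\|[z_{\bf a}]\|}{\|u'\|}\,h(f_{\bf a}).
\]
Therefore everything reduces to establishing a uniform constant $C>0$ with $\|[z_{\bf a}]\|\,h(f_{\bf a})\le C$ for every primitive integral ${\bf a}$ with $[z_{\bf a}]\in\Upsilon_r$; this forces $\log\Lambda(u')\le C/\epsilon$, and we may take $R=\exp(C/\epsilon)$.

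For this uniform bound, first observe that the cone condition $a_1>r\sum_{i\ge 2}|a_i|$ together with $\psi(\fib_1)\in(0.9,1.1)$ and the boundedness of $\psi(\fib_i)$ for $i\ge 2$ gives (after enlarging $r$ once for all if necessary, and then invoking equivalence of norms on $\R^b$ and $H^1(G;\R)$) a uniform constant $c>0$ with $\psi(\fib_{\bf a})\ge c\|[z_{\bf a}]\|$ on $Y$. Each closed orbit $\gamma$ of $\psi$ has integer $\psi$-period $T(\gamma)$ (since $\psi$ is a suspension of $f_1$ with constant roof $1$) and integer $\fib_{\bf a}$-winding $N(\gamma)=z_{\bf a}([\gamma])=\int_\gamma \psi(\fib_{\bf a})\,ds\ge c\|[z_{\bf a}]\|\,T(\gamma)$. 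Closed $\psi$-orbits of period $k$ are in bijection with the fixed points of $f_1^k$, so Perron-Frobenius applied to the irreducible matrix $A(f_1)$ produces a uniform $D>0$ with
\[
\#\{\text{closed }\psi\text{-orbits of period}\le T\}\le D\,\lambda_1^T,\qquad \lambda_1=e^{h(f_1)}.
\]
On the other hand, primitive period-$m$ $f_{\bf a}$-orbits are in bijection with closed $\psi$-orbits of winding exactly $m$, each of which has $\psi$-period at most $m/(c\|[z_{\bf a}]\|)$. Summing over divisors of $n$ bounds $\mathrm{tr}\,A(f_{\bf a})^n=\#\mathrm{Fix}(f_{\bf a}^n)$ by a polynomial in $n$ times $\lambda_1^{n/(c\|[z_{\bf a}]\|)}$, and taking $n$-th roots (using $(\mathrm{tr}\,A(f_{\bf a})^n)^{1/n}\to\lambda(f_{\bf a})$ from irreducibility of $A(f_{\bf a})$) yields
\[
h(f_{\bf a})=\log\lambda(f_{\bf a})\le \frac{h(f_1)}{c\,\|[z_{\bf a}]\|},\qquad \text{i.e.,}\qquad \|[z_{\bf a}]\|\,h(f_{\bf a})\le h(f_1)/c.
\]

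The main obstacle I foresee is the combinatorial bookkeeping relating closed $\psi$-orbits to periodic $f_{\bf a}$-orbits: one has to verify carefully that each closed $\psi$-orbit $\gamma$ with $N(\gamma)=m$ meets $\Theta_{\bf a}$ in exactly $m$ points forming a single primitive $f_{\bf a}$-orbit of period $m$, and that every such $f_{\bf a}$-orbit arises in this way. A secondary technical point is securing the lower bound $\psi(\fib_{\bf a})\ge c\|[z_{\bf a}]\|$ with $c$ genuinely independent of ${\bf a}$; this requires only a mild tightening of the initial choice of $r$ and of the bounds on the $\psi(\fib_i)$, but must be recorded carefully because the final conclusion is sensitive to the linear (not merely positive) scaling of $\psi(\fib_{\bf a})$ with $\|{\bf a}\|$.
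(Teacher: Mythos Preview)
Your argument is essentially correct and takes a genuinely different route from the paper's. The paper proceeds by showing that $[z_{\bf a}]\mapsto h(f_{\bf a})$ extends continuously to all of $\Upsilon_r$: passing to the natural extension $(Y',\Psi)$ (where $\Psi$ is an honest flow), the Variational Principle together with Abramov's Theorem yields
\[
\frac{1}{h(f_{\bf a})}=\inf_{\mu}\frac{\mu_{\bf a}(Y')}{h_\mu(\Psi)},
\]
where $\mu$ ranges over $\Psi$--invariant probability measures and $[z_{\bf a}]\mapsto \mu_{\bf a}(Y')$ extends to a \emph{linear} functional on $H^1(G;\R)$. Thus $1/h$ is an infimum of linear functions, hence concave and continuous on $\Upsilon_r$, and local boundedness of $\Lambda$ follows immediately. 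Your approach instead produces the explicit homogeneous bound $\|[z_{\bf a}]\|\,h(f_{\bf a})\le h(f_1)/c$ by comparing periodic-orbit counts for $f_1$ and $f_{\bf a}$ through their common suspension. This is more elementary (no measure theory or Abramov) and even gives a quantitative estimate, whereas the paper's argument yields the stronger qualitative conclusion that $h$ extends continuously.

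Two points deserve tightening. First, the asserted equality $\mathrm{tr}\,A(f_{\bf a})^n=\#\mathrm{Fix}(f_{\bf a}^n)$ is not literally true for a regular expanding graph map that is not a train track map; what is true is that each closed length-$n$ path in the transition graph $\mathcal G(f_{\bf a})$ determines a unique fixed point of $f_{\bf a}^n$ (the corresponding subinterval maps homeomorphically onto the full edge containing it, forcing a fixed point), and conversely every interior fixed point has a well-defined itinerary, so the two counts differ by at most the finitely many vertex-periodic points. This bounded discrepancy is harmless for the $n$th-root limit, but you should say so rather than claim equality. Second, enlarging $r$ is unnecessary: the paper already records $\psi(\fib_{\bf a})\ge a_1/2$, and the cone condition $a_1>r\sum_{i\ge2}|a_i|$ gives $a_1\ge \tfrac{r}{r+1}\|{\bf a}\|_1$, so $\psi(\fib_{\bf a})\ge c\|[z_{\bf a}]\|$ with $c$ depending only on $r$ and the basis.
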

Since every point of $\QBNS(G)$ is contained in such an open cone neighborhood $\Upsilon_r$, this proves Proposition~\ref{prop:stretch_is_locally_bounded}.
\begin{proof}
From \eqref{E:bounding stretch}, it suffices to prove that the assignment $[z_{\bf a}] \mapsto h(f_{\bf a})$ extends continuously (and hence homogeneously with degree $-1$) to all of $\Upsilon_r$.  For this we can repeat the arguments given in Section 8 of \cite{DKL}, essentially verbatim.  We sketch this here, but refer the reader to \cite{DKL} for the details.

Continuity follows by first passing to the natural extension of $(Y,\psi)$, which we denote $(Y',\Psi)$.   Here $\Psi$ is an honest flow, and there is a projection $\pi \colon Y' \to Y$ which is equivariant with respect to $\R_+$, that is $\pi(\Psi_s(x')) = \psi_s(\pi(x'))$ 
for all $x' \in Y'$  and $s \geq 0$.
The preimage of a section $\Theta_{\bf a}$ is a section $\Theta_{\bf a}' = \pi^{-1}(\Theta_{\bf a})$ of $\Psi$, and $\pi$ conjugates the associated first return map $f_{\bf a}'\colon \Theta_{\bf a}' \to \Theta_{\bf a}'$ to $f_{\bf a}$. Moreover, the entropies are equal $h(f_{\bf a}) = h(f_{\bf a}')$, and so it suffices to prove $[z_{\bf a}]\to h(f_{\bf a}')$ extends continuously.

For this, we proceed as in \cite{DKL}:  Applying the Variational Principal and Abramov's Theorem, we conclude that for any integral class $[z_{\bf a}] \in \Upsilon_r$, 
\[ \frac{1}{h(f_{\bf a}')} = \inf_\mu \frac{\mu_{\bf a}(Y')}{h_\mu(\Psi)},\]
where $\mu$ ranges over all $\Psi$--invariant probability measures on $Y'$, $h_\mu(\Psi)$ is the measure theoretic entropy of $\Psi$ with respect to $\mu$, and $\mu_{\bf a}$ is an auxiliary measure associated to $\mu$ and ${\bf a}$ via a measure $\bar \mu_0$ on the section $\Theta_{\bf a}'$; see \cite{DKL} for details.
The key property here is that given integral classes $[z_{\bf a}],[z_{\bf b}] \in \Upsilon_r$,
\[ \mu_{\bf a} + \mu_{\bf b} = \mu_{\bf a + b}.\]
This follows from the fact that $\fib_{\bf a} + \fib_{\bf b} = \fib_{\bf a + b}$, as in \cite{DKL}. 

For each $\mu$, this shows that $[z_{\bf a}] \mapsto \mu_{\bf a}(Y')$ extends from the integral points of $\Upsilon_r$, to a linear function $\mu_* \colon H^1(G;\R) \to \R$. Consequently,  we obtain a continuous, concave, positive function on all of $\Upsilon_r$ given by
\[ W([z_{\bf a}]) = \inf_\mu \frac{\mu_*([z_{\bf a}])}{h_\mu(\Psi)},\]
where the infimum is over all $\Psi$--invariant probability measures on $Y'$. Thus $[z_{\bf a}]\mapsto 1/W([z_{\bf a}])$ is the desired continuous extension of $[z_{\bf a}]\mapsto h(f_{\bf a})$ to all of $\Upsilon_r$.
\end{proof}

\bibliography{mcpolynomial}{}

\begin{thebibliography}{GMSW}

\bibitem[AKHR]{AHR}
Yael Algom-Kfir, Eriko Hironaka, and Kasra Rafi.
\newblock Digraphs and cycle polynomials for free-by-cyclic groups.
\newblock {\em Geom. Topol.}, 19(2):1111--1154, 2015.

\bibitem[AKR]{AR}
Yael Algom-Kfir and Kasra Rafi.
\newblock Mapping tori of small dilatation expanding train-track maps.
\newblock {\em Topology Appl.}, 180:44--63, 2015.

\bibitem[BH]{BH92}
Mladen Bestvina and Michael Handel.
\newblock Train tracks and automorphisms of free groups.
\newblock {\em Ann. of Math. (2)}, 135(1):1--51, 1992.

\bibitem[BNS]{BNS}
Robert Bieri, Walter~D. Neumann, and Ralph Strebel.
\newblock A geometric invariant of discrete groups.
\newblock {\em Invent. Math.}, 90(3):451--477, 1987.

\bibitem[BR]{BieriRenz}
Robert Bieri and Burkhardt Renz.
\newblock Valuations on free resolutions and higher geometric invariants of
  groups.
\newblock {\em Comment. Math. Helv.}, 63(3):464--497, 1988.

\bibitem[Bro]{Brown}
Kenneth~S. Brown.
\newblock Trees, valuations, and the {B}ieri-{N}eumann-{S}trebel invariant.
\newblock {\em Invent. Math.}, 90(3):479--504, 1987.

\bibitem[BS]{BStrian}
Peter Brinkmann and Saul Schleimer.
\newblock Computing triangulations of mapping tori of surface homeomorphisms.
\newblock {\em Experiment. Math.}, 10(4):571--581, 2001.

\bibitem[Cal]{Calegari07}
Danny Calegari.
\newblock {\em Foliations and the geometry of 3-manifolds}.
\newblock Oxford Mathematical Monographs. Oxford University Press, Oxford,
  2007.

\bibitem[CL]{CL}
Christopher Cashen and Gilbert Levitt.
\newblock Mapping tori of free group automorphisms, and the
  {B}ieri-{N}eumann-{S}trebel invariant of graphs of groups.
\newblock 2014.
\newblock preprint arXiv:1412.8582.

\bibitem[DKL]{DKL}
Spencer Dowdall, Ilya Kapovich, and Christopher~J. Leininger.
\newblock Dynamics on free-by-cyclic-groups.
\newblock {\em Geom. Topol.}, 19(5):2801--2899, 2015.

\bibitem[DV]{DV}
Warren Dicks and Enric Ventura.
\newblock {\em The group fixed by a family of injective endomorphisms of a free
  group}, volume 195 of {\em Contemporary Mathematics}.
\newblock American Mathematical Society, Providence, RI, 1996.

\bibitem[FGS]{FGS10}
Michael Farber, Ross Geoghegan, and Dirk Sch\"utz.
\newblock Closed 1-forms in topology and geometric group theory.
\newblock {\em Uspekhi Mat. Nauk}, 65(1(391)):145--176, 2010.

\bibitem[Fri1]{FriedD}
David Fried.
\newblock Flow equivalence, hyperbolic systems and a new zeta function for
  flows.
\newblock {\em Comment. Math. Helv.}, 57(2):237--259, 1982.

\bibitem[Fri2]{FriedS}
David Fried.
\newblock The geometry of cross sections to flows.
\newblock {\em Topology}, 21(4):353--371, 1982.

\bibitem[Gau1]{Gau1}
Fran{\c{c}}ois Gautero.
\newblock Dynamical 2-complexes.
\newblock {\em Geom. Dedicata}, 88(1-3):283--319, 2001.

\bibitem[Gau2]{Gau3}
Fran{\c{c}}ois Gautero.
\newblock Cross sections to semi-flows on 2-complexes.
\newblock {\em Ergodic Theory Dynam. Systems}, 23(1):143--174, 2003.

\bibitem[Gau3]{Gau4}
Fran{\c{c}}ois Gautero.
\newblock Combinatorial mapping-torus, branched surfaces and free group
  automorphisms.
\newblock {\em Ann. Sc. Norm. Super. Pisa Cl. Sci. (5)}, 6(3):405--440, 2007.

\bibitem[GMSW]{GMSW}
Ross Geoghegan, Michael~L. Mihalik, Mark Sapir, and Daniel~T. Wise.
\newblock Ascending {HNN} extensions of finitely generated free groups are
  {H}opfian.
\newblock {\em Bull. London Math. Soc.}, 33(3):292--298, 2001.

\bibitem[Had1]{HadariComm}
Asaf Hadari.
\newblock Personal communication.

\bibitem[Had2]{Had2}
Asaf Hadari.
\newblock The spectra of polynomial equations with varying exponents.
\newblock 2014.
\newblock preprint arXiv:1410.0064.

\bibitem[Kap]{K02}
Ilya Kapovich.
\newblock A remark on mapping tori of free group endomorphisms.
\newblock 2002.
\newblock preprint arXiv:0208189.

\bibitem[KM]{KM02}
Ilya Kapovich and Alexei Myasnikov.
\newblock Stallings foldings and subgroups of free groups.
\newblock {\em J. Algebra}, 248(2):608--668, 2002.

\bibitem[Lan]{Lang}
Serge Lang.
\newblock {\em Algebra}.
\newblock Addison-Wesley Publishing Company Advanced Book Program, Reading, MA,
  second edition, 1984.

\bibitem[McM1]{Mc}
Curtis~T. McMullen.
\newblock Polynomial invariants for fibered 3-manifolds and {T}eichm\"uller
  geodesics for foliations.
\newblock {\em Ann. Sci. \'Ecole Norm. Sup. (4)}, 33(4):519--560, 2000.

\bibitem[McM2]{McClique}
Curtis~T. McMullen.
\newblock Entropy and the clique polynomial.
\newblock {\em J. Topol.}, 8(1):184--212, 2015.

\bibitem[MS]{McSch}
James McCool and Paul~E. Schupp.
\newblock On one relator groups and {${\rm HNN}$} extensions.
\newblock {\em J. Austral. Math. Soc.}, 16:249--256, 1973.
\newblock Collection of articles dedicated to the memory of Hanna Neumann, II.

\bibitem[Nor]{Northcott}
D.~G. Northcott.
\newblock {\em Finite free resolutions}.
\newblock Cambridge University Press, Cambridge, 1976.
\newblock Cambridge Tracts in Mathematics, No. 71.

\bibitem[Oer]{Oertel}
Ulrich Oertel.
\newblock Affine laminations and their stretch factors.
\newblock {\em Pacific J. Math.}, 182(2):303--328, 1998.

\bibitem[Rey]{Reynolds}
Patrick Reynolds.
\newblock Dynamics of irreducible endomorphisms of {${F}_n$}.
\newblock 2010.
\newblock preprint arXiv:1008.3659.

\bibitem[Sta]{St83}
John~R. Stallings.
\newblock Topology of finite graphs.
\newblock {\em Invent. Math.}, 71(3):551--565, 1983.

\bibitem[Thu]{ThuN}
William~P. Thurston.
\newblock A norm for the homology of {$3$}-manifolds.
\newblock {\em Mem. Amer. Math. Soc.}, 59(339):i--vi and 99--130, 1986.

\bibitem[Wan]{Wang}
Zhifeng Wang.
\newblock {\em Mapping tori of outer automorphisms of free groups}.
\newblock PhD thesis, Rutgers University, May 2002.

\end{thebibliography}
\bibliographystyle{alphanum}

\end{document}